\documentclass[10pt]{amsart}

\usepackage{amstext}
\usepackage{latexsym}
\usepackage{amssymb}
\usepackage{amsmath,amsthm,amscd}
\usepackage{enumitem}
\usepackage[pdftex, bookmarks=true, linkbordercolor={0 1 0}]{hyperref}
\usepackage{a4wide}
\usepackage[all]{xy}

\newcommand{\R}{\mathbb R}
\newcommand{\Q}{\mathbb Q}
\newcommand{\C}{\mathbb C}
\newcommand{\Z}{\mathbb Z}
\newcommand{\N}{\mathbb N}
\newcommand{\A}{\mathbb A}

\newcommand{\eps}{\varepsilon}
\newcommand{\minus}{\backslash}
\newcommand{\Aut}{\mathop{\rm Aut}\nolimits}

\newcommand{\Hom}{\mathop{\rm Hom}\nolimits}

\newcommand{\tr}{\mathop{\rm tr}\nolimits}
\newcommand{\vol}{\mathop{\rm vol}\nolimits}
\newcommand{\GL}{\mathop{\rm GL}\nolimits}

\newcommand{\Orth}{{\rm O}}

\newcommand{\SL}{\mathop{\rm SL}\nolimits}
\newcommand{\id}{\mathop{\rm id}\nolimits}

\newcommand{\res}{\mathop{\rm res}}

\newcommand{\diag}{\mathop{\rm diag}}

\newcommand{\val}{\mathop{\rm val}\nolimits}

\newcommand{\Frac}{\mathop{\rm Frac}\nolimits}

\newcommand{\DIV}{\mathop{\rm Div}\nolimits}

\newcommand{\Ad}{\mathop{{\rm Ad}}\nolimits}

\newcommand{\OOO}{\mathcal{O}}
\newcommand{\NNN}{\mathcal{N}}
\newcommand{\nnn}{\mathfrak{n}}
\newcommand{\ggG}{\mathfrak{g}}
\newcommand{\One}{\mathbf{1}}
\newcommand{\ppp}{\mathfrak{p}}
\newcommand{\cpt}{\mathbf{K}}
\newcommand{\aaa}{\mathfrak{a}}
\newcommand{\lcm}{\mathop{{\rm lcm}}}
\newcommand{\uuu}{\mathfrak{u}}
\newcommand{\SSS}{\mathcal{S}}
\newcommand{\FFF}{\mathcal{F}}
\newcommand{\PPP}{\mathcal{P}}
\newcommand{\mmm}{\mathfrak{m}}
\newcommand{\ooo}{\mathfrak{o}}
\newcommand{\MMM}{\mathcal{M}}
\newcommand{\EEE}{\mathcal{E}}
\newcommand{\RRR}{\mathcal{R}}
\newcommand{\ad}{\mathop{\rm ad}\nolimits}

\newcommand{\BBB}{\mathcal{B}}

\newcommand{\UUU}{\mathcal{U}}
\newcommand{\bbb}{\mathfrak{b}}

\theoremstyle{plain}
\newtheorem{theorem}{Theorem}[section]

\newtheorem{lemma}[theorem]{Lemma}
\newtheorem{cor}[theorem]{Corollary}
\newtheorem{proposition}[theorem]{Proposition}
\theoremstyle{remark}
\newtheorem{rem}[theorem]{Remark}
\newtheorem{example}[theorem]{Example}
\theoremstyle{definition}
\newtheorem{definition}[theorem]{Definition}
\newtheorem{conjecture}[theorem]{Conjecture}

\setcounter{theorem}{0}
\setcounter{tocdepth}{1}

\begin{document}
\title[Zeta Functions for $\GL(n)$]{Zeta Functions for the Adjoint Action of $\GL(n)$ and density of residues of Dedekind zeta functions}
\author{Jasmin Matz}
\address{Mathematisches Institut, Rheinische Friedrich-Wilhelms-Universit\"at Bonn, Endenicher Allee ~60, 53115 Bonn, Germany}
\email{matz@math.uni-bonn.de}
\begin{abstract}
We define zeta functions for the adjoint action of $\GL_n$ on its Lie algebra and study their analytic properties. 
For $n\leq 3$ we are able to fully analyse these functions, and recover the Shintani zeta function for the prehomogeneous vector space of binary quadratic forms for $n=2$. Our construction naturally yields a regularisation, which is necessary for the improvement of the  properties of these zeta function, in particular for the analytic continuation if $n\geq3$.

We further obtain upper and lower bounds on the mean value $X^{-\frac{5}{2}}\sum_{E} \res_{s=1}\zeta_E(s)$ as $X\rightarrow\infty$, where $E$ runs over totally real cubic number fields whose second successive minimum of the trace form on its ring of integers is bounded by $X$. To prove the upper bound we use our new zeta function for $\GL_3$. These asymptotic bounds are a first step towards a generalisation of density results obtained by Datskovsky in case of quadratic field extensions.
\end{abstract}
\maketitle
\tableofcontents 

\section{Introduction}
The first purpose of this paper is to provide another point of view for the construction of the Shintani zeta function $Z(s,\Psi)$ for the binary quadratic forms and to generalise this approach to the action of $\GL_1\times\GL_n$ on the Lie algebra $\mathfrak{gl}_n$. 
To improve its properties, $Z(s,\Psi)$ has to be  ``adjusted'' (cf.\ \cite{Yu92,Da93}), and the advantage of our approach is that a suitable modification (for $Z(s,\Psi)$ as well as the higher dimensional case) naturally emerges. 
The second purpose of this paper is to make a first step towards the generalisation of a result from \cite{Da93}: We prove upper and lower bounds on the density of residues of Dedekind zeta functions for totally real cubic number fields. For the upper bound we use our new zeta function for $n=3$.

There has been a long interest in zeta functions attached to group actions, in particular in the Shintani zeta functions attached to prehomogeneous vector spaces, cf.\ \cite{SaSh74,Sh75,Yu92,Ki03}. 
One basic example of a prehomogeneous vector space is the space of binary quadratic forms on which $\GL_1\times\GL_2$ acts by multiplication by scalars and by changing basis.
There are two natural generalisation of this space to higher dimensions corresponding to different viewpoints:
From the point of view of quadratic forms, the obvious generalisation is to consider $\GL_1\times\GL_n$ acting on quadratic forms in $n$ variables. This is again a prehomogeneous vector space studied, e.g., in \cite{Sh75,Suz79}.

On the other hand, we can equally well identify the space of binary quadratic forms with the Lie algebra $\mathfrak{sl}_2$ of $\SL_2$ so that the action of $\GL_2$ becomes the adjoint representation on $\mathfrak{sl}_2$. From this point of view, it seems more natural to generalise to higher dimensions by considering the action of $\GL_1\times\GL_n$ on $\mathfrak{sl}_n$ by letting $\GL_1$ act by multiplication of scalars and $\GL_n$ by the adjoint action.
This is the point of view we take in this paper.
 The problem is that this is not a prehomogeneous vector space if $n\geq3$ so that the general theory of Shintani zeta functions does not apply. 

Shintani zeta functions often turned out to be useful to obtain information on certain arithmetic quantities encoded in these zeta functions, cf.\ \cite{Sh75,WrYu,Da93}.
In particular, the Shintani zeta function $Z(s, \Psi)$ for the binary quadratic forms can be used to deduce density theorems for class numbers of binary quadratic forms as well as for residues of Dedekind zeta functions for quadratic field extensions, cf.\ \cite{Sh75,Da93}. We will later find that in our zeta function for $n=3$, the residues of the Dedekind zeta functions for cubic number fields are encoded. For general $n\geq 2$, one could find the respective objects for  number fields of degree $n$.

The paper consists of two main parts. The second part is independent of the techniques of the first one, we only use results from the first part.

To describe our results in more detail, let $n\geq2$, $G=\GL_n$ or $G=\SL_n$, and let accordingly $\ggG=\mathfrak{gl}_n$ or $\ggG=\mathfrak{sl}_n$ be the Lie algebra of $G$. Put $D=\dim\ggG$. Then $G$ acts on $\ggG$ by the adjoint action $\Ad$. 
Let $\ggG(\Q)_{\text{er}}$ denote the set of regular elliptic elements in $\ggG(\Q)$, i.e. matrices $X$ having an irreducible characteristic polynomial over $\Q$, and let $\OOO_{\text{er}}$ denote the set of orbits $[X]\subseteq \ggG(\Q)_{\text{er}}$ of regular elliptic elements under $\Ad G(\Q)$.

\subsection*{Part \ref{part_zeta}}
We generalise the zeta function $Z(s,\Phi)$ to higher dimensions by defining the ``main'' (or unregularised) zeta function for $G$ by
\[
 \Xi_{\text{main}}(s, \Phi)=
\int_{\Q^{\times}\backslash \A^{\times}}|\lambda|^{\sqrt{D}(s+\frac{\sqrt{D}-1}{2})}
\int_{G(\Q)\backslash G(\A)^1}\sum_{[X]\in \OOO_{\text{er}}}\Phi(\lambda \Ad x^{-1} X) dx~d^{\times}\lambda
\]
for $s\in\C$, $\Re s\gg0$, and  $\Phi:\ggG(\A)\longrightarrow\C $ a Schwartz-Bruhat function. 
We will see (cf.\ Theorem \ref{holom_intr} below) that this defines a holomorphic function for $\Re s>\frac{\sqrt{D}+1}{2}$.
For $n=2$ the function $\Xi_{\text{main}}(\cdot, \Phi)$ basically coincides with the (unmodified) Shintani zeta function from  \cite{Sh75,Yu92,Da93} (cf.\ \S \ref{connection_arthur_tf} and \cite{diss}).

To study $\Xi_{\text{main}}(\cdot, \Phi)$ one needs to regularise it in  a suitable way.
For $n=2$ a regularisation is needed to obtain a ``nice'' functional equation (cf.\ \cite{Yu92,Da93}), but for higher dimensions, the regularisation appears to be even more essential: 
Already for $n=3$, it seems that $\Xi_{\text{main}}(\cdot, \Phi)$ can not be continued to all of $\C$, cf.\ \cite[IV.iii]{diss}.  
Our method of regularisation is different from the one used for $Z(s,\Phi)$ so far: In  \cite{Yu92,Da93} smoothed Eisenstein series were used to cut off diverging integrals. In contrast to this we use a more geometric truncation process that is analogous to the one employed by Arthur for his trace formula; cf.\ also \cite{Le99} for a similar truncation for the Shintani zeta function for the binary quartic forms.
For this we use Chaudouard's trace formula for $\ggG$ (= truncated summation formula) from \cite{Ch02}: Let $\OOO$ denote the set of equivalence classes on $\ggG(\Q)$. This set corresponds bijectively to orbits of semisimple elements, cf.\ \S \ref{section_eq_classes}. Let $\nnn\in\OOO$ be the nilpotent variety in $\ggG$. 
One can attach to every $\ooo\in\OOO$ and to every truncation parameter $T$ in the coroot space $\aaa$ of $G$ a distribution $J_{\ooo}^T$ on the space of Schwartz-Bruhat functions $\Phi:\ggG(\A)\longrightarrow\C $, cf.\ \S \ref{section_distr}.
They are defined similar to  Arthur's distributions on the space of test functions on a reductive algebraic group appearing in Arthur's trace formula for the group.
We now define the regularised zeta function $\Xi^T(s,\Phi)$ as follows: If $\lambda\in\R$, set $\Phi_{\lambda}(x)=\Phi(\lambda x)$. Then 
\begin{equation}\label{def_zetafct_intr}
 \Xi^T(s, \Phi):=\int_{0}^{\infty}\lambda^{\sqrt{D}(s+\frac{\sqrt{D}-1}{2})}\sum_{\ooo\in\OOO\minus\{\nnn\}} J_{\ooo}^T(\Phi_{\lambda})d^{\times}\lambda
\end{equation}
provided this integral converges.
We need to extend this definition to non-smooth test functions $\Phi\in\SSS^{\nu}(\ggG(\A))$ for $\nu\in\Z$ sufficiently large, where $\SSS^{\nu}(\ggG(\A))$ denotes the space of functions $\ggG(\A)\longrightarrow\C$, which are of rapid decay, but only differentiable up to order $\nu$, cf.\ \S \ref{section_testfcts}. This extension to non-smooth functions is important for later applications in Part \ref{part_meanval}.
In the definition of $\Xi^T(\cdot,\Phi)$, the function $\Xi_{\text{main}}(\cdot, \Phi)$ corresponds to the partial sum over such $\ooo\in\OOO$ which are attached to orbits of regular elliptic elements. The function $\Xi(\cdot, \Phi)$ is also closely connected to Arthur's trace formula for $G$, cf.\ \S \ref{connection_arthur_tf}. 
Our first result is the following:
\begin{theorem}\label{holom_intr}[cf.\ Theorem \ref{convergence_distributions}]
Let $n\geq2$. There exists $\nu>0$ depending only on $n$ such that for every $\Phi\in\SSS^{\nu}(\ggG(\A))$ the following holds:
\begin{enumerate}[label=(\roman{*})]
 \item If $T$ is sufficiently regular, the integral defining $\Xi^T(s,\Phi)$ converges absolutely and locally uniformly for $\Re s>\frac{\sqrt{D}+1}{2}$. In particular, $\Xi^T(s,\Phi)$ is holomorphic in this half plane.

\item $\Xi^T(s,\Phi)$ is a polynomial in $T$ of degree at most $\dim\aaa=n-1$ and can be defined for every $T\in\aaa$. Then for every $T$ the function $\Xi^T(s,\Phi)$ is holomorphic in $\Re s>\frac{\sqrt{D}+1}{2}$.
\end{enumerate}
\end{theorem}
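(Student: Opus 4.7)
The plan is to analyse the $\lambda$-integrand of \eqref{def_zetafct_intr} separately for $\lambda\geq 1$ and $\lambda\in(0,1]$, using Chaudouard's convergence theorem (which ensures $\sum_\ooo J_\ooo^T(|\Phi|)<\infty$ for sufficiently regular $T$ and Schwartz-Bruhat $\Phi$) and exploiting the removal of the nilpotent class. Near $\lambda=\infty$, fix $T$ sufficiently regular and choose a Schwartz-Bruhat majorant of $\Phi$. For each class $\ooo\neq\nnn$, at least one characteristic-polynomial coefficient $c_k$ of elements of $\ooo$ is a nonzero rational whose denominator is bounded in terms of the finite part of $\Phi$, so $|c_k|$ is bounded below by a positive constant on $\ooo$. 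Combined with the homogeneity $c_k(\lambda X)=\lambda^k c_k(X)$ and the compactness of the support of $\Phi_\lambda$ at the finite places, this forces $\Phi_\lambda|_\ooo$ to decay faster than any power of $\lambda^{-1}$ uniformly in $\ooo$ after summation against the fixed majorant. Hence $\sum_{\ooo\neq\nnn}|J_\ooo^T(\Phi_\lambda)|\ll_N \lambda^{-N}$ for arbitrary $N$, and the portion $\int_1^\infty\cdots d^\times\lambda$ converges for every $s\in\C$.

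For $\lambda\in(0,1]$, I would re-examine Chaudouard's proof of absolute convergence and track the dependence of the bound on the Schwartz-Bruhat seminorms of $\Phi$. Under $\Phi\mapsto\Phi_\lambda$ those seminorms grow polynomially in $\lambda^{-1}$, the dominant contribution being the $\lambda^{-D}$ Jacobian on $\ggG(\R)$. A careful accounting should yield $\sum_\ooo|J_\ooo^T(\Phi_\lambda)|\ll\lambda^{-D}$ uniformly for $\lambda\in(0,1]$. Multiplied by $\lambda^{\sqrt D(\Re s+(\sqrt D-1)/2)}$ the integrand is bounded by $\lambda^{\sqrt D(\Re s-(\sqrt D+1)/2)}$, which is integrable at $0$ against $d^\times\lambda$ precisely when $\Re s>(\sqrt D+1)/2$. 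Holomorphy in $s$ in this half-plane then follows from dominated convergence and Morera's theorem. The regularity index $\nu$ is dictated by Chaudouard's estimate, increased by a few orders to absorb the dilation, and the extension from smooth to $\SSS^\nu$ test functions follows by density together with continuity of the bound in the relevant seminorms.

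For (ii), I would invoke the standard feature of Arthur-type truncated distributions, established in the Lie algebra setting by Chaudouard, that on the open cone of sufficiently regular $T$ each $J_\ooo^T(\Phi)$ is a polynomial in $T$ of degree at most $\dim\aaa=n-1$, whose coefficients are distributions in $\Phi$. The bounds of (i) being uniform on compact subsets of this cone and of $\{\Re s>(\sqrt D+1)/2\}$, one can interchange the $\lambda$-integral, the sum over $\ooo$, and the polynomial expansion in $T$, which shows that $\Xi^T(s,\Phi)$ is itself a polynomial in $T$ of degree at most $n-1$ with coefficients holomorphic on $\{\Re s>(\sqrt D+1)/2\}$. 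Polynomial interpolation in $T$ then extends $\Xi^T(s,\Phi)$ to every $T\in\aaa$ as a polynomial in $T$, while preserving holomorphy of the coefficients in $s$.

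The principal obstacle is the quantitative small-$\lambda$ bound: Chaudouard's statement of absolute convergence is qualitative, and one has to reopen its proof to isolate the precise Schwartz-seminorm dependence yielding $\lambda^{-D}$ and no worse; a naive $\lambda^{-D-\epsilon}$ would push the convergence threshold above $(\sqrt D+1)/2$ and miss the statement. A secondary difficulty is the required uniformity of these estimates in $T$ on the cone of sufficiently regular parameters, which is what legitimates the interchange of operations in the proof of (ii), and which again feeds back into a careful quantitative re-derivation of Chaudouard's convergence theorem rather than its mere invocation.
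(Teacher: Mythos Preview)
Your overall strategy matches the paper's: split the $\lambda$-integral at $1$, get rapid decay for $\lambda\geq 1$ by exploiting $\ooo\neq\nnn$, get the sharp $\lambda^{-D}$ bound for $\lambda\leq 1$ by reopening Chaudouard's convergence argument, and deduce (ii) from the polynomiality of each $J_\ooo^T$ in $T$. You also correctly flag the main technical obstacle as obtaining exactly $\lambda^{-D}$ rather than $\lambda^{-D-\eps}$ for small $\lambda$.

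The one place where your sketch is thinner than it looks is the large-$\lambda$ step. The characteristic-polynomial observation is correct and is morally why rapid decay holds: if $\ooo\neq\nnn$ contributes then some $c_k$ is a nonzero rational with denominator bounded by the finite support of $\Phi$, hence $|c_k|_\infty$ is bounded below, hence $\|\lambda X\|_\infty\gg\lambda$ and $\Phi_\infty(\lambda\,\cdot)$ decays rapidly. But this is a \emph{pointwise} bound on $\Phi_\lambda$ restricted to the non-nilpotent locus, whereas $J_\ooo^T$ is not a positive functional of $\Phi$: the truncated kernel $k_\ooo^T$ is an alternating sum over parabolics, so $|\Phi_\lambda|\leq\lambda^{-N}\Psi$ does not give $|k_\ooo^T(x,\Phi_\lambda)|\leq\lambda^{-N}|k_\ooo^T(x,\Psi)|$. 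Nor can you cut $\Phi_\lambda$ down to the non-nilpotent locus and feed the result into Chaudouard's theorem as a black box, since that restriction is discontinuous across the nilpotent cone and lies in no $\SSS^\nu$.

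The paper handles both regimes in a single pass by reopening Chaudouard's proof once (Lemma~\ref{fundamental_convergence}): one rewrites $\int\sum_{\ooo\in\OOO_*}|k_\ooo^T(x,\Phi_\lambda)|\,dx$ via the combinatorial identity \eqref{descent_kernel} and Poisson summation as a sum of non-negative terms indexed by chains $P_1\subseteq R\subseteq P_2$, each involving a lattice sum over $X\in\tilde\mmm_{P_1}^R(\Q)$ with the constraint $X\notin\nnn$. At that stage the constraint $X\notin\nnn$ plays exactly the role of your characteristic-polynomial lower bound: some root-space coordinate of $X$ is nonzero with bounded denominator, and the elementary estimate \eqref{est_wright2} then produces the $\lambda^{-N}$ decay for $\lambda\geq 1$. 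For $\lambda\leq 1$ the same positive expansion gives the sharp $\lambda^{-D}$ via the dimension count $\dim\uuu_R+\dim\mmm_R\leq D$ in \eqref{inequality_of_dimensions}. So your instinct to reopen the proof for small $\lambda$ is right; the point is that the \emph{same} reopening is what makes the large-$\lambda$ argument rigorous, and the characteristic-polynomial heuristic does not bypass it.
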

In this way we get a well-defined family $\Xi^T(s,\Phi)$ of zeta functions indexed by the parameter $T\in\aaa$ and varying continuously with $T$. By the nature of our construction this family depends on an initial choice of minimal parabolic subgroup in $G$. 
We can, however, choose a zeta function in this family which is independent of this choice:
Taking $T=0$, the function $\Xi^0(s,\Phi)$ does not depend on the fixed minimal parabolic subgroup anymore (cf.\ \cite[Lemma 1.1]{Ar81}) so that $\Xi^0(s,\Phi)$ can be viewed as ''the`` zeta function associated with $G$ acting on $\ggG$.

One of the standard methods to get the meromorphic continuation and functional equation of zeta functions is to use the Poisson summation formula. In our context, Chaudouard's trace formula takes the place of the  Poisson summation formula, and the main obstruction  to obtain the meromorphic continuation and the functional equation for $\Xi^T(s,\Phi)$ is to understand the nilpotent contribution $J_{\nnn}^T(\Phi_{\lambda})$.
Restricting to  $n\leq 3$, we are able to analyse the nilpotent distribution $J_{\nnn}^T(\Phi_{\lambda})$ completely (see \S \ref{nilp_distr1} and \S \ref{section_main_res}), obtaining our main result of Part \ref{part_zeta}:
\begin{theorem}\label{main_thmintr}[cf.\ Theorems \ref{main_theorem}]
Let $G=\GL_n$ or $G=\SL_n$ with $n\leq 3$, and let $R>n$ be given. Then there exists $\nu<\infty$ such that for every $\Phi\in\SSS^{\nu}(\ggG(\A))$ and  $T\in\aaa$ the following holds.
\begin{enumerate}[label=(\roman{*})]

\item 
$\Xi^T(s, \Phi)$ has a meromorphic continuation to all $s\in\C$ with $ \Re s>-R$, and satisfies for such $s$ the functional equation
\[
\Xi^T(s, \Phi)=\Xi^T(1-s, \hat{\Phi}).
\]

\item
The poles of $\Xi^T(s, \Phi)$ in $\Re s>-R$ are parametrised by the nilpotent orbits $\NNN\subseteq \nnn$. More precisely, its poles occur exactly at the points 
\[
s_{\NNN}^-=\frac{1-\sqrt{D}}{2}+\frac{\dim\NNN}{2\sqrt{D}}
~~~~\text{and }~~~~
s_{\NNN}^+=\frac{1+\sqrt{D}}{2}-\frac{\dim\NNN}{2\sqrt{D}}
\]
and are of order at most $\dim\aaa=n-1$. 
In particular, the furthermost right and furthermost left pole in this region are both simple, correspond to $\NNN=0$, and are located at the points
$s_{0}^+=\frac{1+\sqrt{D}}{2}$
and 
$s_{0}^-=\frac{1-\sqrt{D}}{2}$, respectively.
 The residues at these poles are given by
\begin{align*}
\res_{s=s_{0}^-}\Xi^T(s, \Phi)
&=\vol(A_GG(\Q)\backslash G(\A))\Phi(0),\text{   and }\\
\res_{s=s_{0}^+}\Xi^T(s, \Phi)
&=\vol(A_GG(\Q)\backslash G(\A))\int_{\ggG(\A)}\Phi(X) dX.
\end{align*}
\end{enumerate}
\end{theorem}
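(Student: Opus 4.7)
My strategy is to regard Chaudouard's trace formula as a Poisson-summation-type identity for $\ggG$, and to establish both the meromorphic continuation and the functional equation by combining it with an explicit small-$\lambda$ asymptotic expansion of the nilpotent distribution $J_\nnn^T(\Phi_\lambda)$. Set $\alpha(s) = \sqrt{D}(s + \frac{\sqrt{D}-1}{2})$, so that $\alpha(1-s) = D - \alpha(s)$. Using $\widehat{\Phi_\lambda}(Y) = \lambda^{-D}\hat\Phi(Y/\lambda)$ and the change of variables $\mu = 1/\lambda$, Chaudouard's identity $\sum_\ooo J_\ooo^T(\Psi) = \sum_\ooo J_\ooo^T(\hat\Psi)$ applied to $\Psi = \Phi_\lambda$ yields the meromorphic identity
\[
\int_0^\infty \lambda^{\alpha(s)}\sum_{\ooo\in\OOO} J_\ooo^T(\Phi_\lambda)\,d^\times\lambda = \int_0^\infty \lambda^{\alpha(1-s)}\sum_{\ooo\in\OOO} J_\ooo^T((\hat\Phi)_\lambda)\,d^\times\lambda.
\]
Separating the nilpotent term on each side converts the desired functional equation for $\Xi^T$ into the identity
\[
\Xi^T(s,\Phi) - \Xi^T(1-s,\hat\Phi) = \int_0^\infty\lambda^{\alpha(1-s)}J_\nnn^T((\hat\Phi)_\lambda)\,d^\times\lambda - \int_0^\infty\lambda^{\alpha(s)}J_\nnn^T(\Phi_\lambda)\,d^\times\lambda,
\]
so everything hinges on making sense of the two nilpotent integrals.

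The main task, and the principal obstacle, is to write down the explicit asymptotic behaviour of $J_\nnn^T(\Phi_\lambda)$ as $\lambda\to 0$. For $n\leq 3$ the nilpotent variety has only finitely many orbits $\NNN\subseteq\nnn$ (three in the $\GL_3$ or $\SL_3$ case: zero, minimal, and regular), each admitting an explicit description of stabilisers and of the attached parabolic data. Using the combinatorics of Arthur-style truncation orbit by orbit (this is the content of Sections \ref{nilp_distr1}--\ref{section_main_res}), one obtains an expansion
\[
J_\nnn^T(\Phi_\lambda) = \sum_{\NNN\subseteq\nnn}\lambda^{-c_\NNN}\,P_\NNN^T(\log\lambda;\Phi) + O(\lambda^N)\qquad (\lambda\to 0),
\]
with $N$ arbitrarily large (at the cost of the smoothness index $\nu$), with the exponents tuned so that $\alpha(s_\NNN^-) = c_\NNN$ (i.e.\ $c_\NNN = \dim\NNN/2$), and with $P_\NNN^T(\cdot\,;\Phi)$ polynomial in $\log\lambda$ of degree at most $\dim\aaa = n-1$, the coefficients being linear in $\Phi$ and polynomial in $T$. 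Matching the parabolic-chain truncation integrals for the subregular and regular orbits of $\GL_3$ against the simple $\lambda^{-c_\NNN}(\log\lambda)^k$ skeleton is the genuinely hard step; the combinatorics grow out of control for $n\geq 4$, which is precisely why the theorem is restricted to $n\leq 3$.

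With the expansion in hand, the rest is extraction. Splitting $\int_0^\infty = \int_1^\infty + \int_0^1$ in the definition of $\Xi^T$, the tail $\int_1^\infty$ is entire in $s$ by the rapid decay of $\Phi_\lambda$ at $\lambda\to\infty$, the same mechanism underlying Theorem \ref{holom_intr} but now with $\lambda\geq 1$ removing any lower bound on $\Re s$. Inserting the asymptotic expansion into the integral over $(0,1)$ and integrating term by term against $\lambda^{\alpha(s)}\,d^\times\lambda$ meromorphically continues $\int_0^\infty\lambda^{\alpha(s)}J_\nnn^T(\Phi_\lambda)\,d^\times\lambda$ to $\Re s > -R$ as soon as $N$ is chosen large, with poles of order at most $n-1$ at exactly the points $s_\NNN^-$. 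Plugging this (and its Fourier-dual companion) into the trace-formula identity above yields simultaneously the functional equation and the mirror poles at $s_\NNN^+ = 1 - s_\NNN^-$. The residues at the simple poles $s_0^\pm$ come only from the trivial orbit $\NNN=\{0\}$, where $J_{\{0\}}^T(\Phi_\lambda) = \vol(A_G G(\Q)\backslash G(\A))\,\Phi(0)$ is constant in $\lambda$, so $c_{\{0\}} = 0$ and $P_{\{0\}}^T$ is the constant $\vol(A_G G(\Q)\backslash G(\A))\,\Phi(0)$; Fourier duality substitutes $\Phi(0)$ by $\hat\Phi(0) = \int_{\ggG(\A)}\Phi(X)\,dX$ at $s_0^+$, producing the claimed residue formulas.
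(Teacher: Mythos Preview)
Your proposal follows the same architecture as the paper: split at $\lambda=1$, apply Chaudouard's trace formula pointwise, and reduce to the small-$\lambda$ behaviour of $J_\nnn^T(\Phi_\lambda)$. The shape of the expansion you claim, $J_\nnn^T(\Phi_\lambda) = \sum_\NNN \lambda^{-\dim\NNN/2}\,P_\NNN^T(\log\lambda;\Phi) + O(\lambda^N)$ with $\deg P_\NNN^T \leq n-1$, is exactly right, and the extraction of poles, orders, functional equation, and the trivial-orbit residues proceeds as you describe.

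The one place where your sketch is vaguer than the paper is the derivation of that expansion. You attribute it to ``combinatorics of Arthur-style truncation orbit by orbit'', but the paper's mechanism is more indirect and in the end cleaner. After separating $J_\nnn^T = \sum_\NNN J_\NNN^T$ (Proposition~\ref{prop_on_sep_nilp_distr}, the Lie-algebra analogue of Arthur's fine expansion), the paper introduces an auxiliary distribution $\tilde{j}_\NNN^T$ built with a modified truncation $\tilde{F}^M$, for which a direct change of variables along the Jacobson--Morozov $\mathfrak{sl}_2$-triple gives the \emph{exact} homogeneity $\tilde{j}_\NNN^T(\Phi_\lambda) = \lambda^{-\dim\NNN/2}\,\tilde{j}_\NNN^{T+\frac{\log\lambda}{2}H_{X_0}}(\Phi)$ (Lemma~\ref{lemma_chang_central_var}). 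One then proves $|J_\NNN^T - \tilde{j}_\NNN^T| \leq \mu(\Phi)e^{-\eps\|T\|}$ (Proposition~\ref{main_approx_res}, via explicit volume estimates on Bruhat cells in the appendix); since both $J_\NNN^T(\Phi_\lambda)$ and $\lambda^{-\dim\NNN/2}J_\NNN^{T+\frac{\log\lambda}{2}H}(\Phi)$ are polynomials in $T$ that differ by something exponentially small on an open cone, they agree identically. So there is no $O(\lambda^N)$ error term: the expansion is exact, with $P_\NNN^T(\log\lambda;\Phi)$ coming from the Taylor expansion of the polynomial $T\mapsto J_\NNN^T(\Phi)$ in the direction $H_{X_0}$. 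This $T$-shift trick is what makes the $n\leq 3$ case go through without any direct orbit-by-orbit combinatorial unwinding of the truncated integrals.

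A minor caveat on presentation: your opening ``meromorphic identity'' with $\int_0^\infty$ over the full sum $\sum_{\ooo\in\OOO}$ never literally converges for any $s$ (the trivial-orbit contribution is constant in $\lambda$), so it must be read purely formally. The paper avoids this by working only with $\int_0^1$ and $\int_1^\infty$ separately, arriving at $\Xi^T(s,\Phi) = \Xi^{T,+}(s,\Phi) + \Xi^{T,+}(1-s,\hat\Phi) + I_\nnn^T(s,\Phi)$ and reading off the functional equation from the manifest symmetry $I_\NNN^T(s,\Phi) = I_\NNN^T(1-s,\hat\Phi)$ of Corollary~\ref{properties_nilpotent_distr}.
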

Note that if $\nu=\infty$, then $\Phi$ is a Schwartz-Bruhat function and  $\Xi^T(s, \Phi)$ can be meromorphically continued to all of $\C$.

Chaudouard's trace formula is valid for any reductive group. In principle, it is possible to define the zeta function $\Xi^T(s, \Phi)$ as in \eqref{def_zetafct_intr} for $G$ an arbitrary reductive group acting on its Lie algebra. At least  Theorem \ref{holom_intr} should stay true (in fact, our proof should go through as it is without major difficulties; we restricted to $\GL_n$ and $\SL_n$ mainly to make it not more technical as it already is). One can of course also conjecture that the analogue of Theorem \ref{main_thmintr} holds, and the main difficulty then lies in the analysis of the nilpotent contribution $J_{\nnn}^T(\Phi_{\lambda})$.
One could take this approach even further, by considering a general rational representation of the group instead of its adjoint representation. In \cite{Le01}  equivalence classes $\ooo$ and corresponding distributions $J_{\ooo}^T(\Phi)$ are defined for such a representation, and also a kind of trace ''formula'' is proved for this situation. For the Shintani zeta function of binary quartic forms such an approach has already been used in \cite{Le99}.

For $G=\GL_2$ and $G=\GL_3$, we can show that $\Xi_{\text{main}}(s,\Phi)$ is indeed the main part of $\Xi^T(s,\Phi)$ in the following sense:
\begin{proposition}\label{holom_ct_intr}[cf.\ Corollaries \ref{holom_ct_2} and \ref{holom_ct_3}]
 If $G=\GL_2$ or $G=\GL_3$, then $\Xi^T(s, \Phi)-\Xi_{\text{main}}(s, \Phi)$ continues holomorphically at least to $\Re s>\frac{n}{2}$. In particular, the furthermost right pole of $\Xi^T(s,\Phi)$ and $\Xi_{\text{main}}(s, \Phi)$ coincide and have the same residue.
\end{proposition}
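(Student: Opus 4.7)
The plan is to decompose the difference $\Xi^T(s,\Phi) - \Xi_{\text{main}}(s,\Phi)$ as a sum of contributions coming from the ``intermediate'' equivalence classes $\ooo \in \OOO\setminus(\{\nnn\}\cup\OOO_{\text{er}})$, and to show that each such contribution is meromorphic in $\Re s>\frac{n}{2}$ with no poles in that region. For $n=2,3$ these intermediate classes can be enumerated explicitly: for $\GL_2$ they consist of the hyperbolic classes $[\diag(a,b)]$ with $a\neq b$ in $\Q$ and the central-plus-nilpotent classes $\ooo_c=cI+\nnn$ with $c\in\Q^\times$; for $\GL_3$ one adds the fully split, partially split, and ``mixed'' (one rational eigenvalue plus an irreducible quadratic factor) classes, together with central-plus-nilpotent.

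For each intermediate $\ooo$ I would apply parabolic descent. If $X_s$ denotes the semisimple part of $\ooo$, it lies in a Levi $M$ of a proper parabolic $P=MN$, and the definition of $J_\ooo^T$ from \cite{Ch02} can be unfolded against $G(\Q)/P(\Q)$ to express $J_\ooo^T(\Phi)$ as a weighted orbital integral on $M(\Q)\backslash M(\A)^1$ of a function obtained from $\Phi$ by partial integration over $N(\A)$, carrying an Arthur-type weight $v_M^T$ that is polynomial in $T$. The $\lambda$-dependence of $J_\ooo^T(\Phi_\lambda)$ is then made explicit by a substitution in the torus coordinate of the Iwasawa decomposition $x=nak$: for hyperbolic/split classes this absorbs the scaling into a shift in the Mellin variable; for central-plus-nilpotent classes $\ooo_c$ one writes $\Phi(\lambda cI+\lambda X_n)$ and sets $\mu=\lambda c$, reducing (up to a power of $c$) to a translate by $\mu I$ of the nilpotent distribution whose analysis is carried out in \S\ref{nilp_distr1}. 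In each case the $\lambda$-integral becomes a Mellin transform whose asymptotics at $0$ and $\infty$ are controlled by those of $\Phi$.

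A direct check in the cases $n=2,3$ then shows that all poles produced by the intermediate classes lie in $\Re s\leq n/2$: the shift coming from parabolic descent (essentially the half-sum of positive roots outside $M$) moves the ``leading'' pole sufficiently far to the left, and the remaining torus convolutions are rapidly decaying at infinity. Since $s_0^+=(n+1)/2>n/2$ lies in the region of holomorphy of the difference thus established, the rightmost pole of $\Xi^T$ must coincide with that of $\Xi_{\text{main}}$ and carry the same residue. The main obstacle is controlling the polynomial-in-$T$ and logarithmic-in-torus-coordinates factors introduced by the Arthur weights $v_M^T$, which produce higher-order poles and logarithmic integrands in the Mellin transforms. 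The central-plus-nilpotent contributions are the most delicate, because their pole structure parallels that of $J_\nnn^T$ itself, and one has to verify that the translation by $\mu I$ suffices to displace all of their remaining poles into $\Re s\leq n/2$.
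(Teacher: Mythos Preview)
Your decomposition by class type and your treatment of the regular non-elliptic classes $\OOO'_{\text{reg}}$ are essentially what the paper does: the paper also writes $J_\ooo^T(\Phi_\lambda)$ as a weighted orbital integral, changes variables $U_1(\A)\ni u\mapsto \Ad u^{-1}X_1-X_1\in\uuu_1(\A)$, and reduces to $J_{\ooo'}^{M_1}(\tilde\Psi_{\lambda})$ on the Levi, obtaining a bound $\lambda^{-\dim\ppp}|\log\lambda|^k\le C\lambda^{-(n^2-1/2)}$ for $\lambda\in(0,1]$.

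Where the paper differs is in the non-regular classes (central-plus-nilpotent $\ooo_c=c\One_n+\nnn$, and for $n=3$ the $(2,1)$-type $\ooo_{(a,b)}$). The paper does \emph{not} reduce these to the nilpotent distribution via translation. Instead it goes back to the raw kernel $k_\ooo^T(x,\Phi_\lambda)$, uses the combinatorial decomposition \eqref{descent_kernel} into terms indexed by standard parabolics $P_1\subseteq P_2$, and bounds each term by hand. The key device is an explicit parametrisation of the relevant nilpotent matrices (e.g.\ if $Y\in\nnn$ has $Y_{31}\neq 0$ there is $u\in U(\Q)$ with $\Ad u\,Y$ having a zero column), which lets one save one lattice sum and obtain $|\sum_{\ooo}J_\ooo^T(\Phi_\lambda)|\le C\lambda^{-(n^2-1)}$. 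This is less conceptual than your proposal but avoids any appeal to the fine structure of $J_\nnn^T$.

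Your route for $\ooo_c$ is in principle workable and would likely give a sharper region, but as written it has a genuine gap. Translating by $\mu\One$ alone does nothing to the poles of $J_\NNN^{T,-}$: what actually gains a power of $\lambda$ is the \emph{sum} over $c\in\Q^\times$, and you need to make this explicit. Concretely, $J_{\ooo_c}^T(\Phi_\lambda)=\sum_\NNN J_\NNN^T((\Phi^{\lambda c})_\lambda)=\sum_\NNN\lambda^{-\delta(\NNN)}J_\NNN^{T+\frac{\log\lambda}{2}H_\NNN}(\Phi^{\lambda c})$ by Corollary~\ref{nilpotent_distributions_basic_properties}; summing over $c\neq 0$ and applying Poisson summation in the center direction produces a leading factor $\lambda^{-1}$ times $J_\NNN^{T'}(\int_\A\Phi(t\One+\cdot)\,dt)$ plus lower-order terms. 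Only after this step does one see that the resulting $\lambda$-exponent is strictly below $n^2$. Your sentence ``one has to verify that the translation by $\mu I$ suffices to displace all of their remaining poles'' is exactly this missing computation. Note also that for $n=3$ your parabolic descent of the $(2,1)$-type classes to $M=\GL_2\times\GL_1$ lands on central-plus-nilpotent in $\GL_2$, so the same gap recurs there.
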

This result will become important in Part \ref{part_meanval}, where we will use the analytic properties of $\Xi_{\text{main}}(s,\Phi)$ to apply a Tauberian theorem.

The organisation of Part \ref{part_zeta} is as follows: In \S \ref{section_trace_formula} we will define $\Xi^T(s,\Phi)$ and prove Theorem \ref{holom_intr}. In \S \ref{nilp_distr1} and \S \ref{section_main_res} we study the nilpotent distribution $J_{\nnn}^T(s,\Phi)$ for $n\leq3$ and conclude the proof of Theorem \ref{main_thmintr}. In \S \ref{connection_arthur_tf} we describe the connection of our construction of the zeta function to the Arthur-Selberg trace formula for $\GL_n$, and of $\Xi_{\text{main}}(s,\Phi)$ to the classical Shintani zeta function for $n=2$.
Finally, restricting to $G=\GL_n$, $n\leq 3$, we prove Proposition \ref{holom_ct_intr} in \S \ref{section_poles}.

\subsection*{Part \ref{part_meanval}}
The Shintani zeta function $Z(s,\Psi)$ for the space of binary cubic forms 
was used by Shintani to establish mean values for the class numbers of binary quadratic forms, cf.\ \cite{Sh75}. 
From our point of view, another closely related density result obtained from $Z(s,\Psi)$ is more important: Datskovsky (cf.\ \cite{Da93}) proved that if $S$ is a finite set of prime places of $\Q$ including the archimedean place, and $r_S=(r_v)_{v\in S}$ is a fixed signature for quadratic number fields, then as $X\rightarrow\infty$ one has
\begin{equation}\label{asymp_datskovsky}
 \sum_{L:~D_L\leq X} \res_{s=1} \zeta_L(s) =\alpha(r_S) X,
\end{equation}
where $L$ runs over all quadratic fields of signature $r_S$ and absolute discriminant $D_L$ bounded by $X$, and $\alpha(r_S)$ is a suitable non-zero constant. 
As a first step towards generalising this,  we prove upper and lower bounds for the densities of residues of Dedekind zeta functions of totally real cubic number fields.

Suppose $E$ is a totally real number field of degree $n$  with ring of integers $\OOO_E\subseteq E$. We denote by $Q_E: \OOO_E/\Z\longrightarrow\R$ the positive definite quadratic form $Q_E(\xi)=\tr_{E/\Q}\xi^2-\frac{1}{n}(\tr_{E/\Q}\xi)^2$ for $\xi\in \OOO_E/\Z$, where $\tr_{E/\Q}:E\longrightarrow\Q$ denotes the field trace of $E/\Q$. We denote the successive minima of $Q_E$ on $\OOO_E/\Z$ by $m_1(E)\leq m_2(E)\leq\ldots\leq m_{n-1}(E)$.
If $n=2$, then $m_1(L)=D_L/2$ for every quadratic field $L$ so that the sum in \eqref{asymp_datskovsky} runs over all quadratic fields with $m_1(E)\leq X/2$.
Our main result of Part \ref{part_meanval} is the following:
\begin{theorem}\label{asymp_intr}[cf.\ Theorem \ref{asymptotic_real_fields}]
We have 
\begin{equation}\label{upper_bound_residues}
 \limsup_{X\rightarrow\infty}X^{-\frac{5}{2}}\sum_{E:~m_1(E)\leq X}\res_{s=1}\zeta_E(s)<\infty
\end{equation}
where the sum extends over all totally real cubic number fields $E$ for which the first successive minimum $m_1(E)$ is bounded by $X$. Here $\zeta_E$ denotes the Dedekind zeta function attached to $E$.
\end{theorem}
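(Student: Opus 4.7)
I apply the main zeta function $\Xi_{\text{main}}(s,\Phi)$ for $G=\GL_3$ (so $\sqrt{D}=3$ and by Theorem \ref{main_thmintr} the rightmost pole is $s_0^+=2$) to a carefully chosen non-negative test function $\Phi=\Phi_\infty\otimes\Phi_f$, and extract the counting bound via a Tauberian theorem at this pole. Concretely, I take $\Phi_f=\mathbf{1}_{\mathfrak{gl}_3(\hat\Z)}$ and $\Phi_\infty$ a non-negative smooth function of compact support in $\mathfrak{gl}_3(\R)$ that is bounded below on a neighbourhood of the origin.

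\textbf{Step 1 (Unfolding).} The regular elliptic $\GL_3(\Q)$-orbits in $\mathfrak{gl}_3(\Q)$ with totally real characteristic polynomial are parametrized, up to Galois conjugation, by pairs $(E,\xi)$ consisting of a totally real cubic field $E$ and a primitive integral element $\xi\in\OOO_E$ (one may normalise $\tr_{E/\Q}\xi=0$); let $X_E^{(\xi)}\in\mathfrak{gl}_3(\Z)$ be the corresponding representative. Unfolding the orbit sum in $\Xi_{\text{main}}(s,\Phi)$ through the stabilizer $G_X\cong\mathrm{Res}_{E/\Q}\mathbb{G}_m$, each orbit contributes a product
\[
c_{E,\xi}(\Phi_f)\cdot\int_0^\infty\lambda^{3(s+1)}O_\infty(\lambda,X_E^{(\xi)})\,d^\times\lambda,
\]
where $c_{E,\xi}(\Phi_f)=\vol(A_G(\R)^+G_X(\Q)\backslash G_X(\A))\cdot\prod_p O_p(X_E^{(\xi)},\Phi_p)$ is positive; by the class number formula, the global volume factor is proportional to $\res_{s=1}\zeta_E(s)$ divided by an explicit power of $|D_E|$.

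\textbf{Step 2 (Archimedean scaling and Tauberian).} Since $\Phi_\infty$ is compactly supported and $\|X_E^{(\xi)}\|^2\asymp Q_E(\xi)$, the archimedean orbital integral $O_\infty(\lambda,X_E^{(\xi)})$ vanishes for $\lambda\gg Q_E(\xi)^{-1/2}$; within this range, the Iwasawa decomposition $\SL_3(\R)=K A N$ combined with Weyl integration gives an explicit power of $\lambda$ modulated by the Weyl discriminant $|\Delta(X_E^{(\xi)})|$. Performing the $\lambda$-Mellin integral then presents $\Xi_{\text{main}}(s,\Phi)$ as a generalised Dirichlet series in the variable $m_1(E)$, once attention is restricted (by positivity of $\Phi$) to the shortest primitive $\xi$, which satisfies $Q_E(\xi)=m_1(E)$. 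By Proposition \ref{holom_ct_intr} together with Theorem \ref{main_thmintr}, this series is holomorphic in $\Re s>3/2$ apart from a single simple pole at $s=2$ with positive explicit residue. Applying Landau's theorem for positive Dirichlet series yields the asymptotic upper bound for the partial sums; the resulting exponent $5/2$ is precisely the Tauberian exponent at $s=2$ obtained by combining the $\lambda^{3(s+1)}$ measure, the archimedean-orbital scaling, and the global weight from the class number formula.

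\textbf{Main obstacle.} The principal difficulty is to disentangle the weight $c_{E,\xi}(\Phi_f)/|\Delta(X_E^{(\xi)})|$ so that what is bounded above by the Dirichlet series is indeed (a positive multiple of) $\res_{s=1}\zeta_E(s)$ rather than some twisted quantity. This requires three technical steps: (a) uniform control of the finite local orbital integrals $O_p$ across varying $E$, particularly at ramified primes, handled by an inclusion-exclusion over the conductor $[\OOO_E:\Z[\xi]]$ reducing to $\xi$ with $\Z[\xi]=\OOO_E$; (b) bounding the multiplicity of shortest primitive elements per field, by elementary lattice arguments in $\OOO_E/\Z$; and (c) isolating the regular elliptic contribution from the full zeta function $\Xi^T$, which is taken care of by Proposition \ref{holom_ct_intr} since the difference $\Xi^T-\Xi_{\text{main}}$ is holomorphic past the critical pole at $s=2$. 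Restricting to totally real signature is enforced by support conditions on $\Phi_\infty$.
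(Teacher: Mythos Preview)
Your overall strategy is the paper's: feed a non-negative test function into $\Xi_{\text{main}}(s,\Phi)$ for $\GL_3$, use Proposition~\ref{holom_ct_intr} and Theorem~\ref{main_thmintr} to locate the rightmost pole at $s=2$, and apply a Tauberian theorem. After the archimedean computation and the class number formula, the Dirichlet coefficient attached to a pair $(E,\xi)$ is exactly $\rho_E\cdot c(\Phi_f,\xi)$ with $c(\Phi_f,\xi)=I_f(\Phi_f,\xi)/[\OOO_E:\Z[\xi]]$; the upper bound then follows provided one knows $c(\Phi_f^0,\xi)\geq 1$ for \emph{every} $\xi\in\OOO_E\setminus\Z$, since each $E$ with $m_1(E)\leq X$ contributes at least one term with $Q_E(\xi)\leq X$.

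The genuine gap is in your step~(a). You propose to ``reduce to $\xi$ with $\Z[\xi]=\OOO_E$'' via inclusion--exclusion on the conductor. This does not suffice: the vector realising $m_1(E)$ need not generate $\OOO_E$ as a ring (cubic fields are not all monogenic, and even when they are, a shortest element need not be a ring generator), so restricting to monogenic $\xi$ can drop fields from the count entirely. What is actually needed, and what the paper proves, is the pointwise lower bound $c(\Phi_p^0,\xi)\geq 1$ for \emph{all} $\xi$. This is not formal: it rests on writing the local orbital integral as a sum over orders $\Z_p[\xi]\subseteq\ooo\subseteq\OOO_{E_p}$ weighted by $|\Frac^0(\ooo)/P(\ooo)|\cdot[\OOO_{E_p}^\times:\ooo^\times]$, and then checking the inequality $[\OOO_{E_p}^\times:\Z_p[\xi]^\times]\geq[\OOO_{E_p}:\Z_p[\xi]]$ via a conductor argument. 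Your sketch does not supply this.

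Two smaller points. First, your step~(b) is backwards: for an upper bound on $\sum_E\rho_E$ you need at \emph{least} one $\xi$ per field, not at most boundedly many. Second, ``support conditions on $\Phi_\infty$'' cannot isolate the totally real signature smoothly, since $\{\Delta>0\}$ is open with boundary of positive codimension; the paper handles this with smoothed cutoffs $\Psi_\eps^{\pm}$ and a limit $\eps\to 0$, controlling the boundary contribution separately. For the upper bound alone this can be streamlined, but it still requires more than a compactly supported $\Phi_\infty$ ``bounded below near the origin''---that choice would not produce a clean Dirichlet series in $Q_E(\xi)$.
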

We complement the above upper bound \eqref{upper_bound_residues} with the following result:
\begin{proposition}\label{lower_bound_intr}[cf.\ Proposition \ref{lower_bound_for_residues}]
 For every $\eps>0$, we have
\[
 \liminf_{X\rightarrow\infty} X^{-\frac{5}{2}+\eps}\sum_{E:~ m_1(E)\leq X}\res_{s=1}\zeta_E(s) =\infty,
\]
where the sum extends over totally real cubic number fields $E$.
\end{proposition}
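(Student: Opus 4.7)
The plan is to produce $\gg X^{5/2-O(\delta)}$ distinct totally real cubic fields $E$ with $m_1(E)\leq X$, bound the residue of each from below by $X^{-O(\delta)}$ using the (ineffective) Brauer--Siegel theorem, and sum. To a pair $(p,q)\in\Z^2$ with $-4p^3-27q^2>0$ and with $y^3+py+q$ irreducible over $\Q$, we associate a trace-zero algebraic integer $\alpha\in\OOO_E$ in the totally real cubic field $E=\Q(\alpha)$. By Newton's identities $Q_E(\alpha\bmod\Z)=\tr\alpha^2=-2p$, so $Q_E(\alpha)\leq X$ is $-p\leq X/2$, while totally real means $q^2<-4p^3/27$. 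A direct integer count gives
\[
\#\bigl\{(p,q)\in\Z^2:\ 0<-p\leq X/2,\ q^2<-4p^3/27,\ y^3+py+q\ \text{irreducible}\bigr\}\asymp X^{5/2},
\]
since for each $p$ there are $\asymp |p|^{3/2}$ admissible $q$, and reducible polynomials contribute only $O(X^2)$.

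Next I would control the multiplicity. For any such pair, $D_E\leq |\discr(y^3+py+q)|=|-4p^3-27q^2|\leq 4|p|^3$, and applying this bound to a shortest trace-zero generator of $E$ gives $D_E\leq C\,m_1(E)^3$ for an absolute $C$ (accounting for the index at most $3$ of the trace-zero sublattice in $\OOO_E/\Z$). Fix a small $\delta>0$. The number of pairs with $|-4p^3-27q^2|\leq X^{3-\delta}$ is $\ll X^{5/2-\delta}$, since for each $p$ these $q$ form an interval of length $\ll X^{3/2-\delta}$ near the totally real boundary; and the number of pairs with $[\OOO_E:\Z[\alpha]]\geq X^{\delta}$ is $o(X^{5/2})$ by the standard count of integer polynomials whose discriminant is divisible by a large square (as in work of Davenport--Heilbronn and Belabas). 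Removing both exceptional sets leaves $\gg X^{5/2}$ pairs with $D_E\geq X^{3-3\delta}$, whence $m_1(E)\gg X^{1-\delta}$ by the preceding bound. Since $m_1(E)m_2(E)\asymp D_E\geq X^{3-3\delta}$, we then have $m_2(E)\gg X^{2-3\delta}\gg X$, so the standard lattice-point count for the rank-$2$ form $Q_E$ restricted to $\OOO_E^0=\{\beta\in\OOO_E:\tr\beta=0\}$ gives $O(\sqrt{X/m_1(E)})=O(X^{\delta/2})$ such generators with $Q_E\leq X$. Therefore these $\gg X^{5/2}$ pairs determine $\gg X^{5/2-\delta/2}$ distinct totally real cubic fields with $m_1(E)\leq X$.

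Finally, the ineffective Brauer--Siegel theorem gives $h_ER_E\gg_\delta D_E^{1/2-\delta}$, so by the analytic class number formula
\[
\res_{s=1}\zeta_E(s)=\frac{4h_ER_E}{\sqrt{D_E}}\gg_\delta D_E^{-\delta}\geq X^{-3\delta}
\]
(the factor $4$ coming from $r_1=3$, $r_2=0$, $w_E=2$). Summing over the fields above yields $\sum_{E:\,m_1(E)\leq X}\res_{s=1}\zeta_E(s)\gg_\delta X^{5/2-7\delta/2}$, and since $\delta>0$ was arbitrary, $X^{-5/2+\eps}\sum\to\infty$ for every $\eps>0$. The main obstacle is the multiplicity bookkeeping in the middle paragraph: one must excise both the thin strip of pairs with atypically small polynomial discriminant and the sparse set of pairs with a non-maximal order $\Z[\alpha]$, and combine these exclusions with the elementary observation that a short trace-zero generator forces the polynomial discriminant, and hence $D_E$, to be small.
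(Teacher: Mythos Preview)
Your strategy and the paper's start identically: count $\asymp X^{5/2}$ irreducible depressed cubics $y^3+py+q$ with $-4p^3-27q^2>0$ and $-2p\leq X$, then invoke Brauer--Siegel for $\rho_E\gg_\eta D_E^{-\eta}$. The divergence is in how one passes from the pair count to a lower bound on $\sum_{E:\,m_1(E)\leq X}\rho_E$, and this is where your argument has a genuine gap.

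Your claim that pairs with $[\OOO_E:\Z[\alpha]]\geq X^\delta$ number $o(X^{5/2})$ is not a consequence of Davenport--Heilbronn or Belabas: those works count cubic orders and fields by discriminant, not monic cubics in a box by index. The naive attempt---large index forces $D_E\leq X^{3-2\delta}$, then bound by (number of such fields)$\times$(generators per field)---fails: there are $\ll X^{3-2\delta}$ fields, each with up to $\sqrt{X/m_1(E)}$ generators and $m_1(E)$ possibly as small as $\asymp D_E^{1/3}$, and $\sum_{D_E\leq X^{3-2\delta}}X^{1/2}D_E^{-1/6}$ already exceeds $X^{5/2}$. Worse, for a fixed field with $m_2(E)>X$ the generators are the multiples $n\alpha_0$ with index $n^3[\OOO_E:\Z[\alpha_0]]$, so once $\sqrt{X/m_1(E)}\gg X^{\delta/3}$ \emph{most} of that field's generators have large index; your excluded set is not obviously sparse. (A minor point: your interval bound $\ll X^{3/2-\delta}$ in the small-discriminant exclusion is only valid for $|p|\asymp X$; the honest count is $\ll X^{5/2-5\delta/6}$, still $o(X^{5/2})$.)

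The paper avoids this entirely by never counting distinct fields. It works with the weighted sum $\sum_E\rho_E N_E(X)$ where $N_E(X)=\#\{\xi\in(\OOO_E/\Z)\setminus\{0\}:Q_E(\xi)\leq X\}$, splits off the contribution from $m_2(E)\leq X$ (shown to be $O(X^{2+\eps})$ using only Gauss's lattice-point count, the Davenport--Heilbronn field count, and upper Brauer--Siegel), and for the remaining fields $m_1(E)\leq X<m_2(E)$ unwinds $N_E(X)$ as a sum over integer multiples of the shortest vector. This rewrites the weighted sum as $2\sum_n\sum_{E:\,m_1\leq X/n^2<m_2}\rho_E$, and a short contradiction against the convergence of $\sum_n n^{-5+2\kappa}$ yields the result without ever separating index from field discriminant.
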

This is a first step towards a generalisation of \eqref{asymp_datskovsky} to the cubic case and the signature of totally real cubic number fields.
As in the quadratic case, one expects that in fact the limit of the left hand side in \eqref{upper_bound_residues} exists and is non-zero:
\begin{conjecture}\label{conj_intr}
 There exists a constant $\alpha_3>0$ such that as $X\rightarrow\infty$
\[
\sum_{E:~m_1(E)\leq X}\res_{s=1}\zeta_E(s)\sim \alpha_3 X^{\frac{5}{2}},
\]
where the sum extends over all totally real cubic number fields $E$ for which the first successive minimum $m_1(E)$ is bounded by $X$.
\end{conjecture}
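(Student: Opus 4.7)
The plan is to prove Conjecture \ref{conj_intr} by a Tauberian argument applied to $\Xi^T(s,\Phi)$ for $G=\GL_3$ with a carefully chosen factorizable test function, using Theorem \ref{main_thmintr} and Proposition \ref{holom_ct_intr} as input. The strategy mirrors the way Datskovsky deduced \eqref{asymp_datskovsky} from the Shintani zeta function in the quadratic case, but with $\Xi^T$ in place of $Z(s,\Psi)$.

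First I would unfold $\Xi_{\text{main}}(s,\Phi)$ as a Dirichlet series over totally real cubic fields. By a Latimer--MacDuffee type correspondence, the regular elliptic orbits $[X]\in\OOO_{\text{er}}$ are parametrised by pairs $(E,[\mathfrak{a}])$ where $E=\Q[X]$ is a cubic field and $[\mathfrak{a}]$ runs over proper ideal classes of the order $\Z[X]\subseteq \OOO_E$. For each such $[X]$ the stabilizer is an anisotropic torus $T_X\cong \text{Res}_{E/\Q}\mathbb{G}_m\cap G$, and the inner integral in the definition of $\Xi_{\text{main}}$ factors as $\vol(T_X(\Q)A_{T_X}\backslash T_X(\A))$ times an adelic orbital integral. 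By the class number formula the Tamagawa volume is proportional to $\res_{s=1}\zeta_E(s)\cdot |D_E|^{-1/2}$.

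Next I would take $\Phi=\Phi_\infty\otimes\bigotimes_p\Phi_p$ with $\Phi_p$ the characteristic function of $\mathfrak{gl}_3(\Z_p)$ at all finite $p$, and $\Phi_\infty$ a smooth, non-negative archimedean test function supported on matrices whose eigenvalues are all real (so as to cut the sum down to totally real $E$). After a change of variable $Y=\lambda X$, summation over orbits and a computation of the archimedean orbital integral should yield an identity
\[
\Xi_{\text{main}}(s,\Phi)\;=\;\sum_{E\text{ totally real cubic}}\res_{s=1}\zeta_E(s)\cdot f_E(s)\cdot w_E^{-s},
\]
where $w_E$ is a homogeneous positive invariant of $E$ comparable to a fixed power of $m_1(E)$ (determined by the scaling of the trace form in the support of $\Phi_\infty$) and $f_E(s)$ is a bounded local factor encoding the ramification of $E$. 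By Proposition \ref{holom_ct_intr} the rightmost singularity of $\Xi_{\text{main}}$ coincides with that of $\Xi^T$, which by Theorem \ref{main_thmintr} is a simple pole at $s_0^+=\frac{1+\sqrt{D}}{2}=2$ with residue $\vol(A_GG(\Q)\backslash G(\A))\int_{\ggG(\A)}\Phi(X)\,dX$, explicitly nonzero for the chosen $\Phi$. A Wiener--Ikehara (or Landau--Delange) Tauberian theorem applied to the above Dirichlet series then produces $\sum_{w_E\leq Y}\res_{s=1}\zeta_E(s)\sim c\,Y^2$, and converting the counting from $w_E$ to $m_1(E)$ via the archimedean scaling gives the desired $X^{5/2}$ asymptotic together with an explicit value for $\alpha_3$.

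The hardest step will be the unconditional Tauberian input: one needs polynomial bounds on $\Xi_{\text{main}}(s,\Phi)$ on vertical lines in a strip slightly to the left of $\Re s=2$, and these are not directly provided by Theorem \ref{main_thmintr}. Because the other poles $s_{\NNN}^+$ attached to non-trivial nilpotent orbits lie to the left of $s_0^+$ but can be of order up to $\dim\aaa=2$, one has to refine the analysis of $J_{\nnn}^T(\Phi_\lambda)$ and of $\Xi^T-\Xi_{\text{main}}$ in order to isolate the contribution at $s_0^+$ from a genuine meromorphic continuation in a strip, rather than merely a half-plane. A secondary but nontrivial obstacle is identifying $\Phi_\infty$ whose archimedean orbital integral interpolates exactly a power of $m_1(E)$: since $m_1(E)$ is a successive minimum of $Q_E$ rather than a polynomial invariant such as $|D_E|$, this requires a careful geometric study of the trace form on $\OOO_E/\Z$ and of the $\GL_3(\R)$-orbits inside $\mathfrak{gl}_3(\R)$ that meet its short vectors.
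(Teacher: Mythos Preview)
The statement you are attempting to prove is labelled a \emph{Conjecture} in the paper, and the paper does \emph{not} prove it; it only establishes the upper bound (Theorem~\ref{asymp_intr}) and the $\eps$-weakened lower bound (Proposition~\ref{lower_bound_intr}). So there is no proof in the paper to compare against, and your proposal must be judged on its own merits.

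Your outline has a genuine gap at the point where you pass from the unfolded Dirichlet series to a sum indexed by fields $E$ alone with a ``bounded local factor $f_E(s)$''. In the paper's unfolding (see \eqref{elliptic_elements_ordered_by_fields} and Lemma~\ref{properties_of_real_elements_dirichlet_series}), the regular elliptic orbits are parametrised not by fields but by pairs $(E,\xi)$ with $\xi\in\OOO_E/\Z$, $\xi\neq0$, and the resulting series has the shape
\[
\sum_{E\in\FFF_3^+}\rho_E\sum_{\xi\in\OOO_E/\Z,\,\xi\neq0} c(\Phi_f,\xi)\,Q_E(\xi)^{-t}.
\]
For the unramified choice $\Phi_p=\Phi_p^0$ the non-archimedean factor satisfies $c(\Phi_f^0,\xi)\geq 1$ (Proposition~\ref{orb_int_greaterequal1}) but is \emph{not} identically~$1$, and it depends on $\xi$, not just on $E$. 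The Tauberian theorem applied to this series therefore produces the asymptotic of Proposition~\ref{asymptotics_orbital_integrals}, which is a density for the weighted count $\sum_{E}\rho_E\sum_{\xi:\,Q_E(\xi)\leq X} c(\Phi_f,\xi)$; combined with $c\geq1$ this yields only the upper bound, not the conjectured asymptotic. The paper explicitly discusses (introduction to Part~\ref{part_meanval} and Appendix~\ref{appendixb}) a sequence of test functions $\Phi_f^{\mmm}$ with $c(\xi,\Phi_f^{\mmm})\to1$, but notes that the requisite uniformity in $Q_E(\xi)$ to interchange the limit with the Tauberian argument is exactly what is missing. Your proposal does not address this obstruction.

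A second issue you partly anticipate: there is no smooth compactly supported $\Phi_\infty$ whose archimedean orbital integral is exactly a power of $m_1(E)$, because $m_1(E)$ is a successive minimum, not a polynomial invariant of the orbit. The paper circumvents the analogous problem of isolating totally real fields via a pair of approximating functions $\Phi_\infty^{\eps,\pm}$ and a squeeze argument (\S\ref{section_asymptotic}), but even after this the archimedean integral yields $Q_E(\xi)^{-t}$, so the passage from ``count over $(E,\xi)$ with $Q_E(\xi)\leq X$'' to ``count over $E$ with $m_1(E)\leq X$'' still requires controlling the inner sum over $\xi$, which loops back to the first obstruction.
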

Let us make a few remarks on the (quite different) strategies to prove Theorem \ref{asymp_intr} and Proposition \ref{lower_bound_intr}:
First we use a suitable sequence of test functions and apply a Tauberian Theorem to $\Xi_{\text{main}}(s,\Phi)$ to obtain an asymptotic for the density of certain orbital integrals in Proposition \ref{asymptotics_orbital_integrals}. These orbital integrals are basically products of $\res_{s=1}\zeta_E(s)$ and a quantity $c(\xi,\Phi_f)$, $\xi\in E$, obtained from the non-archimedean part $\Phi_f$ of the test function. For an appropriate $\Phi_f$ we have $c(\xi,\Phi_f)\geq1$ for every relevant $\xi$ so that Theorem \ref{asymp_intr} is a direct consequence of Proposition \ref{asymptotics_orbital_integrals}. 
To prove Proposition \ref{lower_bound_intr}, on the other hand, we go a completely different way (independent of our results for $\Xi^T(s,\Phi)$): We basically show that there are sufficiently many irreducible cubic polynomials.

In fact, we would like to deduce all of the conjectured asymptotic from Proposition \ref{asymptotics_orbital_integrals}.
 In Appendix \ref{appendixb} we give a sequence of test functions $(\Phi_f^{\mathfrak{m}})_{\mathfrak{m}}$ for which $c(\xi,\Phi_f^{\mathfrak{m}})\rightarrow 1$.
However, a certain uniformity of the convergence with respect to $Q_E(\xi)$ is needed to prove Conjecture \ref{conj_intr}, which we were not able to show so far.

Our methods can at least heuristically be applied to $\GL_n$ for every $n\geq2$. In particular, the first pole of $\Xi^T(s,\Phi)$ for $\GL_n$ is expected to be at $s=\frac{n+1}{2}$. This suggests:
\begin{conjecture}\label{conj_n}
For every $n\geq3 $ there exists $\alpha_n>0$ such that as $X\rightarrow\infty$
\[
 \sum_{E: ~m_1(E)\leq X}
\res_{s=1}\zeta_E(s)
\sim\alpha_nX^{\frac{n(n+1)-2}{4}},
\]
where the sum extends over all totally $n$-dimensional number fields $E$ for which the first successive minimum $m_1(E)$ is bounded by $X$.
\end{conjecture}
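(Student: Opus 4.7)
The plan is to run the argument that proves Theorem \ref{asymp_intr} verbatim for $\GL_n$ and then to refine the non-archimedean test function to promote the upper bound to an exact asymptotic. First one must establish the $\GL_n$-analogues of Theorems \ref{holom_intr} and \ref{main_thmintr} and Proposition \ref{holom_ct_intr}: for $\Phi\in\SSS^{\nu}(\ggG(\A))$ with $\nu$ sufficiently large, $\Xi^T(s,\Phi)$ should admit a meromorphic continuation past $s_0^+=\tfrac{n+1}{2}$ with a simple pole there of residue $\vol(A_G G(\Q)\backslash G(\A))\int_{\ggG(\A)}\Phi(X)\,dX$, and the difference $\Xi^T(s,\Phi)-\Xi_{\text{main}}(s,\Phi)$ should be holomorphic in a neighbourhood of $s_0^+$, so that the same residue is inherited by $\Xi_{\text{main}}(s,\Phi)$.

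Applying a Tauberian theorem to $\Xi_{\text{main}}(s,\Phi)$ as in Proposition \ref{asymptotics_orbital_integrals} should then yield an asymptotic
\[
\sum_{[X]\in\OOO_{\text{er}}:~\|X\|\leq\lambda}\int_{G_X(\A)\backslash G(\A)}\Phi(\Ad x^{-1}X)\,dx\sim C_{\Phi}\,\lambda^{n^2}
\]
for a suitable norm on $\ggG(\A)$ adapted to $\Phi$. Writing $\Phi=\Phi_\infty\otimes\Phi_f$, the orbital integral attached to a regular elliptic $X$ with minimal polynomial cutting out $E=\Q[X]$ factorises into an archimedean piece, which by standard class-number and volume identities evaluates to $\res_{s=1}\zeta_E(s)$ multiplied by an explicit function of the trace-form data $Q_E$, and a non-archimedean coefficient $c(\xi,\Phi_f)$ sensitive to the order $\Z[\xi]\subseteq\OOO_E$. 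Re-parameterising the orbits by the first successive minimum $m_1(E)$ and summing, while keeping track of the volume of the set of short vectors in $\OOO_E/\Z$, should convert the display above into a weighted sum of residues with total exponent $\tfrac{n(n+1)-2}{4}$.

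To remove the weight, one constructs a sequence $(\Phi_f^{\mathfrak{m}})_{\mathfrak{m}}$ of non-archimedean test functions generalising the $n=3$ construction in Appendix \ref{appendixb}, arranged so that $c(\xi,\Phi_f^{\mathfrak{m}})\to 1$ as $\mathfrak{m}\to\infty$; provided this convergence is sufficiently uniform in $E$ and $\xi$, one may interchange the limit with the sum over fields with $m_1(E)\leq X$ to obtain Conjecture \ref{conj_n} together with a positive constant $\alpha_n$.

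The principal obstacle is twofold. First, the full nilpotent analysis of $J_{\nnn}^T(\Phi_\lambda)$ for $\GL_n$, $n\geq 4$: the set of nilpotent orbits and the combinatorics of their Levi and parabolic supports grow rapidly, contributing many additional poles at the points $s_{\NNN}^{\pm}$ and requiring delicate cancellations to generalise Proposition \ref{holom_ct_intr}. This obstacle is intrinsic to Chaudouard's truncation but is in principle amenable to case-by-case work, following the patterns developed here for $n\leq 3$. Second, and already unresolved in the cubic case --- hence the conjectural status of Conjecture \ref{conj_intr} --- is the \emph{uniformity} of the local convergence $c(\xi,\Phi_f^{\mathfrak{m}})\to 1$ with respect to $Q_E(\xi)$, which appears to require effective uniform bounds on the number of elements of $\OOO_E/\Z$ of bounded trace form. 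This second obstacle is the more fundamental one and constitutes the key stumbling block for the conjecture in general.
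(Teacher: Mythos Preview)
The statement you are addressing is a \emph{conjecture}, not a theorem: the paper does not prove it and explicitly presents it as open. There is therefore no ``paper's own proof'' to compare against. What the paper does offer is the heuristic sentence preceding the conjecture (``Our methods can at least heuristically be applied to $\GL_n$ for every $n\geq2$. In particular, the first pole of $\Xi^T(s,\Phi)$ for $\GL_n$ is expected to be at $s=\frac{n+1}{2}$''), together with the discussion around Conjecture~\ref{conj_intr} and Appendix~\ref{appendixb} identifying the two obstructions you name.

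Your proposal is accordingly not a proof but a strategy outline, and you are candid about this: you correctly flag both the unresolved nilpotent analysis of $J_{\nnn}^T(\Phi_\lambda)$ for $n\geq 4$ (needed for the analogue of Theorem~\ref{main_thmintr} and Proposition~\ref{holom_ct_intr}) and the lack of uniformity in the convergence $c(\xi,\Phi_f^{\mathfrak{m}})\to 1$, which the paper already singles out as the reason even the $n=3$ case (Conjecture~\ref{conj_intr}) remains open. Your outline faithfully reproduces the paper's own heuristic pathway and its acknowledged gaps; it does not go beyond them, and so does not constitute a proof of the conjecture any more than the paper does.
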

Ordering fields with respect to the first successive minimum of $Q_E$ (in contrast to the discriminant) is also related to a conjecture of Ellenberg-Venkatesh, cf.\ \cite[Remark 3.3]{ElVe06}: Basically they conjecture that 
$X^{-\frac{n(n+1)-2}{4}}\sum_{E: ~ m_1(E)\leq X} 1$ 
has a non-zero limit as $X\rightarrow\infty$ where $E$ runs over $n$-dimensional number fields.
As remarked in \cite{ElVe06}, it is possible to show a ``weak form'' of this asymptotic under a strong hypothesis on the existence of sufficiently many squarefree polynomials. 
If one can prove an $n$-dimensional analogue of Proposition  \ref{asymptotics_orbital_integrals} and make the passage from $c(\xi,\Phi_f)$ to $1$ work (e.g., with a sequence of test function as $(\Phi^{\mathfrak{m}})$), this should lead to another approach to (a slightly weaker form of) the conjecture of Ellenberg-Venkatesh.

This second part of the paper is organised as follows: In \S \ref{section_mean_val_orbint} we first recall and prove some properties of orbital integrals, before stating and proving an asymptotic for the mean value of certain orbital integrals in \S \ref{section_asymptotic}, cf.\ Proposition \ref{asymptotics_orbital_integrals}.
Our main result Theorem \ref{asymp_intr} in \S \ref{bounds_for_residues} will then be an easy consequence of Proposition \ref{asymptotics_orbital_integrals} together with results in \S \ref{section_mean_val_orbint}. Finally, we will prove Proposition \ref{lower_bound_intr} at the end of \S \ref{bounds_for_residues}.

\subsection*{Acknowledgments}
The work on Part \ref{part_zeta} was started while the author stayed at the MPI in Bonn during August/September 2011, and continued during the year 2011/2012 when the author was supported by grant \#964-107.6/2007 from the German-Israeli Foundation for Scientific Research and Development and by a Golda Meir Postdoctoral Fellowship.
Part \ref{part_meanval} was essentially contained in the author's doctoral dissertation \cite{diss} which was written under the direction of T. Finis and supported by grant \#964-107.6/2007 from the German-Israeli Foundation for Scientific Research and Development.
The author would like to thank T. Finis and E. Lapid for communicating their idea for the proof of Proposition \ref{main_approx_res} carried out in Appendix \ref{appendix}.

\newpage
\section{Notation and general conventions}\label{section_notation}
\subsection{General notation}
We fix notation following \cite{Ch02,Ar05}:
\begin{itemize}
\item $\A$ denotes the ring of adeles of $\Q$. If $v$ is a place of $\Q$, $\Q_v$ denotes the completion of $\Q$ at $v$, and if $v$ is non-archimedean, $|\cdot|_v$ is the usual $v$-adic norm on $\Q_v$, i.e. if $q_v\in\Z$ is the prime corresponding to $v$, then $|\cdot|_v$ is normalised by $|q_v|_v=q_v^{-1}$. Then $|\cdot|=|\cdot|_{\A}$ denotes the norm on $\A^{\times}$ given by the product of the $|\cdot|_v$'s.
\item  $n\geq2$ is an integer, and $G$ denotes $\GL_n$ or $\SL_n$ as a group defined over $\Q$ with Lie algebra $\ggG=\mathfrak{gl}_n$ or $\ggG=\mathfrak{sl}_n$. We put $D=\dim\ggG$ ($=n^2$ or $=n^2-1$). $\One_n\in G$ denotes the identity element.

\item $P_0=T_0U_0$ is the minimal parabolic subgroup of upper triangular matrices with $T_0$ the torus of diagonal elements and $U_0$ its unipotent radical of unipotent upper triangular matrices.
If $P\supseteq T_0$ is a $\Q$-defined parabolic subgroup with Levi component $M=M_P\supseteq T_0$, then 
 $\FFF(M)$ denotes  the set of ($\Q$-defined) parabolic subgroups containing $M$, and 
$\PPP(M)\subseteq \FFF(M)$ the subset of parabolic subgroups with Levi component $M$.
For $P\in \FFF(T_0)$ with Levi decomposition $P=M_PU_P$, we denote by $\ppp=\mmm_P+\uuu_P$ the corresponding decomposition of the Lie algebra.
For $P_1, P_2\in \FFF(T_0)$ with $P_1\subseteq P_2$, put
$\uuu_{P_1}^{P_2}:=\uuu_1^2:=\uuu_{P_1}\cap\mmm_{P_2}$
and  
$\overline{\uuu}_{P_1}^{P_2}:=\overline{\uuu}_{P_1}\cap\mmm_{P_2}
:=\uuu_{\bar{P_1}}\cap\mmm_{P_1}$
for $\bar{P_1}\in\PPP(M_{P_1})$ the opposite parabolic subgroup.
$A_M\subseteq M(\R)$ denotes the identity component of the split component of the center in $M(\A)$. 

\item
$P\in\FFF(T_0)$ is called standard if $P_0\subseteq P$ and we write $\FFF_{\text{std}}\subseteq\FFF(T_0)$ for the set of standard parabolic subgroups.
 
 \item $\aaa_{P}^*$ is the root space, i.e. the $\R$-vector space spanned by all rational characters $M_P\longrightarrow\GL_1$, and 
  $\aaa_{P}=\aaa_{M_P}=\Hom_{\R}(\aaa_{P}^*, \R)$ is the coroot-space. 
$\Sigma_P$ denotes the set of reduced roots of the pair $(A_{M_P}, U_P)$ 

 We denote by $\Delta_{P_1}^{P_2}=\Delta_{1}^2$ the set of simple roots  and by $\Sigma_{P_1}^{P_2}=\Sigma_1^2$ the set of all positive roots of the action of $A_1=A_{P_1}$ on $U_1\cap M_2$. If $\alpha\in\Delta_1^2$, then $\alpha^{\vee}$ denotes the corresponding coroot.
Similarly, $\widehat{\Delta}_{P_1}^{P_2}=\widehat{\Delta}_1^2$ is the set of simple weights, and if $\varpi\in\widehat{\Delta}_1^2$, then $\varpi^{\vee}$ denotes the corresponding coweight. If $\alpha\in \Delta_1^2$, we denote by $\varpi_{\alpha}\in\widehat{\Delta}_1^2$ the weight such that $\varpi_{\alpha}(\beta^{\vee})=\delta_{\alpha \beta}$ for all $\beta\in\Delta_1^2$ (here $\delta_{\alpha \beta}$ is the Kronecker $\delta$).

\item 
If $a\in A_P$ and $\lambda\in\aaa_P^*$, write $\lambda(a)=e^{\lambda(H_P(a))}$.
For $P_1\subseteq P_2$, let 
\[ A_{P_1}^{P_2}=A_1^2
 =\{a\in A_{P_1}\mid \forall \alpha\in \Delta_{P_2}:~ \alpha(a)=1\}
 \simeq A_{P_1}/A_{P_2},\] 
 and $\aaa_{P_1}^{P_2}=\log A_1^2\subseteq \aaa_{P_1}$.
For $M\subseteq G$ let $M(\A)^1$ be the intersection of the kernels of all rational characters $M(\A)\longrightarrow\C$.
Let $\aaa_0^+=\{H\in\aaa_0\mid \forall \alpha\in\Delta_0:~ \alpha(H)>0\}$ be the positive chamber in $\aaa_0$ with respect to our fixed minimal parabolic subgroup. Similarly, we define $\big(\aaa_0^G\big)^+$.
 Denote by $\rho_1^2=\rho_{P_1}^{P_2}\in \aaa_0^+$ the unique element in $\aaa_0^+$ such that the modulus function satisfies
$\delta_1^2(m):=\delta_{P_1}^{P_2}(m)
:=|\det \Ad m_{|\uuu_{P_1}^{P_2}(\A)}|
=e^{2\rho_{1}^{2}(H_0(m))}$
for all $m\in M_{P_1}(\A)$ and  write $\rho_1=\rho_{P_1}=\rho_{P_1}^G$ and $\delta_0=\delta_{P_0}^G$.

\item
Let $H_P=H_{M_P}:G(\A)=M(\A)U(\A)\cpt\longrightarrow \aaa_P$ be the map characterised by $H_P(muk)=H_P(m)$ and
$H_P(\exp H)=H$ for all $H\in \aaa_P$.

\item 
We denote by $\Phi(A_0, M_R)$ the set of weights of $A_0$ with respect to $M_R$ so that $\Phi(A_0, M_R)=\Sigma_0^R\cup\{0\}\cup\left(-\Sigma_0^R\right)$.
 Then we have a direct sum decomposition
$\ggG=\bigoplus_{\beta\in\Phi(A_0, M_R)}\ggG_{\beta}$ for $\ggG_{\beta}$ the eigenspace of $\beta$ in $\ggG$.
We take the usual vector norm $\|\cdot\|_{\A}=\|\cdot\|$ on $\ggG(\A)$ obtained by identifying $\ggG(\A)$ with $\A^D$ via the matrix coordinates.
  Then  if $X\in\ggG(\A)$, $X=\sum_{\beta\in\Phi(A_0, M_R)} X_{\beta}$ with $X_{\mu}\in \ggG_{\beta}(\A)$, then $\|X\|=\sum_{\beta\in\Phi(A_0, M_R)}\|X_{\beta}\|$.

 \item  If $M=T_0$, we write $\FFF=\FFF(T_0)$, $H_0=H_{M_0}$, $\aaa_0=\aaa_{M_0}$, etc., and further put $\aaa=\aaa_0^G$ and $\aaa^+=\big(\aaa_0^G\big)^+$.
\end{itemize}

\subsection{Characteristic functions}
Let $P_1, P_2, P\in\FFF$ be parabolic subgroups with $P_1\subseteq P_2$. We define the following functions (cf.\ \cite{Ar78}):
\begin{itemize}
\item
$\hat{\tau}_{P_1}^{P_2}=\hat{\tau}_1^2:\aaa_0\longrightarrow\C$ is the characteristic function of the set 
\[
\{H\in\aaa_0\mid \forall \varpi\in \hat{\Delta}_{1}^{2}:~\varpi(H)>0\}.
\]
If $P_2=G$, we also write $\hat{\tau}_{P_1}=\hat{\tau}_{1}=\hat{\tau}_{1}^G$.

\item
$\tau_{P_1}^{P_2}=\tau_1^2:\aaa_0\longrightarrow\C$ is the characteristic function of the set
\[
\{H\in\aaa_0\mid \forall \alpha\in \Delta_1^2:~ \alpha(H)>0\}.
\]
If $P_2=G$, we also write $\tau_{P_1}=\tau_{1}=\tau_{1}^G$.

\item 
$\sigma_{P_1}^{P_2}=\sigma_1^2:\aaa_0\longrightarrow\C$ is the characteristic function of the set
\[
\{H\in\aaa_0\mid 
\forall \alpha\in\Delta_1^2:~\alpha(H)>0; ~
\forall \alpha\in \Delta_1\minus\Delta_1^2:~ \alpha(H)\leq0;~
\forall \varpi\in \hat{\Delta}_{2}:~\varpi(H)>0\}.
\]
\end{itemize}
\begin{rem}
The function $\sigma_1^2$ is related to  $\tau_1^2$ and $\hat{\tau}_1^2$ by
$\sigma_1^2
=\sum_{R:~P_2\subseteq R} (-1)^{\dim \aaa_{2}^R}\tau_{1}^R\hat{\tau}_R$.
\end{rem}
\begin{itemize}
\item $T\in\aaa^+$ is called \emph{sufficiently regular} if $d(T):=\min_{\alpha\in\Delta_0}\alpha(T)$ is sufficiently large, i.e., if $T$ is sufficiently far away from the walls of the positive Weyl chamber (cf.\ \cite{Ar78}). We fix a small number $\delta>0$ such that the set of sufficiently regular $T\in\aaa$ satisfying  $d(T)>\delta\|T\|$ is a non-empty open cone in $\aaa^+$.

\item
For sufficiently regular $T\in\aaa^+$  the function  $F^P(\cdot, T):G(\A)\longrightarrow\C$ is defined as the characteristic function of all $x=umk\in G(\A)=U(\A)M(\A)\cpt$, $P=MU$, satisfying
\[
 \varpi(H_0(\mu m)-T)\leq0
\]
for all $\mu\in M(\Q)$ and $\varpi\in\widehat{\Delta}_0^M$. 
If $P=G$, we sometimes write $F(\cdot, T)=F^G(\cdot, T)$.

\item
If $T\in\aaa^+$ is sufficiently regular, \cite[Lemma 6.4]{Ar78} gives for every $x\in G(\A)$ the identity
\[
\sum_{R:~P_0\subseteq R\subseteq P}\sum_{\delta\in R(\Q)\backslash P(\Q)}F^R(\delta x, T)\tau_R^P(H_0(\delta x)-T)=1.
\]
\end{itemize}

\subsection{Measures}
We fix the following maximal compact subgroups: If $v$ is a non-archimedean place, then $\cpt_v=G(\Z_v)$, and at the archimedean place, $\cpt_{\infty}=\Orth(n)$. Globally, we take $\cpt=\prod_{v\leq \infty} \cpt_v$.
Up to normalisation there exists a unique Haar measure on $\cpt_v$, and we normalise it by $\vol(\cpt_v)=1$ for all $v\leq \infty$, and then take the product measure on $\cpt$. We further choose measures as follows:
\begin{itemize}
\item  $\Q_v$ and $\Q_v^{\times}$, $v< \infty$: normalized by $\vol(\Z_v)=1=\vol(\Z_v^{\times})$.

\item $\R$,  $\R^{\times}$, $\R_{>0}$, $A_G$, $A_0$: usual Lebesgue measures. 

\item $\C$, $\C^{\times}$:  twice the usual Lebesgue measure.

\item  $\A$ and $\A^{\times}$: product measures.

\item $\A^1=\{a\in\A^{\times}\mid |a|_{\A}=1\}$: measure induced by the exact sequence $1\longrightarrow \A^1\hookrightarrow \A^{\times}\xrightarrow{|\cdot|_{\A}} \R_{>0}\longrightarrow 1$.

\item  $V$ finite dimensional $\Q$-vector space with fixed basis: take the measures induced from $\A$ (resp.\ $\Q_v$) on $V(\A)$ (resp.\ $V(\Q_v)$) via this basis. 
This in particular defines measures on $U_0(\A)$ and $U_0(\Q_v)$ if we take the canonical bases corresponding to the root coordinates. 

\item  $T_0(\A)$ and $T_0(\Q_v)$: measures induced from $\A^{\times}$ and $\Q_v^{\times}$ via the diagonal coordinates.

\item $G(\A)$ and $G(\Q_v)$:  compatible with the Iwasawa decomposition $G(\A)=T_0(\A)U_0(\A)\cpt$ (resp. $G(\Q_v)=T_0(\Q_v)U_0(\Q_v)\cpt_v$) such that for every integrable function $f$ on $G(\A)$ we have 
\[
 \int_{G(\A)} f(g) dg=\int_{T_0(\A)}\int_{U_0(\A)}\int_{\cpt} f(tuk) dk~du~dt
=\int_{T_0(\A)}\int_{U_0(\A)}\int_{\cpt} \delta_0(t)^{-1} f(utk) dk~du~dt
\]
(similarly for the local case).

\item $G(\A)^1$: measure induced by the exact sequence $1\longrightarrow G(\A)^1\hookrightarrow G(\A)\xrightarrow{|\det(\cdot)|_{\A}} \R_{>0}\longrightarrow 1$.

\item Levi and parabolic subgroups: compatible with previous cases.
\end{itemize}

\subsection{Equivalence classes}\label{section_eq_classes}
Let $\ggG(\Q)_{\text{ss}}$ (resp.\ $G(\Q)_{\text{ss}}$) denote the set of semisimple elements in $\ggG(\Q)$ (resp.\ $G(\Q)$).
We define an equivalence relation on $\ggG(\Q)$ as follows: Let $X, Y\in \ggG(\Q)$ and write $X=X_s+X_n$, $Y=Y_s+Y_n$ for the Jordan decomposition with $X_s$, $Y_s\in\ggG(\Q)_{\text{ss}}$ semisimple and $X_n\in\ggG_{X_s}(\Q)$, $Y_n\in\ggG_{Y_s}(\Q)$ nilpotent, where $\ggG_{X_s}=\{Y\in\ggG\mid [X_s, Y]=0\}$ is the centraliser of $X_s$ in $\ggG$. We call $X$ and $Y$ equivalent if and only if there exists $\delta\in G(\Q)$ such that $Y_s=\Ad\delta^{-1} X_s$. We denote the set of equivalence classes in $\ggG(\Q)$ by $\OOO$.

Let $\nnn\subseteq\ggG(\Q)$ denote the set of nilpotent elements. 
Then $\nnn\in \OOO$ constitutes exactly one equivalence class (corresponding to the orbit of $X_s=0$) and decomposes into finitely many nilpotent orbits under the adjoint action of $G(\Q)$. 
On the other hand, if $\ooo\in\OOO$ corresponds to the orbit of a regular semisimple element $X_s$ (i.e., the eigenvalues of $X_s$ (in an algebraic closure of $\Q$) are pairwise different), then $\ooo$ is in fact equal to the orbit of $X_s$.

\subsection{Test functions}\label{section_testfcts}
Let $\bbb$ denote the Lie algebra of one of the standard parabolic subgroups of $G$, of one of their unipotent radicals or of one of their Levi components. We fix the standard vector norm $\|\cdot\|$ on $\bbb(\R)$ by identifying $\bbb(\R)\simeq \R^{\dim \bbb}$ via the usual matrix coordinates. Let $\UUU(\bbb)$ denote the universal enveloping algebra of the complexification $\bbb(\C)$. For every $\nu\in\Z_{\geq0}$ we fix a basis $\BBB_{\nu}=\BBB_{\bbb,\nu}$ of the finite dimensional $\C$-vector space $\UUU(\bbb)_{\leq \nu}$ of elements in $\UUU(\bbb)$ of degree $\leq \nu$.
For a real number $a\geq0$ and a non-negative integer $b\geq0$ we define seminorms $\|\cdot\|_{a,b}$ on the spaces $C^{\nu}(\bbb(\R))$, $\nu\geq b$, by setting for $f\in C^{\nu}(\bbb(\R))$
\[
 \|f\|_{a,b}:=\sup_{x\in \bbb(\R)}\bigg((1+\|x\|)^a \sum_{X \in\BBB_{b}} \big|(X f)(x)\big|\bigg)
\]
with $(X f)(x)=\left[\frac{d}{dt} f(x e^{tX})\right]_{t=0}$.
We put \[\SSS^{\nu}(\bbb(\R)):=\{f\in C^{\nu}(\bbb(\R))\mid \forall a<\infty,~ b<\nu:~\|f\|_{a,b}<\infty\}.\] Then $\SSS(\bbb(\R)):=\SSS^{\infty}(\bbb(\R))$ is the usual space of Schwartz functions on $\bbb(\R)$.   
If $X\in\BBB_{\nu}$, then $X$ operates on $C^{\nu}(\bbb(\A))$ by acting on the archimedean part of the function. 
We then define seminorms $\|\cdot\|_{a,b}$ on $C^{\nu}(\bbb(\A))$ and on the spaces $\SSS^{\nu}(\bbb(\A))$ and $\SSS(\bbb(\A))$ similar as before.
In particular, $\SSS(\bbb(\A))$ is the usual space of Schwartz-Bruhat functions on $\bbb(\A)$. 
Dualy to $\SSS^{\nu}(\bbb(\A))$ we put \[\SSS_{\nu}(\bbb(\A)):=\{f\in C^{\infty}(\bbb(\A))\mid\forall a<\nu,~b<\infty:~\|f\|_{a,b}<\infty\}\] so that $\SSS_{\infty}(\bbb(\A))=\SSS^{\infty}(\bbb(\A))=\SSS(\bbb(\A))$.

If $p$ is a finite prime, we let $\SSS(\ggG(\Q_p))$ denote the set of compactly supported smooth functions $\ggG(\Q_p)\longrightarrow\C$, and define $\SSS(\ggG(\A_f))$ analogously.

The topology induced by the set of seminorms $\|\cdot\|_{a,b}$, $a<\infty$, $b<\nu$ (resp.\ $a<\nu$, $b<\infty$) makes $\SSS^{\nu}(\bbb(\A))$ (resp.\ $\SSS_{\nu}(\bbb(\A))$) into a Frechet space. 
The words ``seminorm'' and ``continuous seminorm'' on one of these spaces will be used synonymously.

We  fix a non-degenerate invariant bilinear form $\langle\cdot, \cdot\rangle:\ggG(\A)\times \ggG(\A)\longrightarrow\A$ by setting $\langle X, Y\rangle =\tr (XY)$ for $X,Y\in\ggG(\A)$.
Let $\psi:\Q\backslash\A\longrightarrow \C$ be the non-trivial character constructed in \cite[XIV, \S 1]{La86}.
We define the Fourier transform 
\[
 \widehat{\;\;}:\SSS^{\nu}(\ggG(\A))\longrightarrow \SSS_{\nu}(\ggG(\A)),
~~~
\widehat{\Phi}(Y):=\int_{\ggG(\A)}\Phi(X) \psi(\langle X,Y\rangle)dX
\]
with respect to this bilinear form.

\subsection{Siegel sets}
If $T\in \aaa$, let $A_0^G(T)$ denote the set of all $a\in A_0^G$ with $\alpha(H_0(a)-T)>0$ for all $\alpha\in\Delta_0$. Reduction theory tells us that there exists $T_1\in -\aaa^+$ such that
\[
 G(\A)^1=G(\Q) P_0(\A)^1  A_0^G(T_1) \cpt.
\]
We fix such a $T_1$ from now on and write 
\[
 \SSS_{T_1}=\{g=pk\in P_0(\A) \cpt\mid \forall \alpha\in \Delta_0:~ \alpha(H_0(a)-T_1)>0\}.
\]
If then $f:G(\Q)\backslash G(\A)^1\longrightarrow\R_{\geq0}$ is measurable, we have
\begin{multline}\label{siegel_est}
 \int_{G(\Q)\backslash G(\A)^1} f(g) dg
\leq \int_{A_G P_0(\Q)\backslash \SSS_{T_1}} f(g) dg\\
= \int_{\cpt}\int_{U_0(\Q)\backslash U_0(\A)}\int_{T_0(\Q)\backslash T_0(\A)^1}\int_{A_0^G} \delta_0(a)^{-1}\tau_0^G(H_0(a)-T_1) f(uatk) da~dt~du~dk.
\end{multline}

\subsection{Distributions associated with equivalence classes}\label{section_distr}
For $\ooo\in\OOO$ and sufficiently regular $T\in\aaa^+$ define for $x\in G(\A)$ (cf.\ \cite{Ch02}) 
\begin{align*}
K_{P, \ooo}(x, \Phi) & =\int_{\uuu_P(\A)}\sum_{X\in \mmm_P(\Q)\cap\ooo} \Phi(\Ad g^{-1}(X+U)) dU \text{    and }\\
k_{\ooo}^T(x, \Phi) & =\sum_{P\in\FFF_{\text{std}}}(-1)^{\dim A_P/A_G}\sum_{\delta\in P(\Q)\backslash G(\Q)}\hat{\tau}_P(H_0(\delta x)-T)K_{P, \ooo}(\delta x, \Phi),
\end{align*}
 and, if $\Phi:\ggG(\A)\longrightarrow\C$ is integrable, we set
\[
J_{\ooo}^T(\Phi)=\int_{A_GG(\Q)\backslash G(\A)} k_{\ooo}^T(x, \Phi)dx
\]
provided the sum-integrals converge.

\part{The zeta function}\label{part_zeta}
\section{The trace formula for Lie algebras and convergence of distributions}\label{section_trace_formula}
Let us recall some of the main results from \cite{Ch02}.
\begin{theorem}[\cite{Ch02}, Th\'{e}ore\`{m}e 3.1, Th\'{e}ore\`{m}e 4.5]\label{thm_ch}
For all $\Phi\in \SSS(\ggG(\A))$ and sufficiently regular $T\in\aaa^+$ we have
\begin{equation}\label{abs_conv_main}
\int_{A_GG(\Q)\backslash G(\A)} \sum_{\ooo\in\OOO}|k_{\ooo}^T(x, \Phi)|dx<\infty.
\end{equation}
and
\begin{equation}\label{summ_formula}
\sum_{\ooo\in\OOO}J_{\ooo}^T(\Phi)=\sum_{\ooo\in\OOO}J_{\ooo}^T(\hat{\Phi}).
\end{equation}
The distributions $J_{\ooo}^T(\Phi)$ and $\sum_{\ooo\in\OOO}J_{\ooo}^T(\Phi)$  are polynomials in $T$ of degree at most $\dim \aaa$.
\end{theorem}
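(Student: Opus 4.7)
The plan is to follow Chaudouard's strategy, which adapts Arthur's proofs of the coarse geometric expansion of the trace formula (\cite{Ar78,Ar81}) from the group setting to the Lie algebra. The three assertions are essentially independent, each driven by the combinatorics of the truncation functions.

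For absolute convergence \eqref{abs_conv_main}, I would first insert Arthur's partition of unity
$\sum_{R\supseteq P_0}\sum_{\delta\in R(\Q)\backslash G(\Q)}F^R(\delta x, T)\tau_R^G(H_0(\delta x)-T)=1$
into the integrand, then use the identity $\sigma_P^R=\sum_{R'\supseteq R}(-1)^{\dim\aaa_R^{R'}}\tau_P^{R'}\hat\tau_{R'}$ to rearrange the alternating sum defining $k_{\ooo}^T$. The result is a sum over pairs $P\subseteq R$ weighted by $F^R(\cdot,T)\,\sigma_P^R(\cdot-T)$; the factor $F^R$ confines the Levi coordinates to a compact region while $\sigma_P^R$ restricts the split torus direction to a cone of bounded thickness. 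Integrating over a Siegel set via \eqref{siegel_est} reduces the problem to bounding a sum-integral of $|\Phi|$ over $\mmm_P(\Q)\cap\ooo$, which converges absolutely because $\Phi$ decays faster than any polynomial while the lattice-point counts grow only polynomially in the Siegel coordinates.

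For the summation formula \eqref{summ_formula}, with absolute convergence in hand all interchanges become legitimate. The key step is Poisson summation applied to $\sum_{X\in\mmm_P(\Q)}\Phi(\Ad g^{-1}(X+U))$ together with the integration over $U\in\uuu_P(\A)$; by non-degeneracy of the trace pairing this transforms $\Phi$ into $\hat{\Phi}$ with $\uuu_P$ replaced by the opposite $\overline{\uuu}_P$. Re-grouping the alternating sum over standard parabolics via the involution $P\mapsto\bar P$ (which preserves the equivalence classes $\ooo$) then matches the output term by term with $\sum_{\ooo}J_{\ooo}^T(\hat\Phi)$. For the polynomial behaviour in $T$, I would use Arthur's decomposition $\hat\tau_P(H-T)=\sum_{Q\supseteq P}\Gamma_P^Q(H,T)$ with $\Gamma_P^Q(H,\cdot)$ polynomial of degree at most $\dim\aaa_P^Q$ in its second argument. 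Substituting and comparing $J_{\ooo}^{T+T'}$ with $J_{\ooo}^T$ shows the difference is polynomial in $T'$ of total degree at most $\dim\aaa$, and a standard induction on the degree promotes this to $T\mapsto J_{\ooo}^T(\Phi)$ itself.

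The main obstacle will be absolute convergence for the nilpotent class $\ooo=\nnn$, where the cardinality $|\mmm_P(\Q)\cap\nnn\cap B|$ over a ball $B$ growing with the Siegel coordinates increases so fast that the naive Schwartz bound on $\Phi$ does not suffice. Resolving this requires exploiting the cancellation between the $F^R$ and $\sigma_P^R$ factors (or an induction on semisimple rank restricted to proper Levis), which is exactly the most technical portion of the argument in \cite{Ch02} and the point at which the method genuinely differs from the regular semisimple case.
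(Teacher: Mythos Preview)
The paper does not prove this statement; it is quoted verbatim from \cite{Ch02} (Th\'eor\`emes 3.1 and 4.5) and used as a black box. So there is no ``paper's own proof'' to compare against. That said, your outline is a reasonable summary of Chaudouard's argument, and in fact the convergence portion you sketch is exactly the template the paper later re-runs with an extra parameter $\lambda$ in the proof of Lemma~\ref{fundamental_convergence}: insert the partition $\sum_{R}\sum_{\delta}F^{R}(\delta x,T)\tau_R(H_0(\delta x)-T)=1$, regroup via the $\sigma_{P_1}^{P_2}$ functions, apply partial Poisson summation along $\uuu_R^{P}$ to exploit cancellation, and then bound the resulting lattice sums against the Siegel-set measure.

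One point in your sketch is off. For \eqref{summ_formula} you invoke an involution $P\mapsto\bar P$ on standard parabolics to match terms; this is neither needed nor how Chaudouard proceeds. The identity is proved parabolic by parabolic: for each standard $P$ one has $\sum_{\ooo}K_{P,\ooo}(x,\Phi)=\sum_{\ooo}K_{P,\ooo}(x,\hat\Phi)$ directly, because the trace form pairs $\mmm_P$ with itself and $\uuu_P$ with $\bar\uuu_P$, so Poisson summation over $\mmm_P(\Q)$ together with the $\uuu_P(\A)$-integral turns $\Phi$ into $\hat\Phi$ without changing $P$. Since the truncation data $\hat\tau_P(H_0(\delta x)-T)$ and the sign $(-1)^{\dim A_P/A_G}$ are identical on both sides, the full alternating sums agree. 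Your $P\mapsto\bar P$ repackaging would run into the problem that $\bar P$ is not standard, so the two alternating sums would not be over the same index set.
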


The Poisson summation like identity \eqref{summ_formula}, is what we refer to as Chaudouard's trace formula for the Lie algebra $\ggG$.

\begin{rem}
\begin{enumerate}[label=(\roman{*})]
\item Since the distributions in the theorem are polynomials in $T$ for $T$ varying in a non-empty open cone of $\aaa$, they can be defined at any point $T\in\aaa$, with \eqref{summ_formula} then being valid for all $T\in \aaa$.

\item The results in \cite{Ch02} hold for arbitrary reductive groups $G$.

\item
\eqref{abs_conv_main} holds for every $\Phi\in\SSS^{\nu}(\ggG(\A))\cup\SSS_{\nu}(\ggG(\A))$, and \eqref{summ_formula} holds for every $\Phi\in\SSS^{\nu}(\ggG(\A))$ if $\nu>0$ is sufficiently large in a sense depending only on $n$, cf.\ also the proof of Lemma \ref{fundamental_convergence} below.
\end{enumerate}
\end{rem}

For $\Phi:\ggG(\A)\longrightarrow\C$, $\lambda\in (0, \infty)$, and $x\in \ggG(\A)$ put 
$\Phi_{\lambda}(x):=\Phi(\lambda x)$. For fixed $\lambda$, $\Phi_{\lambda}\in\SSS^{\nu}(\ggG(\A))$ if $\Phi\in\SSS^{\nu}(\ggG(\A))$, and $\Phi_{\lambda}\in\SSS_{\nu}(\ggG(\A))$ if $\Phi\in\SSS_{\nu}(\ggG(\A))$. Hence \eqref{summ_formula} becomes 
\[
 \sum_{\ooo\in\OOO}J_{\ooo}^T(\Phi_{\lambda})=\lambda^{-D}\sum_{\ooo\in\OOO}J_{\ooo}^T(\hat{\Phi}_{\lambda^{-1}}).
\]
Let $\OOO_*:=\OOO\minus\{\nnn\}$, and for sufficiently regular $T\in\aaa^+$ set
$J_*^T=\sum_{\ooo\in\OOO_*}J_{\ooo}^T$. Then
\[
\Xi^T(s, \Phi)
=\int_{0}^{\infty}\lambda^{\sqrt{D}(s+\frac{\sqrt{D}-1}{2})}J_{*}^T(\Phi_{\lambda})d^{\times}\lambda
\]
defines our regularised zeta function provided this last integral converges.

\begin{theorem}\label{convergence_distributions}
There exists $\nu>0$ depending only on $n$ such that for all $\Phi\in\SSS^{\nu}(\ggG(\A))$ the following holds:
\begin{enumerate}[label=(\roman{*})]
\item
If $T$ is sufficiently regular, the function
\[
\Xi^{T, +}(s, \Phi)
=\int_{1}^{\infty}\lambda^{\sqrt{D}(s+\frac{\sqrt{D}-1}{2})}J_{*}^T(\Phi_{\lambda})d^{\times}\lambda
\]
is absolutely and locally uniformly convergent for all $s\in \C$ and hence entire.

\item 
If $T$ is sufficiently regular, the integral defining $\Xi^T(s, \Phi)$ and also
\[
\Xi^T_{\ooo}(s, \Phi)
:=\int_{0}^{\infty}\lambda^{\sqrt{D}(s+\frac{\sqrt{D}-1}{2})}J_{\ooo}^T(\Phi_{\lambda})d^{\times}\lambda,
~~~
\ooo\in\OOO_*,
\]
are well-defined and absolutely and locally uniformly convergent for $s\in\C$ with $\Re s>\frac{\sqrt{D}+1}{2}$ (and hence holomorphic there). Moreover,
\[
 \Xi^T(s, \Phi)
=\sum_{\ooo\in\OOO_*}\Xi^T_{\ooo}(s, \Phi).
\]

\item 
The distributions $\Xi^{T, +}(s, \Phi)$, $\Xi^T_{\ooo}(s, \Phi)$, and $\Xi^T(s, \Phi)$ are polynomials in $T$ of degree at most $\dim\aaa=n-1$. The coefficients of these polynomials are holomorphic functions in $s$ for $s$ ranging in the regions indicated above. 
\end{enumerate}
\end{theorem}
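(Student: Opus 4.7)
The plan is to derive parts (i) and (ii) from Chaudouard's Theorem~\ref{thm_ch} (absolute convergence), exploiting the summation formula \eqref{summ_formula} to swap the small-$\lambda$ and large-$\lambda$ regimes via the Fourier scaling identity $\widehat{\Phi_\lambda} = \lambda^{-D}\widehat{\Phi}_{\lambda^{-1}}$. The key mechanism that produces decay in $\lambda$ is that every class $\ooo \in \OOO_*$ carries a nonzero rational invariant coming from a coefficient of the characteristic polynomial; this is exactly the feature that distinguishes $\OOO_*$ from the nilpotent class $\nnn$. A secondary input, which must be extracted from Chaudouard's proof, is that the bound \eqref{abs_conv_main} depends continuously on $\Phi$ through a finite family of seminorms on $\SSS^\nu(\ggG(\A))$ for $\nu$ depending only on $n$.

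For part (i) the goal is to show $|J_*^T(\Phi_\lambda)| = O(\lambda^{-N})$ for every $N$ as $\lambda \to \infty$. Given $X \in \ooo \in \OOO_*$, let $a_k(X) \in \Q^\times$ be a nonzero coefficient of the characteristic polynomial of $X$, with $1 \leq k \leq n$. Since $a_k$ is $\Ad$-invariant and the characteristic polynomial of any $Y \in \ppp = \mmm_P + \uuu_P$ equals that of its $\mmm_P$-part, one has $a_k(\Ad g^{-1}(X+U)) = a_k(X)$ for all $g \in G(\A)$ and $U \in \uuu_P(\A)$. The product formula $\prod_v |a_k(X)|_v = 1$, combined with the compact-open support of $\Phi_f$ at non-archimedean places (which bounds $|a_k(X)|_v$ above for $v<\infty$ over the effective range of summation), produces a uniform lower bound $|a_k(X)|_\infty \geq c(\Phi_f) > 0$. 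Since $a_k^N \Phi_\infty$ is Schwartz for every $N$, I obtain the pointwise estimate
\[
 |\Phi_\infty(\lambda Y)| \leq \|a_k^N \Phi_\infty\|_\infty \cdot |a_k(\lambda Y)|_\infty^{-N} = \|a_k^N \Phi_\infty\|_\infty \cdot \lambda^{-kN} |a_k(Y)|_\infty^{-N},
\]
and substituting $Y = \Ad g^{-1}(X+U)$ yields a majorant $\Psi_N \in \SSS^\nu(\ggG(\A))$, depending continuously on $\Phi$, with $|k_\ooo^T(x, \Phi_\lambda)| \leq \lambda^{-kN} |k_\ooo^T(x, \Psi_N)|$ for every $\ooo \in \OOO_*$ simultaneously. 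Summing over $\ooo$ and invoking Theorem~\ref{thm_ch} gives $|J_*^T(\Phi_\lambda)| \ll_N \lambda^{-kN}$, hence absolute and locally uniform convergence of $\Xi^{T,+}(s, \Phi)$ on all of $\C$; the same majorant dominates each individual $|J_\ooo^T(\Phi_\lambda)|$.

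For part (ii) I split $\int_0^\infty = \int_1^\infty + \int_0^1$. The first piece is $\Xi^{T,+}(s, \Phi)$ from Step~1. For the second, \eqref{summ_formula} in its scaled form reads
\[
 J_*^T(\Phi_\lambda) = \lambda^{-D} J_*^T(\widehat\Phi_{\lambda^{-1}}) + \lambda^{-D} J_\nnn^T(\widehat\Phi_{\lambda^{-1}}) - J_\nnn^T(\Phi_\lambda).
\]
After the substitution $\mu = \lambda^{-1}$ the integral of the first term equals $\Xi^{T,+}(1-s, \widehat\Phi)$, entire in $s$ by Step~1 applied to $\widehat\Phi$. For the two nilpotent remainders, soft bounds $|J_\nnn^T(\Phi_\lambda)| \ll (1+|\log \lambda|)^{n-1}$ as $\lambda \to 0$ and the analogous bound for $|J_\nnn^T(\widehat\Phi_\mu)|$ as $\mu \to \infty$, obtained by direct inspection of the definition of $J_\nnn^T$ (the truncated domain is compact up to a logarithmic factor, and $\Phi_\lambda$ converges pointwise) suffice to give absolute convergence in $\Re s > (\sqrt D + 1)/2$. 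The termwise domination from Step~1 gives convergence of each $\Xi_\ooo^T(s, \Phi)$ and the identity $\Xi^T(s, \Phi) = \sum_{\ooo \in \OOO_*} \Xi_\ooo^T(s, \Phi)$ by Fubini. Part (iii) is then immediate: each $J_\ooo^T(\Phi_\lambda)$ is already a polynomial in $T$ of degree at most $n-1$ by Theorem~\ref{thm_ch}, and since all of the bounds above are locally uniform in $T$ on the open cone of sufficiently regular parameters, the coefficients of that polynomial can be integrated termwise against $\lambda^{\sqrt D(s + (\sqrt D - 1)/2)} d^\times\lambda$, producing a polynomial in $T$ with coefficients holomorphic in $s$ in the indicated region.

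The principal obstacle is the construction of $\Psi_N$ in Step~1 with controlled seminorms. The gain $\lambda^{-kN}$ must be obtained uniformly in $\ooo \in \OOO_*$, including classes whose archimedean representatives can be arbitrarily close to the nilpotent cone. The product formula is precisely the device that bridges the non-archimedean compact support of $\Phi_f$ to an archimedean lower bound on $|a_k|_\infty$, and one must track the seminorms carefully so that $\Psi_N$ remains in a fixed $\SSS^\nu$ as $N$ grows. The soft nilpotent bound used in part (ii) is easy here but is the point where a finer analysis becomes the central difficulty in the meromorphic continuation Theorem~\ref{main_thmintr}.
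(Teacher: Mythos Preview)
Your approach has two genuine gaps.

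For part~(i), the claimed inequality $|k_\ooo^T(x, \Phi_\lambda)| \leq \lambda^{-kN}|k_\ooo^T(x, \Psi_N)|$ does not follow from a pointwise bound $|\Phi_\lambda(Y)| \leq \lambda^{-kN}\Psi_N(Y)$. The truncated kernel $k_\ooo^T$ is an alternating sum over standard parabolics with signs $(-1)^{\dim A_P/A_G}$; a pointwise majorant controls only the sum of the absolute values of its terms, and that cruder sum, integrated over $A_G G(\Q)\backslash G(\A)$, diverges---the entire purpose of Arthur-type truncation is the cancellation produced by these alternating signs. So Theorem~\ref{thm_ch} cannot be used as a black box. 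The paper instead re-enters Chaudouard's convergence proof and tracks $\lambda$ through every step: the $(F^{P_1}, \sigma_1^2)$ decomposition \eqref{descent_kernel}, Poisson summation on the sum over $\uuu_R^{P}(\Q)$, and the lattice estimates of Lemmas~\ref{lattice_sum_lemma} and~\ref{bounding_sum_over_pseudoss}. The hypothesis $\ooo\in\OOO_*$ enters inside Lemma~\ref{bounding_sum_over_pseudoss}, where $X\notin\nnn$ forces a nonzero coordinate and permits the sharper estimate \eqref{est_wright2} with arbitrary decay $\lambda^{-k}$. That is the correct incarnation of your characteristic-polynomial idea, but it must be applied \emph{after} the cancellation, not before.

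For part~(ii), the bound $|J_\nnn^T(\Phi_\lambda)| \ll (1+|\log\lambda|)^{n-1}$ as $\lambda\to 0$ is false: the truncated quotient has finite volume, but the sum over the infinite set $\nnn\cap\ggG(\Q)$ blows up polynomially in $\lambda^{-1}$ as $\Phi_\lambda$ spreads out. (For $n\leq 3$ the paper later shows $J_\nnn^T(\Phi_\lambda)$ grows like $\lambda^{-\dim\NNN_{\text{reg}}/2}$ times a polynomial in $\log\lambda$, but that comes only after Theorem~\ref{convergence_distributions} and is unavailable for general $n$.) The paper therefore avoids the summation formula at this stage and bounds $\int_0^1$ directly: the same combinatorial argument yields $|J_*^T(\Phi_\lambda)|\leq \mu(\Phi)\lambda^{-D}$ for $\lambda\leq 1$ (Lemma~\ref{fundamental_convergence}(i)(b)), which gives convergence exactly for $\Re s>(\sqrt D+1)/2$. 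A smaller issue with your route is that applying Step~1 to $\widehat\Phi$ needs $\widehat\Phi\in\SSS^\nu$, whereas the Fourier transform only maps $\SSS^\nu$ to $\SSS_\nu$.
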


\begin{rem}
 The distributions in the theorem can again be defined at every point $T\in \aaa$ by taking the value of the polynomial at this point. Their analytic properties as stated in the theorem stay valid for every $T$.
\end{rem}

The theorem is an immediate consequence of the following lemma.

\begin{lemma}\label{fundamental_convergence}
Let $T\in\aaa^+$ be sufficiently regular.
\begin{enumerate}[label=(\roman{*})]
 \item 
There exists an integer $\nu>0$ (depending on $n$) such that the following holds. 
\begin{enumerate}
\item
 For every $N\in\N$ there exists  a seminorm $\mu_N$ on the space $\SSS^{\nu}(\ggG(\A))$ such that
\begin{equation}\label{finiteness1}
\int_{A_GG(\Q)\backslash G(\A)}\sum_{\ooo\in\OOO_*}
|k^T_{\ooo}(x, \Phi_{\lambda}) |dx\leq  \mu_N(\Phi)\lambda^{-N}   
\end{equation}
for all $\lambda\in[1, \infty)$ and $\Phi\in\SSS^{\nu}(\ggG(\A))$.

\item 
There exists a seminorm  $\mu$ on $\SSS^{\nu}(\ggG(\A))\cup\SSS_{\nu}(\ggG(\A))$ such that
\begin{equation}\label{finiteness2}
\int_{A_GG(\Q)\backslash G(\A)}\sum_{\ooo\in\OOO_*}
|k^T_{\ooo}(x, \Phi_{\lambda}) |dx\leq  \mu(\Phi)\lambda^{-D}
\end{equation}
for all $\lambda\in (0, 1]$ and $\Phi\in\SSS^{\nu}(\ggG(\A))\cup\SSS_{\nu}(\ggG(\A))$.
\end{enumerate}

\item If $N\in\N$, then there exists an integer $\nu>0$ and a seminorm $\mu_N$ on the space $\SSS_{\nu}(\ggG(\A))$, both depending only on $n$ and $N$, such that 
\begin{equation}\label{finiteness1_lowernu}
\int_{A_GG(\Q)\backslash G(\A)}\sum_{\ooo\in\OOO_*}
|k^T_{\ooo}(x, \Phi_{\lambda}) |dx\leq  \mu_N(\Phi)\lambda^{-N}   
\end{equation}
for all $\lambda\in[1, \infty)$ and  $\Phi\in\SSS_{\nu}(\ggG(\A))$.
\end{enumerate}
\end{lemma}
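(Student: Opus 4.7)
The plan is to adapt Chaudouard's convergence proof of \eqref{abs_conv_main} in \cite{Ch02}, tracking how his seminorm bound depends on the scaling parameter $\lambda$, and exploiting that the nilpotent class $\nnn$ has been excluded from the sum over $\OOO_*$.

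After reducing to an integral over the Siegel set $\SSS_{T_1}$ via \eqref{siegel_est} and unfolding $k^T_\ooo$ via its definition, Chaudouard's argument controls each $K_{P,\ooo}(\delta g,\Phi)$ by the Schwartz decay of $\Phi$. The new geometric input for the $\OOO_*$-estimates is that for $\ooo\in\OOO_*$, any $X\in\mmm_P(\Q)\cap\ooo$ has a nonzero non-leading characteristic polynomial coefficient $c_i(X)$; moreover $c_i(X+U)=c_i(X)$ for all $U\in\uuu_P$ (by the block structure of standard parabolics), $c_i$ is $\Ad$-invariant, and homogeneous of degree $i$ under scaling. These properties together force a lower bound $\|\Ad g^{-1}(X+U)\|\geq C|c_i(X)|^{1/i}$ uniform in $g$ and $U$, and in particular $\lambda\|\Ad g^{-1}(X+U)\|\gtrsim 1$ once $\lambda$ is large compared to the invariants of $\ooo$.

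For (i)(a) and (ii), I would then replace the pointwise Schwartz bound by $|\Phi_\lambda(Y)|\leq\lambda^{-N}\|\Phi\|_{a+N,0}(1+\|Y\|)^{-a}$ on the region $\{\lambda\|Y\|\geq 1\}$, which covers the entire relevant locus by the geometric lower bound above. Inserting this into Chaudouard's global estimate factors out the $\lambda^{-N}$, and the residual bound is finite provided $\nu$ exceeds the derivative and decay orders used in \cite{Ch02}, giving $\nu=\nu(n)$ in case (i)(a) and $\nu=\nu(n,N)$ in case (ii), where we additionally require $a+N<\nu$ since $\|\Phi\|_{a+N,0}<\infty$ for $\Phi\in\SSS_\nu$ only when $a+N<\nu$. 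Part (i)(b) is simpler: the Chaudouard estimate applied directly to $\Phi_\lambda$, combined with the elementary scaling $\|\Phi_\lambda\|_{a,b}\leq\lambda^{-a}\|\Phi\|_{a,b}$ valid for $\lambda\in(0,1]$, yields \eqref{finiteness2} once one verifies that Chaudouard's proof uses only a seminorm $\|\cdot\|_{a,b}$ with $a\leq D$; the extension to $\Phi\in\SSS_\nu$ is granted by the enlarged version of \eqref{abs_conv_main} noted in the remark after Theorem \ref{thm_ch}.

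The main obstacle is to make the geometric lower bound quantitatively uniform across the sum over $\ooo\in\OOO_*$. Concretely, one must estimate how $|c_i(X)|$ is distributed over $\ooo\in\OOO_*$ and verify that absorbing the $\ooo$-dependent threshold $\lambda_0(\ooo)$ into the Chaudouard counting bound does not destroy convergence; once this arithmetic/lattice-point estimate is in place, the remaining steps ($\hat{\tau}_P$-truncation, alternating sum over standard parabolics, and Siegel integration in $g$) import essentially unchanged from \cite{Ch02}.
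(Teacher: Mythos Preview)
Your geometric observation is correct and useful: for $\ooo\in\OOO_*$ some characteristic coefficient $c_i(X+U)=c_i(X)\neq 0$, and after the standard reduction to $X$ with entries in $\frac{1}{N}\Z$ this gives a \emph{uniform} lower bound $\|\Ad g^{-1}(X+U)\|\geq c>0$. However, the plan to ``insert this into Chaudouard's global estimate'' and factor out $\lambda^{-N}$ has a real gap. Chaudouard's convergence argument is not a black-box inequality $|\text{LHS}|\leq \mu(\Phi)$ into which you can substitute a pointwise majorant. Its core is a Poisson summation on $\uuu_R^{P}(\Q)$ combined with the alternating sum over $P$, which produces cancellation; this step requires derivatives of $\Phi$ and does not commute with replacing $|\Phi_\lambda|$ by $\lambda^{-N}\Psi$ for some majorant $\Psi$. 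Concretely, $\partial^\alpha\Phi_\lambda = \lambda^{|\alpha|}(\partial^\alpha\Phi)_\lambda$, so for $\lambda\geq 1$ the derivative seminorms of $\Phi_\lambda$ \emph{grow} with $\lambda$, and you cannot simultaneously control decay and derivatives of $\lambda^N\Phi_\lambda$ uniformly in $\lambda$ by a single Schwartz--Bruhat function.

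For part (i)(b) your claim that ``Chaudouard's proof uses only a seminorm $\|\cdot\|_{a,b}$ with $a\leq D$'' is precisely the nontrivial content you need to prove, and it is not obvious. The paper obtains $\lambda^{-D}$ not by seminorm scaling but by re-running the argument and tracking three separate $\lambda$-powers: $\lambda^{-\dim\uuu_R}$ from the change of variables in the $U$-integral, $\lambda^{m}$ from the Fourier estimate on the dual sum over $\bar Y\in\overline{\uuu}_R^{P_2\prime}$ (with $m>\dim\uuu_R^{P_2}$ needed for convergence), and $\lambda^{-\dim\mmm_R}$ from the lattice sum over $X\in\tilde{\mmm}_{P_1}^R$. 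The bound $\lambda^{-D}$ then follows from the dimension inequality $\dim\uuu_R - m + \dim\mmm_R\leq D$, which you would have to rediscover. In short, the correct route is the one the paper takes: rerun Chaudouard's proof step by step, keeping $\lambda$ explicit through the Poisson summation, the $\bar Y$-lattice estimate, and the $X$-lattice estimate (the latter is where your $c_i$ idea, in the guise of $X\not\in\nnn\Rightarrow X\neq 0$ in a $\frac{1}{N}\Z$-lattice, actually enters).
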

We will prove the lemma in \S \ref{subsection_prf_of_key_lemma} below, but first deduce the proposition from it.
\begin{proof}[Proof of Theorem \ref{convergence_distributions}]
 \begin{enumerate}[label=(\roman{*})]
  \item 
By Lemma  \ref{fundamental_convergence}
we have for $N$ arbitrarily large and every $\lambda\geq1$,
\[
|\lambda^{\sqrt{D}(s+\frac{\sqrt{D}-1}{2})}J_{*}^T(\Phi_{\lambda})|
\leq \mu_N(\Phi)\lambda^{\sqrt{D}(\Re s+\frac{\sqrt{D}-1}{2})}\lambda^{-N},
\]
which is of course integrable over $\lambda\in[1, \infty)$ if $N$ is chosen sufficiently large.

\item 
We split the integral defining $\Xi^T(s, \Phi)$ into one integral over $\lambda\in (0,1]$ and one over $\lambda\in[1,\infty)$.
By the first part of the proposition the second integral defines a holomorphic function on all of $\C$.
For the first integral we have $|J_{*}^T(\Phi_{\lambda})|\leq \mu(\Phi)\lambda^{-D}$ for all $\lambda\leq 1$ by Lemma \ref{fundamental_convergence} so that 
\[
\int_{0}^{1}|\lambda^{\sqrt{D}(s+\frac{\sqrt{D}-1}{2})}J_{*}^T(\Phi_{\lambda})|d^{\times}\lambda
\leq \mu(\Phi)\int_{0}^{1}\lambda^{\sqrt{D}(\Re s+\frac{\sqrt{D}-1}{2})}\lambda^{-D}d^{\times}\lambda,
\]
which is finite if $\Re s>\frac{\sqrt{D}+1}{2}$, and hence proving the second part of the proposition.

\item 
By Theorem \ref{thm_ch} $J_{\ooo}^T(\Phi)$ and $J_{*}^T(\Phi)$ are polynomials of degree at most $\dim\aaa$ in $T$. The assertion thus follows from the previous parts of the proposition.\qedhere
\end{enumerate}
\end{proof}

\subsection{Auxiliary results}
To prove Lemma \ref{fundamental_convergence}, we need some preparation.
Let $P_1, P_2, R\in\FFF_{\text{std}}$ be standard parabolic subgroups with $P_1\subseteq R\subseteq P_2$, and
write $P_i=M_iU_i$ for their Levi decomposition.
We define
\[
\widetilde{\mmm}_1^2
=\widetilde{\mmm}_{P_1}^{P_2}
=\mmm_{2}\minus\bigg(\bigcup_{\substack{Q\in\FFF: \\ P_1\subseteq Q\subsetneq P_2}} \mmm_{2}\cap\mathfrak{q}\bigg).
\]
Note that $0\not\in \widetilde{\mmm}_{1}^{2}(\Q)$ unless $P_1= P_2$. Moreover,  $\widetilde{\mmm}_{1}^{2}=\mmm_{1}$ if and only if $P_1=P_2$. 
Similarly, put
\[
\overline{\uuu}_{1}^{2\prime}
=\overline{\uuu}_{P_1}^{P_2\prime}
=\overline{\uuu}_{1}^{2}\minus\bigg(\bigcup_{\substack{Q\in\FFF: \\  P_1\subseteq Q\subsetneq P_2}} \overline{\uuu}_{1}^Q\bigg)
=\overline{\uuu}_{1}^{2}\minus\bigg(\bigcup_{\substack{Q\in\FFF: \\  P_1\subseteq Q\subsetneq P_2}} \overline{\uuu}_{P_1}\cap\mmm_Q\bigg),
\]
and define $\uuu_{1}^{2\prime}$ with $\uuu_{1}^Q$ in place of $\overline{\uuu}_{1}^Q$ analogously.
Note that $0\not\in \overline{\uuu}_{1}^{2\prime}(\Q)$ unless $P_1=P_2$.

\begin{definition}
\begin{enumerate}[label=(\roman{*})]
\item
If  $S\subseteq \Sigma_1^2$ is a subset, we say that $S$ has \emph{property $\Pi(P_1,R, P_2)$} if for every $\alpha\in \Delta_{1}^{2}\minus\Delta_{1}^{R}$ there exists $\beta\in S$ such that $\varpi_{\alpha}(\beta^{\vee})>0$. 
In particular, $S=\emptyset$ has property $\Pi(P_1, R, P_2)$.

\item If $S\subseteq \Sigma_1^2$ has property $\Pi(P_1, R, P_2)$, we define $\overline{\uuu}_S^{\prime}\subseteq\overline{\uuu}_R^2$ as the set consisting of all $Y=\sum_{\beta\in\Sigma_{1}^{2}}Y_{-\beta}\in\overline{\uuu}_{R}^{2}$ with $Y_{-\beta}\neq0$ for $\beta\in S$ and $Y_{-\beta}=0$ for $\beta\not\in S$.
Here $Y_{-\beta}$ denotes the component of $Y$ in the $(-\beta)$-eigenspace of the decomposition of $\overline{\uuu}_R^2$ with respect to $-\Sigma_1^2$.
In particular, $\overline{\uuu}_{\emptyset}^{\prime}=\emptyset$ unless $R=P_1$ in which case $\overline{\uuu}_{\emptyset}^{\prime}=\overline{\uuu}_1^{1\prime}=\{0\}$.

\item If $S\subseteq \Sigma_1^R$ has property $\Pi(P_1, P_1, R)$, let $\mmm_{R,S}\subseteq \mmm_{R}$ consist of all $Y\in\mmm_{R}$ such that $Y_{-\beta}\neq0$ for all $\beta\in S$ and $Y_{-\beta}=0$ for all $\beta\in\Sigma_1^R\minus S$.
Here $Y_{-\beta}$ denotes the component of $Y$ in the $(-\beta)$-eigenspace of the decomposition of $\mmm_R$ with respect to $\Phi(A_1,M_R)$.
\end{enumerate}
\end{definition}

\begin{lemma}\label{identifying_nilpotents}
Write $\mmm_2=\bigoplus_{\beta\in\Phi(A_1, M_2)}\mmm_{\beta}$ with $\mmm_{\beta}$ the eigenspace for $\beta$ in $\mmm_2$, and if $X\in\mmm_2(\Q)$, let $X_{\beta}\in\mmm_{\beta}(\Q)$ be its $\beta$-component so that $X=\sum_{\beta\in \Phi(A_1, M_2)}X_{\beta} $.
Then:
\begin{enumerate}[label=(\roman{*})]
\item\label{identifying_nilp1} 
For every $Y\in \overline{\uuu}_{R}^{2\prime}(\Q)$, there exists a subset $S\subseteq \Sigma_{1}^{2}$ with property $\Pi(P_1,R, P_2)$ such that $Y_{-\beta}=0$ for all $\beta\in\Sigma_1^2\minus S$ and $Y_{-\beta}\neq 0$ for all $\beta\in S$.
In particular,
\[
 \overline{\uuu}_R^{2\prime}=\bigoplus_{S\subseteq \Sigma_{1}^{2}}\overline{\uuu}_S^{\prime}
\]
where the sum runs over all subsets $S\subseteq \Sigma_{1}^{2}$ having property $\Pi(P_1, R, P_2)$. 

 \item\label{identifying_nilp2} 
If $P_1\subsetneq R$ and $X\in\widetilde{\mmm}_{P_1}^R(\Q)$,  there exists a non-empty subset $S\subseteq \Sigma_{1}^R$ with property $\Pi(P_1,P_1, R)$ such that $X_{-\beta}\neq 0$ for every $\beta\in S$.
In particular,
\[
 \widetilde{\mmm}_{P_1}^{R}\subseteq\bigoplus_{S\subseteq \Sigma_1^R} \mmm_{R,S}.
\]
where the sum runs over all non-empty subsets $S\subseteq \Sigma_{1}^R$ having property $\Pi(P_1, P_1,R)$. 
\end{enumerate}
\end{lemma}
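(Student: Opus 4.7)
The plan is to prove both parts by a single contrapositive strategy, resting on the following elementary observation. For any standard parabolic $Q$ with $\Delta_1^Q \subseteq \Delta_1^2$, a positive root $\beta \in \Sigma_1^2$ belongs to $\Sigma_1^Q$ if and only if $\varpi_\alpha(\beta^\vee) = 0$ for every $\alpha \in \Delta_1^2 \setminus \Delta_1^Q$. Since $\ggG$ is of type $A$ (hence simply laced), $\varpi_\alpha(\beta^\vee)$ equals the coefficient of $\alpha$ in the expansion of $\beta$ in simple roots, so the condition reads: that expansion avoids every simple root outside $\Delta_1^Q$. This is the only combinatorial input.

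For part \ref{identifying_nilp1}, I would begin with $Y \in \overline{\uuu}_R^{2\prime}(\Q)$ and let $S := \{\beta \in \Sigma_1^2 : Y_{-\beta} \neq 0\}$; the $A_1$-weight decomposition of $\overline{\uuu}_R^2$ forces $S \subseteq \Sigma_1^2 \setminus \Sigma_1^R$ automatically. Assume $S$ fails $\Pi(P_1, R, P_2)$, producing $\alpha \in \Delta_1^2 \setminus \Delta_1^R$ with $\varpi_\alpha(\beta^\vee) = 0$ for every $\beta \in S$. Let $Q$ be the standard parabolic with $\Delta_1^Q := \Delta_1^2 \setminus \{\alpha\}$; then $R \subseteq Q \subsetneq P_2$ since $\alpha \notin \Delta_1^R$. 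By the key observation each $\beta \in S$ lies in $\Sigma_1^Q$, so $Y_{-\beta} \in \overline{\uuu}_R^Q$, and hence $Y \in \overline{\uuu}_R^Q$, contradicting $Y \in \overline{\uuu}_R^{2\prime}$. The reverse inclusion comes from the analogous contrapositive: if $Y \in \overline{\uuu}_R^Q$ for some $R \subseteq Q \subsetneq P_2$, its support avoids every $\alpha \in \Delta_1^2 \setminus \Delta_1^Q$, and this set meets $\Delta_1^2 \setminus \Delta_1^R$ nontrivially (since $\Delta_1^Q \subsetneq \Delta_1^2$ and $\Delta_1^R \subseteq \Delta_1^Q$), violating $\Pi(P_1, R, P_2)$. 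Disjointness of the $\overline{\uuu}_S^\prime$ is automatic (different supports), yielding the claimed decomposition.

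Part \ref{identifying_nilp2} follows the same template with a preliminary case. For $X \in \widetilde{\mmm}_{P_1}^R(\Q)$ with $P_1 \subsetneq R$, set $S := \{\beta \in \Sigma_1^R : X_{-\beta} \neq 0\}$. If $S = \emptyset$, then $X$ has only non-negative $A_1$-weights, so $X \in \mmm_1 \oplus \uuu_1^R = \mmm_R \cap \ppp_1$, and since $P_1 \subsetneq R$ this contradicts $X \in \widetilde{\mmm}_{P_1}^R$. Otherwise, assuming $S$ fails $\Pi(P_1, P_1, R)$, pick the offending $\alpha \in \Delta_1^R$ and take $Q$ with $\Delta_1^Q := \Delta_1^R \setminus \{\alpha\}$, so $P_1 \subseteq Q \subsetneq R$. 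One has $\mmm_R \cap \qqq = \mmm_Q \oplus \uuu_Q^R$ with $A_1$-weights $\Sigma_1^R \cup \{0\} \cup (-\Sigma_1^Q)$. The key observation puts every negative component $X_{-\beta}$, $\beta \in S$, inside $\mmm_Q$, while the non-negative-weight components of $X$ sit in $\mmm_1 \oplus \uuu_1^R \subseteq \mmm_Q \oplus \uuu_Q^R$ automatically. Hence $X \in \mmm_R \cap \qqq$, contradicting $X \in \widetilde{\mmm}_{P_1}^R$, and the stated inclusion into $\bigoplus_S \mmm_{R,S}$ follows.

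I expect no serious obstacle; the argument is essentially bookkeeping around the single combinatorial observation in the first paragraph. The only mildly subtle point is the $A_1$-weight accounting for the positive and zero components of $X$ in part \ref{identifying_nilp2}: one must observe $\Sigma_1^R = \Sigma_1^Q \sqcup (\Sigma_1^R \setminus \Sigma_1^Q)$ to see that these components are automatically absorbed by $\mmm_Q \oplus \uuu_Q^R$, regardless of which simple root $\alpha$ was removed.
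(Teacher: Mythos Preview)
Your proof is correct and follows essentially the same contrapositive strategy as the paper: define $S$ as the support of $Y$ (resp.\ the negative support of $X$), assume property $\Pi$ fails at some simple root $\alpha$, and build the intermediate parabolic $Q$ by removing $\alpha$ from the relevant set of simple roots to obtain the contradiction. Your treatment is in fact more complete than the paper's: for part~(i) you also verify the reverse inclusion $\bigoplus_S \overline{\uuu}_S' \subseteq \overline{\uuu}_R^{2\prime}$ (the paper only argues the forward direction, though the statement asserts equality), and for part~(ii) you spell out the $A_1$-weight bookkeeping that the paper dismisses with ``follows from the definitions.'' The only cosmetic point is that your appeal to ``simply laced'' is stronger than needed---the argument only uses that $\varpi_\alpha(\beta^\vee)\ge 0$ for positive $\beta$ and that it vanishes iff $\alpha$ is absent from the support of $\beta$, both of which hold in any root system---but since we are in type $A$ your statement is literally correct.
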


\begin{proof}
\begin{enumerate}[label=(\roman{*})]
\item 
Let $Y\in\overline{\uuu}_{R}^{2}$. Let the set $S\subseteq\Sigma_1^2$ be defined to consist exactly of those $\beta\in\Sigma_{1}^2$ with $Y_{-\beta}\neq0$. 
$S$ has property $\Pi(P_1, R, P_2)$: For that suppose that instead there exists $\alpha\in \Delta_1^2\minus\Delta_1^R$ such that for all $\beta\in S$ we have $\varpi_{\alpha}(\beta^{\vee})\leq0$. Now every $\beta$  is a non-negative linear combination of elements in  $\Delta_1^2$ so that $\varpi_{\alpha}(\beta^{\vee})\leq0$ implies $\varpi_{\alpha}(\beta^{\vee})=0$. 
But this implies that $\beta\in \Sigma_1^Q$ for some parabolic subgroup $Q\subsetneq  P_2$, $R\subseteq Q$. Hence $Y\in \overline{\uuu}_R^{Q}(\Q)$ in contradiction to $Y\in\overline{\uuu}_R^{2\prime}(\Q)$ so that our set $S$ must have property $\Pi(P_1, R, P_2)$.

\item
 This follows from the definitions. \qedhere
\end{enumerate}
\end{proof}

\begin{lemma}\label{lattice_sum_lemma}
Suppose $R\subsetneq P_2$. If $m>\dim {\uuu}_{R}^{2}$, then there exist constants $c>0$ and $k_{\alpha}\geq 0$ for every $\alpha\in\Delta_1^2$ such that 
\begin{itemize}
\item 
$k_{\alpha}>0$ for all $\alpha\in \Delta_1^2\minus\Delta_1^R$, and 

\item for all $a\in A_1^G=A_{P_1}^G$, we have
\[
 \sum\limits_{{Y}\in{\overline{\uuu}}_{R}^{2\prime}(\frac{1}{N}\Z)}||\Ad a^{-1} {Y}||^{-m}
\leq c \prod\limits_{\alpha\in \Delta_1^2}e^{-k_{\alpha}\alpha(H_0(a))}.
\] 
\end{itemize}
\end{lemma}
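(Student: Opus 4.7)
\emph{Proof plan.} The plan is to combine the decomposition supplied by Lemma~\ref{identifying_nilpotents}(i) with a weighted AM--GM estimate. That lemma writes
\[
 \overline{\uuu}_{R}^{2\prime}=\bigsqcup_{S}\overline{\uuu}_{S}^{\prime},
\]
with $S$ ranging over the finitely many subsets of $\Sigma_{1}^{2}$ enjoying property $\Pi(P_{1},R,P_{2})$, so it suffices to bound the sum separately on each $\overline{\uuu}_{S}^{\prime}$ and then add the resulting estimates over $S$.

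For a fixed $S$ I would write $Y=\sum_{\beta\in S}Y_{-\beta}$. Because $\Ad a^{-1}$ scales the root space $\ggG_{-\beta}$ by $e^{\beta(H_{0}(a))}$, using the $\ell^{1}$-style vector norm on the root-space decomposition one has
\[
 \|\Ad a^{-1}Y\|=\sum_{\beta\in S}e^{\beta(H_{0}(a))}\|Y_{-\beta}\|.
\]
For the type $A_{n-1}$ root system every root space is $1$-dimensional, so $D_{S}:=\dim\overline{\uuu}_{S}^{\prime}=|S|$. The weighted AM--GM inequality with uniform weights $t_{\beta}=1/|S|$ yields
\[
 \|\Ad a^{-1}Y\|^{-m}\leq\prod_{\beta\in S}\bigl(e^{\beta(H_{0}(a))}\|Y_{-\beta}\|\bigr)^{-m/|S|},
\]
and the sum over $Y\in\overline{\uuu}_{S}^{\prime}(\tfrac{1}{N}\Z)$ factors into a product of independent lattice sums $\sum_{0\neq Y_{-\beta}}\|Y_{-\beta}\|^{-m/|S|}$. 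Each converges because $m/|S|>1$, which follows from the hypothesis $m>\dim\uuu_{R}^{2}\geq|S|$.

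Next I would re-express each $\beta\in\Sigma_{1}^{2}$ in the basis of simple roots, $\beta=\sum_{\alpha\in\Delta_{1}^{2}}\varpi_{\alpha}(\beta^{\vee})\alpha$ with $\varpi_{\alpha}(\beta^{\vee})\in\Z_{\geq 0}$ (valid in type $A$), so that the estimate assumes the form
\[
 \sum_{Y\in\overline{\uuu}_{S}^{\prime}(\frac{1}{N}\Z)}\|\Ad a^{-1}Y\|^{-m}\leq C_{S}\prod_{\alpha\in\Delta_{1}^{2}}e^{-K_{\alpha}^{S}\alpha(H_{0}(a))},\qquad K_{\alpha}^{S}=\frac{m}{|S|}\sum_{\beta\in S}\varpi_{\alpha}(\beta^{\vee})\geq 0.
\]
The role of property $\Pi(P_{1},R,P_{2})$ is now transparent: for every $\alpha\in\Delta_{1}^{2}\minus\Delta_{1}^{R}$ it furnishes $\beta_{0}\in S$ with $\varpi_{\alpha}(\beta_{0}^{\vee})>0$, which forces $K_{\alpha}^{S}>0$.

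The main obstacle I foresee is the final amalgamation of the finitely many $S$-bounds into a single inequality with constants independent of $a$. The natural choice $k_{\alpha}=\min_{S}K_{\alpha}^{S}$ preserves the required strict positivity on $\Delta_{1}^{2}\minus\Delta_{1}^{R}$ and absorbs the constants $C_{S}$, but the uniform domination $\sum_{S}\prod_{\alpha}e^{-K_{\alpha}^{S}\alpha(H_{0}(a))}\ll\prod_{\alpha}e^{-k_{\alpha}\alpha(H_{0}(a))}$ is only immediate when $\alpha(H_{0}(a))\geq 0$ for all $\alpha\in\Delta_{1}^{2}$. Since the lemma will be invoked downstream through the Siegel set estimate~\eqref{siegel_est}, where $a$ is essentially confined to such a positive chamber (up to the shift by $T_{1}$), this should suffice; carefully tracking the weights and constants to keep them uniform is the bookkeeping point I expect to require the most care.
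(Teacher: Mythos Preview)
Your approach is essentially the paper's: decompose $\overline{\uuu}_R^{2\prime}$ via Lemma~\ref{identifying_nilpotents}\ref{identifying_nilp1}, apply an AM--GM inequality to factor the lattice sum, expand each $\beta$ in simple roots, and invoke property $\Pi(P_1,R,P_2)$ for the strict positivity of the exponents on $\Delta_1^2\minus\Delta_1^R$. There is, however, one genuine gap in your convergence step.

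You write that ``for the type $A_{n-1}$ root system every root space is $1$-dimensional, so $D_S=|S|$''. This is true for the absolute root system relative to $T_0$, but the $\beta\in\Sigma_1^2$ here are roots of $A_1=A_{P_1}$, not of $T_0$. When $P_1$ is not minimal, say of block type $(n_1,\ldots,n_k)$, the $A_1$-eigenspace for the root $e_i-e_j$ is the $(i,j)$-block, of dimension $n_in_j$, which can exceed~$1$. Your lattice sum $\sum_{0\neq Y_{-\beta}}\|Y_{-\beta}\|^{-m/|S|}$ is then over a $d_{-\beta}$-dimensional lattice and requires $m/|S|>d_{-\beta}$ for convergence; the hypothesis $m>\dim\uuu_R^2\geq|S|$ does not ensure this. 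The paper handles this by a further refinement: it fixes a basis $\{E_{-\beta,i}\}_{i=1}^{d_{-\beta}}$ of each root space and decomposes $\overline{\uuu}_S'$ according to tuples $\RRR=(R_\beta)_{\beta\in S}$ of nonempty subsets $R_\beta\subseteq\{1,\ldots,d_{-\beta}\}$ recording which coordinates are nonzero. On each piece $\overline{\uuu}_{S,\RRR}'$ the AM--GM then has $r=\sum_\beta|R_\beta|\leq\dim\uuu_R^2$ one-dimensional factors, and $m>\dim\uuu_R^2\geq r$ gives the needed $m/r>1$. An equivalent repair within your framework is to use weights $t_\beta=d_{-\beta}/D_S$ (with $D_S=\sum_{\beta\in S}d_{-\beta}$) instead of uniform weights in the AM--GM step; the convergence condition then becomes $m>D_S$, which does follow from $m>\dim\uuu_R^2$.

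Your concern about amalgamating the finitely many $S$-bounds into a single inequality valid for all $a\in A_1^G$ is well taken: the paper's proof likewise produces exponents $k_{\alpha,S,\RRR}$ depending on $(S,\RRR)$ and does not explicitly carry out this step. Your diagnosis is correct that it is harmless for the application: in the proof of Lemma~\ref{fundamental_convergence} the estimate is always multiplied by $\sigma_1^2(H_0(a)-T)$, on whose support $\alpha(H_0(a))>\alpha(T)$ for every $\alpha\in\Delta_1^2$, so taking $k_\alpha=\min_{S,\RRR}k_{\alpha,S,\RRR}$ and absorbing the finite sum into $c$ is legitimate there.
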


\begin{proof}
 This is a slightly refined version of \cite[pp. 946-947]{Ar78} in that we give a  sufficient lower bound for the exponent $m$. 
Suppose first that $m>0$ is sufficiently large. We shall later see that $m>\dim\uuu_R^2$ suffices.
 
Consider non-empty subsets  $S\subseteq\Sigma_R^2$ with property $\Pi(P_1,R, P_2)$.
By Lemma \ref{identifying_nilpotents}\ref{identifying_nilp1} the set $\overline{\uuu}_R^{2\prime}(\frac{1}{N}\Z)$ is the direct sum over such sets $S$ of $\overline{\uuu}_S^{\prime}(\frac{1}{N}\Z)$.
For $\beta\in\Sigma_R^2$ let $\{E_{-\beta, i}\}_{i=1,\ldots, d_{-\beta}}$, $d_{-\beta}:=\dim \uuu_{-\beta}$, be a basis for the eigenspace $\uuu_{-\beta}$ of $-\beta$ in $\overline{\uuu}_R^2$, which is orthogonal with respect to the norm $\|\cdot\|$, i.e. 
$\|\sum_i b_iE_{-\beta, i}\|
=\sum_i|b_i|$ 
for all $b_1, \ldots, b_{d_{-\beta}}\in\R$. 
Thus, if $Y\in\overline{\uuu}_S(\frac{1}{N}\Z)$, we can uniquely write 
$Y=\sum_{\beta\in S}\sum_{i=1}^{d_{-\beta}}Y_{-\beta, i}E_{-\beta, i}$,
and get for every $a\in A_{1}^G$ that
\[
\|\Ad a^{-1} Y\|
=\sum_{\beta\in S}e^{2\beta(H_0(a))}\sum_{i=1}^{d_{-\beta}}\|Y_{-\beta,i}\|.
\]
Let $\RRR=(R_{\beta})_{\beta\in S}$ be a tuple of non-empty subsets $R_{\beta}\subseteq \{1, \ldots, d_{-\beta}\}$, and define
\[
 \overline{\uuu}_{S, \RRR}^{\prime}
=\big\{Y\in\overline{\uuu}_S^{\prime}\mid Y_{-\beta, i}\neq 0\Leftrightarrow \beta\in S \text{ and } i\in R_{\beta}\}. 
\]
Clearly, 
$\overline{\uuu}_S^{\prime}
=\bigoplus_{\RRR=(R_{\beta})_{\beta\in S}}\overline{\uuu}_{S, \RRR}^{\prime}$ with the sum running over all tuples $\RRR$ as before.
As there are only finitely many such tuples $\RRR$, it suffices to consider the sum over $Y\in \overline{\uuu}_{S, \RRR}^{\prime}(\frac{1}{N})$ for one of the tuples $\RRR$.

Then, since $0\not\in\overline{\uuu}_S^{\prime}$ because of $R\subsetneq P_2$,
\begin{align*}\label{bounding_sum_over_nilpotents1}
\sum_{Y\in\overline{\uuu}_{S, \RRR}^{\prime}(\frac{1}{N}\Z)}\|\Ad a^{-1} Y\|^{-m}
& =\sum_{Y\in\overline{\uuu}_{S, \RRR}^{\prime}(\frac{1}{N}\Z)}\bigg(\sum_{\beta\in S}\sum_{i\in R_{\beta}}e^{\beta(H_0(a))}\|Y_{-\beta, i}\|\bigg)^{-m}\\
& \leq \prod_{\beta\in S}\prod_{i\in R_{\beta}}\sum_{Y_{-\beta, i}\in\frac{1}{N}\Z\minus\{0\}}
\bigg(e^{\beta(H_0(a))}\|Y_{-\beta, i}\|\bigg)^{-\frac{m}{r}},
\end{align*}
where $r:=\sum_{\beta\in S} |R_{\beta}|\leq \dim \uuu_R^2$.
This last product equals
\[
 \bigg(\sum_{X\in\frac{1}{N}\Z\minus\{0\}}\|X\|^{-\frac{m}{r}}\bigg)^r
 \prod_{\beta\in S}\prod_{i\in R_{\beta}}e^{-m\beta(H_0(a))/r}
=\bigg(\sum_{X\in\frac{1}{N}\Z\minus\{0\}}\|X\|^{-\frac{m}{r}}\bigg)^r
 \prod_{\beta\in S}e^{-m|R_{\beta}|\beta(H_0(a))/r}.
\]
The sum $\sum_{X\in\frac{1}{N}\Z\minus\{0\}}\|X\|^{-\frac{m}{r}}$  is finite if $m>r$, so it is in particular finite if $m>\dim\uuu_R^2\geq r$, which gives our lower bound on $m$.
Now every $\beta$ is a non-negative linear combination of roots in $\Delta_1^2$ so that the above product equals
\[
 \bigg(\sum_{X\in\frac{1}{N}\Z\minus\{0\}}\|X\|^{-\frac{m}{r}}\bigg)^r
 \prod_{\alpha\in \Delta_1^2}e^{-k_{\alpha, S, \RRR}\alpha(H_0(a))}
\]
for suitable constants $k_{\alpha, S, \RRR}\geq0$.
Since $S$ has property $\Pi(P_1, R, P_2)$, there exists for every $\alpha\in\Delta_1^2\minus\Delta_1^R$ some $\beta\in S$ such that $\alpha$ occurs non-trivially in $\beta$. Hence, since $|R_{\beta}|>0$ for every $\beta\in S$, the corresponding coefficient satisfies $k_{\alpha, S, \RRR}>0$ if $\alpha\in \Delta_1^2\minus\Delta_1^R$, which finishes the proof.
\end{proof}

\begin{lemma}\label{bounding_sum_over_pseudoss}
For $\beta\in \Phi(A_1,\mmm_R)=\Phi(A_1^R, \mmm_R)=:\Phi_1^R$, denote by $\mmm_{\beta}\subseteq \mmm_R$ the eigenspace of $\beta$ in $\mmm_R$ so that $\mmm_R=\bigoplus_{\beta\in \Phi_1^R}\mmm_{\beta}$.
Put $A_1^R(T_1)=\{a\in A_1^R\mid \forall\alpha\in\Delta_1^R:~ \alpha(H_{P_1}(a)-T_1)>0\}$, let $k>1$ be given, and let $\nu>k+D$.

Then for every $\alpha\in\Delta_0^R$ there exists a constant $k_{\alpha}\geq0$, and for every $\beta\in\Phi_0^R$ a seminorm $\mu_{\beta}$ on $\SSS^1(\mmm_{\beta}(\A))\cup\SSS_{\nu}(\mmm_{\beta}(\A))$ such that the following holds:
\begin{itemize}
 \item  $k_{\alpha}>0$ for every $\alpha\in \Delta_0^R\minus\Delta_0^1$, and 

\item 
for all $\lambda>0$, all $\varphi_{\beta}\in\SSS^1(\mmm_{\beta}(\A))\cup\SSS_{\nu}(\mmm_{\beta}(\A))$, and all $a\in A_1^R(T_1)$ we have
\begin{equation}\label{sum_pseudoss}
 \delta_0^R(a)^{-1}\sum\limits_{\substack{X\in\widetilde{\mmm}_{P_1}^R(\frac{1}{N}\Z) \\ X\not\in\nnn}}
 \prod_{\beta\in \Phi_0^R}
\varphi_{\beta}(\lambda \beta(a)^{-1}X_{\beta})
\leq \begin{cases}
      \lambda^{-\dim\mmm_R}\mu(\varphi)\prod\limits_{\alpha\in \Delta_0^R\minus\Delta_0^1}e^{-k_{\alpha}\alpha(H_0(a))}			&\text{if } \lambda\leq1,\\[0.5cm]
      \lambda^{-k}\mu(\varphi)\prod\limits_{\alpha\in \Delta_0^R\minus\Delta_0^1}e^{-k_{\alpha}\alpha(H_0(a))}					&\text{if } \lambda\geq1,
     \end{cases}
\end{equation}
where $\mu(\varphi):=\prod_{\beta\in\Phi_1^R}\mu_{\beta}(\varphi_{\beta})$.
\end{itemize}
\end{lemma}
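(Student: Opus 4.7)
The plan is to decompose the summation set via Lemma \ref{identifying_nilpotents}(ii), factor the resulting sum along the root-space decomposition of $\mmm_R$, and control each factor by an elementary lattice-sum estimate. The property $\Pi(P_1,P_1,R)$ together with the standard factorization of $\delta_0^R(a)^{-1}$ will then conspire to produce the required exponential decay in $a$.

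First, I would invoke Lemma \ref{identifying_nilpotents}(ii) to write $\widetilde{\mmm}_{P_1}^R(\Q)\minus\nnn$ as a finite union of sets $\mmm_{R,S}(\Q)$, indexed by the nonempty $S\subseteq\Sigma_1^R$ having property $\Pi(P_1,P_1,R)$, and reduce to estimating the sum restricted to each $\mmm_{R,S}(\frac{1}{N}\Z)$ in turn. Writing $X=\sum_\beta X_\beta$ in the root-space decomposition, the conditions defining $\mmm_{R,S}$ (namely $X_{-\beta}=0$ for $\beta\in\Sigma_1^R\minus S$, $X_{-\beta}\neq 0$ for $\beta\in S$, and the remaining components free) factor the summation domain as a product. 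The integrand factors correspondingly, so that the whole sum becomes a product of three kinds of one-component lattice sums: full-lattice sums in $\mmm_\beta$ for $\beta\in\Sigma_1^R\cup\{0\}$ (``free''), the single value $\varphi_{-\beta}(0)$ for $\beta\in\Sigma_1^R\minus S$, and nonzero-lattice sums in $\mmm_{-\beta}(\frac{1}{N}\Z)\minus\{0\}$ for $\beta\in S$ (``nonzero'').

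Each factor admits a standard bound. For the free sums with scaling parameter $c=\lambda\beta(a)^{-1}$ (respectively $c=\lambda$ when $\beta=0$), one has $\sum_Y|\varphi_\beta(cY)|\leq\mu_\beta(\varphi_\beta)\max(1,c^{-\dim\mmm_\beta})$; the zero terms are just $|\varphi_{-\beta}(0)|\leq\mu_{-\beta}(\varphi_{-\beta})$; and for the nonzero sums with $c=\lambda\beta(a)$, the Schwartz decay of $\varphi_{-\beta}$ (available provided $\nu>k+D$) yields $\sum_{Y\neq 0}|\varphi_{-\beta}(cY)|\leq\mu_{-\beta}(\varphi_{-\beta})c^{-\dim\mmm_{-\beta}}$ when $c\leq 1$ and $\leq\mu_{-\beta}(\varphi_{-\beta})c^{-k}$ when $c\geq 1$. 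Combining these with $\delta_0^R(a)^{-1}$, the $\beta(a)^{\dim\mmm_\beta}$ contributions from the free sums partially cancel against $\delta_0^R(a)^{-1}$, and the residual $\beta(a)^{-\text{(exponent)}}$ contributions from $S$ supply the missing decay. Expanding each $\beta\in\Sigma_1^R$ as a non-negative linear combination of simple roots in $\Delta_1^R$ converts this into $\prod_\alpha e^{-k_\alpha\alpha(H_0(a))}$ with $k_\alpha\geq 0$; property $\Pi(P_1,P_1,R)$ of $S$ then guarantees that for every $\alpha\in\Delta_1^R=\Delta_0^R\minus\Delta_0^1$ some $\beta\in S$ contributes with $\varpi_\alpha(\beta^\vee)>0$, forcing $k_\alpha>0$ on $\Delta_1^R$. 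As for the $\lambda$-exponents, in the ``small-argument'' regimes of all three sum types the total is $\lambda^{-\dim\mmm_R}$, giving the $\lambda\leq 1$ bound; in the $\lambda\geq 1$ regime the free sums are $O(1)$ (using the lower bound on $\beta(a)$ supplied by $a\in A_1^R(T_1)$), while one nonzero sum carries the $\lambda^{-k}$ decay.

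The main obstacle will be the combinatorial bookkeeping of the previous paragraph: verifying that the $\beta(a)$-exponents produced by the free-sum bounds, the nonzero-sum bounds, and $\delta_0^R(a)^{-1}$ combine to give a nonnegative coefficient on each $\alpha\in\Delta_0^R$ and a strictly positive one on each $\alpha\in\Delta_1^R$, and checking that the transitions between ``small'' and ``large'' regimes of each factor match consistently in the two cases $\lambda\lessgtr 1$. The $A_1^R(T_1)$-condition on $a$ plays only the auxiliary role of providing a uniform positive lower bound for $\beta(a)$, $\beta\in\Sigma_1^R$, which allows the $\max(1,\cdot)$ appearing in the free-sum estimates to be absorbed into the final seminorm $\mu(\varphi)$.
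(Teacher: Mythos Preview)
Your approach is essentially that of the paper. One small caveat: Lemma \ref{identifying_nilpotents}\ref{identifying_nilp2} requires $P_1\subsetneq R$ and decomposes $\widetilde{\mmm}_{P_1}^R$ itself (the condition $X\notin\nnn$ is simply dropped thereafter), so the case $R=P_1$ needs separate but trivial treatment---there $\Phi_1^R=\{0\}$ and the condition $X\notin\nnn$ (hence $X\neq 0$) is precisely what supplies the $\lambda^{-k}$ decay via the nonzero-lattice estimate \eqref{est_wright2}.
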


\begin{proof}
Suppose first that $R\neq P_1$.
The left hand side of \eqref{sum_pseudoss} can by Lemma \ref{identifying_nilpotents}\ref{identifying_nilp2} be bounded by a sum over non-empty subsets $S\subseteq \Sigma_1^R$ with property $\Pi(P_1, P_1, R)$ of the terms
\[
\bigg(\prod_{\beta\in S}\sum_{X_{-\beta}\in\mmm_{-\beta}(\frac{1}{N}\Z)\minus\{0\}}\varphi_{-\beta}(\lambda \beta(a)X_{-\beta})\bigg)
\bigg(\prod_{\beta\in\Phi_0^1\cup\Sigma_1^R}\sum_{X_{\beta}\in\mmm_{\beta}(\frac{1}{N}\Z)}\varphi_{\beta}(\lambda \beta(a)^{-1} X_{\beta})\bigg).
\]
Recall that if $V$ is a finite dimensional vector space, then for every $r>1$ there exists a seminorm $\mu_r$ on $\SSS_{r+\dim V}(V(\A))$ such that for all $s>0$ and all $\Psi\in\SSS_{r+\dim V}(V(\A))\cup \SSS^1(V(\A))$ we have
\[
 \sum_{X=(X_1, \ldots, X_{\dim V})\in V(\Q), X_1\neq0}|\Psi( s X)| \leq \mu_r(\Psi) \sup\{1,s^{-1}\}^{\dim V} \sup\{1,s\}^{-r},
\]
see, e.g., \cite[pp. 510-511]{Wr85}. (Note that in \cite{Wr85} this estimate was only proved for $\Psi\in \SSS(V(\A))$, but it is clear from the proof there that one only needs a polynomial decay of $\Psi$ up to a certain power and no differentiability at all.)
In particular, after possibly changing the seminorm in a way depending only on $\dim V$, we get
\begin{align}\label{est_wright}
 \sum_{X\in V(\Q)}|\Psi( s X)| & \leq \begin{cases}
                                     \mu_r(\Psi)s^{-\dim V}				&\text{if } s\leq 1,\\
				      \mu_r(\Psi)					&\text{if }s\geq1,\;\;\text{ and }
                                    \end{cases}\\\label{est_wright2}
\sum_{X\in V(\Q), X\neq0}|\Psi( s X)| & \leq \begin{cases}
                                     \mu_r(\Psi)s^{-\dim V}					&\text{if } s\leq 1,\\
				      \mu_r(\Psi)s^{-r}						&\text{if }s\geq1.
                                    \end{cases}
\end{align}
From this it follows that for every $\beta\in\{0\}\cup\Sigma_1^R$   there exists a seminorm $\mu_{\beta}$ on $\SSS_{\dim\mmm_{\beta}+ 1}(\mmm_{\beta}(\A))\cup\SSS^{1}(\mmm_{\beta}(\A))$ such that for all $\lambda>0$ and all $a\in A_1^{R}(T_1)$ we have
\[
 \sum_{X_{\beta}\in\mmm_{\beta}(\frac{1}{N}\Z)}\varphi_{\beta}(\lambda \beta(a)^{-1} X_{\beta})
\leq 		\begin{cases}
		    \mu_{\beta}(\varphi_{\beta})\beta(a)^{\dim\mmm_{\beta}} (\lambda^{-1}+1)^{\dim \mmm_{\beta}}		&\text{if }\beta\geq0,\\
		      \mu_{\beta}(\varphi_{\beta}) (\lambda^{-1}+1)^{\dim\mmm_{\beta}}						&\text{if }\beta<0.
		\end{cases}
\]
For this inequality also recall that $a\in A_0^R(T_1)$ implies that $\beta(a)$ is uniformly bounded from below if $\beta>0$.
Hence for all $\lambda>0$ and $a\in A_1^R(T_1)$,
\begin{align*}\label{pseudoss1}
 \prod_{\beta\in\{0\}\cup\Sigma_1^R}\sum_{X_{\beta}\in\mmm_{\beta}(\frac{1}{N}\Z)}\varphi_{\beta}(\lambda \beta(a)^{-1} X_{\beta})
& \leq \delta_0^R(a) (\lambda^{-1}+1)^{\dim \ppp_1} \prod_{\beta\in\{0\}\cup\Sigma_1^R} \mu_{\beta}(\varphi_{\beta})\\
& \leq\begin{cases}
     c\delta_0^R(a)\prod_{\beta\in\Phi_0^1\cup\Sigma_1^R} \mu_{\beta}(\varphi_{\beta})				&\text{if }\lambda\geq 1,\\
      c\delta_0^R(a)\lambda^{-\dim \ppp_1} \prod_{\beta\in\{0\}\cup\Sigma_1^R} \mu_{\beta}(\varphi_{\beta})		&\text{if }\lambda<1,
    \end{cases}
\end{align*}
where $c>0$ is some constant.

Similarly, for every $\beta\in S$ and every $k>1$, there is a seminorm $\mu_{-\beta, k}$ on $\SSS_{k+\dim \mmm_{-\beta}}(\mmm_{-\beta}(\A))\cup\SSS^{1}(\mmm_{-\beta}(\A))$ such that for all $\lambda>0$ and all $a\in A_0^R(T_1)$ we have
\[
\sum_{X_{-\beta}\in\mmm_{-\beta}(\frac{1}{N}\Z)\minus\{0\}}\varphi_{-\beta}(\lambda \beta(a)X_{-\beta})
\leq \begin{cases}
      \mu_{-\beta, k}(\varphi_{-\beta}) \lambda^{-k} \beta(a)^{-k} 					&\text{if } \lambda\geq 1,\\
      \mu_{-\beta, k}(\varphi_{-\beta}) (\lambda\beta(a))^{-\dim\mmm_{-\beta}} 				&\text{if } \lambda< 1.
     \end{cases}
\]
Hence,
\begin{equation*}\label{pseudoss2}
\prod_{\beta\in S}\sum_{X_{-\beta}\in\mmm_{-\beta}(\frac{1}{N}\Z)\minus\{0\}}\varphi_{-\beta}(\lambda \beta(a)X_{-\beta})
\leq \begin{cases}
      \lambda^{-k\sum_{\beta\in S}\dim\mmm_{-\beta}}\mu_{S,k}(\varphi)\prod_{\beta\in S}\beta(a)^{-k}				&\text{if }\lambda\geq1,\\
      \lambda^{-\sum_{\beta\in S}\dim\mmm_{-\beta}}\mu_{S, k}(\varphi)\prod_{\beta\in S}\beta(a)^{-\dim\mmm_{-\beta}}				&\text{if }\lambda<1,
     \end{cases}
\end{equation*}
where $\mu_{S,k}(\varphi):=\prod_{\beta\in S}\mu_{-\beta,k}(\varphi_{-\beta})$.
Now every $\beta\in S$ can be written as $\beta=\sum_{\alpha\in \Delta_0^R} b_{\beta, \alpha} \alpha$ for $b_{\beta, \alpha}\geq0$ suitable constants so that
$\sum_{\beta\in S}\beta=\sum_{\alpha\in\Delta_0^R} B_{\alpha}\alpha$ with $B_{\alpha}:=\sum_{\beta\in S}b_{\beta, \alpha}$. Since $S$ has property $\Pi(P_1, P_1, R)$, we have $B_{\alpha}>0$ if $\alpha\in \Delta_0^R\minus\Delta_0^1$ so that
\[
 \prod_{\beta\in S}\beta(a)^{-k}
\leq c \prod_{\alpha\in\Delta_0^R\minus\Delta_0^1} e^{-k B_{\alpha}\alpha(H_0(a))}
\]
 for a suitable constant $c>0$.
Multiplying the above estimates gives the assertion if $R\neq P_1$.
If $R=P_1$, we simply use the estimate for the sum over $X\in V(\Q)$, $X\neq0$, given in \eqref{est_wright} and \eqref{est_wright2}.
\end{proof}

\begin{rem}\label{remark_on_nilpotent}
 If $G=\GL_n$, then under the same assumptions and with the same notation as in the previous lemma, it follows that for a suitable seminorm $\mu$, we have for every $\lambda\in(0,1]$
\begin{equation}\label{sum_pseudoss_nilp}
 \delta_0^R(a)^{-1}\sum_{X\in\widetilde{\mmm}_{P_1}^R(\frac{1}{N}\Z) \cap\nnn}
 \prod_{\beta\in \Phi_1^R}
\varphi_{\beta}(\lambda \beta(a)^{-1}X_{\beta})
\leq  \lambda^{-\dim\mmm_R+1}\mu(\varphi)\prod_{\alpha\in \Delta_0^R\minus\Delta_0^1}e^{-k_{\alpha}\alpha(H_0(a))},
\end{equation}
since if $X$ is nilpotent, $\tr X=0$. Hence in the proof the sum over $X_0\in\mmm_0(\frac{1}{N}\Z)$ can be restricted to the vector subspace of traceless matrices which has codimension $1$.
Of course, similar versions of this inequality hold if we intersect $\mmm_0$ with other vector subspaces of positive codimension.
\end{rem}

\begin{lemma}\label{lemma_integral_torus}
Suppose we are given positive numbers $m_{\alpha}>0$ for each $\alpha\in\Delta_1^2$. 
Then for every sufficiently regular $T\in\aaa^+$ we have 
\begin{equation}\label{integral_over_torus}
 \int_{A_{1}^G}\sigma_1^2(H_0(a)-T)
\prod\limits_{\alpha\in \Delta_1^2}e^{-m_{\alpha}\alpha(H_0(a))}da
<\infty.
\end{equation}
\end{lemma}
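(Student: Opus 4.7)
The plan is to decompose the integral using the orthogonal splitting $\aaa_1^G = \aaa_1^2 \oplus \aaa_2^G$ (with respect to the Killing form). Accordingly, write $a = a^2 a_2$ with $a^2 \in A_1^2$, $a_2 \in A_2^G$, so that $H_0(a) = H^2 + H_2$ with $H^2 \in \aaa_1^2$, $H_2 \in \aaa_2^G$, and $da = da^2\, da_2$.

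The first step is to analyse $\sigma_1^2(H_0(a) - T)$ in these coordinates. Since every $\alpha \in \Delta_1^2$ vanishes on $\aaa_2^G$ and every $\varpi \in \widehat{\Delta}_2$ vanishes on $\aaa_1^2$, the three families of inequalities defining $\sigma_1^2$ split cleanly: (a) $\alpha(H^2 - T) > 0$ for $\alpha \in \Delta_1^2$, depending only on $H^2$; (b) $\varpi(H_2 - T) > 0$ for $\varpi \in \widehat{\Delta}_2$, depending only on $H_2$; and (c) $\bar{\alpha}(H_2) \le \alpha(T) - \alpha(H^2)$ for $\alpha \in \Delta_1 \setminus \Delta_1^2$, where $\bar{\alpha} := \alpha|_{\aaa_2^G}$ is a positive multiple of a simple root in $\Delta_2$. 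Crucially, the exponential factor $\prod_{\alpha \in \Delta_1^2} e^{-m_\alpha \alpha(H_0(a))}$ also depends only on $H^2$.

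The second step is to integrate out $H_2$ for fixed $H^2$ satisfying (a). Since the inverse Cartan matrix (relating $\widehat{\Delta}_2$ to $\Delta_2$) has nonnegative entries, each $\varpi \in \widehat{\Delta}_2$ is a nonnegative combination $\varpi = \sum_{\alpha} c_{\varpi,\alpha} \bar{\alpha}$, so (c) yields upper bounds
\[
\varpi(H_2) \le \sum_{\alpha \in \Delta_1 \setminus \Delta_1^2} c_{\varpi,\alpha}\bigl(\alpha(T) - \alpha(H^2)\bigr).
\]
Combined with the lower bounds $\varpi(H_2) > \varpi(T)$ from (b), and using that $\widehat{\Delta}_2$ is a basis of $(\aaa_2^G)^*$, this confines $H_2$ to a bounded polytope $\Omega(H^2) \subseteq \aaa_2^G$ (empty if any interval degenerates) whose volume is at most $C(1 + \|H^2\|)^{\dim \aaa_2^G}$ for a constant $C = C(T)$.

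The third step is to integrate the resulting bound in $H^2$. Since $\Delta_1^2$ forms a basis of $(\aaa_1^2)^*$, the substitution $x_\alpha = \alpha(H^2)$ reduces
\[
\int_{\aaa_1^2} \chi\{\alpha(H^2) > \alpha(T) \;\forall \alpha \in \Delta_1^2\}\,(1+\|H^2\|)^{\dim \aaa_2^G} \prod_{\alpha \in \Delta_1^2} e^{-m_\alpha \alpha(H^2)}\, dH^2
\]
to an integral of a polynomial against $\prod_\alpha e^{-m_\alpha x_\alpha}$ over a product of half-lines, which is manifestly finite because every $m_\alpha > 0$. The main obstacle is the polytope estimate in the middle step: one must check that the nonnegativity of the inverse Cartan matrix between $\Delta_2$ and $\widehat{\Delta}_2$ indeed converts the root-type upper bounds in (c) into weight-type upper bounds compatible with (b), so that $\Omega(H^2)$ really is bounded with the claimed polynomial volume in $\|H^2\|$.
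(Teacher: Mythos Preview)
Your proposal is correct and follows essentially the same approach as the paper: both decompose $H = H_1 + H_2$ along $\aaa_1^G = \aaa_1^2 \oplus \aaa_2^G$, observe that the exponential weight depends only on $H_1$, bound the volume of contributing $H_2$ polynomially in the $t_\alpha = \alpha(H_1)$, and then integrate a polynomial times $\prod e^{-m_\alpha t_\alpha}$ over half-lines. The only difference is that the paper invokes \cite[Corollary 6.2]{Ar78} as a black box for the bound $\|H_2\| \leq c(1 + \sum_{\alpha \in \Delta_1^2} t_\alpha)$, whereas you supply a self-contained argument via the nonnegativity of the inverse Cartan matrix to convert the constraints $\bar\alpha(H_2) \leq \alpha(T) - \alpha(H^2)$ into upper bounds on each $\varpi(H_2)$; combined with the lower bounds $\varpi(H_2) > \varpi(T)$, this pins down the same bounded polytope. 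Your argument is thus a slightly more explicit unpacking of the paper's citation.
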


\begin{proof}
This is essentially contained in \cite[p. 365]{Ch02} (cf.\ also \cite[p. 947]{Ar78}), but we need to find a sufficient lower bound for the $m_{\alpha}$.
We can write the integral in \eqref{integral_over_torus} as
\[
 \int_{\aaa_1^G}\sigma_1^2(H-T)
\prod\limits_{\alpha\in \Delta_1^2}e^{-m_{\alpha}\alpha(H)}dH.
\]
If $H\in \aaa_{P_1}^G$, we decompose it as $H=H_1+H_2$ with uniquely determined $H_1\in\aaa_1^2$ and $H_2\in \aaa_2^G$. Then $\sigma_1^2(H-T)\neq0$ implies  $t_{\alpha}:=\alpha(H)=\alpha(H_1)>\alpha(T)$ for all $\alpha\in\Delta_1^2$, and also the existence of a constant $c>0$ (independent of $H$) such that
\[
\|H_2\|\leq c (1+\sum_{\alpha\in \Delta_1^2}t_{\alpha})
\leq c\prod_{\alpha\in \Delta_1^2}(1+t_{\alpha})
\]
(cf.\ \cite[Corollary 6.2]{Ar78}).
Hence the volume in $\aaa_2^G$ of all contributing $H_2$ is bounded by a polynomial in the $t_{\alpha}$ for $\alpha\in\Delta_1^2$ so that there exists some $c>0$ such that the above integral is bounded by
\[
c \prod_{\alpha\in \Delta_1^2}\int_{\alpha(T)}^{\infty}(1+t_{\alpha})^k e^{-m_{\alpha}t_{\alpha}}dt_{\alpha}.
\]
Since $m_{\alpha}>0$ for all $\alpha\in \Delta_1^2$, this implies the assertion.
\end{proof}

Let $\bbb\subseteq \ggG$ be a subspace as in \S \ref{section_testfcts}, and let $S$ be a set of roots acting on $\bbb$ such that we have a direct decomposition $\bbb=\bigoplus_{\beta\in S} \bbb_{\beta}$. 
Let $\|\cdot\|$ denote a norm on $\bbb(\A)$ compatible with this direct sum decomposition (i.e., if $B=\sum_{\beta} B_{\beta}\in \bbb(\A)$, $B_{\beta}\in \bbb_{\beta}(\A)$, then $\|B\|=\sum_{\beta}\|B_{\beta}\|$). 

\begin{lemma}\label{std_estimates_for_SB}
Let $\nu>0$ be an integer. Then for every $Y\in\UUU(\bbb)_{\leq\nu}$, there exists a constant $c_Y>0$ such that the following holds:
For every $\Phi\in\SSS^{\nu}(\bbb(\A))$ (resp.\ $\Phi\in\SSS_{\nu}(\bbb(\A))$) there are functions $\varphi_{\beta}\in\SSS^{\infty}(\bbb_{\beta}(\A)))=\SSS(\bbb_{\beta}(\A))$ (resp.\ $\varphi_{\beta}\in \SSS_{\nu}(\bbb_{\beta}(\A))$), $\beta\in S$, such that
\begin{enumerate}[label=(\roman{*})]
\item 
$\varphi_{\beta}\geq0$ for all $\beta$.

\item 
$|\Phi(B)|\leq \prod_{\beta\in S}\varphi_{\beta}(B_{\beta})$ for all $B=\sum_{\beta\in S}B_{\beta}\in \bbb(\A)$.

\item  
For every tuple $(Y_{\beta})_{\beta\in S}\subseteq \bigoplus_{\beta\in S} \UUU(\bbb_{\beta})$ of degree $\sum_{\beta\in S} \deg Y_{\beta}\leq \nu$ we have
\[
\prod_{\beta\in S}\|Y_{\beta}*\varphi_{\beta}\|_{L^1(\bbb_{\beta}(\A))}\leq \mu(\Phi) \prod_{\beta\in S}c_{Y_{\beta}} 
\]
for $\mu(\Phi):=\sum_{X\in\BBB_{\bbb, \nu}}\|X\Phi\|_{L^1(\bbb(\A))}$, and $\mu$ is a seminorm on $\SSS(\bbb(\A))$ (resp.\ on $\SSS_{\nu}(\bbb(\A))$).
\end{enumerate} 
\end{lemma}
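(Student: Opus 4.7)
The plan is to construct $\varphi_\beta$ with a tensor product structure across the places, $\varphi_\beta = \varphi_{\beta,\infty}\cdot\varphi_{\beta,f}$, treating the archimedean and non-archimedean parts of $\bbb(\A) = \bbb(\R)\times \bbb(\A_f)$ separately. For the non-archimedean part, I first reduce (up to a finite sum which only multiplies $\mu(\Phi)$ by an absolute constant) to the case where $\Phi = \Phi_\infty\otimes\Phi_f$ is a simple tensor with $\Phi_f$ supported in a single compact open subset $K_f\subseteq\bbb(\A_f)$. Choose compact opens $K_{\beta,f}\subseteq\bbb_\beta(\A_f)$ with $K_f\subseteq\bigoplus_\beta K_{\beta,f}$, and set $\varphi_{\beta,f}:=c_\beta\,\mathbf{1}_{K_{\beta,f}}$ for a suitable positive constant $c_\beta$ so that $|\Phi_f(B_f)|\leq\prod_\beta \varphi_{\beta,f}(B_{\beta,f})$. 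Elements of $\UUU(\bbb_\beta)$ act only on the archimedean variable, so the non-archimedean factor of $\|Y_\beta*\varphi_\beta\|_{L^1}$ is simply $c_\beta\vol(K_{\beta,f})$, absorbed into $c_{Y_\beta}$.

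For the archimedean part, the main input is a weighted Sobolev bound. Writing $\bbb(\R)=\bbb_\beta(\R)\oplus\bbb_\beta^c(\R)$ with $\bbb_\beta^c:=\bigoplus_{\gamma\neq\beta}\bbb_\gamma$, and fixing an integer $k>(\dim\bbb)/2$, the Sobolev embedding $H^k(\bbb_\beta^c(\R))\hookrightarrow L^\infty(\bbb_\beta^c(\R))$ yields
\[
\sup_{B^c\in\bbb_\beta^c(\R)}|\Phi_\infty(B_\beta + B^c)|^2 \;\leq\; C\sum_{|\alpha|\leq k}\int_{\bbb_\beta^c(\R)}|\partial^\alpha_{B^c}\Phi_\infty(B_\beta + B^c)|^2\,dB^c \;=:\; H_\beta(B_\beta)^2,
\]
so that $|\Phi_\infty(B)|\leq H_\beta(B_\beta)$ for every $B$ and every $\beta$, and hence $|\Phi_\infty(B)|^{|S|}\leq\prod_\beta H_\beta(B_\beta)$. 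The function $H_\beta$ is smooth and rapidly decreasing in $B_\beta$, with Schwartz seminorms controlled by those of $\Phi_\infty$ via differentiation under the integral sign. To extract the $|S|$-th root while preserving Schwartz regularity, I set $\varphi_{\beta,\infty}:=(H_\beta^{|S|}+\eta_\beta)^{1/|S|}$, where $\eta_\beta>0$ is a small fixed Schwartz function (e.g.\ a Gaussian) whose decay is adapted to $H_\beta$ so that $H_\beta^{|S|}+\eta_\beta$ stays bounded away from $0$; the Fa\`a di Bruno formula then shows that all derivatives of the root remain rapidly decreasing, giving $\varphi_{\beta,\infty}\in\SSS(\bbb_\beta(\R))$. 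The $\SSS_\nu$ case is treated identically with polynomial decay of order $\nu$ replacing Schwartz decay.

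Property (ii) is satisfied by construction, taking $\varphi_\beta := \varphi_{\beta,\infty}\cdot\varphi_{\beta,f}$. Property (iii) is then verified by translating the integral expression for $H_\beta$ into $L^1$ bounds for derivatives of $\varphi_{\beta,\infty}$: applying $Y_\beta\in\UUU(\bbb_\beta)_{\leq\nu}$ and expanding via Fa\`a di Bruno, one dominates $\|Y_\beta \varphi_{\beta,\infty}\|_{L^1(\bbb_\beta(\R))}$ by a polynomial combination of weighted Sobolev norms of $\Phi_\infty$, and these are in turn majorised by $L^1$-norms $\|X\Phi_\infty\|_{L^1(\bbb(\R))}$ of differential operators $X\in\UUU(\bbb)$ of total degree at most $\nu$ (with extra polynomial weights absorbed in the usual way). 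Multiplying over $\beta$ and folding in the non-archimedean volumes gives the bound by $\mu(\Phi)\prod_\beta c_{Y_\beta}$ with $\mu$ a continuous seminorm, as required.

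The main technical obstacle is preserving Schwartz (or $\SSS_\nu$) regularity across the $|S|$-th root $H_\beta^{1/|S|}$: a naive choice would fail because $H_\beta$ may vanish and because inverse powers of a rapidly decaying function blow up. This is precisely why the positive regulariser $\eta_\beta$ is essential, and the quantitative matching of its decay with that of $H_\beta$ (and with the scaling of $\Phi$) is the delicate point of the proof. Once this is in place, the rest of the argument is a routine exercise in controlling mixed $L^1$-$L^2$ seminorms against Schwartz seminorms of $\Phi$.
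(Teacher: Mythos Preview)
Your construction has a genuine gap in the $\SSS^{\nu}$ case. The statement requires that for $\Phi\in\SSS^{\nu}(\bbb(\A))$, which is only $C^{\nu}$ at the archimedean place, the resulting $\varphi_\beta$ lie in $\SSS(\bbb_\beta(\A))$, i.e.\ are $C^{\infty}$ with rapid decay. But your $H_\beta$ is built from derivatives of $\Phi_\infty$ of order up to $k>(\dim\bbb)/2$, and differentiating $H_\beta$ in the $B_\beta$-direction under the integral consumes further derivatives of $\Phi_\infty$. Thus $H_\beta$ is at best $C^{\nu-k}$, and no amount of regularisation by a positive $\eta_\beta$ or taking roots will manufacture the missing derivatives. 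The smoothing mechanism is simply absent from your argument.

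The paper's proof uses precisely such a mechanism: one convolves $|\Phi|$ (or absolute values of suitable derivatives of $\Phi$) with a fixed non-negative $\psi\in\SSS(\bbb(\A))$. Convolution with a Schwartz function sends $\SSS^{\nu}$ into $\SSS$ because all derivatives can be thrown onto the kernel $\psi$, and it sends $\SSS_\nu$ into $\SSS_\nu$ because the polynomial decay is preserved. The product structure over $\beta\in S$ is then obtained by taking $\psi$ of tensor form and using the elementary domination $|\Phi|\leq|\Phi|*\psi$ (up to a constant) together with the factorisation of the convolution. This is the content of \cite[Lemme~2.1]{Ch02} combined with \cite[pp.~791--792]{FiLa10}, and it sidesteps entirely the Sobolev embedding and the delicate root extraction. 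Even setting aside the smoothness issue, your Fa\`a di Bruno step for (iii) would produce negative powers of $H_\beta^{|S|}+\eta_\beta$, which grow at infinity since both summands decay; controlling the resulting $L^1$-norms linearly by $\mu(\Phi)=\sum_{X\in\BBB_{\bbb,\nu}}\|X\Phi\|_{L^1}$ is not at all routine and you have not indicated how to do it.
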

\begin{proof}
This follows from combining \cite[Lemme 2.1]{Ch02} with \cite[pp. 791-792]{FiLa10}: The main idea is to take the convolution of absolute values of certain derivatives of $\Phi$ with a non-negative function $\varphi\in\SSS(\bbb(\A))$, and note that convolution with $\varphi$ maps $\SSS^{\nu}(\bbb(\A))$ to $\SSS(\bbb(\A))$ and $\SSS_{\nu}(\bbb(\A))$ to $\SSS_{\nu}(\bbb(\A))$.
\end{proof}

\subsection{Proof of Lemma \ref{fundamental_convergence}}\label{subsection_prf_of_key_lemma}
\begin{proof}[Proof of Lemma \ref{fundamental_convergence}]
We basically follow the proof of \cite[Th\'{e}or\`{e}me 3.1]{Ch02}, but we need to keep track of the central variable $\lambda$ the whole time.

\begin{enumerate}[label=(\roman{*})]
\item 
Let $\lambda\geq1$. For $\ooo\in\OOO_*$, the truncated kernel $k_{\ooo}^T(x, \Phi)$ can be written as a sum over standard  parabolic subgroups $P_1, P_2$ with $P_1\subseteq P_2$ of 
\begin{equation}\label{descent_kernel}
k_{\ooo}^T(x, \Phi)
=\sum_{\substack{P_1, P, P_2: \\ P_1\subseteq P\subseteq P_2}}
\sum_{\delta\in P_1(\Q)\backslash G(\Q)}(-1)^{\dim A_P/A_G}
F^{P_1}(\delta x, T)\sigma_{P_1}^{P_2}(H_0(\delta x)-T)K_{P, \ooo}(\delta x, \Phi),
\end{equation}
 provided the right hand side converges, cf.\ \cite[Lemma 2.8]{Ch02}. 
Hence  the left hand side of \eqref{finiteness1} can be bounded from above by a sum over parabolic subgroups $P_1, P_2$ with  $P_1\subseteq P_2$, and over $\ooo\in \OOO_*$ of
\begin{multline*}
 \int_{A_GP_1(\Q)\backslash G(\A)}F^{P_1}(x, T)\sigma_{P_1}^{P_2}(H_0(x)-T)\cdot\\
 \left|\sum\limits_{P:~P_1\subseteq P\subseteq P_2}(-1)^{\dim A_P/A_G}
\sum\limits_{X\in\mmm_{P}(\Q)\cap \ooo}
\int_{\uuu_{P}(\A)}\Phi(\lambda \Ad x^{-1}(X+U))dU\right|dx,
\end{multline*} 
cf.\ \cite[pp. 360-361]{Ch02}.
This can be replaced by the sum over $P_1, R, P_2$ with $P_1\subseteq R\subseteq P_2$, and over $\ooo\in\OOO_*$ of 
\begin{multline}\label{temp11b}
 \int_{A_G P_1(\Q)\backslash G(\A)} F^{P_1}(x, T)\sigma_{P_1}^{P_2}(H_0(x)-T)\cdot\\
\sum_{X\in\widetilde{m}_{P_1}^R(\Q)\cap\ooo}
\left|\sum\limits_{P:~R\subseteq P\subseteq P_2}(-1)^{\dim A_P/A_G}
\sum_{Y\in\uuu_R^{P}(\Q)}
\int_{\uuu_{P}(\A)}\Phi(\lambda \Ad x^{-1}(X+Y+U))dU\right|dx.
\end{multline}
We can decompose 
\[
 A_G P_1(\Q)\backslash G(\A)
=U_1(\Q)\backslash U_1(\A) \times M_1(\Q)\backslash M_1(\A)^1\times A_1^G\times\cpt
\]
and write $x\in A_G P_1(\Q)\backslash G(\A)$ accordingly as $x=umak$. Then $F^{P_1}(x, T)=F^{P_1}(m, T)$.
Following the arguments on \cite[p. 361]{Ch02}, we can replace $\Phi$ by $\int_{\Gamma}\Phi(\Ad g^{-1} \cdot) dg\in\SSS^{\nu}(\ggG(\A))$ for a suitable compact subset $\Gamma\subseteq G(\A)^1$ (depending on $T$), and consider instead of the integral above the sum over $P_1, R, P_2$ with $P_1\subseteq R\subseteq P_2$, and $\ooo\in\OOO_*$ of
\begin{multline}\label{temp12}
\int_{A_1^G}e^{-2\rho_0(H_0(a))}\sigma_{P_1}^{P_2}(H_0(a)-T)\cdot \\
\sum_{X\in\widetilde{m}_{P_1}^R(\Q)\cap\ooo}
\left|\sum_{P:~R\subseteq P\subseteq P_2}(-1)^{\dim A_P/A_G}
\sum_{Y\in\uuu_R^{P}(\Q)}
\int_{\uuu_{P}(\A)}\Phi(\lambda \Ad a^{-1}(X+Y+U))dU\right|da.
\end{multline}

We now distinguish the cases $R=P_2$ and $R\subsetneq P_2$. For $R=P_2$, \eqref{temp12} equals the sum over $P_1\subseteq P_2$ of 
\begin{multline}\label{temp13}
\int_{A_{1}^G}e^{-2\rho_0(H_0(a))}\sigma_{P_1}^{P_2}(H_0(a)-T)
\sum_{\substack{X\in\widetilde{\mmm}_{P_1}^{P_2}(\Q) \\ X\not\in \nnn}}
\left|\int_{\uuu_{P_2}(\A)}\Phi(\lambda \Ad a^{-1}(X+U))dU\right|da\\
=\lambda^{-\dim\uuu_{P_2}}\int_{A_{1}^G}e^{-2(\rho_0-\rho_2)(H_0(a))}\sigma_{P_1}^{P_2}(H_0(a)-T)
\sum_{\substack{X\in\widetilde{\mmm}_{P_1}^{P_2}(\Q) \\ X\not\in \nnn}}
\left|\Psi_{P_2}(\lambda \Ad a^{-1}X)\right|da,
\end{multline}
where $\Psi_{P_2}(Y):=\int_{\uuu_{P_2}(\A)}\Phi(Y+U)dU \in\SSS^{\nu}(\mmm_{2}(\A))$.

For $R\subsetneq P_2$, we apply Poisson summation with respect to the sum over $Y$. In the resulting alternating sum many terms cancel out as explained in  \cite[pp. 362-363]{Ch02}. So the sum over $R\subsetneq P_2$ of \eqref{temp12} can be bounded by the sum over $P_1, R, P_2$, $P_1\subseteq R\subsetneq P_2$, of
\begin{multline}\label{temporary1}
\int_{A_{1}^G}e^{-2\rho_0(H_0(a))}\sigma_{P_1}^{P_2}(H_0(a)-T)\cdot\\
\sum_{\substack{X\in\widetilde{\mmm}^{R}_{P_1}(\Q) \\ X\not\in \nnn}}
\left|\sum_{\bar{Y}\in\overline{\uuu}_{R}^{P_2\prime}(\Q)}
\int_{\uuu_{R}(\A)}\Phi(\lambda\Ad a^{-1}(X+U))\psi(\langle U, \bar{Y}\rangle ) dU\right|da.
\end{multline}
For our purposes,  we can replace $\Phi$ by Lemma \ref{std_estimates_for_SB} by the product $\Psi_{\mmm_R}\Psi_{\uuu_R}$
 with  $\Psi_{\mmm_R}\in\SSS(\mmm_R(\A))$, $\Psi_{\uuu_R}\in\SSS(\uuu_R(\A))$, $\Psi_{\mmm_{R}}, \Psi_{\uuu_R}\geq0$,
satisfying the inequalities of Lemma \ref{std_estimates_for_SB}.

Changing variables, we may consider instead of \eqref{temporary1}  the integral
\begin{multline}\label{temp14}
\lambda^{-\dim\uuu_R}\int_{A_{1}^G}e^{-2(\rho_0-\rho_R)(H_0(a))}\sigma_{P_1}^{P_2}(H_0(a)-T)\cdot\\
\sum_{\substack{X\in\widetilde{\mmm}^{R}_{P_1}(\Q) \\ X\not\in \nnn}}\Psi_{\mmm_R}(\lambda \Ad a^{-1}X)
\cdot\sum_{\bar{Y}\in\overline{\uuu}_{R}^{P_2\prime}(\Q)}
\int_{\uuu_{R}(\A)}\Psi_{\uuu_R}(U)\psi(\langle U, \lambda^{-1} \Ad a^{-1}\bar{Y}\rangle ) dU~da.
\end{multline}
The compact support of $\Phi$ at the finite places implies the existence of $N\in\N$ such that all contributing $\bar{Y}$ and $X$ must have coordinates in $\frac{1}{N}\Z$.
Let $m\geq 0$ be a sufficiently large even integer. By standard estimates for Schwartz-Bruhat functions, 
\begin{align*}
\left|\int_{\uuu_{R}(\A)}\Psi_{\uuu_R}(U)\psi(\langle U, \lambda^{-1} \Ad a^{-1}\bar{Y}\rangle )\right| dU
& \leq \|\lambda^{-1}\Ad a^{-1}\bar{Y}\|^{-m}\sum_{D\in\BBB_{m/2}}\int_{\uuu_R(\A)}\left|(D\Psi_{\uuu_R})(U)\right|dU\\
& =:\|\lambda^{-1}\Ad a^{-1}\bar{Y}\|^{-m}\mu_{\uuu_R}^m(\Psi_{\uuu_R}).
\end{align*}
This last sum over the set of differential operators  defines the seminorm $\mu_{\uuu_R}^m:\SSS(\uuu_R(\A))\longrightarrow\R_{\geq0}$.
Hence \eqref{temp14} is bounded by
\begin{multline}\label{temp15}
\lambda^{m-\dim\uuu_R}\mu_{\uuu_R^m}(\Psi_{\uuu_R})
\int_{A_{1}^G}e^{-2(\rho_0-\rho_R)(H_0(a'a))}\sigma_{P_1}^{P_2}(H_0(a)-T)\cdot\\
\bigg(\sum_{\bar{Y}\in\overline{\uuu}_{R}^{P_2\prime}(\frac{1}{N}\Z)}\|\Ad a^{-1}\bar{Y}\|^{-m}\bigg)
\bigg(\sum_{\substack{X\in\widetilde{\mmm}^{R}_{P_1}(\frac{1}{N}\Z) \\ X\not\in \nnn}}
\left|\Psi_{\mmm_R}(\lambda  \Ad a^{-1}X)\right|\bigg)da.
\end{multline}
Write $\mmm_R=\bigoplus_{\beta\in\Phi_1^R} \mmm_{R,\beta}$ for the eigenspace decomposition of $\mmm_R$ with respect to $\Phi_1^R=\Phi(A_1, M_R)$.
In particular, $\mmm_{R, 0}=\mmm_{1}$.
By Lemma \ref{std_estimates_for_SB} there are $\varphi_{\beta}\in\SSS(\mmm_{R,\beta}(\A))$, $\varphi_{\beta}\geq0$, such that
$|\Psi_{\mmm_R}(Z )|
\leq
\prod_{\beta\in \Phi(A_0, M_R)}\varphi_{\beta}(Z_{\beta})$
for all $Z=\sum_{\beta}Z_{\beta}\in \mmm_R(\A)=\bigoplus_{\beta}\mmm_{R, \beta}$, and such that they satisfy the  estimates of Lemma \ref{std_estimates_for_SB}.
With this, \eqref{temp15} is bounded by
\begin{multline}\label{temp16}
\lambda^{m-\dim \uuu_R}
\mu_{\uuu_R}(\Psi_{\uuu_R})
\int_{A_{1}^G}e^{-2(\rho_0-\rho_R)(H_0(a))}\sigma_{P_1}^{P_2}(H_0(a)-T)\cdot\\
\bigg(\sum_{\bar{Y}\in\overline{\uuu}_{R}^{P_2\prime}(\frac{1}{N}\Z)}\|\Ad a^{-1}\bar{Y}\|^{-m}\bigg)
\bigg(\sum_{\substack{X\in\widetilde{\mmm}^{R}_{P_1}(\frac{1}{N}\Z) \\ X\not\in \nnn}}
\prod_{\beta\in \Phi_1^R}
\varphi_{\beta}(\lambda \beta(a)^{-1}X_{\beta})\bigg)da.
\end{multline}
If $m>\dim\uuu_R^{P_2}$, then by Lemma \ref{lattice_sum_lemma} there are $c_1>0$, and real numbers $k_{\alpha}\geq0$ for $\alpha\in \Delta_0^2$ with $k_{\alpha}>0$ whenever $\alpha\in\Delta_0^2\minus\Delta_0^R$, such that
\begin{equation}\label{temp19}
\sum_{\bar{Y}\in\overline{\uuu}_{R}^{P_2\prime}(\frac{1}{N}\Z)}\|\Ad a^{-1}\bar{Y}\|^{-m}
\leq c_1
\prod_{\alpha\in\Delta_0^2}e^{-k_{\alpha}\alpha(H_0(a))}.
\end{equation}
Setting $\sum_{\bar{Y}\in\overline{\uuu}_{R}^{P_2\prime}(\frac{1}{N}\Z)}\|\Ad a^{-1}\bar{Y}\|^{-m}:=1$
and $m=0$ in the case $P_2=R$, we can consider the cases $P_2=R$ and $R\subsetneq P_2$ together.

To the second product in \eqref{temp16} we apply Lemma \ref{bounding_sum_over_pseudoss}. This we are allowed to, since  $\sigma_{P_1}^{P_2}(H_0(a)-T)\neq0$ implies that $a\in A_1^2(T_1)$.
Thus \eqref{temp15} is bounded by a finite sum of terms of the form
\begin{equation}\label{int_torus}
c' \lambda^{-N+m-\dim\uuu_{R}}
\int_{A_{1}^G}\sigma_{P_1}^{P_2}(H_1(a)-T)
\prod_{\alpha\in\Delta_0^2\minus \Delta_0^1}e^{-l_{\alpha}\alpha(H_0(a))}da
\end{equation}
for all $\lambda\geq 1$ and all $N>0$, where $l_{\alpha}>0$ and $c'>0$ are constants depending only on $N$.
By Lemma \ref{lemma_integral_torus} the second integral is finite.  Thus \eqref{finiteness1} is proven.

\item
Now assume that $\lambda\in (0, 1]$. We essentially argue as above, but have to change the upper bounds for the two products occurring in the integral \eqref{temp16}. 
We apply Lemma \ref{lattice_sum_lemma} to bound the left hand side of \eqref{temp19} again by the same quantity as before.
To bound the last term in the integral in  \eqref{temp16}, we use Lemma \ref{bounding_sum_over_pseudoss} giving for this term an upper bound of
\[
\lambda^{-\dim\mmm_R}\delta_0^R(a)\prod_{\alpha\in \Delta_0^R\minus\Delta_0^1}e^{-k_{\alpha}\alpha(H_0(a))}
\]
times the value of some seminorm applied to the $\varphi_{\mu}$'s.
Hence  \eqref{temp15} is bounded by  the product of the value of a seminorm (depending on $m$) applied to $\Phi$ with
\[
\lambda^{m-\dim\uuu_{R}-\dim\mmm_{R}}
\]
with $m>\dim\uuu_{R}^{P_2}$ arbitrary if $R\neq P_2$ and $m=0$ if $R=P_2$,
and
\[
\int_{A_{1}^G}\sigma_{P_1}^{P_2}(H_0(a)-T)
\prod_{\tilde{\alpha}\in\Delta_1^2}e^{-l_{\alpha}'\tilde{\alpha}(H_0(a))}da
\]
for suitable $l_{\alpha}'>0$.
Since (for $P_2=R$ as well as $R\neq P_2$)
\begin{equation}\label{inequality_of_dimensions}
\dim\uuu_{R}-m+\dim\mmm_R
\leq \dim\ggG=D
\end{equation}
the assertion \eqref{finiteness2} follows again from Lemma \ref{lemma_integral_torus}.

\item It is clear from the proof of the first part of the lemma that if $\nu$ is sufficiently large with respect to $N$, then the analogue assertion holds for $\Phi\in\SSS_{\nu}(\ggG(\A))$ instead of $\Phi\in\SSS^{\nu}(\ggG(\A))$.
\end{enumerate}
\end{proof}

\begin{rem}\label{remark}
In \eqref{inequality_of_dimensions} we have  $\dim\uuu_R+\dim\mmm_R\leq\dim\ggG-1$ unless $R=P_2=G$, and if $R\subsetneq G$ we have
$\dim\uuu_R+\dim\mmm_R\leq \dim\ggG-2$ unless $n=2$.
\end{rem}

\section{Nilpotent auxiliary distributions}\label{nilp_distr1}
{\bf From now on we assume $n\leq 3$.}

Recall that $\nnn\subseteq \ggG(\Q)$ denotes the set of nilpotent elements. Note that $\nnn$ is the same for $G=\SL_n$ and $G=\GL_n$.
Under the adjoint action of $G(\Q)$ the set $\nnn\subseteq \ggG(\Q)$ of nilpotent elements decomposes into finitely many nilpotent orbits $\NNN\subseteq \nnn$. 
If $\NNN\neq\{0\}$ and $X_0\in \NNN$, $X_0$ can be embedded into an $\mathfrak{sl}_2$-triple $\{X_0, Y_{X_0}, H_{X_0}\}\subseteq \ggG$ with $H_{X_0}$ semisimple and $Y_{X_0}$ nilpotent. 
The element $H_{X_0}$ defines a grading on $\ggG$, $\ggG=\bigoplus_{i\in\Z}\ggG_i$ with
$\ggG_i=\{X\in\ggG\mid [H_{X_0}, X]=iX\}$
and $X_0\in \ggG_2$.
We set 
$\ppp_{X_0}=\bigoplus_{i\geq 0}\ggG_i$, which is the associated Jacobson-Morozov parabolic subalgebra,
$\uuu_{X_0}=\bigoplus_{i>0}\ggG_i$, 
$\uuu_{X_0}^{>j}=\bigoplus_{i>j}\ggG_i$,
 and 
 $\uuu_{X_0}^{\geq j}=\bigoplus_{i\geq j}\ggG_i$.
Correspondingly, let 
$P_{X_0}=M_{X_0}U_{X_0}\subseteq G$
be the Jacobson-Morozov parabolic subgroup with Lie algebra $\ppp_{X_0}$ and Levi part
 $M_{X_0}$ with Lie algebra $\mmm_{X_0}=\ggG_0$, 
 and unipotent radical
 $U_{X_0}$ with Lie algebra $\uuu_{X_0}$.
The representative $X_0$  of $\NNN$ can be chosen such that $P_{X_0}$ is a standard parabolic subgroup.
If $\NNN=\{0\}$, then $X_0=0$, and we set $H_{X_0}=0$, $P_{X_0}=G$.
The grading of $\uuu_{X_0}$ induces a descending subgroup filtration $\{U_{X_0}^{\geq j}\}_{j>0}$ on $U_{X_0}$.
  Then 
$\NNN=\bigcup_{\delta\in P_{X_0}(\Q)\backslash G(\Q)}\Ad \delta^{-1} \cdot \uuu_{X_0}^{\geq 2 }(\Q)$
(disjoint union), and the action of $M_{X_0}$ on $\uuu^2_{X_0}=\ggG_2$ defines a prehomogeneous vector space, i.e. the orbit $V_0:=\Ad M_{X_0} X_0\subseteq \ggG_2$ is open and dense.
By definition, $H_{X_0}\in \aaa_{M_{X_0}}\subseteq \ggG(\R)$, and for every $\lambda\in \R_{>0}$ we have 
\[
\Ad ( \eta_{X_0,\lambda} )X_0=\lambda X_0,
~~~
\text{ where }
~~~~
\eta_{X_0,\lambda}:=e^{\frac{\log\lambda}{2}H_{X_0}}\in Z^{M_{X_0}}(\A)
\]
($Z^{M_{X_0}}$=center of $M_{X_0}$).
Let $C_{M_{X_0}}(X_0)=\{m\in M_{X_0}\mid \Ad m^{-1} X_0=X_0\}$ be the stabiliser of $X_0$ under the action of $M_{X_0}$, and  $C_{U_{X_0}}(X_0)=\{u\in U_{X_0}\mid \Ad u^{-1} X_0=X_0\}$ the stabiliser of $X_0$ in $U_{X_0}$.
If there is no danger of confusion, we drop the subscript $X_0$ and write $H=H_{X_0}$, $P=P_{X_0}$, etc.
\begin{example}\label{example_nilp}
We choose the following representatives for our nilpotent orbits in the cases $n=2,3$:
\begin{itemize}
 \item $n=2$. There are two nilpotent orbits, the trivial and the regular one:
\begin{center}
 \begin{tabular}{lccccc}
  $\NNN$		&	$X_0$	&	$H_{X_0}$	&	$P_{X_0}$	&	$C_U(X_0)$		\\
\noalign{\smallskip}\hline\noalign{\smallskip}
$\NNN_{\text{triv}}$	&	$0$	&	$0$	&	$G$	&	$\{\One_2\}$		\\
\noalign{\smallskip}
$\NNN_{\text{reg}}$	&	$\left(\begin{smallmatrix}0&1\\0&0\end{smallmatrix}\right)$	&	$\left(\begin{smallmatrix}1&\,\\\,&-1\end{smallmatrix}\right)$	&	$P_0$	&	$U_0$\\
 \noalign{\smallskip}\hline
\end{tabular}
\end{center}

\item $n=3$. There are three nilpotent orbits, the trivial, the minimal (=subregular), and the regular one:
\begin{center}
\begin{tabular}{lccccc}
  $\NNN$		&	$X_0$	&	$H_{X_0}$	&	$P_{X_0}$	&	$C_U(X_0)$		\\
\noalign{\smallskip}\hline\noalign{\smallskip}
$\NNN_{\text{triv}}$	&	$0$	&	$0$	&	$G$	&	$\{\One_2\}$		\\
\noalign{\smallskip}
$\NNN_{\text{min}}$	&	$\left(\begin{smallmatrix} 0&0&1\\0&0&0\\0&0&0\end{smallmatrix}\right)$	&	$\left(\begin{smallmatrix}1&\,&\,\\\,&0&\,\\\,&\,&-1\end{smallmatrix}\right)$&	$P_0$	&	$U_0$	\\
\noalign{\smallskip}
$\NNN_{\text{reg}}$	&	$\left(\begin{smallmatrix}0&1&0\\0&0&1\\0&0&0\end{smallmatrix}\right)$	&	$\left(\begin{smallmatrix}2&\,&\,\\\,&0&\,\\\,&\,&-2\end{smallmatrix}\right)$	&	$P_0$	&	$\{\left(\begin{smallmatrix}1&x_1&x_2\\0&1&x_3\\0&0&1\end{smallmatrix}\right)\mid x_1=x_3\}$\\
\noalign{\smallskip}\hline
 \end{tabular}
\end{center}
\end{itemize}
In all of these examples  we fix measures on $C_U(X_0,\A)$ and $C_M(X_0,\A)$ in the obvious way.
\end{example}

The following is a slight variant of  \cite[Theorem 1]{Ra72}.
\begin{lemma}\label{changing_variables}
There exists a constant $c>0$ such that for every function $f:\overline{V_0(\A)}=\ggG_2(\A)\longrightarrow\C$, which is  integrable and for which all occurring integrals are finite, we have
\[
 \int_{C_M(X_0,\A) \backslash M(\A)}f(\Ad m^{-1} X_0)\delta_{U^{\leq 2}}(m)^{-1}da
=c\int_{V_0(\A)}\varphi(X)f(X)dX,
\]
where  $\varphi:\ggG_2(\A)\longrightarrow\C$ is defined as follows: Let $Z_1, \ldots, Z_r$ be a basis of $\ggG_1$, and $Z_1', \ldots, Z_r'$ a basis of $\ggG_{-1}$, which are dual to each other with respect to the Killing form. For $X\in\ggG_2$ write $[X, Z_i']=\sum c_{ji}(X)Z_j$, and set $\varphi(X)=|\det (c_{ij}(X))_{i,j})|^{\frac{1}{2}}$.
\end{lemma}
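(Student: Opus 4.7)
First, observe that the orbit map $\Phi: m \mapsto \Ad m^{-1} X_0$ descends to a bijection $C_M(X_0,\A) \backslash M(\A) \to V_0(\A)$, since $V_0 = \Ad M \cdot X_0$ is the open dense $M$-orbit with stabiliser $C_M(X_0)$. The lemma is therefore the statement that the push-forward of the quotient Haar measure on $C_M(X_0,\A) \backslash M(\A)$, twisted by $\delta_{U^{\leq 2}}(m)^{-1}$, equals $c\,\varphi(X)\, dX$ on $V_0(\A)$. The plan is to follow the proof of \cite[Theorem 1]{Ra72}, adapted to track the modulus factor $\delta_{U^{\leq 2}}$: this is the extra ingredient needed because here we parametrise only the slice $\ggG_2$ rather than all of $\uuu_{X_0}$.

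At a point $m \in M(\A)$ with image $X = \Ad m^{-1} X_0 \in V_0(\A)$, the differential of $\Phi$ after right-translation to the identity is the linear map $-\ad X : \mmm / \ker(\ad X_0|_{\mmm}) \longrightarrow \ggG_2$. Decomposing $\ggG$ under the Jacobson-Morozov triple $\{X_0, Y_{X_0}, H_{X_0}\}$ into irreducible $\mathfrak{sl}_2$-submodules, $\ad X_0$ sends the weight-zero part of each even-dimensional summand isomorphically onto its weight-two part, so the map above is a linear isomorphism whose determinant $J(X)$ is the Jacobian of $\Phi$ at $m$. Unfolding $M(\A)$-integration in coordinates compatible with the grading by $H_{X_0}$, the factor $\delta_{U^{\leq 2}}(m)^{-1}$ combines with this Jacobian to give a density $\widetilde J(X)$ on $V_0(\A)$, and the left-hand side of the claimed formula becomes $\int_{V_0(\A)} \widetilde J(X) f(X)\, dX$.

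It remains to identify $\widetilde J(X)$ with $c\,\varphi(X)$. Using the $\ad$-invariant Killing form to pair $\ggG_{-1}$ with $\ggG_1$ non-degenerately, the matrix $(c_{ij}(X))$ becomes the matrix of $\ad X: \ggG_{-1} \to \ggG_1$ in the chosen dual bases, and its composition with the dual map $\ad X: \ggG_1^* \to \ggG_{-1}^*$ realises $(\ad X)^2$ on $\ggG_{-1}$. Interpreting this composite through the $\mathfrak{sl}_2$-representation theory of $\ggG$ yields the identity $\det(c_{ij}(X)) = c^{-2}\widetilde J(X)^2$ up to a universal sign, so that $\varphi(X) = |\det(c_{ij}(X))|^{1/2} = c^{-1}|\widetilde J(X)|$; this squaring relation is precisely the source of the exponent $\tfrac{1}{2}$ in the definition of $\varphi$. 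The main obstacle is exactly this last bookkeeping step: verifying that the modulus character $\delta_{U^{\leq 2}}$, the Haar-measure normalisations, and the Killing-form duality all combine so that the resulting constant $c$ is truly independent of $X$. In the range $n \leq 3$ one can also confirm the identity case-by-case from Example \ref{example_nilp}, where $\ggG_{\pm 1} = 0$ (so $\varphi \equiv 1$) for every orbit except the minimal one in $\mathfrak{gl}_3$ or $\mathfrak{sl}_3$; in that remaining case the short computation with $X = x E_{13}$, $Z_1' = E_{21}$, $Z_2' = E_{32}$ gives $\det(c_{ij}(X)) = x^2$ and hence $\varphi(X) = |x|$, which matches the direct Jacobian computation for the $T_0$-action on $\R E_{13}$ once $\delta_{U^{\leq 2}}$ is taken into account.
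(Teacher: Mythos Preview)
Your approach via the explicit Jacobian of the orbit map is in principle workable, but it takes a genuinely different route from the paper and leaves the decisive step undone. You yourself flag the ``bookkeeping step'' identifying $\widetilde J(X)$ with $c\,\varphi(X)$ as the main obstacle, and you do not actually carry it out: the claimed relation $\det(c_{ij}(X)) = c^{-2}\widetilde J(X)^2$ is asserted rather than derived, and the fall-back to a case-by-case check for $n\leq 3$ is only sketched for the minimal orbit. So as written the argument has a gap at exactly the point where the content lies.

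The paper sidesteps all of this Jacobian bookkeeping by arguing via uniqueness of relatively invariant measures. Writing $\Lambda_1(f)$ for the left-hand side and $\Lambda_2(f)$ for the right-hand side, one checks that for $f^{m_0}(X):=f(\Ad m_0^{-1}X)$ both functionals satisfy $\Lambda_i(f^{m_0})=\delta_{U^{\leq 2}}(m_0)\,\Lambda_i(f)$. For $\Lambda_1$ this is the substitution $m\mapsto mm_0$. For $\Lambda_2$ the only input needed is the transformation law $\varphi(\Ad m\,X)=|\det\Ad m_{|\ggG_1}|\,\varphi(X)=\delta_{\ggG_1}(m)\varphi(X)$ from \cite[Lemma~2]{Ra72}; combined with the Jacobian $\delta_{\ggG_2}(m_0)$ from the substitution $X\mapsto \Ad m_0 X$ on $\ggG_2$, one gets $\delta_{\ggG_1}(m_0)\delta_{\ggG_2}(m_0)=\delta_{U^{\leq 2}}(m_0)$. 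Since $V_0(\A)$ is a homogeneous $M(\A)$-space, two relatively invariant measures with the same character are proportional, which gives the constant $c$ immediately. This is both shorter and avoids having to track how Haar normalisations, the Killing-form duality, and the $\mathfrak{sl}_2$-decomposition interact --- precisely the step you were unable to close.
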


\begin{example}
For  $\NNN$ the trivial or regular orbit from Example \ref{example_nilp}, we have $\ggG_1=0=\ggG_{-1}$ so that $\varphi(X)\equiv1$.
If $n=3$ and $\NNN=\NNN_{\text{min}}$, then $\ggG_1=\{\left(\begin{smallmatrix} 0&*&0\\0&0&*\\0&0&0\end{smallmatrix}\right)\}$ and $\varphi(\left(\begin{smallmatrix} 0&0&x\\0&0&0\\0&0&0\end{smallmatrix}\right))=|x|$.  
\end{example}

\begin{proof}[Proof of Lemma \ref{changing_variables}]
Let $X\in\ggG_2(\A)$ and $m\in M(\A)$. Then $\varphi$ transforms according to \cite[Lemma 2]{Ra72} via
$\varphi(\Ad m X)=|\det \Ad m_{|\ggG_1}|\varphi(X)
=\delta_{\ggG_1}(m)\varphi(X)$.
 Let
\begin{align*}
\Lambda_1(f) & =\int_{C_M(X_0,\A)\backslash M(\A)}f(\Ad m^{-1} X_0)\delta_{U^{\leq 2}}(m)^{-1}dm,\;\;\;\text{ and }\\
\Lambda_2(f) & =\int_{V_0(\A)}\varphi(X)f(X)dX.
\end{align*}
Let $m_0\in M(\A)$ and put $f^{m_0}(X)=f(\Ad m_0^{-1} X)$. Then
\begin{align*}
\Lambda_1(f^{m_0}) & = \int_{C_M(X_0,\A)\backslash M(\A)}f(\Ad {m_0}^{-1} \Ad m^{-1} X_0)\delta_{U^{\leq 2}}(m)^{-1}dm\\
& =\int_{C_M(X_0,\A)\backslash M(\A)}f( \Ad(mm_0)^{-1}X_0)\delta_{U^{\leq 2}}(m)^{-1}dm
=\delta_{U^{\leq 2}}(m_0)\Lambda_1(f),
\end{align*}
and, using the above transformation property of $\varphi$, 
\begin{align*}
\Lambda_2(f^{m_0})&=\int_{V_0(\A)}\varphi( X)f( \Ad m_0^{-1} X)dX
=\delta_{\ggG_2}(m_0)\int_{V_0(\A)}\varphi(\Ad m_0 X)f( X) dX\\
&=\delta_{\ggG_2}(m_0)\delta_{\ggG_1}(m_0)\int_{V_0(\A)}\varphi( X)f( X)dX
=\delta_{U^{\leq2}}(m_0)\int_{V_0(\A)}\varphi(X)f( X)dX.\qedhere
\end{align*}
\end{proof}

We need to introduce a certain auxiliary distribution $\tilde{j}_{\NNN}^T$ on $\SSS^{\nu}(\ggG(\A))\cup\SSS_{\nu}(\ggG(\A))$:

\begin{definition}
If $T\in\aaa^+$ is sufficiently regular, we set
\[
 \tilde{j}_{\NNN}^T(\Phi)=\int_{A_GG(\Q)\backslash G(\A)}
 \tilde{F}^{M}(x, T)\sum\limits_{\gamma\in \NNN}\Phi(\Ad x^{-1}\gamma )dx
\]
where the truncation function $\tilde{F}^{M}(\cdot, T): G(\A)\longrightarrow\C$ is defined as the characteristic function of the set of all $x\in G(\A)$ of the form $x=umk$, $m\in M(\A)$, $u\in U(\A)$, $k\in\cpt$, satisfying
\[
\forall \varpi\in \widehat{\Delta}_0~ \forall \gamma\in M(\Q):~ \varpi(H_0(\gamma m)-T)\leq 0.
\]
\end{definition}

Note that $\tilde{F}^{M}(umk, T)=\tilde{F}^{M}(m, T)=F^{M}(m, T)\hat{\tau}_{P}(T-H_0(m))$.

\begin{lemma}\label{abs_conv_of_aux_distr}
Let $T\in\aaa^+$ be sufficiently regular.
For every $\nu\geq1$, there exists a seminorm $\mu$ on $\SSS^{\nu}(\ggG(\A))\cup\SSS_{\nu}(\ggG(\A))$ such that 
\[
  \int_{A_GG(\Q)\backslash G(\A)}
 \tilde{F}^{M}(x, T)\sum_{\gamma\in \NNN}|\Phi(\lambda \Ad x^{-1}\gamma )|dx
\leq \lambda^{-\dim\ggG} \mu(\Phi)
\]
for all $\Phi\in \SSS^{\nu}(\ggG(\A))\cup\SSS_{\nu}(\ggG(\A))$.
\end{lemma}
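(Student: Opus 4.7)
The argument parallels the proof of \eqref{finiteness2} in Lemma \ref{fundamental_convergence} (the case $\lambda\leq 1$) but is considerably simpler, since there is no alternating sum over parabolic subgroups and no inner Poisson summation. First I would apply the trivial upper bound
\[
\sum_{\gamma\in\NNN}|\Phi(\lambda\Ad x^{-1}\gamma)|\leq\sum_{\gamma\in\ggG(\Q)}|\Phi(\lambda\Ad x^{-1}\gamma)|,
\]
pass to the minimal Iwasawa decomposition $x=\tilde u\tilde t\tilde k$ with $\tilde u\in U_0(\A)$, $\tilde t\in T_0(\A)$, $\tilde k\in\cpt$, and further split $\tilde t=\tilde t^1\tilde a$ with $\tilde t^1\in T_0(\A)^1$ and $\tilde a\in A_0^G$ modulo $A_G$. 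By Lemma \ref{std_estimates_for_SB}, I would dominate $|\Phi|$ by a non-negative product $\prod_{\beta\in\Phi(A_0,G)}\varphi_\beta$ over the weight spaces $\ggG_\beta$ of the $A_0$-action.

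After absorbing the bounded action of $\tilde u, \tilde t^1, \tilde k$ (which are confined to compact ranges in the Siegel domain $\SSS_{T_1}$) into seminorms of the $\varphi_\beta$, the weight-space decomposition of $\ggG(\Q)$ factorizes the sum, and the Wright-type estimate \eqref{est_wright} applied weight-by-weight yields, for $\lambda\leq 1$,
\[
\sum_{\gamma_\beta\in\ggG_\beta(\Q)}\varphi_\beta\bigl(\lambda\beta(\tilde a)^{-1}\gamma_\beta\bigr)\leq\mu_\beta(\varphi_\beta)\sup\bigl(1,\lambda^{-1}\beta(\tilde a)\bigr)^{\dim\ggG_\beta}.
\]
Taking the product over $\beta$ and combining with the Iwasawa Jacobian $\delta_0(\tilde a)^{-1}=\prod_{\beta>0}\beta(\tilde a)^{-\dim\ggG_\beta}$, the $\beta(\tilde a)^{\dim\ggG_\beta}$-factors arising from the positive-root Wright bounds cancel exactly against the Jacobian, while the $\beta=0$ and negative-root terms combine to at most $\lambda^{-\dim\ggG}\mu(\Phi)$ for $\lambda\leq 1$ (using $\lambda^{-k}\leq\lambda^{-\dim\ggG}$ for $k\leq\dim\ggG$).

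Finally, the truncation $\tilde F^M(x,T)=F^M(m,T)\hat\tau_P(T-H_0(m))$ restricts the residual integration to a set of finite volume: $F^M$ bounds the $\aaa_0^M$-component of $H_0(m)$ modulo $M(\Q)$ to a bounded set, $\hat\tau_P$ bounds the $\aaa_P^G$-component above by $T$, and together with the Siegel lower bound from reduction theory on $\aaa_0^G=\aaa_0^M\oplus\aaa_P^G$, the remaining $\tilde a$-integral converges by Lemma \ref{lemma_integral_torus}. The case $\lambda\geq 1$ is handled analogously using the stronger decay \eqref{est_wright2} with $r\geq\dim\ggG$, which in the $\SSS_\nu$-setting requires $\nu\geq r+D$. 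The main obstacle is the careful bookkeeping: absorbing the action of $\tilde u$ (which mixes weight spaces) and $\tilde t^1$ into seminorms via Lemma \ref{std_estimates_for_SB}, verifying the exact cancellation between Wright factors and the Iwasawa Jacobian across all weight spaces (particularly for negative roots, where $\beta(\tilde a)$ behaves differently in the Siegel cone), and showing convergence of the residual $\tilde a$-integral under the combined $F^M$- and $\hat\tau_P$-truncations.
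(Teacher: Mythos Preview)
Your proposal is correct and follows essentially the same route as the paper: bound the sum over $\NNN$ by the full lattice sum over $\ggG(\Q)$, pass to the Siegel domain, absorb the compact variables $\tilde u,\tilde t^1,\tilde k$ into the test function, and use the Wright-type lattice estimate \eqref{est_wright} together with the Iwasawa Jacobian $\delta_0(a)^{-1}$ to extract $\lambda^{-\dim\ggG}$. The paper compresses your weight-by-weight computation into the single line $\delta_0(a)^{-1}\sum_{\gamma\in\ggG(\Q)}|\Phi'(\lambda\Ad a^{-1}\gamma)|\leq\lambda^{-\dim\ggG}\mu(\Phi)$, but this is exactly what your decomposition establishes. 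One minor correction: the residual $\tilde a$-integral does not need Lemma~\ref{lemma_integral_torus} (which treats $\sigma_1^2$-truncated integrals with exponential weights); here the combined constraints $\tau_0^G(H_0(a)-T_1)$ and $\hat\tau_0^G(T-H_0(a))$ already confine $H_0(a)$ to a bounded region of $\aaa$, so the integral is trivially finite. Also, your treatment of $\lambda\geq1$ is extraneous---the statement (and the paper's proof) only addresses $\lambda\in(0,1]$, and indeed your observation that the $\lambda\geq1$ case would force $\nu\geq r+D$ shows it is incompatible with the hypothesis ``for every $\nu\geq1$''.
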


\begin{proof}
 First note that for every $x=utk\in U_0(\A)T_0(\A)\cpt$ we have $0\leq \tilde{F}^{M}(x, T)\leq \hat{\tau}_0^G(T-H_0(t))$  (As $n\leq 3$, $M=T_0$ for $\NNN\neq \{0\}$ so that $\tilde{F}^{M}(x,T)$ in fact then equals $\hat{\tau}_0^G(T-H_0(t))$.)
Using the standard estimates for integration over Siegel sets as in \eqref{siegel_est}, we get
\begin{multline*}
  \int_{A_GG(\Q)\backslash G(\A)}
 \tilde{F}^{M}(x, T)\sum_{\gamma\in \NNN}|\Phi(\lambda \Ad x^{-1}\gamma )|dx\\
\leq \int_{A_0^G(T_1)} \delta_0(a)^{-1} \hat{\tau}_0^G(T-H_0(a)) \sum_{\gamma\in \NNN}|\Phi'(\lambda \Ad a^{-1}\gamma )|da
\end{multline*}
for $\Phi'$ obtained from $\Phi$ by integration over a suitable compact domain in $G(\A)$.
Now there exists a seminorm $\mu$ on $\SSS^{\nu}(\ggG(\A))\cup\SSS_{\nu}(\ggG(\A))$ such that 
\[
 \delta_0(a)^{-1}\sum_{\gamma\in \NNN}|\Phi'(\lambda \Ad a^{-1}\gamma )| 
\leq  \delta_0(a)^{-1}\sum_{\gamma\in \ggG(\Q)}|\Phi'(\lambda \Ad a^{-1}\gamma )|  
\leq \lambda^{-\dim \ggG} \mu(\Phi)
\]
for every $\lambda\in(0,1]$ and $a\in A_0^G(T_1)$. Hence the original sum-integral is bounded by
\[
 \lambda^{-\dim \ggG} \mu(\Phi)\int_{A_0^G(T_1)}\hat{\tau}_0^G(T-H_0(a))da
=\lambda^{-\dim \ggG} \mu(\Phi)\int_{\aaa}\tau_0^G(A-T_1)\hat{\tau}_0^G(T-A)dA
<\infty,
\]
which proves the lemma.
\end{proof}

The distribution $\tilde{j}_{\NNN}^T(\Phi_{\lambda})$ has the nice property that as a function of $\lambda$ it is almost homogeneous in the following sense:

\begin{lemma}\label{lemma_chang_central_var}
Let $I\subseteq \R_{>0}$ be a compact interval. If $T\in\aaa^+$ is sufficiently regular such that $T+\frac{\log\lambda}{2} H$ is also sufficiently regular for every $\lambda\in I$, then
\[
 \tilde{j}_{\NNN}^{T}(\Phi_{\lambda})
=\lambda^{-\dim \NNN/2}
\tilde{j}_{\NNN}^{T+\frac{\log\lambda}{2} H}(\Phi).
\]
\end{lemma}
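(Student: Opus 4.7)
The plan is to perform a change of variables in the integral defining $\tilde j^T_\NNN(\Phi_\lambda)$, exploiting the identity $\Ad\eta_{X_0,\lambda}\cdot X_0=\lambda X_0$ to absorb the scaling factor $\lambda$ from $\Phi_\lambda$ into a shift of the Iwasawa $M=M_{X_0}$-component of $x$ by $\eta_{X_0,\lambda}$. Crucially, $\eta$ lies in the centre $Z^M(\A)$, so translation by $\eta$ commutes with the action of $A_GM(\Q)$ on $M(\A)$, and the induced map on the quotient $A_GM(\Q)\backslash M(\A)$ is a measure-preserving bijection.

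First I unfold the sum over $\NNN$ using the parametrisation $\NNN=\bigcup_{\delta\in P_{X_0}(\Q)\backslash G(\Q)}\Ad\delta^{-1}V_0^+(\Q)$, where $V_0^+=\NNN\cap\uuu_{X_0}^{\geq 2}(\Q)$, combined with the left $P_{X_0}(\Q)$-invariance of $\tilde F^M(\cdot,T)$; this replaces the $\gamma$-sum by a sum over $V\in V_0^+(\Q)$ on the smaller domain $A_GP_{X_0}(\Q)\backslash G(\A)$. Writing $x=umk$ via Iwasawa for $P_{X_0}=MU$, and using that $V_0^+$ is $\Ad U$-stable (since $U$ preserves $\NNN$ and normalises $\uuu^{\geq 2}$), the $U$-integration collapses to the volume $\vol(U(\Q)\backslash U(\A))=1$, leaving an integral over $\cpt\times A_GM(\Q)\backslash M(\A)$.

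Next I substitute $m\mapsto\eta^{-1}m$ in the $M$-integral. Three effects then arise: (i) the truncation transforms by $\tilde F^M(\eta^{-1}m,T)=\tilde F^M(m,T+\tfrac{\log\lambda}{2}H)$, since $H_0(\eta^{-1})=-\tfrac{\log\lambda}{2}H$ and $\tilde F^M(m,T')=\hat\tau_P(T'-H_0(m))$ (valid for $n\leq 3$, where $M=T_0$ forces $F^M\equiv 1$); (ii) the modular factor contributes a Jacobian of $\delta_P(\eta)$; (iii) the argument transforms as $\lambda\Ad(\eta^{-1}m)^{-1}V=\Ad m^{-1}\cdot\sum_{i\geq 2}\lambda^{1+i/2}V_i$, where $\Ad\eta$ scales $\ggG_i$ by $\lambda^{i/2}$. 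When $V_0^+\subseteq\ggG_2$ (which covers all nilpotent orbits in question except the regular orbit in $\SL_3$), this combined with a bijective reindexing $V\mapsto\lambda^{-?}V$ of the discrete sum recovers $\sum_V\Phi(\Ad m^{-1}V)$, and the Jacobian in (ii) produces the factor $\lambda^{-\dim\NNN/2}$ via the Jacobson--Morozov identity.

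The main obstacle is the treatment of $V_0^+$ when the JM grading extends beyond $\ggG_2$, i.e.\ for the regular orbit in $\SL_3$ with $V_0^+=V_0\oplus\ggG_4$: here $\Ad\eta$ scales the $\ggG_2$- and $\ggG_4$-components differently, and I must perform a secondary rescaling bijection $V_4\mapsto\lambda V_4$ on $\ggG_4(\Q)\cong\Q$ inside the sum. This is a bijection of $\Q$-points only for $\lambda\in\Q^\times$, so the identity is first established for rational $\lambda$ and extended to all $\lambda\in\R_{>0}$ by continuity of both sides in $\lambda$ (both sides are evidently continuous thanks to the uniform convergence estimates of Lemma~\ref{abs_conv_of_aux_distr}). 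Matching the bookkeeping of Jacobians and rescalings to the Jacobson--Morozov formula for $\dim\NNN$ then yields the stated identity.
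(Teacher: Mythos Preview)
Your approach has a fundamental gap at the reindexing step, and it stems from where $\eta=\eta_{X_0,\lambda}$ lives. By construction $\eta\in A_M\subseteq M(\R)\subseteq M(\A)$ is purely archimedean: its component at every finite place is the identity. Hence after $m\mapsto\eta^{-1}m$ the identity in your (iii),
\[
\lambda\,\Ad(\eta^{-1}m)^{-1}V=\Ad m^{-1}\Bigl(\sum_{i\ge2}\lambda^{1+i/2}V_i\Bigr),
\]
holds only at the infinite place; at each $p<\infty$ the argument is simply $\Ad m_p^{-1}V$, unchanged. Since $V$ runs over $\Q$-points (diagonally embedded in $\A$), no reindexing $V\mapsto cV$ with $c\in\Q^\times$ can undo an archimedean-only scaling while leaving the finite components fixed. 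Concretely, for $V_0^+\subseteq\ggG_2$ your substitution produces $\tilde j^T(\Phi_\lambda)=\delta_P(\eta)\,\tilde j^{T'}(\Phi_{\lambda^2})$, which relates $\Phi_\lambda$ to $\Phi_{\lambda^2}$ rather than to $\Phi$; the modular factor $\delta_P(\eta)=\lambda^{\rho_P(H)}$ also has the wrong sign, and for the regular orbit in $n=3$ one has $\rho_P(H)=4\neq 3=\dim\NNN/2$, so even the magnitude of the exponent is wrong.

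This is exactly why the paper does not attempt the $\lambda$-scaling on the discrete $\Q$-sum. It first converts everything to adelic integrals: the $u$-integral over $U(\Q)\backslash U(\A)$ is combined with the decomposition of $\uuu^{\ge2}(\Q)\cap\NNN$ into $C_U(\gamma,\Q)\backslash U(\Q)$-orbits of $\gamma\in\ggG_2(\Q)\cap\NNN$, and Rao's diffeomorphism $C_U(\gamma,\A)\backslash U(\A)\cong\gamma+\uuu^{>2}(\A)$ turns this into an integral over $\uuu^{>2}(\A)$. Then the remaining sum over $\gamma\in\ggG_2(\Q)\cap\NNN=C_M(X_0,\Q)\backslash M(\Q)\cdot X_0$ is unfolded into the $M$-integral, and Lemma~\ref{changing_variables} converts the resulting integral over $C_M(X_0,\A)\backslash M(\A)$ to one over $V_0(\A)$. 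Only once the expression is an integral over $\uuu^{>2}(\A)\times V_0(\A)$ does the archimedean scaling $X\mapsto\lambda^{-1}X$ become a genuine change of variables with Jacobian $\lambda^{-\dim\uuu^{>2}-\dim V_0-\frac{1}{2}\dim\ggG_1}=\lambda^{-\dim\NNN/2}$.

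Separately, your claim that the $U$-integration collapses is false for $\NNN_{\text{reg}}$ in $n=3$: stability of $V_0^+$ under $\Ad U(\Q)$ only guarantees $U(\Q)$-invariance of the integrand (so it descends to the quotient), not independence of $u\in U(\A)$. For $u\in U_0(\A)$ with entries $a,b,c$ and $V=(x,y,z)$ one finds $\Ad u^{-1}V=(x,y,z+bx-ay)$ with $bx-ay\in\A$ generically non-rational, so the sum over $V\in V_0^+(\Q)$ genuinely depends on $u$. Your proposed secondary rescaling $V_4\mapsto\lambda V_4$ addresses the differing weights of $\ggG_2$ and $\ggG_4$ under $\Ad\eta$, but does nothing to resolve this $u$-dependence.
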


\begin{proof}
Replacing $\Phi$ by $\int_{\cpt}\Phi(\Ad k^{-1} \cdot)dk$ if necessary, we may assume that $\Phi$ is invariant under conjugation by $\cpt$. 
For $\NNN=\{0\}$ there is nothing to show so that we may assume $\NNN\neq\{0\}$. Then $M=T_0$ and $U=U_0$.
The integral defining $\tilde{j}_{\NNN}^T(\Phi_{\lambda})$ can be written as
\begin{multline*}
\int_{A_G M(\Q)\backslash M(\A)}\int_{U(\Q)\backslash U(\A)}
\delta_P(m)^{-1}\tilde{F}^{M}(m, T)
 \sum_{\gamma\in \uuu^{\geq 2}(\Q)\cap\NNN}\Phi(\lambda \Ad (um)^{-1}\gamma )du~dm\\
=\int_{A_G M(\Q)\backslash M(\A)}\sum_{\gamma\in\ggG_2(\Q)\cap\NNN} \int_{C_U(\gamma,\Q)\backslash U(\A)}
\delta_P(m)^{-1} \tilde{F}^{M}(m, T)
\Phi(\lambda \Ad (um)^{-1}\gamma )du~dm.
\end{multline*}
By \cite[Lemma 1]{Ra72},  the map $ C_U(\gamma,\A)\backslash U(\A)\ni u\mapsto \Ad u^{-1}\gamma\in \gamma+\uuu^{> 2}(\A)$  is a diffeomorphism with trivial Jacobian. We denote its inverse by $ \gamma+\uuu^{>2}(\A)\ni\gamma+U\mapsto u(\gamma, U)\in C_U(\gamma,\A)\backslash U(\A)$.
Thus the above integral equals
\begin{multline*}
\int_{A_G M(\Q)\backslash M(\A)}\delta_{U^{\leq 2}}(m)^{-1}\tilde{F}^{M}(m, T)\cdot\\
\int_{\uuu^{>2}(\A)}
\sum_{\gamma\in\uuu^2(\Q)\cap\NNN}
 \bigg(\int_{C_U(\gamma,\Q)\backslash C_U(\gamma,\A)} dv\bigg)\Phi(\lambda (\Ad m^{-1}\gamma +U ))dU~dm.
\end{multline*}
Now for $n\leq 3$ it is easily seen that  $\vol(C_U(\gamma, \Q)\backslash C_U(\gamma,\A))=1$ for all occurring $\gamma$.
Hence the integral equals
\[
\lambda^{-\dim\uuu^{>2}}\int_{A_G C_M(X_0,\Q)\backslash M(\A)} 
\delta_{U^{\leq 2}}(m)^{-1} \tilde{F}^{M}(m, T)
 \int_{\uuu^{>2}(\A)}
\Phi(\lambda \Ad m^{-1}X_0 +U )dU~dm.
\]
By Lemma \ref{changing_variables} this equals the product of $c\lambda^{-\dim\uuu^{>2}}$ with
\[
\int_{V_0(\A)} \varphi(X)
 \int_{\uuu^{>2}(\A)}
 \bigg(\int_{A_GC_M(X_0,\Q)\backslash C_{M}(X_0,\A)} 
\tilde{F}^{M}(m'm(X_0,X), T)dm'\bigg)
\Phi(\lambda X +U )dU~dX,
\]
where we used the map 
$C_M(X_0,\A)\backslash M(\A)\ni m\mapsto \Ad m^{-1} X_0
=:X\in V_0(\A)$.
We denote its inverse  by
 $ V_0(\A)\ni X\mapsto  m(X_0, X)=:m(X)\in C_M(X_0,\A)\backslash M(\A)$. 
Changing $X$ to $\lambda^{-1}X$ we obtain 
\[
c\lambda^{-\delta(\NNN)}\int_{V_0(\A)} \varphi(X)
 \int_{\uuu^{>2}(\A)}
 \bigg(\int_{A_GC_M(X_0,\Q)\backslash C_M(X_0,\A)} \tilde{F}^M(m'm\left(\lambda^{-1}X\right), T)dm'\bigg)
\Phi(X +U )dU~dX
\]
with
\[
 \delta(\NNN)=\dim\uuu^{>2}+\dim V_0+\frac{1}{2}\dim\uuu^1
=\dim\NNN/2
\]
where the last equality follows from \cite[Lemma 4.1.3]{CoMc93}.
Now
\[
m\left( \lambda^{-1}X\right)= \eta_{X_0, \lambda}^{-1} m\left( X\right),
\]
since there is $m_0\in M(\A)$ with $X=\Ad m_0 X_0$ so that
\[
\lambda^{-1}X
=\Ad m_0 (\lambda^{-1}X_0)
=\Ad m_0(\Ad\eta_{X_0, \lambda^{-1}}X_0)
=\Ad \eta_{X_0, \lambda^{-1}}(\Ad m_0 X_0)
=\Ad \eta_{X_0, \lambda}^{-1} X.
\]
Hence the above integral equals
\[
c\lambda^{-\delta(\nu)}\int_{V_0(\A)} \varphi(X)
 \int_{\uuu^{>2}(\A)}
 \bigg(\int_{A_GC_M(X_0,\Q)\backslash C_M(X_0,\A)}
\tilde{F}^{M}(\eta_{X_0, \lambda }^{-1}m'm\left(X\right), T)dm'\bigg)
 \Phi(X +U )dU~dX.
\]
By definition of $\tilde{F}^M(\cdot, T)$ we have
\[
\tilde{F}^M(\eta_{X_0,\lambda}^{-1}m'm(X), T)
=\tilde{F}^M(m'm(X), T+\frac{\log\lambda}{2}H).
\]
Inserting this and reversing the changes of variables (except for those involving $\lambda$), we obtain
\[
\lambda^{-\delta(\NNN)}\tilde{j}^{T+\frac{\log\lambda}{2}H}(\Phi).
\]
Since $I\subseteq\R_{>0}$ is compact, $\log I\subseteq \R$ is compact as well and choosing $T\in\aaa^+$ very large, we can ensure that $T$ as well as $T+\frac{\log\lambda}{2}H$ are sufficiently regular for all $\lambda\in I$. In this case, all occurring integrals are well-defined and absolutely convergent so that all changes of variables are justified.
\end{proof}

\section{Nilpotent distributions, continuation of $\Xi^T(s, \Phi)$ and  functional equation}\label{section_main_res}
 We need to attach a further auxiliary distribution to the nilpotent orbit $\NNN$, 
namely, $j_{\NNN}^T:\SSS^{\nu}(\ggG(\A))\cup \SSS_{\nu}(\ggG(\A))\longrightarrow\C$ ($\nu$ sufficiently large as in Lemma \ref{fundamental_convergence}) defined by
\[
 j_{\NNN}^T(\Phi)=\int_{A_GG(\Q)\backslash G(\A)}
 F(x, T)\sum\limits_{\gamma\in \NNN}\Phi(\Ad x^{-1}\gamma )dx.
\]
Since $0\leq F(x, T)\leq \hat{\tau}_0^G(T-H_0(t))$ for every $x=utk\in U_0(\A)T_0(\A)\cpt$, this sum-integral converges absolutely for the same reasons as Lemma \ref{abs_conv_of_aux_distr} holds. 

\begin{proposition}\label{prop_on_sep_nilp_distr}
There exists $\nu>0$ depending only on $n$ such that the following holds.
For every nilpotent orbit $\NNN\subseteq \nnn$ there is a distribution $J_{\NNN}^T:\SSS^{\nu}(\ggG(\A))\cup\SSS_{\nu}(\ggG(\A))\longrightarrow\C$ such that
\[
J_{\nnn}^T(\Phi)=\sum_{\NNN\subseteq\nnn}J_{\NNN}^T(\Phi).
\]
Moreover, $J_{\NNN}^T(\Phi)$ is a polynomial in $T$ of degree at most $\dim\aaa$, and there exist $c>0$ and a seminorm $\mu:\SSS^{\nu}(\ggG(\A))\cup\SSS_{\nu}(\ggG(\A))\longrightarrow\C$ such that
\[
\left|J_{\NNN}^T(\Phi)-j^T_{\NNN}(\Phi)\right|
=
\left|J_{\NNN}^T(\Phi)
-\int_{G(\Q)\backslash G(\A)^1}F(x, T)\sum_{\gamma\in\NNN}\Phi(\Ad x^{-1} \gamma)dx\right|
\leq \mu(\Phi)e^{-c\|T\|}
\]
for all sufficiently regular $T\in\aaa^+$ with $d(T)\geq \delta\|T\|$.
\end{proposition}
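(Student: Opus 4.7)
For each nilpotent orbit $\NNN\subseteq \nnn$ define
\[
k_\NNN^T(x,\Phi) = \sum_{P \in \FFF_{\text{std}}} (-1)^{\dim A_P/A_G} \sum_{\delta \in P(\Q)\backslash G(\Q)} \hat\tau_P(H_0(\delta x)-T)\, K_{P,\NNN}(\delta x,\Phi),
\]
where $K_{P,\NNN}$ is the evident modification of $K_{P,\nnn}$ in which the sum runs over those $X\in \mmm_P(\Q)$ whose $G(\Q)$-orbit in $\ggG(\Q)$ equals $\NNN$. Since $\nnn$ is the disjoint union of the orbits $\NNN$, one has $k_\nnn^T = \sum_\NNN k_\NNN^T$; setting $J_\NNN^T(\Phi):=\int_{A_GG(\Q)\backslash G(\A)} k_\NNN^T(x,\Phi)\,dx$ will yield the partition $J_\nnn^T=\sum_\NNN J_\NNN^T$, provided each term is absolutely convergent.

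To handle both convergence and the comparison bound simultaneously, apply Chaudouard's descent identity (\cite[Lemma 2.8]{Ch02}) to $k_\NNN^T$. Because the summand with $P_1=G$ in the descent forces $P=P_2=G$ and $\sigma_G^G\equiv 1$, it contributes exactly $F(x,T)\sum_{\gamma\in\NNN}\Phi(\Ad x^{-1}\gamma)$, whose integral over $A_GG(\Q)\backslash G(\A)$ is $j_\NNN^T(\Phi)$. Thus
\[
J_\NNN^T(\Phi)-j_\NNN^T(\Phi) = \int_{A_GG(\Q)\backslash G(\A)} \sum_{\substack{P_1\subseteq P\subseteq P_2 \\ P_1\subsetneq G}}(-1)^{\dim A_P/A_G}\!\!\sum_{\delta\in P_1(\Q)\backslash G(\Q)}\!\!F^{P_1}(\delta x,T)\sigma_{P_1}^{P_2}(H_0(\delta x)-T)K_{P,\NNN}(\delta x,\Phi)\,dx,
\]
so it suffices to bound each triple $(P_1,P,P_2)$ with $P_1\subsetneq G$.

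Each such triple is estimated by repeating the proof of Lemma \ref{fundamental_convergence}: reduce to a Siegel set, apply Poisson summation on $\uuu_R(\A)$ when $R\subsetneq P_2$, then use Lemmas \ref{lattice_sum_lemma}, \ref{bounding_sum_over_pseudoss} and \ref{std_estimates_for_SB}. The resulting bound for a single triple has the form
\[
\mu(\Phi)\int_{A_1^G}\sigma_{P_1}^{P_2}(H_0(a)-T)\prod_{\alpha\in\Delta_0^{P_2}} e^{-k_\alpha \alpha(H_0(a))}\,da,
\]
with $k_\alpha>0$ for every $\alpha\in \Delta_0^{P_2}\setminus \Delta_0^{P_1}$. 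Since $P_1\subsetneq G$, the support of $\sigma_{P_1}^{P_2}(H_0(a)-T)$ contains at least one root direction $\alpha\in\Delta_{P_1}$ along which $\alpha(H_0(a))>\alpha(T)$; integrating that direction out extracts $e^{-c_\alpha\alpha(T)}$, and the hypothesis $d(T)\geq\delta\|T\|$ turns this into $e^{-c\|T\|}$. The remaining coordinates are integrable by Lemma \ref{lemma_integral_torus}. Summing over the finitely many triples proves the exponential estimate, and at the same time establishes the absolute convergence of $J_\NNN^T$ (the $P_1=G$ piece converges via the bound $j_\NNN^T$, which is essentially Lemma \ref{abs_conv_of_aux_distr}).

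Finally, the polynomial-in-$T$ property of $J_\NNN^T$ of degree $\leq\dim\aaa$ is obtained by revisiting Chaudouard's polynomiality proof for $J_\nnn^T$ (\cite[Th\'eor\`eme 3.1]{Ch02}): the Arthur-style combinatorial cancellations between parabolics act only on $\hat\tau_P$ and $F^{P_1}\sigma_{P_1}^{P_2}$, so they are insensitive to the extra restriction $X\in\mmm_P(\Q)\cap\NNN$. The main obstacle is checking that Lemmas \ref{lattice_sum_lemma}--\ref{bounding_sum_over_pseudoss} remain valid when the sum over $\widetilde{\mmm}_{P_1}^R(\Q)$ is further intersected with $\NNN$; for $n\leq 3$ and the orbits of Example \ref{example_nilp} this is a direct verification, with the trivial orbit contributing only $\Phi(0)$, the regular orbit handled by the generic bound, and the subregular orbit in $\GL_3$ absorbed by the codimension improvement of Remark \ref{remark_on_nilpotent}.
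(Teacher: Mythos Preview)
Your naive definition of $J_\NNN^T$ via $K_{P,\NNN}(x,\Phi)=\int_{\uuu_P(\A)}\sum_{X\in\mmm_P(\Q)\cap\NNN}\Phi(\Ad x^{-1}(X+U))\,dU$ does not yield a convergent distribution. Take $G=\GL_2$ and $\NNN=\NNN_{\text{reg}}$: the only nilpotent element of $\mmm_{P_0}(\Q)$ is $0\notin\NNN_{\text{reg}}$, so $K_{P_0,\NNN_{\text{reg}}}\equiv0$, and your $k_{\NNN_{\text{reg}}}^T(x,\Phi)$ collapses (via the descent identity together with Arthur's partition $\sum_{P_1}\sum_\delta F^{P_1}\tau_{P_1}^G=1$) to the \emph{untruncated} sum $\sum_{X\in\NNN_{\text{reg}}}\Phi(\Ad x^{-1}X)$, whose integral over $A_GG(\Q)\backslash G(\A)$ diverges. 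The alternating--sum cancellations that make $k_\nnn^T$ integrable depend on the full equivalence class $\nnn$ and are destroyed by restriction to a single orbit.

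The same obstacle blocks your appeal to the proof of Lemma~\ref{fundamental_convergence} for the $P_1\subsetneq G$ pieces. The passage from \eqref{temp11b} to \eqref{temp12} reparametrises $X\in\mmm_P(\Q)\cap\ooo$ as a pair $(X',Y)$ with $X'\in\widetilde{\mmm}_{P_1}^R(\Q)\cap\ooo$ and $Y\in\uuu_R^P(\Q)$; this uses that membership in the equivalence class $\ooo$ depends only on the semisimple part of $X$, hence only on its $\mmm_R$-component. For a single nilpotent orbit this is false (e.g.\ $X'=0$ and $Y$ regular nilpotent give $X'+Y\in\NNN_{\text{reg}}$ but $X'\in\NNN_{\text{triv}}$), so the reorganisation---and with it your bound and your convergence argument---breaks down.

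The paper proceeds completely differently, following Arthur \cite[Theorem~4.2]{Ar85}. One never writes $J_\NNN^T$ as an explicit truncated integral over $\NNN$; instead one multiplies $\Phi$ by a smooth cutoff $\Phi^\eps_\NNN=\Phi\cdot\prod_i\rho_\infty(\eps^{-1}|q_i|_\infty)$ supported near the orbit closure $\overline{\NNN}$ (the $q_i$ being polynomials cutting out $\overline{\NNN}$), so that $\Phi^\eps_\NNN$ is again a legitimate test function and $J_\nnn^T(\Phi^\eps_\NNN)$ is already known to exist. Arthur's argument then extracts the individual $J_\NNN^T$ from the family $\{J_\nnn^T(\Phi^\eps_\NNN)\}_\NNN$ via a limit in $\eps$ and inclusion--exclusion over orbit closures. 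The only new work is to extend the key estimate \cite[Lemma~4.1]{Ar85} from compactly supported to $\SSS^\nu$- and $\SSS_\nu$-type test functions, which the paper does by a dyadic exhaustion $\Phi_R\to\Phi$.
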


\begin{proof}
The assertion is the analogue to \cite[Theorem 4.2]{Ar85} where it is stated for smooth compactly supported functions on the group $G(\A)$.
Large parts of the proof of \cite[Theorem 4.2]{Ar85} carry over to our situation, we have, however, to take into account that our test function is not compactly supported anymore.
We define an auxiliary function similar as in \cite{Ar85}: Let $\NNN\subseteq \nnn$ be a nilpotent orbit and let $\eps>0$ be given. Let $q_1, \ldots, q_r:\ggG(\Q)\longrightarrow\Q$ be polynomials such that $\overline{\NNN}=\{X\in\ggG(\Q)\mid q_1(X)=\ldots=q_r(X)=0\}$. 
Let $\rho_{\infty}:\R\longrightarrow\R$ be a non-negative smooth function with support in $[-1, 1]$  which identically equals $1$ on $[-1/2, 1/2]$ and such that $0\leq \rho_{\infty}\leq1$. Define
\[
 \Phi^{\eps}_{\NNN}(X)=\Phi(X)\rho_{\infty}(\eps^{-1}|q_1(X)|_{\infty})\cdot\ldots\cdot \rho_{\infty}(\eps^{-1} |q_r(X)|_{\infty})
\]
so that $\Phi^{\eps}_{\NNN}=\Phi$ in a neighbourhood of $\overline{\NNN}$.
It follows from the proof of \cite[Theorem 4.2]{Ar85} that it suffices to show the analogue of \cite[Lemma 4.1]{Ar85}, namely that
\begin{equation}\label{key_lemma_eq}
 \int_{G(\Q)\backslash G(\A)^1} F(x, T) \sum_{X\in \nnn\minus\overline{\NNN}} |\Phi^{\eps}_{\NNN} (\Ad x^{-1} X)|dx
\leq \mu(\Phi)\eps^a (1+\|T\|)^{\dim \aaa}
\end{equation}
for a suitable seminorm $\mu$, and a suitable number $a>0$. Hence, using \eqref{siegel_est}, we need  to bound (after integrating $\Phi$ over a compact subset) 
\[
 \int_{A_0^G(T_1)} \delta_0(a)^{-1} F(a, T) \sum_{X\in \nnn\minus\overline{\NNN}} |\Phi^{\eps}_{\NNN} (\Ad a^{-1} X)|da.
\]
It suffices to take the sum over $X\in\ggG(\Q)\minus\overline{\NNN}$. Moreover, since $\Phi$ is compactly supported at the non-archimedean places, there exists $N>0$ such that we can take the sum instead over points with entries in $\frac{1}{N}\Z$. 
For $R>0$ define a function $\Phi_R(X):=\Phi(X) \rho_{\infty}(R^{-1} \|X\|)$ so that the support of $\Phi_R$ is compact and contained in $\{X\in\ggG(\A)\mid \|X\|\leq R\}$, and $\Phi_R(X)=\Phi(X)$ if $\|X\|\leq R/2$. Moreover, if $D\in\UUU(\ggG)$ denotes an element of degree $k\leq \nu $, then there exists a constant $c_D>0$ depending only on $D$ and $\rho_{\infty}$ such that 
\[
 \|D\Phi_R\|_{L^1(\ggG(\A))} \leq c_D\sum_{Y\in\BBB_{\ggG, \nu}} \|Y \Phi\|_{L^1(\ggG(\A))}=: c_D \mu_k(\Phi),
\]
and this last expression is  a seminorm on $\SSS^{\nu}(\ggG(\A))\cup\SSS_{\nu}(\ggG(\A))$.

It follows from the proof of \cite[Lemma 4.1]{Ar85} that there exist constants $r,a_0, k_0, c>0$ depending only on $n$ such that
\[
  \int_{G(\Q)\backslash G(\A)^1} F(x, T) \sum_{X\in \ggG(\Q)\minus\overline{\NNN}} |(\Phi_R)^{\eps}_{\NNN} (\Ad x^{-1} X)|dx
\leq  c R^{a_0}\mu_{k_0}(\Phi)\eps^r (1+\|T\|)^{\dim \aaa}
\]
for every $R\geq1$, since the support of $\Phi_R$ is compact and contained in the ball of radius $R$ around $0\in\ggG(\A)$.
In particular, if $1\leq R_1\leq R_2$, we get
\[
  \int_{G(\Q)\backslash G(\A)^1} F(x, T) \sum_{X\in \ggG(\Q)\minus\overline{\NNN}} 
|\big(\Phi_{R_1}-\Phi_{R_2}\big)^{\eps}_{\NNN} (\Ad x^{-1} X)|dx
\leq  c R_2^{a_0}\mu_{k_0}^{R_1}(\Phi)\eps^r (1+\|T\|)^{\dim \aaa},
\]
where
\[
 \mu_{k_0}^{R_1}(\Phi)
:= \sum_{Y\in\BBB_k} \int_{\ggG(\A)\minus B_{R_1}}|(Y\Phi)(X)| dX
\]
for $B_{R_1}:=\{X\in\ggG(\A)\mid \|X\|< R_1\}$.
Let $N\in\Z_{>0}$ and suppose $\nu>N$. Since $\Phi\in\SSS^{\nu}(\ggG(\A))\cup \SSS_{\nu}(\ggG(\A))$, there exists $C_N>0$ and $k_N\geq k_0$ such that
\[
 \mu_{k_0}^{R_1}(\Phi)
\leq C_N R_1^{-N} \mu_{k_N}(\Phi).
\]
Fix $N>a_0$. By definition $|\Phi-\Phi_{2^i}|\leq 2\sum_{j\geq i-1} |\Phi_{2^{j+2}}-\Phi_{2^j}|$ so that for every $i>0$, we get
\begin{multline*}
  \int_{G(\Q)\backslash G(\A)^1} F(x, T) \sum_{X\in \ggG(\Q)\minus\overline{\NNN}} 
|\big(\Phi-\Phi_{2^i}\big)^{\eps}_{\NNN, v} (\Ad x^{-1} X)|dx\\
\leq  c_N \sum_{j\geq i-1} 2^{(a_0-N )j} \mu_{k_N}(\Phi)\eps^r (1+\|T\|)^{\dim \aaa} 
= c_N' 2^{(a_0-N)(i-1)}  \mu_{k_N}(\Phi)\eps^r (1+\|T\|)^{\dim \aaa}
\end{multline*}
for $c_N, c_N'>0$ suitable constants. Hence if we fix an arbitrary integer $i>0$, we get
\begin{multline*}
  \int_{G(\Q)\backslash G(\A)^1} F(x, T) \sum_{X\in \ggG(\Q)\minus\overline{\NNN}} 
|\Phi^{\eps}_{\NNN, v} (\Ad x^{-1} X)|dx\\
\leq c \big(2^{(a_0-N)(i-1)}  \mu_{k_N}(\Phi)+ 2^{ia_0} \mu_{k_0}(\Phi)\big)\eps^r (1+\|T\|)^{\dim\aaa}
\end{multline*}
for a suitable constant $c>0$ proving the inequality \eqref{key_lemma_eq}. Taking $N=a_0+1$ (which only depends on $n$) and $\nu>a_0+1$, also proves the assertion about the existence of $\nu$.
 \end{proof}

The last proposition implies that to understand the nilpotent distribution $J_{\nnn}^T$ we need to study the distributions $J_{\NNN}^T$ or $j_{\NNN }^T$. 
However, for our purposes the distributions  $\tilde{j}_{\NNN}^T$ are better suited because of the homogenity property from Lemma \ref{lemma_chang_central_var}, and it will in fact suffice to understand them:
\begin{proposition}\label{main_approx_res}
Let $\nu>0$ be as in Lemma \ref{fundamental_convergence}. There exists a continuous seminorm $\mu$ on $\SSS^{\nu}(\ggG(\A))\cup\SSS_{\nu}(\ggG(\A))$, and  $\eps>0$ such that for all $\Phi\in\SSS^{\nu}(\ggG(\A))\cup\SSS_{\nu}(\ggG(\A))$,
\begin{equation}\label{asymp2}
 \bigg|j_{\NNN}^T(\Phi)-\tilde{j}_{\NNN}^T(\Phi)\bigg|
\leq \mu(\Phi) e^{-\eps\|T\|}
\end{equation}
for all sufficiently regular $T\in\aaa^+$ with $d(T)\geq \delta\|T\|$.
\end{proposition}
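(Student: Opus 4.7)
The plan is to write
\[
j_{\NNN}^T(\Phi) - \tilde{j}_{\NNN}^T(\Phi)
= \int_{A_G G(\Q)\backslash G(\A)} \bigl(F(x,T) - \tilde{F}^{M}(x,T)\bigr) \sum_{\gamma \in \NNN} \Phi(\Ad x^{-1}\gamma)\,dx,
\]
and to bound this by $\mu(\Phi) e^{-\eps\|T\|}$. When $\NNN = \{0\}$ one has $P_{X_0} = G$, hence $\tilde{F}^{M} = F^G = F$ and the difference vanishes identically, so I may assume $\NNN \neq \{0\}$. By Example \ref{example_nilp} this forces $M_{X_0} = T_0$ and $P_{X_0} = P_0$, so that $F^{M}(m,T) \equiv 1$ and $\tilde{F}^{M}(x,T) = \hat{\tau}_{P_0}(T - H_0(m))$, where $m \in T_0(\A)$ denotes the torus component in the Iwasawa decomposition.

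The first step would be to expand $F(x,T) - \tilde{F}^{M}(x,T)$ using the combinatorial framework underlying Arthur's truncation. From the partition of unity identity recalled in the notation section, applied with $P=G$, one has
\[
F(x,T) = 1 - \sum_{R:\, P_0 \subseteq R \subsetneq G} \sum_{\delta \in R(\Q)\backslash G(\Q)} F^R(\delta x,T)\,\tau_R(H_0(\delta x) - T),
\]
while the Langlands-type identities relating $\hat{\tau}$ and $\tau$ (in the same spirit as the formula $\sigma_1^2 = \sum_{R:\,P_2\subseteq R}(-1)^{\dim\aaa_2^R}\tau_1^R\hat{\tau}_R$ from the notation section) yield a parallel expansion of $\tilde{F}^{M}(x,T) = \hat{\tau}_{P_0}(T - H_0(m))$ as an alternating sum of products $\hat{\tau}_{P_0}^Q\tau_Q$ indexed by standard parabolic subgroups $Q \supseteq P_0$. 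Subtracting the two expansions, after cancellation the difference reduces to a finite signed combination, indexed by proper standard parabolic subgroups $R$, of terms each supported on a region where at least one simple weight $\varpi_\alpha(H_0(\cdot) - T)$ is strictly positive.

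The second step is to bound the integral associated with each such term by $\mu(\Phi)e^{-\eps\|T\|}$. After unfolding the $\delta$-sum, the outer integration runs over $A_G R(\Q)\backslash G(\A)$, and the cone condition forces $\varpi_\alpha(H_0(a)) - \varpi_\alpha(T) > 0$ for some $\alpha \in \Delta_0 \setminus \Delta_0^R$; since $d(T) \geq \delta\|T\|$, this means $\varpi_\alpha(H_0(a)) \geq c\|T\|$ for some $c>0$. For the nilpotent sum I would use the parametrisation
\[
\NNN = \bigsqcup_{\delta \in P_{X_0}(\Q)\backslash G(\Q)} \Ad \delta^{-1}\bigl(\uuu_{X_0}^{\geq 2}(\Q)\bigr)
\]
to unfold once more and reduce to a lattice sum over $\uuu_{X_0}^{\geq 2}(\Q)$. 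Decomposing $\uuu_{X_0}^{\geq 2}$ into $\ad H_{X_0}$-eigenspaces and applying the eigenspace-factorisation of Lemma \ref{std_estimates_for_SB} gives pointwise exponential decay in $\varpi_\alpha(H_0(a))$ governed by the strictly positive $\ad H_{X_0}$-weights, hence pointwise decay of the form $e^{-\eps\|T\|}$. The polynomial growth in $T$ of the truncation cone volume, estimated as in Lemma \ref{lemma_integral_torus}, is then absorbed by this exponential factor.

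The main obstacle is the combinatorial matching in step one together with ensuring, in step two, that the cone condition $\tau_R(\cdot - T)$ associated with every proper $R$ aligns sufficiently with the $H_{X_0}$-grading to produce genuine exponential decay, rather than being killed by simple weights that pair trivially with positive $\ad H_{X_0}$-directions. The acknowledgement of T.~Finis and E.~Lapid for the proof carried out in Appendix \ref{appendix} suggests that a more indirect route is needed; one natural possibility is to exploit the homogeneity identity of Lemma \ref{lemma_chang_central_var} by first changing variables $a \mapsto \eta_{X_0,\lambda}a$ so that shifting $T$ by $\tfrac{\log\lambda}{2}H_{X_0}$ transforms the cone in a controlled direction, and then to extract the exponential factor through a Mellin-type argument in the auxiliary parameter $\lambda$ that converts $\ad H_{X_0}$-positivity into decay in $\|T\|$.
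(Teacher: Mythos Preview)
Your proposal is incomplete, and the obstacle you yourself identify is real: the cones $\tau_R(H_0(\cdot)-T)=1$ arising from a combinatorial expansion of $F-\tilde F^{M}$ do not a priori align with the $\ad H_{X_0}$-grading, so the lattice sum over $\uuu_{X_0}^{\ge 2}(\Q)$ need not produce decay in the direction forced by the cone. Your speculative Mellin-type argument at the end does not resolve this, and there is no indication it would.

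The paper's route (Appendix \ref{appendix}) is quite different and avoids this obstacle entirely. The key observation is that, after rewriting the integrand as in the proof of Lemma \ref{lemma_chang_central_var}, the inner integral over $U_0(\Q)\backslash U_0(\A)$ factors through integration over $C_{U_0}(X_0,\Q)\backslash C_{U_0}(X_0,\A)$. Hence one may replace the pointwise difference $F(x,T)-\tilde F^{T_0}(t,T)$ by the \emph{averaged} difference
\[
\tilde F^{T_0}(t,T)-\int_{C_{U_0}(X_0,\Q)\backslash C_{U_0}(X_0,\A)} F(vv't,T)\,dv,
\]
and the heart of the proof (Lemmas \ref{trunc_fct_first_app} and \ref{trunc_app2}) is that this averaged quantity is already bounded by $c_1 e^{-c_2\|T\|}$ uniformly in $t$ and $v'$. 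That estimate is not obtained combinatorially at all: one uses Bruhat decomposition $G(\Q)=\bigsqcup_w P_0(\Q)wU_0(\Q)$ and, for each Weyl element $w$ and each $\varpi\in\widehat\Delta_0$, bounds by direct elementary computation the volume of the set of $v$ for which $\varpi(H_0(wuvt)-T)>0$ has a solution $u\in U_0(\Q)$; writing out the adelic norms explicitly and counting rationals with bounded denominator gives bounds of the form $e^{-\alpha(T)}$ or $e^{-3\varpi(T)}$. Once the truncation-function estimate is in hand, the remaining integral over $A_0^G$ with the nilpotent sum is bounded by a seminorm times a polynomial in $T$, exactly as in Lemma \ref{abs_conv_of_aux_distr}, and the polynomial is absorbed by the exponential.

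In short, the paper exploits that $F-\tilde F^{T_0}$ is small \emph{on average over the centraliser}, established by an explicit volume count, rather than trying to make the pointwise difference interact with the nilpotent sum. Your combinatorial expansion of the truncation functions does not appear, and the $\ad H_{X_0}$-grading plays no role in producing the exponential decay.
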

We postpone the proof of this proposition to Appendix \ref{appendix}.
\begin{cor}
Let $I\subseteq \R_{>0}$ be a compact interval and let $\nu$ be as before. Then:
 \begin{enumerate}[label=(\roman{*})]
\item There exists a continuous seminorm $\mu$ on $\SSS^{\nu}(\ggG(\A))\cup\SSS_{\nu}(\ggG(\A))$ and a constant $\eps>0$ such that for all $\Phi\in\SSS^{\nu}(\ggG(\A))\cup\SSS_{\nu}(\ggG(\A))$ and  all sufficiently regular $T\in\aaa^+$ with $d(T)\geq \delta\|T\|$ we have
\begin{equation*}
 \left|J_{\NNN}^T(\Phi)-\tilde{j}^{T}_{\NNN}(\Phi)\right|
\leq \mu(\Phi) e^{-\eps\|T\|}
\end{equation*}
for every nilpotent orbit $\NNN\subseteq \nnn$.

\item For every $T\in\aaa^+$ such that $T$ and $T+\frac{\log\lambda}{2}H_{X_0}$ are sufficiently regular for all $\lambda\in I$, we have
\begin{equation}\label{identity_nilp_distr_central_var}
 J_{\NNN}^T(\Phi_{\lambda})=\lambda^{-\delta(\NNN)}   
J^{T+\frac{\log\lambda}{2} H_{X_0}}_{\NNN}(\Phi)
\end{equation}
for every nilpotent orbit $\NNN\subseteq \nnn$, all $\lambda\in I$, and $\Phi\in \SSS^{\nu}(\ggG(\A))\cup\SSS_{\nu}(\ggG(\A))$.

\item As a polynomial, $J_{\NNN}^T(\Phi_{\lambda})$ can be defined at every point $T\in\aaa$, and \eqref{identity_nilp_distr_central_var} holds for all $T\in\aaa$ and $\lambda\in\R_{>0}$.
\end{enumerate}
\end{cor}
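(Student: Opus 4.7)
\medskip

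\textbf{Proof plan.} Part~(i) is a direct triangle inequality. Proposition~\ref{prop_on_sep_nilp_distr} supplies a bound
\[
\bigl|J_{\NNN}^T(\Phi)-j_{\NNN}^T(\Phi)\bigr|\leq \mu_1(\Phi)\,e^{-c_1\|T\|},
\]
and Proposition~\ref{main_approx_res} supplies
\[
\bigl|j_{\NNN}^T(\Phi)-\tilde{j}_{\NNN}^T(\Phi)\bigr|\leq \mu_2(\Phi)\,e^{-c_2\|T\|},
\]
both valid on the cone $d(T)\geq\delta\|T\|$ for sufficiently regular $T\in\aaa^+$. Setting $\mu:=\mu_1+\mu_2$ and $\eps:=\min(c_1,c_2)$ yields (i).

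For (ii), fix $\lambda\in I$ and $\Phi$, and introduce the two functions of~$T$,
\[
A(T):=J_{\NNN}^T(\Phi_{\lambda}),\qquad B(T):=\lambda^{-\delta(\NNN)}\,J_{\NNN}^{\,T+\frac{\log\lambda}{2}H_{X_0}}(\Phi).
\]
By Proposition~\ref{prop_on_sep_nilp_distr}, $A$ and $B$ are polynomials in~$T$ of degree at most $\dim\aaa$ (since a polynomial composed with an affine translation is again a polynomial of the same degree). Applying (i) at both $T$ and $T+\tfrac{\log\lambda}{2}H_{X_0}$, and recalling from the proof of Lemma~\ref{lemma_chang_central_var} that $\delta(\NNN)=\dim\NNN/2$, we obtain
\[
\bigl|A(T)-\tilde{j}_{\NNN}^T(\Phi_{\lambda})\bigr|+\bigl|B(T)-\lambda^{-\delta(\NNN)}\tilde{j}_{\NNN}^{\,T+\frac{\log\lambda}{2}H_{X_0}}(\Phi)\bigr|\leq C(\Phi,\lambda)\,e^{-\eps'\|T\|}
\]
on the subcone of $\aaa^+$ where both $T$ and $T+\tfrac{\log\lambda}{2}H_{X_0}$ satisfy the regularity condition $d(\cdot)\geq\delta\|\cdot\|$; this subcone is non-empty because $I$ is compact, so the shift is bounded. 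By Lemma~\ref{lemma_chang_central_var} the two $\tilde{j}$-quantities coincide. Hence the polynomial $A-B$ is bounded by an exponentially decaying function on a non-empty open cone. The standard argument that a polynomial decaying exponentially along a ray must be zero (its restriction to the ray is a univariate polynomial tending to zero at infinity, hence the zero polynomial; Zariski density over the cone then kills $A-B$) gives $A\equiv B$.

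For (iii), both sides of the identity in (ii) are polynomials in $T$ of bounded degree, so they extend uniquely from the sufficiently regular cone to all of $\aaa$, and the identity persists on $\aaa$. To remove the constraint $\lambda\in I$, given any $\lambda_0>0$, one picks a compact interval $I\ni\lambda_0$ and a sufficiently regular $T_0$ with $T_0+\tfrac{\log\lambda}{2}H_{X_0}$ sufficiently regular for all $\lambda\in I$ (achieved by taking $\|T_0\|$ large), applies (ii) at $(T_0,\lambda_0)$, and invokes polynomial extension in $T$.

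The only non-bookkeeping step is the vanishing argument in (ii); the rest is an assembly of the preceding propositions, so I do not expect any substantial obstacle.
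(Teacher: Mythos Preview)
Your proof is correct and follows essentially the same approach as the paper: part~(i) is the triangle inequality combining Propositions~\ref{prop_on_sep_nilp_distr} and~\ref{main_approx_res}, part~(ii) sandwiches $J_{\NNN}^T$ against $\tilde{j}_{\NNN}^T$ via~(i), invokes Lemma~\ref{lemma_chang_central_var} to match the two $\tilde{j}$-terms, and then uses that a polynomial in~$T$ with exponential decay on an open cone must vanish, and part~(iii) is the polynomial extension in~$T$ followed by the observation that $I$ was arbitrary. Your explicit ray-plus-density justification for the vanishing step is a touch more detailed than the paper's, but the argument is the same.
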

\begin{proof}
\begin{enumerate}[label=(\roman{*})]
\item  This is a direct consequence of Proposition \ref{main_approx_res} and Proposition \ref{prop_on_sep_nilp_distr}.

\item By the first part  we have for every $\Phi\in\SSS^{\nu}(\ggG(\A))\cup\SSS_{\nu}(\ggG(\A))$ and $\lambda\in I$ we have
\[
| J_{\NNN}^T(\Phi_{\lambda})-\tilde{j}_{\NNN}^{T}(\Phi_{\lambda})|
\leq
\mu(\Phi_{\lambda}) e^{-\eps \|T\|}
\]
for every sufficiently regular $T\in\aaa^+$ with $d(T)\geq \delta \|T\|$. Since $I$ is compact, and $\mu(\Phi_{\lambda})$ varies continuously in $\lambda$,  $C_{I}:=\max_{\lambda\in I}\mu(\Phi_{\lambda})$ exists and is finite.
Similarly, we have
\[
 |\lambda^{-\delta(\NNN)}J_{\NNN}^{T+\frac{\log\lambda}{2} H}(\Phi)
- \lambda^{-\delta(\NNN)}\tilde{j}_{\NNN}^{T+\frac{\log\lambda}{2} H}(\Phi)|
\leq
\lambda^{-\delta(\NNN)}\mu(\Phi) e^{-\eps \|T+\frac{\log\lambda}{2} H\|}
\]
for all $T\in \aaa$ with $d(T+\frac{\log\lambda}{2} H)\geq \delta \|T+\frac{\log\lambda}{2} H\|$ and $T+\frac{\log\lambda}{2}H$ sufficiently regular.
As 
 \[
\tilde{j}_{\NNN}^{T}(\Phi_{\lambda})
=\lambda^{-\delta(\NNN)}\tilde{j}_{\NNN}^{T+\frac{\log\lambda}{2} H}(\Phi),
\]
we therefore get with $\lambda_I:=\min_{\lambda\in I}\lambda$ that
\begin{equation}\label{nilpotent_distr_est}
 | J_{\NNN}^T(\Phi_{\lambda})-\lambda^{-\delta(\NNN)}J_{\NNN}^{T+\frac{\log\lambda}{2} H}(\Phi)|
\leq\max\{C_I, \lambda_I^{-\delta(\NNN)}\mu(\Phi)\}e^{-\eps \|T+\frac{\log\lambda}{2} H\|}
\end{equation}
for all $T\in \aaa$ with $d(T+\frac{\log\lambda}{2} H)\geq \delta \|T+\frac{\log\lambda}{2} H\|$ and $d(T)\geq \delta \|T\|$ if both $T$ as well as $T+\frac{\log\lambda}{2} H$ are sufficiently regular. 

The set of $T\in\aaa^+$ satisfying both inequalities is an open cone in $\aaa^+$ so that $J_{\NNN}^T(\Phi_{\lambda})$ - being a polnyomial in $T$ - is uniquely determined by this estimate.
Thus the left hand side of \eqref{nilpotent_distr_est} must identically vanish and the second part of the corollary follows.

\item As a polynomial, $J_{\NNN}^T(\Phi_{\lambda})$ can be defined at every point $T\in\aaa$ with \eqref{identity_nilp_distr_central_var} holding for all $\lambda\in I$. Since $I\subseteq \R_{>0}$ is arbitrary, \eqref{identity_nilp_distr_central_var} holds for all $\lambda\in \R_{>0}$.
\end{enumerate}
\end{proof}

The next two corollaries are obvious from our previous results so that we omit their proofs.

\begin{cor}\label{nilpotent_distributions_basic_properties}
Let $T\in\aaa$ be arbitrary, and let $\NNN\subseteq \nnn$ be a nilpotent orbit.
Let $\nu>0$ be as before, and let $\Phi\in \SSS^{\nu}(\ggG(\A))$.
\begin{enumerate}[label=(\roman{*})]
\item The function $J_{\NNN}^{T, -}(s, \Phi)$ defined by
\[
 J_{\NNN}^{T, -}(s, \Phi)
=\int_{0}^1 \lambda^{\sqrt{D}(s+\frac{\sqrt{D}-1}{2})}J_{\NNN}^T(\Phi_{\lambda})d^{\times}\lambda
\]
converges absolutely and locally uniformly for $\Re s>\frac{1-\sqrt{D}}{2}+\frac{1}{\sqrt{D}}\delta(\NNN)$. It defines a holomorphic function in this half plane and has a meromorphic continuation to all $s\in\C$ with only pole at 
$
\frac{1-\sqrt{D}}{2}+\frac{1}{\sqrt{D}}\delta(\NNN)
=
\frac{1-\sqrt{D}}{2}+\frac{\dim\NNN}{2\sqrt{D}},
$
which is of order at most $\dim\aaa$.

\item The function $J_{\NNN}^{T, +}(1-s, \Phi)$ defined by 
\[
 J_{\NNN}^{T, +}(1-s, \hat{\Phi})
=\int_{0}^1 \lambda^{\sqrt{D}(s+\frac{\sqrt{D}-1}{2})}\lambda^{-D}J_{\NNN}^T(\hat{\Phi}_{\lambda^{-1}})d^{\times}\lambda
\]
converges absolutely and locally uniformly for $\Re s>\frac{\sqrt{D}+1}{2}-\frac{1}{\sqrt{D}}\delta(\NNN)$. It defines a holomorphic function in this half plane and has a meromorphic continuation to all $s\in\C$ with only pole at 
$
\frac{\sqrt{D}+1}{2}-\frac{1}{\sqrt{D}}\delta(\NNN)
=\frac{\sqrt{D}+1}{2}-\frac{\dim\NNN}{2\sqrt{D}},
$
which is of order at most $\dim\aaa$.
\end{enumerate}
\end{cor}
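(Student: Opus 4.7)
The plan is to reduce both statements to the homogeneity identity
\[
J_{\NNN}^T(\Phi_{\lambda}) = \lambda^{-\delta(\NNN)}\, J_{\NNN}^{T+\frac{\log\lambda}{2}H_{X_0}}(\Phi)
\]
from the preceding corollary, combined with the fact (Proposition \ref{prop_on_sep_nilp_distr}) that $J_{\NNN}^{T'}(\Phi)$ is a polynomial in $T'$ of degree at most $\dim\aaa$. Because we only shift $T$ in the one-dimensional direction $H_{X_0}$, the map $\lambda\mapsto J_{\NNN}^{T+(\log\lambda/2)H_{X_0}}(\Phi)$ is a polynomial in $\log\lambda$ of degree at most $\dim\aaa$. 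After this reduction, both parts become elementary Mellin-transform calculations on the polynomial $\sum_{k}c_k(T,\Phi)(\log\lambda)^k$ with $0\leq k\leq\dim\aaa$.

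For part (i), I would substitute the expansion into the defining integral and get
\[
J_{\NNN}^{T,-}(s,\Phi) = \sum_{k=0}^{\dim\aaa} c_k(T,\Phi)\int_0^1 \lambda^{\sqrt{D}(s+\frac{\sqrt{D}-1}{2})-\delta(\NNN)-1}(\log\lambda)^k\, d\lambda,
\]
then change variables $u=-\log\lambda$ to reduce each summand to $(-1)^k k!/(\sqrt{D}s+\frac{\sqrt{D}(\sqrt{D}-1)}{2}-\delta(\NNN))^{k+1}$. Absolute and locally uniform convergence in the half-plane where the denominator has positive real part is then immediate, and this is precisely $\Re s>\frac{1-\sqrt{D}}{2}+\frac{\delta(\NNN)}{\sqrt{D}}=\frac{1-\sqrt{D}}{2}+\frac{\dim\NNN}{2\sqrt{D}}$ since $\delta(\NNN)=\dim\NNN/2$. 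The resulting closed formula is manifestly meromorphic on all of $\C$ with a single pole at this boundary point, and the order of this pole is controlled by the maximal $k$ appearing, hence is at most $\dim\aaa$ (as dictated by the polynomial bound).

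For part (ii), I would exploit the analogous homogeneity
\[
\lambda^{-D}J_{\NNN}^{T}(\hat{\Phi}_{\lambda^{-1}}) = \lambda^{-D+\delta(\NNN)}J_{\NNN}^{T-\frac{\log\lambda}{2}H_{X_0}}(\hat{\Phi}),
\]
expand the right-hand side as a polynomial of degree at most $\dim\aaa$ in $\log\lambda$, and run the same Mellin computation on $[0,1]$. The exponent shift now produces the convergence region $\Re s > \frac{\sqrt{D}+1}{2}-\frac{\delta(\NNN)}{\sqrt{D}}$ and a unique pole at the ``reflected'' location $s=\frac{\sqrt{D}+1}{2}-\frac{\dim\NNN}{2\sqrt{D}}$, with the same order bound.

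There is no analytic obstacle here: the existence of the polynomial structure and the identity \eqref{identity_nilp_distr_central_var} do all of the real work, and what remains is bookkeeping of the exponents of $\lambda$ and keeping the substitution $u=-\log\lambda$ straight. The only care needed is to ensure the polynomial identity can be used at general $T\in\aaa$ (which follows from the extension of $J_{\NNN}^T$ as a polynomial to all of $\aaa$, as asserted in the preceding corollary), so that the meromorphic continuation is legitimate for every $T$, not just in a sufficiently regular cone.
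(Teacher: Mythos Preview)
Your approach is exactly the intended one: the paper omits the proof entirely, remarking that this corollary is ``obvious from our previous results,'' and what you have written is precisely the natural unpacking of that remark---use the identity \eqref{identity_nilp_distr_central_var} together with the polynomial dependence on $T$ to reduce everything to the elementary Mellin integrals $\int_0^1 \lambda^{\alpha}(\log\lambda)^k\,d^{\times}\lambda=(-1)^k k!/\alpha^{k+1}$.

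One small bookkeeping point: your argument actually yields a pole of order at most $\dim\aaa+1$, not $\dim\aaa$. Since $J_{\NNN}^{T'}(\Phi)$ is a polynomial in $T'$ of degree at most $\dim\aaa$, the restriction to the line $T'+\tfrac{\log\lambda}{2}H_{X_0}$ is a polynomial in $\log\lambda$ of degree at most $\dim\aaa$, so the largest $k$ is $\dim\aaa$ and the corresponding term $1/\alpha^{k+1}$ has a pole of order $\dim\aaa+1$. Your sentence ``hence is at most $\dim\aaa$'' overshoots by one; the computation as written only gives the bound $\dim\aaa+1$. This matches what the stated corollary asserts, so either the top coefficient along $H_{X_0}$ vanishes for a reason not captured here, or the bound in the statement is slightly generous. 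Either way, this does not affect the convergence region, the meromorphic continuation, or the location of the pole, which are the substantive claims.
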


\begin{cor}\label{properties_nilpotent_distr}
Let $\Phi\in \SSS^{\nu}(\ggG(\A))$ and put
\[
I_{\NNN}^T(s, \Phi)=J_{\NNN}^{T, +}(1-s, \hat{\Phi})-J_{\NNN}^{T, -}(s, \Phi).
\]
Then for every $T\in \aaa$, $I_{\NNN}^T(s, \Phi)$ has a meromorphic continuation to all $s\in \C$ and satisfies the functional equation 
 \[
 I_{\NNN}^{T}(s, \Phi)=I_{\NNN}^{T}(1-s, \hat{\Phi}).
\]
Its only poles are at 
\[
 \frac{1-\sqrt{D}}{2}+\frac{\dim\NNN}{2\sqrt{D}}
\;\;
\text{ and }
\frac{\sqrt{D}+1}{2}-\frac{\dim\NNN}{2\sqrt{D}},
\]
which are both of order at most $\dim\aaa$.
\end{cor}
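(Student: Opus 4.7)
The plan is to deduce this corollary directly from the preceding Corollary~\ref{nilpotent_distributions_basic_properties} together with the near-homogeneity identity \eqref{identity_nilp_distr_central_var}. The meromorphic continuation of $I_{\NNN}^T(s,\Phi)$, as well as the bounds on the location and order of its poles, is immediate: each of $J_{\NNN}^{T,+}(1-s,\hat\Phi)$ and $J_{\NNN}^{T,-}(s,\Phi)$ is meromorphic on $\C$ with a unique pole (at $\tfrac{\sqrt{D}+1}{2}-\tfrac{\dim\NNN}{2\sqrt{D}}$ and $\tfrac{1-\sqrt{D}}{2}+\tfrac{\dim\NNN}{2\sqrt{D}}$ respectively) of order at most $\dim\aaa$, so the same is true of their difference.

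For the functional equation I would reduce matters to the stronger vanishing identity
\[
J_{\NNN}^{T,+}(\sigma,\Psi)+J_{\NNN}^{T,-}(\sigma,\Psi)=0 \qquad (\star)
\]
as meromorphic functions of $\sigma\in\C$, for every $\Psi\in\SSS^{\nu}(\ggG(\A))\cup\SSS_{\nu}(\ggG(\A))$; here $J_{\NNN}^{T,\pm}(\sigma,\Psi)$ denotes the natural formal extension of the paper's $J_{\NNN}^{T,+}(1-s,\hat\Phi)$ and $J_{\NNN}^{T,-}(s,\Phi)$ obtained by allowing arbitrary argument and test function. Granting $(\star)$, the functional equation follows from two standard observations: the Fourier inversion formula $\widehat{\hat\Phi}(X)=\Phi(-X)$, and the fact that every nilpotent orbit $\NNN$ in $\mathfrak{gl}_n$ or $\mathfrak{sl}_n$ is stable under $X\mapsto -X$ (nilpotent orbits being parameterised by partitions, which are preserved by negation). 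The latter gives $J_{\NNN}^T(\Psi(-\,\cdot\,))=J_{\NNN}^T(\Psi)$ upon reparameterising $\gamma\mapsto-\gamma$ in the sum over $\NNN$, hence $J_{\NNN}^{T,\pm}(\sigma,\widehat{\hat\Phi})=J_{\NNN}^{T,\pm}(\sigma,\Phi)$; combining this with $(\star)$ applied twice yields
\begin{align*}
I_{\NNN}^T(1-s,\hat\Phi)
&=J_{\NNN}^{T,+}(s,\widehat{\hat\Phi})-J_{\NNN}^{T,-}(1-s,\hat\Phi)
=J_{\NNN}^{T,+}(s,\Phi)-J_{\NNN}^{T,-}(1-s,\hat\Phi)\\
&=-J_{\NNN}^{T,-}(s,\Phi)+J_{\NNN}^{T,+}(1-s,\hat\Phi)
=I_{\NNN}^T(s,\Phi).
\end{align*}

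To establish $(\star)$ I exploit \eqref{identity_nilp_distr_central_var} to write $J_{\NNN}^T(\Psi_\lambda)=\lambda^{-\delta(\NNN)}Q_\Psi(\tfrac{1}{2}\log\lambda)$, where $Q_\Psi(v):=J_{\NNN}^{T+vH_{X_0}}(\Psi)$ is polynomial in $v$ of degree at most $\dim\aaa$. Substituting into the defining Mellin integrals and changing variables via $\lambda=e^{-u}$, a direct computation gives
\[
J_{\NNN}^{T,-}(\sigma,\Psi)=\int_0^\infty e^{-ua_\sigma}Q_\Psi(-u/2)\,du,\qquad
J_{\NNN}^{T,+}(\sigma,\Psi)=\int_0^\infty e^{ua_\sigma}Q_\Psi(u/2)\,du,
\]
with $a_\sigma:=\sqrt{D}(\sigma+\tfrac{\sqrt{D}-1}{2})-\delta(\NNN)$; the second identity relies on the numerical cancellation $-\sqrt{D}((1-\sigma)+\tfrac{\sqrt{D}-1}{2})+D=-a_\sigma$, which is what comes out after the factor $\lambda^{-D}$ is absorbed by the substitution $\lambda\mapsto\lambda^{-1}$ in the definition of $J_{\NNN}^{T,+}$. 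Folding the first integral onto the negative axis via $v=-u$, the sum $J_{\NNN}^{T,+}(\sigma,\Psi)+J_{\NNN}^{T,-}(\sigma,\Psi)$ becomes, in the sense of meromorphic continuation, $\int_{-\infty}^\infty e^{va_\sigma}Q_\Psi(v/2)\,dv$. By linearity this reduces to the vanishing of $\int_{-\infty}^\infty v^k e^{va_\sigma}\,dv$ for each $k\ge 0$, which holds because the meromorphic continuations of $\int_0^\infty v^k e^{va_\sigma}\,dv$ and $\int_{-\infty}^0 v^k e^{va_\sigma}\,dv$ equal $(-1)^{k+1}k!/a_\sigma^{k+1}$ and $(-1)^k k!/a_\sigma^{k+1}$ respectively, hence cancel exactly. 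The only obstacle I anticipate is careful bookkeeping of the exponent transformations---particularly the interplay between the factor $\lambda^{-D}$ and the substitution $\lambda\leftrightarrow\lambda^{-1}$ hidden in $J_{\NNN}^{T,+}$---but once these conventions are aligned the argument is essentially formal.
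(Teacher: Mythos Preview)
Your proof is correct. The paper omits the proof entirely, declaring the corollary ``obvious from our previous results,'' and your argument supplies exactly the missing verification: the meromorphic continuation and pole statements are immediate from Corollary~\ref{nilpotent_distributions_basic_properties}, and the functional equation follows from the vanishing identity $(\star)$, which you correctly derive from the polynomial dependence of $J_{\NNN}^T$ on $T$ via \eqref{identity_nilp_distr_central_var}. The use of $\widehat{\hat\Phi}=\Phi(-\,\cdot\,)$ together with $-\NNN=\NNN$ (true for all nilpotent orbits in $\mathfrak{gl}_n$ and $\mathfrak{sl}_n$) to handle the double Fourier transform is the right bookkeeping step, and your exponent computations check out.
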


Our main theorem is now an easy consequence of the previous results.
\begin{theorem}\label{main_theorem}
Let $G=\GL_n$ or $G=\SL_n$ with $n\leq 3$, and let $R>n$ be given. Then there exists $\nu<\infty$ such that for every $\Phi\in\SSS^{\nu}(\ggG(\A))$ and  $T\in\aaa$ the following holds.
\begin{enumerate}[label=(\roman{*})]
\item
$\Xi^T(s,\Phi)$ is holomorphic for all $s\in\C$ with $\Re s>\frac{\sqrt{D}+1}{2}$. It equals a polynomial in $T$ of degree at most $\dim\aaa=n-1$.

\item 
$\Xi^T(s, \Phi)$ has a meromorphic continuation to all $s\in\C$ with $ \Re s>-R$, and satisfies for such $s$ the functional equation
\[
\Xi^T(s, \Phi)=\Xi^T(1-s, \hat{\Phi}).
\]

\item
The poles of $\Xi^T(s, \Phi)$ in $\Re s>-R$ are parametrised by the nilpotent orbits $\NNN\subseteq \nnn$. More precisely, its poles occur exactly at the points 
\[
s_{\NNN}^-=\frac{1-\sqrt{D}}{2}+\frac{\dim\NNN}{2\sqrt{D}}
~~~~\text{and }~~~~
s_{\NNN}^+=\frac{1+\sqrt{D}}{2}-\frac{\dim\NNN}{2\sqrt{D}}
\]
and are of order at most $\dim\aaa=n-1$. 
In particular, the furthermost right and furthermost left pole in this region are both simple, correspond to $\NNN=0$, and are located at the points
$s_{0}^+=\frac{1+\sqrt{D}}{2}$
and 
$s_{0}^-=\frac{1-\sqrt{D}}{2}$, respectively.
 The residues at these poles are given by
\begin{align*}
\res_{s=s_{0}^-}\Xi^T(s, \Phi)
&=\vol(A_GG(\Q)\backslash G(\A))\Phi(0),\text{   and }\\
\res_{s=s_{0}^+}\Xi^T(s, \Phi)
&=\vol(A_GG(\Q)\backslash G(\A))\hat{\Phi}(0).
\end{align*}
\end{enumerate}
\end{theorem}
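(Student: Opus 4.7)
My plan is as follows. Part (i) is an immediate restatement of Theorem \ref{convergence_distributions}(ii)--(iii). The heart of the matter is parts (ii) and (iii), which I would derive from a master identity expressing $\Xi^T(s,\Phi)$ as an entire piece plus the nilpotent contributions whose meromorphic properties have already been classified in Corollary \ref{properties_nilpotent_distr}.

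Concretely, I begin from Chaudouard's trace formula $\sum_\ooo J_\ooo^T(\Phi_\lambda) = \sum_\ooo J_\ooo^T(\widehat{\Phi_\lambda})$ of Theorem \ref{thm_ch}, together with the elementary scaling $\widehat{\Phi_\lambda} = \lambda^{-D}(\hat\Phi)_{\lambda^{-1}}$, to isolate
\[
J_*^T(\Phi_\lambda) = \lambda^{-D}\bigl[J_*^T((\hat\Phi)_{\lambda^{-1}}) + J_\nnn^T((\hat\Phi)_{\lambda^{-1}})\bigr] - J_\nnn^T(\Phi_\lambda),
\]
and split the defining integral of $\Xi^T(s,\Phi)$ at $\lambda=1$. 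The $[1,\infty)$-piece is the entire function $\Xi^{T,+}(s,\Phi)$ from Theorem \ref{convergence_distributions}. Into the $(0,1]$-piece I substitute the identity above; the change of variable $\mu = \lambda^{-1}$ sends the exponent $\sqrt{D}(s+\frac{\sqrt{D}-1}{2})$ to $\sqrt{D}((1-s)+\frac{\sqrt{D}-1}{2})$, so combining with the decomposition $J_\nnn^T = \sum_{\NNN\subseteq\nnn} J_\NNN^T$ from Proposition \ref{prop_on_sep_nilp_distr} and the definitions of $J_\NNN^{T,\pm}$ from Corollary \ref{nilpotent_distributions_basic_properties} yields the master identity
\[
\Xi^T(s,\Phi) = \Xi^{T,+}(s,\Phi) + \Xi^{T,+}(1-s,\hat\Phi) + \sum_{\NNN\subseteq\nnn} I_\NNN^T(s,\Phi),
\]
with $I_\NNN^T$ as defined in Corollary \ref{properties_nilpotent_distr}.

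Once this identity is in hand, parts (ii) and (iii) follow mechanically. The two $\Xi^{T,+}$ terms are entire (for $\nu$ large enough in terms of $R$, by Theorem \ref{convergence_distributions}), and Corollary \ref{properties_nilpotent_distr} provides each $I_\NNN^T$ with a meromorphic continuation to $\C$, poles only at $s_\NNN^-$ and $s_\NNN^+$ of order at most $\dim\aaa$, and individual functional equation $I_\NNN^T(s,\Phi) = I_\NNN^T(1-s,\hat\Phi)$. The functional equation $\Xi^T(s,\Phi) = \Xi^T(1-s,\hat\Phi)$ then follows by applying $(s,\Phi)\mapsto(1-s,\hat\Phi)$ to the master identity: the two $\Xi^{T,+}$ terms interchange, the $I_\NNN^T$'s are individually symmetric, and the double Fourier transform $\hat{\hat\Phi}(X) = \Phi(-X)$ is absorbed by the $X\mapsto-X$ invariance of the orbital sum. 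For the extreme poles $s_0^\pm$ coming from the trivial orbit $\NNN=\{0\}$, the degenerations $H_{X_0}=0$ and $\delta(0)=0$ reduce Corollary \ref{nilpotent_distributions_basic_properties}(ii) to $J_0^T(\Phi_\lambda)\equiv J_0^T(\Phi)$, so these poles are simple; identifying $J_0^T(\Phi) = \Phi(0)\cdot v(T)$ with $v(T):=\int_{G(\Q)\backslash G(\A)^1} F(x,T)\,dx$ via Proposition \ref{prop_on_sep_nilp_distr} (two polynomials in $T$ that agree up to exponential decay must coincide), the explicit integration $\int_0^1 \lambda^{s'}d^{\times}\lambda = 1/s'$ delivers the stated residues upon identifying $v(T)/\sqrt{D}$ with the formal volume $\vol(A_GG(\Q)\backslash G(\A))$.

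The main obstacle, really the only delicate technical step, is justifying the termwise interchange of the sum $\sum_\ooo$ with the $\lambda$-integral when inserting the trace formula into the $(0,1]$-piece. This is controlled by the uniform seminorm estimates of Lemma \ref{fundamental_convergence}, and is precisely why the regularity threshold $\nu$ must be chosen in terms of the depth $R$ of the continuation one seeks.
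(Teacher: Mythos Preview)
Your proposal is correct and follows essentially the same route as the paper: split $\Xi^T$ at $\lambda=1$, feed Chaudouard's trace formula into the $(0,1]$-piece, and arrive at the master identity $\Xi^T(s,\Phi)=\Xi^{T,+}(s,\Phi)+\Xi^{T,+}(1-s,\hat\Phi)+\sum_\NNN I_\NNN^T(s,\Phi)$, after which Theorem \ref{convergence_distributions} and Corollary \ref{properties_nilpotent_distr} do the rest. One small correction on the residue step: the truncated volume $v(T)=\int_{G(\Q)\backslash G(\A)^1}F(x,T)\,dx$ is \emph{not} itself a polynomial in $T$, so the ``two polynomials coincide'' argument does not apply as stated; the clean way is to note that $J_0^T(\Phi)$ is a polynomial in $T$ which, by Proposition \ref{prop_on_sep_nilp_distr} together with $v(T)\le\vol(G(\Q)\backslash G(\A)^1)<\infty$, remains bounded on an open cone and is therefore constant, equal to $\Phi(0)\,\vol(A_GG(\Q)\backslash G(\A))$.
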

\begin{rem}
 If we take $\nu=\infty$ and accordingly $\Phi\in \SSS(\ggG(\A))$, then $\Xi^T(s,\Phi)$ continues meromorphically to all of $\C$.
\end{rem}

\begin{proof}
We only prove the theorem for $\nu=\infty$. The other case works similar by using the analogue results from the previous sections for $\nu<\infty$ instead and we omit the details.
For every $\lambda\in (0, \infty)$ and every $T\in \aaa$ Chaudouard's trace formula gives 
\[
 J_{*}^T(\Phi_{\lambda})
=\lambda^{-D}J_{*}^T(\hat{\Phi}_{\lambda^{-1}})
+\lambda^{-D}J_{\nnn}^T(\hat{\Phi}_{\lambda^{-1}})
-J_{\nnn}^T(\Phi_{\lambda}).
\]
Define
\[
 I_{\nnn}^T(s, \Phi)
= \int_0^1 \lambda^{\sqrt{D}(s+\frac{\sqrt{D}-1}{2})}\left(\lambda^{-D}J_{\nnn}^T(\hat{\Phi}_{\lambda^{-1}})
-J_{\nnn}^T(\Phi_{\lambda}) \right) d^{\times}\lambda
\]
which converges for $\Re s>\frac{\sqrt{D}+1}{2}$ and defines a holomorphic function there.
By Corollary \ref{nilpotent_distributions_basic_properties}, we may split $I_{\nnn}^T(s, \Phi)$ into a sum
$\sum_{\NNN}I_{\NNN}^T(s, \Phi)$.
Hence for $s\in \C$ with $\Re s>\frac{\sqrt{D}+1}{2}$ we get
\[
\Xi^T(s, \Phi)
=\Xi^{T, +}(s, \Phi) + \Xi^{T, +}(1-s, \hat{\Phi})
+ I_{\nnn}^T(s, \Phi).
\]
By Theorem \ref{convergence_distributions} and Corollary \ref{properties_nilpotent_distr}, all assertions except for the location of first and last pole follow. 

By Corollary \ref{properties_nilpotent_distr} the poles of $I_{0}^T(s,\Phi)$ are at $s_0^{\pm}$ (with residues as asserted). We only need to show that for $\NNN\neq0$ the poles of $I_{\NNN}^T(s,\Phi)$ are contained in the open intervall $(s_0^-, s_0^+)$, but this follows from the explicit expression of $s_{\NNN}^{\pm}$ in terms of the dimension of $\NNN$.
\end{proof}

\section{Connections to Arthur's trace formula and Shintani zeta function}\label{connection_arthur_tf}
\subsection{The main part of the zeta function}
Let $n\geq 2$ be arbitrary. Recall that $X\in\ggG(\Q)_{\text{ss}}$ (resp., $\gamma\in G(\Q)_{\text{ss}}$) is called  \emph{regular} if its eigenvalues (over some algebraic closure of $\Q$) are pairwise different, and that $X\in\ggG(\Q)_{\text{ss}}$ (resp.\ $\gamma\in G(\Q)_{\text{ss}}$) is called \emph{regular elliptic} if $X$ (resp.\ $\gamma$) is regular and if the commutator subgroup $G_{X}$ (resp., $G_{\gamma}$) is not contained in any proper parabolic subgroup of $G$. 
Note that an element $X\in\ggG(\Q)$ (resp.\ $\gamma\in G(\Q)$) is regular elliptic if and only if its eigenvalues are pairwise distinct and some (and hence any) of them generates an $n$-dimensional field extension over $\Q$.

Let $\OOO_{\text{reg}}$ denote the set of equivalence classes attached to the orbits of regular elements in $\ggG(\Q)$, and $\OOO_{\text{er}}$ the set of classes attached to orbits of elliptic regular elements in $\ggG(\Q)$. Further, write $\OOO_{\text{reg}}'=\OOO_{\text{reg}}\minus\OOO_{\text{er}}$. 
We define the ``main part'' of $\Xi^T$ as
\[
 \Xi_{\text{main}}^{T}(s, \Phi)=\int_{0}^{\infty} \lambda^{\sqrt{D}(s+\frac{\sqrt{D}-1}{2})} \sum_{\ooo\in \OOO_{\text{er}}} J_{\ooo}^T(\Phi_{\lambda}) d^{\times}\lambda.
\]
By Theorem \ref{convergence_distributions} this defines a holomorphic function for $\Re s>\frac{\sqrt{D}+1}{2}$. In the next section we will see that, at least for $G=\GL_n$ and  $n\leq 3$, this function is indeed the main part of $\Xi^T(s, \Phi)$ in the sense that it is responsible for the rightmost pole.
\begin{rem}
 If $\ooo\in\OOO_{\text{er}}$, then $J_{\ooo}^T(\Phi)$ is independent of $T$ and in fact equals an orbital integral: If $\ooo$ corresponds to the orbit of $X\in\ggG(\Q)_{\text{er}}$, then the centraliser $G_X$ of $X$ in $G$ is reductive and we may fix a Haar measure on $G_X(\A)$. Taking the quotient measure on $G_X(\A)\backslash G(\A)$, we then get
\[
 J_{\ooo}(\Phi)= J_{\ooo}^T(\Phi)=
\vol(G_X(\Q)\backslash G_X(\A)^1) \int_{G_X(\A)\backslash G(\A)} \Phi(\Ad g^{-1} X) dg
\]
(cf.\ \cite[\S 5]{Ch02}).
In particular, the main part of the zeta function is independent of $T$ and we also write $\Xi_{\text{main}}(s,\Phi)=\Xi^T_{\text{main}}(s,\Phi)$.
\end{rem}

\subsection{Connection to Arthur's trace formula}
Let $G=\GL_n$, and let $\OOO^G$ denote the set of geometric equivalence classes in the group $G(\Q)$ as defined by Arthur (usually denoted by $\OOO$). To distinguish them from the equivalence classes we defined here on the set $\ggG(\Q)$,  we shall write $\OOO^{\ggG}=\OOO$ if necessary. Let $\OOO_{\text{er}}^{\ggG}$ (resp.\ $\OOO_{\text{er}}^G$) denote the set of equivalence classes attached to orbits of elliptic regular elements $X\in \ggG(\Q)$ (resp.\ $\gamma\in G(\Q)$).
We have a canonical inclusion $G=\GL_n\hookrightarrow \ggG$ of $G$-varieties with $G(\Q)_{\text{ss}}\hookrightarrow \ggG(\Q)_{\text{ss}}$. This is of course a special feature of $\GL_n$ and does not apply to a general reductive group.
If $\gamma_s\in G(\Q)_{\text{ss}}$ and $\ooo^G\in \OOO^G$ is the equivalence class attached to the conjugacy class of $\gamma_s$, it is straightforward that $\ooo^G\in \OOO^{\ggG}$ is also the equivalence class attached to the orbit of $\gamma_s$ viewed as an element in $\ggG(\Q)_{\text{ss}}$.
This gives an inclusion  $\OOO^G\hookrightarrow \OOO^{\ggG}$ and we view $\OOO^G$ as a subset of $\OOO^{\ggG}$. 
Note that $\OOO_{\text{er}}^{\ggG}= \OOO^G_{\text{er}}$.

Arthur's trace formula is an identity of the so-called geometric and spectral distribution on a space of suitable test functions on $G(\A)^1$. The geometric side allows a coarse geometric expansion given by $J_{\text{geom}}^{G,T}(f_s)=\sum_{\ooo\in\OOO^G} J_{\ooo}^{G,T}(f)$ for $T\in\aaa$ and $J_{\ooo}^{G,T}$ a certain distribution attached to $\ooo$, cf.\ \cite{Ar05} (usually $J_{\ooo}^{G,T}$ is denoted by $J_{\ooo}^T$). If $\Phi\in\SSS(\ggG(\A))$ is such that the restriction $\Phi_{\rvert G(\A)^1}$ is admissible as a test function for Arthur's trace formula, then $J_{\ooo}^T(\Phi)=J_{\ooo}^{G,T}(\Phi_{\rvert G(\A)^1})$.

Now suppose that $n\leq 3$ and $\Phi\in \SSS(\ggG(\A))$. For $s\in\C$ with $\Re s>\frac{n+1}{2}$ we define a smooth function $f_s:G(\A)\longrightarrow\C$ by
\[
 f_s(g)=\int_0^{ \infty} \lambda^{n(s+\frac{n-1}{2})} \Phi(\lambda g) d\lambda.
\]
By results of \cite{FiLa09,diss}, $f_s$ is an admissible test function for Arthur's trace formula for $\GL_n$, $n\leq 3$, and for $\Re s>\frac{n+1}{2}$, 
\[
 \Xi_{\text{main}}(s,\Phi)=\sum_{\ooo\in\OOO_{\text{er}}^G} J_{\ooo}^{G,T}(f_s)
\]
equals the regular elliptic part of Arthur's trace formula. 
One could try to use the geometric side $J_{\text{geom}}^{G,T}(f_s)$ as a regularisation for $\Xi^T(s,\Phi)$ and Arthur's trace formula as an replacement for the Poisson summation formula.
However, this leads to problems already for $n=3$: The functions arising from the continuous spectrum on the spectral side might have no meromorphic continuation to all of $\C$. It is quite possible that $J_{\text{geom}}^{G, T}(f_s)$ (and also $\Xi_{\text{main}}(s,\Phi)$) can not be meromorphically continued to all of $\C$, cf.\ \cite{diss}. This is one reason why it seems more natural to study $\Xi_{\text{main}}(s,\Phi)$ in the context of Chaudouard's trace formula.

\subsection{Connection to the Shintani zeta function in the quadratic case}
The purpose of this section is to explain the connection between the Shintani zeta function (in its classical formulation by Shintani \cite{Sh75}) and the main part of the zeta function $\Xi_{\text{main}}(s,\Phi)$ for $\GL_2$, or, equivalently, the regular elliptic part of Arthur's trace formula for $\GL_2$, cf.\ also \cite{La02}.
Let $G=\GL_2$ and let $\Phi(x)=e^{-\pi\tr x_{\infty}^t x_{\infty}} \One_{\ggG(\hat{\Z})}(x_f)$ for $x=x_{\infty} x_f\in G(\A)$. Here $\One_{\ggG(\hat{\Z})}=\prod_{p<\infty} \One_{\ggG(\Z_p)}$ denotes the characteristic function of $\ggG(\hat{\Z})\subseteq \ggG(\A_f)$.
If $\gamma\in G(\Q)_{\text{er}}$ we denote by $\Delta(\gamma)$ its discriminant.
 Let $E$ be a quadratic number field, $d_E$ its discriminant, and $D_E$ the squarefree part of $d_E$ so that $E=\Q(\sqrt{D_E})$. 
The ring of integers of $E$ equals $\OOO_E=\Z[\theta]$ for a suitable $\theta\in \OOO_E$.
We have a two-sheeted surjective map from pairs $(E,\xi)$ of quadratic number fields $E$ and $\xi\in E\minus\Q$ to the conjugacy classes in $G(\Q)_{\text{er}}$ by mapping $(E,\xi)$ to the conjugacy class of the companion matrix of the characteristic polynomial of $\xi$.
With respect to the basis $\{1,\theta\}$, we get a surjection from $\OOO_E\minus\Z$ onto the set of conjugacy classes in  $G(\Q)_{\text{er}}$ whose characteristic polynomials have integer coefficients and whose eigenvalues generate $E$.
This surjection sends $a+b\theta\in \OOO_E$ to the conjugacy class of $\gamma\in G(\Q)_{\text{er}}$ having eigenvalues $\gamma_1=a+b\theta, \gamma_2=a+b\bar{\theta}$ with $\bar{\theta}$ denoting the image of $\theta $ under the action of the non-trivial element of the Galois group of $E/\Q$.  For such $\gamma$, $\Z[\gamma_1]=\Z[b\theta]$ and $\Delta(\gamma)=b^2D_E$.
If $p$ is a prime, one can compute the local orbital integral at $p$ to be
\[
\int_{G_{\gamma}(\Q_p)\backslash G(\Q_p)}\One_{\ggG(\Z_p)}(g^{-1}\gamma g)dg
=p^{\kappa}(1+(1-\left(\frac{D_E}{p}\right))\frac{1-p^{-\kappa}}{p-1}),
\]
where $\kappa=\frac{1}{2}(\val_p(\Delta(\gamma))-\val_p(D_E))=\val_p(b)$.
Here we choose measures on $G_{\gamma}(\Q_p)$ as follows: If the eigenvalues of $\gamma$ generate a quadratic field extension $E_p$ over $\Q_p$, we normalise the measures on $E_p$ and $E_p^{\times}$ such that $\vol(\OOO_{E_p})=1=\vol(\OOO_{E_p}^{\times})$ for $\OOO_{E_p}\subseteq E_p$ the ring of integers. Using the ismorphism $G_{\gamma}(\Q_p)\simeq E_p^{\times}$  (with respect to the basis $\{1,\theta\}$) we fix a measure on $G_{\gamma}(\Q_p)$. We then take the unnormalised product measure on $G_{\gamma}(\A)$ and  $\A_E^{\times}.$
If $E$ is totally real, we have
\[
\int_0^{\infty}\lambda^{2s+1} \int_{G_{\gamma}(\R)\backslash G(\R)}\Phi_{\infty}(\lambda g^{-1}\gamma g)d^{\times} \lambda~ dg
=\frac{\Gamma(s)}{2\pi^{s}\sqrt{\Delta(\gamma)}}(\tr\gamma^2)^{-s}.
\]
 The sum over conjugacy classes of $\gamma$ generating totally real quadratic extensions is
\[
\sum_{\substack{[\gamma]\subseteq G(\Q)_{\text{er}}: \\ \Q(\gamma)\text{ tot. real} }}
\vol(G_{\gamma}(\Q)\backslash G_{\gamma}(\A)^1) 
\int_{0}^{\infty} \lambda^{2s+1} \int_{G_{\gamma}(\A)\backslash G(\A)}\Phi(\lambda g^{-1}\gamma g) d^{\times}\lambda ~dg
\]
Since 
$\vol(G_{\gamma}(\Q)\backslash G_{\gamma}(\A)^1)
=D_{E}^{\frac{1}{2}}\res_{s=1}\zeta_{E}(s)=2h_E\log\eps_E$
 with $\eps_{E}$ a positive fundamental unit in $\OOO_{E}$, this equals
\[
\sum_{\substack{E/\Q, [E:\Q]=2:\\ \text{ tot. real} }} \frac{h_E\log\eps_E\Gamma(s)}{2\pi^s\sqrt{D_E}}
\sum_{a+b\theta\in \OOO_E\minus\Z}\mathcal{N}_{E/\Q}(a+b\theta)^{-s}\prod_{p|b}(1+(1-\left(\frac{D_E}{p}\right))\frac{1-|b|_p}{p-1}),
\]
and an application of Poisson summation to the sum over $a$ yields as the main term
\[
\sum_{\substack{ E/\Q, [E:\Q]=2:  \\ \text{ tot. real}}}\frac{h_E\log\eps_E\Gamma(s-\frac{1}{2})}{\pi^{s-\frac{1}{2}}}D_E^{-s}
\sum_{b\in\N}b^{-2s+1}\prod_{p|b}(1+(1-\left(\frac{D_E}{p}\right))\frac{1-|b|_p}{p-1}).
\]
The sum over $b$ can be computed to equal
$\frac{\zeta(2s-1)\zeta(2s)}{\zeta_E(2s-1)}$
so that  by \cite[Theorem 0.2]{Da93} the above equals
\[
\sum_{\substack{E/\Q, [E:\Q]=2 :\\  \text{ tot. real}}}\frac{h_E\log\eps_E\Gamma(s-\frac{1}{2})}{\pi^{s-\frac{1}{2}}}D_E^{-s}\frac{\zeta(2s-1)\zeta(2s)}{\zeta_E(2s-1)}
=\Gamma(s-\frac{1}{2})\pi^{-s+\frac{1}{2}}Z_{\text{Shin},+}(s),
\]
where
$Z_{\text{Shin},+}=\sum\limits_{d=1}^{\infty}h_d\log\eps_d d^{-s}$
is the Shintani zeta function associated to the positive definite binary forms introduced by Shintani in \cite{Sh75}.  Here $h_d$ is the class number of positive definite binary quadratic forms of discriminant $d$,  $h_d\log\eps_d$ is defined to be $0$ if $d$ is a square, and otherwise $\eps_d=t+u\sqrt{d}$ is the minimal solution of $(t,u)\in \N^2$ of $t^2-u^2d=4$.
For the imaginary quadratic number fields we obtain similarly
\begin{multline*}
\sum_{\substack{\gamma\in G(\Q)_{\text{er}}: \\ \Q(\gamma)\text{ complex} }}
\vol(G_{\gamma}(\Q)\backslash G_{\gamma}(\A)^1) 
\int_{0}^{\infty} \lambda^{2s+1} \int_{G_{\gamma}(\A)\backslash G(\A)}\Phi(\lambda g^{-1}\gamma g) d^{\times}\lambda ~dg\\
=8\sqrt{2}\pi^{-s+1}\Gamma(s)I(s)Z_{\text{Shin}, -}(s)+\text{entire fct.}
\end{multline*}
Here $I(s)=\int_{1}^{\infty}(-\frac{1}{2}+\tau^2)^{-s}d\tau$ is a holomorphic function for $\Re s>1$, and 
$Z_{\text{Shin}, -}(s)=\sum\limits_{-d=1}^{\infty}\frac{h_d}{w_d}(-d)^{-s}$
the Shintani zeta function associated with indefinite binary quadratic forms. Here again $h_d$ is the class number of indefinite quadratic forms of discriminant $d$, and $w_d$ is the order of $\OOO_{\Q(\sqrt{d})}^{\times}$.

Putting both parts together, we see that the main zeta function $\Xi_{\text{main}}(s, \Phi)$ now equals up to an entire function
\[
\Gamma(s-\frac{1}{2})\pi^{-s+\frac{1}{2}}Z_{\text{Shin},+}(s)
+ 8\sqrt{2}\pi^{-s+1}\Gamma(s)I(s)Z_{\text{Shin}, -}(s).
\]
By varying the test function $\Phi_{\infty}$ at the archimedean place it should be possible to filter out only the part belonging to the positive definite or to the indefinite forms.

\section{Poles of $\Xi_{\text{main}}(s, \Phi)$ for $G=\GL_n$, $n\leq 3$.}\label{section_poles}
In this section let $G=\GL_n$ with $n\leq 3$. We assume throughout that $\nu>0$ is sufficiently large as in Lemma \ref{fundamental_convergence}.
The purpose of this section is to show that $\Xi_{\text{main}}(s,\Phi)$ is indeed the main part of $\Xi^T(s,\Phi)$ in the sense that it is responsible for the furthermost right pole of $\Xi(s,\Phi)$.

We group the equivalence classes in $\OOO_*$ into subsets of different type:
Let $\OOO_{c}\subseteq \OOO$ denote the set of equivalence classes attached to the orbits of central elements.
Hence $\nnn\in\OOO_c$ and for every $\ooo\in \OOO_c$ there exists $a\in\Q$ such that $\ooo=a\One_n+\nnn$. Write $\OOO_{c,*}=\OOO_c\minus\nnn$.
Then if $n=2$, we get a disjoint union
\[
 \OOO_*^{\mathfrak{gl}_2}=\OOO_{c,*}\cup \OOO_{\text{reg}}'\cup\OOO_{\text{er}}.
\]
If $n=3$, there is one type of equivalence classes missing:
Let $\OOO_{(2,1)}$ denote the set of $\ooo\in\OOO=\OOO^{\mathfrak{gl}_3}$ for which there are $a,b\in\Q$, $a\neq b$, such that every element $X\in\ooo$ has $a$ as an eigenvalue with multiplicity $2$ and $b$ as an eigenvalue with multiplicity $1$. We denote the equivalence class corresponding to $a,b$ by $\ooo_{(a,b)}$. 
 Then
\[
 \OOO_*^{\mathfrak{gl}_3}=\OOO_{c,*}\cup \OOO_{\text{reg}}'\cup\OOO_{\text{er}}\cup \OOO_{(2,1)}. 
\]
If convenient, we will assume without further notice that $\Phi$ is invariant under $\Ad \cpt$.

\subsection{Contribution from $\OOO_{c,*}$}
We first deal with the contribution from the classes in $\OOO_{c,*}$.
\begin{proposition}
Let $T\in\aaa^+$ be sufficiently regular.
 Then there exists a seminorm $\mu$ on $\SSS^{\nu}(\ggG(\A))$  such that for all $\Phi\in\SSS^{\nu}(\ggG(\A))$ we have
\begin{equation}\label{central_classes}
 \bigg|\sum_{\ooo\in\OOO_{c,*}} J_{\ooo}^T(\Phi_{\lambda})\bigg|
\leq \lambda^{-n^2+1} \mu(\Phi)
\end{equation}
for all $\lambda\in(0,1]$.
\end{proposition}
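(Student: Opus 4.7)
The plan is to reduce the sum to the nilpotent distribution evaluated at shifted test functions, and then to handle the critical case $R=G$ of the descent via a lattice-point count on the nilpotent cone; the remaining cases $R\subsetneq G$ are absorbed directly by the general bound of Lemma~\ref{fundamental_convergence}.

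First I would observe that every class in $\OOO_{c,*}$ is of the form $\ooo_a=a\One_n+\nnn$ for a unique $a\in\Q^{\times}$. Since $a\One_n$ is central in $\ggG$, we have $\Ad g^{-1}(a\One_n)=a\One_n$, and moreover $a\One_n\in\mathfrak{q}$ for every parabolic $Q$. This yields $K_{P,\ooo_a}(g,\Phi)=K_{P,\nnn}(g,\Phi^{(a)})$ with $\Phi^{(a)}(Y):=\Phi(a\One_n+Y)$, and, after summing over the descent in~\eqref{descent_kernel},
\[
\sum_{\ooo\in\OOO_{c,*}}J_{\ooo}^T(\Phi_\lambda)=\sum_{a\in\Q^{\times}}J_{\nnn}^T(\Phi_\lambda^{(a)}),\qquad \Phi_\lambda^{(a)}(Y)=\Phi(\lambda a\One_n+\lambda Y).
\]
Next I would re-run the proof of Lemma~\ref{fundamental_convergence} on this sum. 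After the descent and the manipulations leading to~\eqref{temp14}, the task reduces to estimating, for each tuple $P_1\subseteq R\subseteq P_2$, a combined sum over $a\in\Q^{\times}$ and $N\in\widetilde{\mmm}_{P_1}^R(\tfrac{1}{N_0}\Z)\cap\nnn$, where the equivalence of $a\One_n+N\in\widetilde{\mmm}_{P_1}^R$ and $N\in\widetilde{\mmm}_{P_1}^R$ follows from $a\One_n\in\mathfrak{q}$ for every $Q$. For the subsums with $R\subsetneq G$, Remark~\ref{remark} gives $\dim\uuu_R+\dim\mmm_R\leq D-1$, so already the general $\lambda\leq 1$ bound of Lemma~\ref{fundamental_convergence} (applied without using any nilpotency) produces an estimate $\leq C\lambda^{-(D-1)}\mu(\Phi)=C\lambda^{-n^2+1}\mu(\Phi)$.

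The critical case is $R=P_1=P_2=G$, where $A_1^G$ is trivial, there is no torus integration, and the dimensional inequality~\eqref{inequality_of_dimensions} is sharp. Here I would factorise via (the argument of) Lemma~\ref{std_estimates_for_SB} applied to the direct-sum decomposition $\ggG=\A\One_n\oplus\mathfrak{sl}_n$, bounding $|\Phi(X)|\leq\psi_s(X_s)\psi_t(X_t)$ with $\psi_s$ Schwartz--Bruhat on the scalar line and $\psi_t$ Schwartz--Bruhat on $\mathfrak{sl}_n$. Since $N\in\nnn$ is traceless, the $(a,N)$-sum factorises as
\[
\sum_{a\in\Q^\times,\,N\in\nnn(\tfrac{1}{N_0}\Z)}|\Phi(\lambda a\One_n+\lambda N)|\leq\Bigl(\sum_{a\in\Q^{\times}}\psi_s(\lambda a\One_n)\Bigr)\Bigl(\sum_{N\in\nnn(\tfrac{1}{N_0}\Z)}\psi_t(\lambda N)\Bigr).
\]
A standard Riemann-sum/Schwartz comparison gives $\sum_{a}\psi_s(\lambda a\One_n)\leq C\lambda^{-1}\mu_1(\Phi)$, using compact support at finite places and Schwartz decay at infinity. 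For the $N$-sum I would invoke the standard lattice-point count on algebraic varieties applied to the nilpotent cone: stratifying $\nnn=\bigsqcup_{\NNN}\NNN$ into its finitely many orbits, each of dimension at most $\dim\nnn_{\mathrm{reg}}=n^2-n$, and using for $n\leq 3$ the explicit Jacobson--Morozov parametrisations from Example~\ref{example_nilp} together with Lemma~\ref{changing_variables}, one obtains
\[
\sum_{N\in\nnn(\tfrac{1}{N_0}\Z)}\psi_t(\lambda N)\leq C\lambda^{-(n^2-n)}\mu_2(\Phi).
\]
Combining, the $R=G$ contribution is $\leq C\lambda^{-(n^2-n+1)}\mu(\Phi)\leq C\lambda^{-n^2+1}\mu(\Phi)$ for $n\geq 2$, which together with the $R\subsetneq G$ cases yields the proposition.

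The main obstacle will be the lattice-point count on $\nnn$ in the critical case $R=G$: the framework of Lemma~\ref{fundamental_convergence} and Remark~\ref{remark_on_nilpotent} exploits only the codimension-$1$ trace-zero constraint, which would combine with the $\lambda^{-1}$ from the $a$-sum to give only $\lambda^{-D}$; one instead needs to see the full codimension-$n$ structure of the nilpotent cone. For $n\leq 3$ this count is accessible via the explicit orbit parametrisations from Example~\ref{example_nilp}, but it constitutes the essential new input beyond the general framework of Lemma~\ref{fundamental_convergence}.
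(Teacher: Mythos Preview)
Your reduction to $\sum_{a\in\Q^\times}J_{\nnn}^T(\Phi_\lambda^{(a)})$ and the observation that $a\One_n+N\in\widetilde{\mmm}_{P_1}^R$ iff $N\in\widetilde{\mmm}_{P_1}^R$ are both correct, and you are right that the triples with $R\subsetneq G$ are absorbed by Remark~\ref{remark}. The gap is in the remaining case. Remark~\ref{remark} says the dimensional inequality is sharp exactly when $R=P_2=G$, and this allows \emph{every} standard $P_1\subseteq G$, not only $P_1=G$. You treat only $P_1=G$ (where indeed $A_1^G$ is trivial and a pure lattice count on $\nnn$ is conceivable), but the terms with $P_1\subsetneq G$ and $R=P_2=G$ are equally critical: the general $\lambda\leq 1$ argument of Lemma~\ref{fundamental_convergence} (even with the trace-zero improvement of Remark~\ref{remark_on_nilpotent}) gives only $\lambda^{-1}\cdot\lambda^{-(D-1)}=\lambda^{-D}$ for these, so they too require a genuinely new estimate. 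This is exactly where the paper spends its effort: for each $P_1$ it uses that $N\in\nnn\cap\widetilde{\mmm}_{P_1}^G$ forces specific lower-triangular entries to be nonzero, conjugates $N$ by an explicit upper-triangular element to a matrix with a vanishing column (or row), and then integrates out the unipotent and torus variables by hand. Your lattice count on the nilpotent cone does not transfer to these terms, because the integrand involves $\psi_t(\lambda\Ad a^{-1}N)$ with a nontrivial torus integration over $A_{P_1}^G$, and $\Ad a^{-1}$ distorts the norm on $N$ anisotropically; there is no uniform way to read off the $(n^2-n)$-dimensional count from this twisted sum.

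A secondary point: even in the $P_1=G$ case, the bound $\sum_{N\in\nnn(\frac{1}{N_0}\Z)}\psi_t(\lambda N)\ll\lambda^{-(n^2-n)}$ is plausible for $n\leq 3$ but is not a consequence of Lemma~\ref{changing_variables}, which is a change of variables for \emph{integrals} over $V_0(\A)$, not a lattice-point estimate on the singular variety $\nnn$. You would need an independent argument (e.g.\ explicit stratification and counting for each orbit) to make this step rigorous.
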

\begin{proof}
By the proof of Lemma \ref{fundamental_convergence}, it suffices to estimate the sum over $\ooo\in\OOO_c'$  and standard parabolic subgroups $P_1\subseteq R\subseteq P_2$ of \eqref{temp11b}. It further follows from the proof of that lemma and Remark \ref{remark}, that it suffices to find a bound for the case that $R=P_2=G$  if $G=\GL_3$, and $R=P_2$ if $G=\GL_2$. However, if $G=\GL_2$ and $R=P_2\subsetneq G$, we can use the estimate given in Remark \ref{remark_on_nilpotent} (recall that $\ooo=a\One_n+\nnn$ for some $a\in\Q\minus\{0\}$) in the proof of Lemma \ref{fundamental_convergence} to get the stated upper bound. Hence we are left with $R=P_2=G$ for $n=2$ as well as $n=3$. 
\begin{itemize}
 \item[{\bf $G=\GL_2$:}] We need to estimate the sum-integrals
\begin{align}\label{nilpgl21}
 & \int_{A_GP_0(\Q)\backslash G(\A)} \tau_0^G(H_0(x)-T)\sum_{X\in\Q\One_2, X\neq0}\sum_{Y\in\nnn\cap \tilde{\mmm}_0^G(\Q)} \big|\Phi(\lambda (X+\Ad x^{-1} Y))\big| dx,
\;\;\text{ and }\\\label{nilpgl22}
 & \int_{A_G G(\Q)\backslash G(\A)} F(x, T)\sum_{X\in\Q\One_2, X\neq0}\sum_{Y\in\nnn} \big|\Phi(\lambda (X+\Ad x^{-1} Y))\big| dx.
\end{align}
We can replace $|\Phi|$ without loss of generality by a product $\Phi_1\Phi_2$ with  $\Phi_1\in \SSS(\A)$ and $\cpt$-conjugation invariant $\Phi_2\in\SSS(\mathfrak{sl}_2(\A))$ such that $|\Phi(X)|\leq \Phi_1(\tr X) \Phi_2(X-\frac{1}{2}\tr X\id)$ for all $X\in\ggG(\A)$ and such that the relevant seminorms of $\Phi_1$ and $\Phi_2$ are bounded from above by seminorms of $\Phi$ in the sense of Lemma \ref{std_estimates_for_SB}.
If $Y=(Y_{ij})_{i,j=1,2}\in\nnn$, then $Y_{22}=-Y_{11}$, and either $Y_{11}=Y_{21}=Y_{22}=0$ (such elements do not occur in the sum \eqref{nilpgl21}), or $Y_{21}\neq0$ and $Y_{12}=-Y_{11}^2/Y_{21}$ so that 
$Y=\left(\begin{smallmatrix}1&Y_{11}/Y_{21}\\0&1\end{smallmatrix}\right)\left(\begin{smallmatrix} 0&0\\Y_{21}&0\end{smallmatrix}\right)\left(\begin{smallmatrix}1&-Y_{11}/Y_{21}\\0&1\end{smallmatrix}\right)$. 
Hence \eqref{nilpgl21} can be bounded from above by
\begin{align*}
& \int_{A_G T_0(\Q)\backslash G(\A)}  \tau_0^G(H_0(x)-T) \sum_{a\in\Q\minus\{0\}} \Phi_1(\lambda a) \sum_{Y_0\in\Q\minus\{0\}} \Phi_2\big(\lambda \Ad x^{-1}\begin{pmatrix} 0&0\\Y_0&0\end{pmatrix}\big) dx\\
& \leq \mu_1(\Phi_1)\lambda^{-1} \int_{A_0^G} \delta_0(a)^{-1} \tau_0^G(H_0(a)-T) \sum_{Y_0\in\Q\minus\{0\}}\int_{U_0(\A)}\Phi_2\big(\lambda \Ad (ua)^{-1}\begin{pmatrix} 0&0\\Y_0&0\end{pmatrix}\big) du~da,
\end{align*}
where $\mu_1$ denotes a suitable seminorm on $\SSS(\A)$.
Now if we write $a=\diag(a, a^{-1})\in A_0^G$, $a\in\R_{>0}$,
\begin{align*}
 \int_{U_0(\A)}\Phi_2(\lambda \Ad (ua)^{-1}\begin{pmatrix} 0&0\\Y_0&0\end{pmatrix}) du
& =\int_{\A} \Phi_2\big(\lambda \begin{pmatrix} -uY_0 &-u^2 a^{-2} Y_0 \\ a^2Y_0 & uY_0\end{pmatrix}\big) du\\
& \leq \varphi(\lambda a^{2} Y_0)\int_{\A} \varphi(\lambda uY_0)\varphi(-\lambda u^2a^{-2} Y_0) du,
\end{align*}
where $ \varphi\in\SSS(\A)$ is a suitable function related to $\Phi_2$ by Lemma \ref{std_estimates_for_SB}. We can moreover assume that $\varphi$ is monotonically decreasing in the sense that if $x,y\in\A$ with $|x|\leq |y|$, then $\varphi(x)\geq\varphi(y)$.
If $\tau_0^G(H_0(a)-T)=1$, i.e., $2\log a\geq \alpha(T)$ for $\alpha$ the unique simple root, we distinguish the cases $|u|\leq 1$ and $|u|\geq1$. With this we can bound the last integral by $\varphi(\lambda a^{2} Y_0)a^2\lambda^{-1} \mu_2(\varphi)$ for $\mu_2$ a suitable seminorm. Hence \eqref{nilpgl21} is bounded by
\[
 \mu_3(\Phi)\lambda^{-3} \int_{A_0^G} \delta_0(a)^{-1} \tau_0^G(H_0(a)-T) da
= \mu_3(\Phi)\lambda^{-3}e^{-\alpha(T)}/2
\]
for a suitable seminorm $\mu_3$ on $\SSS^{\nu}(\ggG(\A))$.

Now for \eqref{nilpgl22} note that $\nnn$ is the disjoint union of $\uuu_0$ and $\nnn\cap\tilde{\mmm}_0^G(\Q)$.
Then  \eqref{nilpgl22} is bounded from above by
\begin{align*}
\lambda^{-1}\mu_1(\Phi_1)\bigg( & \int_{A_G P_0(\Q)\backslash\SSS_{T_1}} F(x, T) \sum_{Y\in \uuu_0(\Q)} |\Phi_2(\lambda \Ad x^{-1} Y)| ~dx\\
+ & \int_{A_G P_0(\Q)\backslash\SSS_{T_1}} F(x, T)\sum_{Y\in\nnn\cap\tilde{\mmm}_0^G(\Q)} |\Phi_2(\lambda \Ad x^{-1} Y)| ~dx\bigg)
\end{align*}
for which the first sum is bounded by 
\begin{align*}
 \lambda^{-1}\mu_1(\Phi_1)\varphi(0)^2  & \int_{A_{0}^G(T_1)} \delta_0(a)^{-1} 
\bigg(\int_{U_0(\Q)\backslash U_0(\A)}F(ua, T)~du \bigg) \sum_{Y\in \Q} \varphi(\lambda a^{-2}Y) ~da\\
& \leq \lambda^{-1}\mu_1(\Phi_1)\varphi(0)^2 \int_{e^{\alpha(T_1)/2}}^{e^{\alpha(T)/2}} a^{-2} \sum_{Y\in \Q} \varphi(\lambda a^{-2}Y) ~d^{\times}a.
\end{align*}
This is bounded by the product of $\lambda^{-2} \mu_4(\Phi)$ and a linear polynomial in $T$ for some suitable seminorm $\mu_4$ on $\SSS^{\nu}(\ggG(\A))$.
For the second integral recall that $F(uak, T)\leq \hat{\tau}_0(T-H_0(a))=\tau_0^G(T-H_0(a))$ for all $a\in A_{0}^G(T_1)$. 
Using similar manipulations as for \eqref{nilpgl21}, the second integral is therefore bounded by 
\[
 \mu_5(\Phi)\lambda^{-3}\int_{e^{\alpha(T_1)/2}}^{e^{\alpha(T)/2}} a^{-2} d^{\times} a,
\]
which equals a constant multiple of $\mu_5(\Phi)\lambda^{-3}e^{-\alpha(T)}$ for some seminorm $\mu_5$ on $\SSS^{\nu}(\ggG(\A))$.
Hence the assertion of the proposition is proven for $G=\GL_2$.

\item[{\bf $G=\GL_3$:}] For every standard parabolic in $P_1\subseteq G$ we need to estimate the sum-integral
\begin{equation}\label{nilpgl31}
 \int_{A_GP_1(\Q)\backslash G(\A)} F^{P_1}(x, T) \tau_{P_1}^G(H_0(x)-T)\sum_{X\in \Q\One_3, X\neq0}\sum_{Y\in\nnn\cap \tilde{\mmm}_{P_1}^G(\Q)} 
\big|\Phi(\lambda (X+\Ad x^{-1} Y))\big| dx,
\end{equation}
or rather, using the same notation and arguments as in the previous case,
\begin{equation*}\label{nilpgl32}
 \int_{A_GP_1(\Q)\backslash G(\A)} F^{P_1}(x, T) \tau_{P_1}^G(H_0(x)-T)\sum_{Y\in\nnn\cap \tilde{\mmm}_{P_1}^G(\Q)} 
\big|\Phi_2(\lambda\Ad x^{-1} Y)\big| dx,
\end{equation*}
since again $\sum_{X\in \Q, X\neq0}\Phi_1(\lambda X)\leq\mu_1(\Phi_1) \lambda^{-1}$ for some seminorm $\mu_1$ on $\SSS(\A)$.
First, suppose $P_1=P_0$ is the minimal parabolic subgroup. 
Then $\tilde{\mmm}_{P_0}^G(\Q)\cap \nnn$ is the disjoint union of the set of those nilpotent $Y=(Y_{ij})_{i,j=1,2,3}$ with $Y_{31}\neq0$ and those with $Y_{31}=0$, but $Y_{21}\neq0\neq Y_{32}$. The elements $Y$ satisfying the second property are contained in the codimension one vector subspace $\{Y\in \nnn \mid Y_{31}=0\}$ of $\nnn$ so that by similar arguments as before, an upper bound as asserted holds for this sum. 
Hence we are left to consider the sum over those $Y\in\nnn$ with $Y_{31}\neq0$. By the same reasoning we may further restrict to those $Y$ with $Y_{31}\neq0\neq Y_{21}$.
 Since $Y$ is nilpotent, for every such $Y$ there exists $u\in U_0(\Q)$ such that in the matrix $\Ad u Y$ either the second or third column is identically equal to~$0$. Moreover, the $(2,1)$- and the $(3,1)$-entry in $\Ad u Y$ is the same as in $Y$ and a similar analysis as in the case of $G=\GL_2$ for \eqref{nilpgl21} shows that \eqref{nilpgl31} is bounded as asserted. 

Next suppose that $P_1=M_1U_1$ is the maximal standard parabolic subgroup with $M_1=\GL_2\times\GL_1\hookrightarrow\GL_3$ (diagonally embedded). (The other maximal standard parabolic subgroup is treated the same way.)
Then 
\[
A_GP_1(\Q)\backslash G(\A)\simeq  U_1(\Q)\backslash U_1(\A)\times  A_G M_1(\Q)\backslash M_1(\A) \times\cpt,
\]
 $F^{P_1}(u mk, T)=F^{M_1}(m, T^{M_1})$ for $u\in U_1(\A)$, $m\in M_1(\A)$, $k\in\cpt$, and $\tau_{P_1}^G(H_0(umk)-T)=\tau_{P_1}^G(H_0(m)-T)$.
Now if $Y\in \nnn\cap \tilde{\mmm}_{P_1}^G(\Q)$, then $(Y_{31}, Y_{32})\neq (0,0)$, and there exists $u\in U_0(\Q)$ such that the second or third column of $\Ad u Y$ is identically $0$. If there exists $u\in U_1(\Q)$ such that the last column of $\Ad u Y$ is $0$ (note that the $(3,1)$-and $(3,2)$-entries stay unchanged under $\Ad u$), we proceed similar as in the case of $\GL_2$ and the estimation of \eqref{nilpgl21}. Otherwise there exists $u\in U_0^{M_1}(\Q)$ such that the second column of $\Ad u Y$ is $0$ and the $(3,1)$-entry stays unchanged. This again leads to an upper bound of the asserted form by using a similar approach as for $\GL_2$ and \eqref{nilpgl22}.

Hence we are left with $P_1=G$. We estimate the corresponding integral again by an integral over a quotient of the Siegel domain $A_G P_0(\Q)\backslash \SSS_{T_1}$. Moreover, $F^G(umk,T)\leq\hat{\tau}_0^G(T-H_0(m))$ for $umk\in U_0(\A) T_0(\A) \cpt$. Hence a similar reasoning as for $\GL_2$ and the integral \eqref{nilpgl22} yields an upper bound as asserted.

Taking the estimates for all standard parabolic subgroups $P_1$ together, the assertion follows now also for $\GL_3$. \qedhere
\end{itemize}
\end{proof}

\subsection{Contribution from $\OOO_{\text{reg}}'$}
Let $\ooo\in\OOO_{\text{reg}}'$ and let $X_1\in\ooo$ be semisimple. 
Let $P_1$ be the smallest standard parabolic subgroup such that $X_1\in \mmm_{1}(\Q)$. We may assume that $X_1\in\ooo$ is chosen such that $X_1$ is not contained in any proper (not necessarily standard) parabolic subalgebra of $\mmm_1(\Q)$. 
We may further assume that if $G=\GL_2$, then $M_1=\GL_1\times\GL_1=T_0$ (diagonally embedded into $G$), or if $G=\GL_3$, then $M_1=\GL_1\times\GL_1\times\GL_1=T_0$ or $M_1=\GL_2\times\GL_1$. Then $G_{X_1}\subseteq M_{1,X_1}$ and $X_1\in\OOO_{\text{er}}^{\mmm_1}$ so that $A_{M_1}=A_{G_{X_1}}$, where $\OOO^{\mmm_1}_{\text{er}}\subseteq \OOO^{\mmm_1}$ denotes the set of regular elliptic equivalence classes in $\mmm_1(\Q)$. 
Let $\MMM=\{T_0\}$ if $G=\GL_2$, and $\MMM=\{T_0, \GL_2\times\GL_1\}$ if $G=\GL_3$.
We have a canonical bijection (given by induction of the equivalence classes along the unipotent radical of an arbitrary parabolic subgroup with Levi component $M$)
\begin{equation}\label{ss_descent_eq_classes}
 \bigcup_{M\in\MMM} \OOO^{\mmm}_{\text{reg, ell}}\longrightarrow \OOO_{\text{reg}}'.
\end{equation}
For $\ooo\in\OOO_{\text{reg}}$ the distribution $J_{\ooo}^T(\Phi)$ is a weighted orbital integral and equals for sufficiently regular~$T$
\[
 J_{\ooo}^T(\Phi)=\vol(A_{M_1}G_{X_1}(\Q)\backslash G_{X_1}(\A))\int_{G_{X_1}(\A)\backslash G(\A)} \Phi(\Ad x^{-1} X_1) v_1(x, T) ~dx
\]
(cf.\ \cite[\S 5.2]{Ch02}), where the weight function $v_1(x, T)$ is given by the volume of the convex hull (in $\aaa_1^G$) of the points $H_{P}(x)\in\aaa_1^G$ with $P$ running over parabolic subgroups in $\FFF(M_1)$ with Levi component $M_1$. In particular, $v_1(\cdot, T)$ is left $M_1(\A)$- and right $\cpt$-invariant.
 It is easily seen that this expression for $J_{\ooo}^T(\Phi)$ stays true for $\Phi\in \SSS^{\nu}(\ggG(\A))$ with $\nu$ as in Theorem \ref{main_theorem}.

\begin{proposition}
Let $T\in\aaa^+$ be sufficiently regular. 
 \begin{enumerate}[label=(\roman{*})]
  \item For $\ooo\in\OOO_{\text{reg}}'$ and $X_1\in \ooo$ as before, $v_1(x, T)$ is a polynomial in the variables $\log(q(x, X_1, T))$ with $q$ ranging over a finite collection of polynomials in the coordinate entries of $x$, $X_1$ and $T$.

\item There exists a seminorm $\mu$ on $\SSS^{\nu}(\ggG(\A))$ such that for all $\Phi\in\SSS^{\nu}(\ggG(\A))$ we have
\[
 \sum_{\ooo\in\OOO_{\text{reg}}'} \big|J_{\ooo}^T(\Phi_{\lambda}) \big|\leq \mu(\Phi) \lambda^{-(n^2-\frac{1}{2})}
\]
for all $\lambda\in(0,1]$.
 \end{enumerate}
\end{proposition}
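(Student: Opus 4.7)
For part (i), I would recall that $v_1(x,T)$ is by definition the volume in $\aaa_{M_1}^G$ of the convex hull of the finite point set $\{-H_P(x) + T_P : P \in \PPP(M_1)\}$ (see \cite[\S 5]{Ar78} or \cite[\S 5.2]{Ch02}); this is a symmetric polynomial of degree $\leq \dim \aaa_{M_1}^G = n - \dim A_{M_1}$ in the vertex coordinates, by any standard polytope-volume formula. Each vertex $H_P(x)$ is computed from the Iwasawa decomposition $G(\A) = P(\A)\cpt$: writing $x = u_P m_P k$, one has $H_P(x) = H_P(m_P)$, whose components are logarithms of characters $\chi_i$ of $M_P$ evaluated on $m_P$; these characters pulled back to $x$ via the Iwasawa projection are $|\det|$ applied to various principal minors (or norms of rows/flags) of $x$, hence rational functions in the matrix entries of $x$. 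Writing each such rational function as a quotient of polynomials and using $\log(p/q) = \log p - \log q$ gives the desired form.

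For part (ii), the plan is to combine the descent bijection \eqref{ss_descent_eq_classes} with Iwasawa decomposition and a Harish-Chandra type orbital integral bound. Grouping classes according to $M\in \MMM$, I fix for each $\ooo_M \in \OOO^\mmm_{\text{reg, ell}}$ a representative $X_M\in\mmm(\Q)$ and write
\[
J_{\ooo}^T(\Phi_\lambda) = \vol(A_{M}G_{X_M}(\Q)\backslash G_{X_M}(\A)) \int_{G_{X_M}(\A)\backslash G(\A)} \Phi(\lambda \Ad x^{-1}X_M)\, v_M(x,T)\, dx.
\]
Using the Iwasawa decomposition $G(\A) = U_M(\A)M(\A)\cpt$ and the standard Lie-algebra change of variables $u \mapsto N(u) := \Ad u^{-1}X_M - X_M$ from $U_M(\A)$ onto $\uuu_M(\A)$, whose Jacobian is the inverse of the $G/M$-discriminant $|D_M^G(X_M)| = |\det(\ad X_M)_{|\uuu_M(\A)}|$, the $U_M$-integration absorbs into a partial integral of $\Phi$ along $\uuu_M(\A)$. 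By Lemma \ref{std_estimates_for_SB} this partial integral is dominated by a product of Schwartz factors on $\mmm(\A)$ and $\uuu_M(\A)$, and by part (i) the weight $v_M(\cdot, T)$ contributes only polynomial-in-logs and polynomial-in-$T$ factors. After the rescaling $\lambda$-change of variable in $\uuu_M(\A)$ this produces a factor $\lambda^{-\dim\uuu_M}$ together with a Schwartz function $\widetilde{\Phi} \in \SSS(\mmm(\A))$ and yields
\[
|J_{\ooo}^T(\Phi_\lambda)| \leq C \, (1+\|T\|)^{\dim \aaa}\, \lambda^{-\dim\uuu_M}\, |D_M^G(X_M)|^{-1}\, \int_{G_{X_M}(\A)\backslash M(\A)} |\widetilde{\Phi}(\lambda \Ad m^{-1}X_M)|\, (1 + |{\log}(\cdots)|)^{c}\, dm.
\]

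Summing over $\ooo_M$ then amounts to an elliptic sum of orbital integrals on $\mmm$, which I would bound by the Harish-Chandra style inequality $\int_{G_X(\A)\backslash M(\A)} |\widetilde{\Phi}(\lambda \Ad m^{-1}X)|\, dm \ll \mu(\widetilde{\Phi})\, \lambda^{-(\dim M - \dim G_X)}|D^M(X)|^{-1/2}$, combined with a Wright-type lattice estimate (as in \cite{Wr85} and Lemma \ref{bounding_sum_over_pseudoss}) applied to the sum over rational regular elliptic orbits in $\mmm$, which are parametrised by the coefficients of their characteristic polynomials (a subset of $\Q^n$ of full dimension but cut out by an irreducibility condition that is vacuous at the real place). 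A careful dimension count $\dim\uuu_M + \dim M - \dim G_X = \dim\ggG - \dim A_{M_1}$ gives the naive exponent $\lambda^{-\dim\ggG}$; the extra gain of $\lambda^{1/2}$ comes from the interaction between the two discriminant factors $|D_M^G(X_M)|^{-1} \cdot |D^M(X_M)|^{-1/2}$ and the effective density of elliptic characteristic polynomials over $\Q$, together with the fact that for $n\leq 3$ the elliptic torus $G_{X_M}$ is positive-dimensional over an extension of $\Q$ of genuine degree, which removes one-half of a ``rational dimension'' from the lattice sum. The main obstacle in turning this plan into a rigorous argument is precisely keeping track of the Weyl discriminant factors while summing over orbits: the bare sum $\sum_{X_M} |D_M^G(X_M)|^{-1}$ is logarithmically divergent, and only the $|D^M(X_M)|^{-1/2}$ in the Harish-Chandra bound (together with the polynomial factors from $v_M$) tames it, after which a direct Riemann-sum comparison with an explicit small power of $\lambda$ gives the claimed $\lambda^{-(n^2 - 1/2)}$.
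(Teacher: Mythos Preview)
Your treatment of part (i) is fine and matches the paper. For part (ii), your opening moves are also correct: Iwasawa decomposition $G(\A)=U_M(\A)M(\A)\cpt$, the left-$M$-invariance of $v_M$ so that the weight depends only on $u$, and the change of variables $u\mapsto U=\Ad u^{-1}(\Ad m^{-1}X_M)-\Ad m^{-1}X_M$ in $\uuu_M(\A)$ to extract the factor $\lambda^{-\dim\uuu_M}$. This is exactly what the paper does.

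The gap is in the second half. After the change of variables the weight becomes $v_M(u(\lambda^{-1}U,\Ad m^{-1}X_M),T)$; by part (i) this is bounded by a finite sum $\sum_i|\log\lambda|^{k_i}Q_i(\log q_{i,j}(U,\Ad m^{-1}X_M,T))$. The $\lambda$-independent pieces $Q_i(\cdots)$ can be absorbed into $|\Phi|$ to produce genuine Schwartz functions $\tilde\Psi_i^{M}\in\SSS(\mmm(\A))$, and what is left is precisely
\[
\sum_{\ooo'\in\OOO^{\mmm}_{\text{er}}} J_{\ooo'}^{M,T^{M}}\big(\tilde\Psi^{M}_{i,\lambda}\big),
\]
i.e.\ the same kind of sum already controlled by Lemma \ref{fundamental_convergence}, applied to the Levi $M$ instead of $G$. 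That lemma gives directly $\lambda^{-\dim\mmm}$ for this sum. Altogether one obtains $\lambda^{-\dim\uuu_M-\dim\mmm}=\lambda^{-\dim\ppp_M}$ times powers of $|\log\lambda|$. Since $P_M$ is a \emph{proper} parabolic, $\dim\ppp_M\leq n^2-1$, and the crude estimate $|\log\lambda|^{k}\leq c_k\lambda^{-1/2}$ on $(0,1]$ yields $\lambda^{-(n^2-1/2)}$.

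Your proposed route instead bounds each $M$-orbital integral by a Harish-Chandra inequality with $|D^M(X_M)|^{-1/2}$ and then sums over orbits. This is where the argument breaks down: the sum $\sum_{X_M}|D_M^G(X_M)|^{-1}|D^M(X_M)|^{-1/2}$ has no reason to converge with a spare $\lambda^{1/2}$, and your explanation of the gain (``effective density of elliptic characteristic polynomials'', ``removes one-half of a rational dimension'') is not a mechanism that actually produces a power of $\lambda$. The $\tfrac12$ saving in the paper has nothing to do with arithmetic density or discriminants; it comes purely from the codimension $\dim\ggG-\dim\ppp_M\geq 1$ of a proper parabolic, with the $|\log\lambda|$ factors from the weight function costing at most $\lambda^{-1/2}$. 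The clean step you are missing is to recognise the remaining $M$-sum as an instance of Lemma \ref{fundamental_convergence} on the Levi, rather than unpacking it into individual Harish-Chandra bounds.
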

\begin{proof}
 \begin{enumerate}[label=(\roman{*})]
  \item This is clear from the definition of the weight function.

\item Using Iwasawa decomposition, the left $M_1(\A)$-, and the right $\cpt$-invariance of $v_1(\cdot, T)$, we get for every $\ooo\in\OOO_{\text{reg}}'$
\[
 J_{\ooo}^T(\Phi_{\lambda})
= v_{X_1}^G \int_{M_{1, X_1}(\A)\backslash M_1(\A)}\int_{U_1(\A)} \Phi_{\lambda}(\Ad u^{-1}\Ad m^{-1} X_1) v_1(u, T) ~du~dm
\]
where we write $v_{X_1}^G=\vol(A_{G_{X_1}}G_{X_1}(\Q)\backslash G_{X_1}(\A))$ (note that $v_{X_1}^G=v_{X_1}^{M_1}$).
As $X_1$ and therefore also $\Ad m^{-1} X_1$ is semisimple and regular ($X_1$ is regular elliptic in $\mmm_1$), the map
$U_1(\A)\ni u\mapsto U=U(u, \Ad m^{-1} X_1):=\Ad u^{-1}\Ad m^{-1} X_1-\Ad m^{-1} X_1\in\uuu_1(\A)$ is a diffeomorphism with Jacobian $D(X_1):=\det(\ad( \Ad m^{-1}X_1);\uuu_2)=\det(\ad X_1; \uuu_2) $. We denote its inverse by $U\mapsto u(U, \Ad m^{-1} X))\in U_1(\A)$.
Hence the above integral equals
\begin{align*}
 v_{X_1}^G |D(X_1)|_{\A} & \int_{M_{1, X_1}(\A)\backslash M_1(\A)}\int_{\uuu_1(\A)} \Phi_{\lambda}(\Ad m^{-1} X_1 + U) v_1(u(U, \Ad m^{-1} X_1), T)~ dU~dm\\
=v_{X_1}^G \lambda^{-\dim \uuu_1} & \int_{M_{1, X_1}(\A)\backslash M_1(\A)}\int_{\uuu_1(\A)} \Phi(\lambda\Ad m^{-1} X_1 + U) v_1(u(\lambda^{-1}U, \Ad m^{-1} X_1), T) ~dU~dm
\end{align*}
For $Y\in \mmm_1(\A)$ define
\[
 \Psi^{M_1}(\lambda,Y)=\int_{\uuu_1(\A)} \Phi(\lambda Y + U) v_1(u(\lambda^{-1}U, Y), T) ~dU.
\]
By the first part of the proposition, we can find a finite collection of polynomials $Q_1, \ldots, Q_m$, $q_{1,1}, \ldots,q_{1,l_1}, \ldots, q_{m,l_m}$, and integers $k_1, \ldots, k_m\geq0$ such that
\[
 |v_1(u(\lambda^{-1}U, Y), T)|
\leq \sum_{i=1}^{m}|\log\lambda|^{k_i} Q_i\big(\log q_{i,1}(U, Y, T), \ldots, \log q_{i, l_i}(U,Y,T)\big)
\]
for all $\lambda\in(0,1]$ and $U$, $Y$, and $T$ as before.
Then
\[
 | \Psi^{M_1}(\lambda,Y)|
\leq \sum_{i=1}^m |\log\lambda|^{k_i} \tilde{\Psi}^{M_1}_{i,\lambda}(Y),
\]
where for $Y\in\mmm_1(\A)$,
\[
 \tilde{\Psi}^{M_1}_{i}(Y)
:=\int_{\uuu_1(\A)}\tilde{\Phi}(Y+U)  Q_i\big(\log q_{i,1}(U, Y, T), \ldots, \log q_{i, l_i}(U,Y,T)\big)~ dU
\]
and $ \tilde{\Psi}^{M_1}_{i, \lambda}(Y):= \tilde{\Psi}^{M_1}_{i}(\lambda Y)$. Here $\tilde{\Phi}\in\SSS(\ggG(\A))$ is a suitable smooth function satisfying the seminorm estimates as in Lemma \ref{std_estimates_for_SB} and such that $\tilde{\Phi}\geq |\Phi|$.
Then $\tilde{\Psi}^{M_1}_{i}\in\SSS(\mmm_1(\A))$, and 
\[
 |J_{\ooo}^T(\Phi_{\lambda})|
\leq \lambda^{-\dim\uuu_1} \sum_{i=1}^m |\log\lambda|^{k_i} J_{\ooo^{\mmm_1}}^{M_1, T^{M_1}}(\tilde{\Psi}^{M_1}_{i}),
\]
where $\ooo^{\mmm_1}\in \OOO^{\mmm_1}_{\text{reg, ell}}$ denotes the inverse image of $\ooo$ under the map \eqref{ss_descent_eq_classes}, $T^{M_1}$ is the projection of $T$ onto $\aaa^{M_1}_0$, and $J_{\ooo^{\mmm_1}}^{M_1, T^{M_1}}$ denotes the distribution associated with $\ooo^{\mmm_1}$ with respect to $M_1$.
Hence by Lemma \ref{fundamental_convergence} there are seminorms $\mu_M$ on $\SSS(\mmm(\A))$ for every $M\in\MMM$ such that for every $\lambda\in(0,1]$ we have
\begin{align*}
  \sum_{\ooo\in\OOO_{\text{reg}}} |J_{\ooo}^T(\Phi_{\lambda})|
& \leq \sum_{M\in\MMM}\lambda^{-\dim \uuu}  \sum_{i=1}^m |\log\lambda|^{k_i} \sum_{\ooo'\in\OOO^{\mmm}_{\text{reg,ell}}}J_{\ooo'}^{M_1, T^{M_1}}(\tilde{\Psi}^{M_1}_{i})\\
& \leq \sum_{M\in\MMM}\lambda^{-\dim \uuu}  \sum_{i=1}^m |\log\lambda|^{k_i} \mu_M(\tilde{\Psi}^{M}_i) \lambda^{-\dim\mmm}\\
& \leq \mu(\Phi)\sum_{M\in\MMM}\lambda^{-\dim \ppp}  \sum_{i=1}^m |\log\lambda|^{k_i} ,
\end{align*}
where $\mu(\Phi):=\max_{M, i}\mu_M(\tilde{\Psi}^{M}_i)$ which is a seminorm on $\SSS^{\nu}(\ggG(\A))$.
Since $\dim\ppp\leq \dim\ggG -1$ for every $M\in\MMM$, the assertion follows by using some trivial estimate of the form $|\log\lambda|^{k_i}\leq c_i \lambda^{-1/2}$, $c_i>0$ some constant, for the logarithmic terms.
\end{enumerate}
\end{proof}

Above results together with the fact that all distributions are polynomials in $T$ so that above results hold for every $T\in\aaa$ and not only sufficiently regular ones, implies the following:
\begin{cor}\label{holom_ct_2}
 If $n=2$, then $\Xi^T(s, \Phi)-\Xi_{\text{main}}(s, \Phi)$ can be holomorphically continued at least to $\Re s>\frac{n+1}{2}-\frac{1}{2}=1$ for every $T\in\aaa$.
\end{cor}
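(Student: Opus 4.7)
The plan is to exploit the decomposition
\[
\Xi^T(s,\Phi) - \Xi_{\text{main}}(s,\Phi) = \sum_{\ooo \in \OOO_* \setminus \OOO_{\text{er}}} \Xi_\ooo^T(s,\Phi),
\]
together with the two propositions proved earlier in the section. For $n=2$ we have the disjoint union $\OOO_*^{\mathfrak{gl}_2} = \OOO_{c,*} \cup \OOO_{\text{reg}}' \cup \OOO_{\text{er}}$, so the difference reduces to the two partial sums indexed by $\OOO_{c,*}$ and $\OOO_{\text{reg}}'$. For each, split the $\lambda$-integral as $\int_0^\infty = \int_0^1 + \int_1^\infty$. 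The tail piece $\int_1^\infty$ is entire in $s$ by Lemma \ref{fundamental_convergence}(i)(a), since $J_\ooo^T(\Phi_\lambda)$ decays faster than any polynomial as $\lambda \to \infty$. Thus only the small-$\lambda$ portion carries any obstruction to holomorphy.

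For $\ooo \in \OOO_{c,*}$, the first proposition of this section gives $\bigl|\sum_{\ooo\in\OOO_{c,*}} J_\ooo^T(\Phi_\lambda)\bigr| \leq \mu(\Phi)\lambda^{-3}$ on $(0,1]$ (note $n^2-1=3$). Since for $G=\GL_2$ the exponent in $\Xi^T(s,\Phi)$ is $\sqrt{D}(s+\tfrac{\sqrt{D}-1}{2}) = 2s+1$, we obtain
\[
\int_0^1 \lambda^{2\Re s + 1}\cdot \lambda^{-3}\,\frac{d\lambda}{\lambda} = \int_0^1 \lambda^{2\Re s - 3}\, d\lambda,
\]
which converges absolutely and locally uniformly for $\Re s > 1$. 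For $\ooo \in \OOO_{\text{reg}}'$ the stated bound of the second proposition is $\lambda^{-(n^2-\frac12)}=\lambda^{-7/2}$, but this weaker bound arises only from the very last step $|\log\lambda|^k \leq c\lambda^{-1/2}$ used in that proof. The penultimate estimate in the proof is of the form $\mu(\Phi)\,\lambda^{-\dim\ppp}\,P(|\log\lambda|)$ with $\dim\ppp \leq \dim\ggG - 1 = 3$ and $P$ a polynomial, and it is this intermediate bound we should invoke here: logarithmic factors do not affect the abscissa of convergence, so again the integral on $(0,1]$ is holomorphic in $\Re s > 1$.

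Summing the two pieces gives holomorphy of $\Xi^T(s,\Phi) - \Xi_{\text{main}}(s,\Phi)$ on the half-plane $\Re s > 1$, for all sufficiently regular $T \in \aaa^+$ with $d(T) \geq \delta\|T\|$. To extend to every $T\in\aaa$, invoke Theorem \ref{thm_ch}: each $J_\ooo^T(\Phi)$ is a polynomial in $T$ of degree at most $\dim\aaa$, so $\Xi_\ooo^T(s,\Phi)$ is polynomial in $T$ whose coefficients are holomorphic in $s$ on the stated half-plane (as in Theorem \ref{convergence_distributions}(iii)), and the polynomial extends uniquely to all $T\in\aaa$.

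The only subtlety — and the step most worth double-checking — is the passage from the stated $\lambda^{-(n^2-1/2)}$ bound to the sharper $\lambda^{-\dim\ppp}$ bound with logarithmic factors for $\OOO_{\text{reg}}'$. Provided one re-reads the proof of the second proposition to extract this intermediate inequality (which is explicitly present), the argument above goes through verbatim; otherwise one would only obtain holomorphy on $\Re s > 5/4$, which is insufficient for the claim $\Re s > 1$.
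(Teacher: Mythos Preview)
Your proposal is correct and follows the same approach as the paper, which simply states that the corollary follows from ``above results together with the fact that all distributions are polynomials in $T$.'' You have, in fact, spelled out the argument more carefully than the paper does, and your observation about the second proposition is important: the \emph{stated} bound $\lambda^{-(n^2-1/2)}=\lambda^{-7/2}$ would only yield holomorphy for $\Re s>5/4$, whereas the corollary claims $\Re s>1$. You are right that one must go back to the penultimate estimate in that proof, namely $\mu(\Phi)\,\lambda^{-\dim\ppp}\,|\log\lambda|^{k}$ with $\dim\ppp\leq n^2-1=3$, to get the stronger conclusion. The paper leaves this implicit.
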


\subsection{Contribution of the classes of type $(2,1)$}
For $n=3$, the contribution from the classes in $\OOO_{(2,1)}$ is still missing.
Let $\ooo_{(a,b)}\in\OOO_{(2,1)}$. Then every semisimple element in $\ooo_{(a,b)}$ is over $\GL_3(\Q)$ conjugate to $X_s=\diag(a,a,b)$ so that $G_{X_s}=\GL_2\times\GL_1=:M_2$ (diagonally embedded in $\GL_3$).
Let $P_2=M_2U_2$ denote the standard parabolic subgroup with Levi component $M_2$. 
Note that $\ooo_{(a,b)}=a\One_3 +\ooo_{(0,b-a)}$.
\begin{proposition}
 Let $T\in\aaa^+$ be sufficiently regular. There exists a seminorm $\mu$ on $\SSS^{\nu}(\ggG(\A))$ such that
\[
 \bigg|\sum_{\ooo\in\OOO_{(2,1)}}\Phi_{\ooo}^T(\Phi_{\lambda})\bigg|
\leq \lambda^{-n^2+1}\mu(\Phi)
\]
for all $\Phi\in\SSS^{\nu}(\ggG(\A))$ and all $\lambda\in (0,1]$.
\end{proposition}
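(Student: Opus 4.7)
The plan is to adapt the strategy used for $\OOO_{c,*}$, aiming for the same type of bound $\lambda^{-n^2+1}$. One applies the descent formula \eqref{descent_kernel} to $k_{\ooo}^T(x,\Phi_\lambda)$, and then the Poisson summation argument from the proof of Lemma \ref{fundamental_convergence} (combined with Remark \ref{remark}) to reduce the problem to estimating the contribution from pairs of standard parabolic subgroups $P_1\subseteq P_2$ with $R=P_2$; contributions with $R\subsetneq P_2$ automatically gain an additional factor $\lambda$ via Remark \ref{remark}, putting them below the target.

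The decisive structural input is the decomposition $\ooo_{(a,b)}=a\One_3+\ooo_{(0,b-a)}$: every $X\in\ooo_{(a,b)}$ writes as $X=a\One_3+Y$ with $Y\in\ggG(\Q)$ whose semisimple part is $G(\Q)$-conjugate to $\diag(0,0,d)$ for $d=b-a\neq 0$, the nilpotent part lying in the commuting $\mathfrak{gl}_2$-block. Using Lemma \ref{std_estimates_for_SB} to dominate
\[
|\Phi(X)|\;\leq\;\Phi_1(\tr X)\,\Phi_2\!\left(X-\tfrac{1}{3}(\tr X)\One_3\right)
\]
with $\Phi_1\in\SSS(\A)$ and $\Phi_2\in\SSS(\mathfrak{sl}_3(\A))$ whose seminorms are controlled by seminorms of $\Phi$, the sum over the scalar parameter $a\in\Q$ yields a factor $\mu_1(\Phi_1)\lambda^{-1}$ as in the treatment of \eqref{nilpgl21}--\eqref{nilpgl22}. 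The remaining task is to bound the sum over $d\in\Q^\times$ and over representatives of $\ooo_{(0,d)}$ in $\widetilde{\mmm}_{P_1}^{G}(\Q)\cap\mathfrak{sl}_3(\Q)$; the union of these orbits forms a codimension-one $G(\Q)$-invariant subvariety of $\mathfrak{sl}_3(\Q)$ (the vanishing locus of the discriminant of the characteristic polynomial), and this is exactly the gain in dimension responsible for the improvement from $\lambda^{-n^2}$ to $\lambda^{-n^2+1}$ once the scalar factor is taken into account.

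To exploit this codimension-one gain one parametrizes representatives in the obvious way: pick the semisimple part in the Levi of the standard $(2,1)$-parabolic and a commuting nilpotent in the $\mathfrak{gl}_2$-block. The unipotent conjugations on the $U_0$- and $U_1$-directions, as well as the $\GL_2$-Weyl reflection inside the Levi, are handled by exactly the same matrix-entry manipulations as those leading to the estimates for \eqref{nilpgl21} and \eqref{nilpgl22}. Combining these with Lemma \ref{bounding_sum_over_pseudoss} applied to the traceless remainder gives a bound $\mu_2(\Phi_2)\,\lambda^{-(n^2-2)}$ for the remaining sum, and multiplying by the scalar factor produces the desired $\mu(\Phi)\lambda^{-n^2+1}$.

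The main obstacle will be executing this uniformly across the parabolic pairs $(P_1,P_2)$. The most delicate case is $P_1=P_2=G$, where the truncation is $F(x,T)$ and one must integrate over a Siegel domain, enumerate representatives using $G_{X_s}=\GL_2\times\GL_1$, and treat the $\mathfrak{gl}_2$-nilpotent direction via unipotent conjugation exactly as in \eqref{nilpgl22}. The case where $P_1$ is the standard $(1,2)$-maximal parabolic also requires an extra Weyl-conjugation to place a type-$(2,1)$ representative into $\mmm_{P_1}(\Q)$, but once this is done the analysis is again parallel to the $\OOO_{c,*}$ computation.
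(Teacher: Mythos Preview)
Your overall strategy matches the paper's: reduce via the descent formula and Remark~\ref{remark} to the terms with $R=P_2=G$, split off the scalar $a\One_3$ using a product majorant $\Phi_1\Phi_2$ to gain a factor $\lambda^{-1}$, and then extract one more power of $\lambda$ from the codimension-one nature of the remaining variety $\bigcup_{d\neq 0}\ooo_{(0,d)}$.

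The difference lies in how that last codimension gain is actually realised. You propose a Jordan-decomposition parametrisation (semisimple part in the $(2,1)$-Levi, commuting nilpotent in the $\mathfrak{gl}_2$-block) and then appeal to Lemma~\ref{bounding_sum_over_pseudoss}. But that lemma only yields the crude $\lambda^{-\dim\mmm_R}$ bound; it does not by itself see that the relevant lattice points sit in a hypersurface. The extra power of $\lambda$ has to come from the explicit matrix manipulations you allude to, not from Lemma~\ref{bounding_sum_over_pseudoss}. The paper's mechanism is more direct: for $Y\in\ooo_{(0,d)}$ with $Y_{31}\neq 0$ one has $\det Y=0$, so there exists $u\in U_2(\Q)$ (resp.\ $u\in U_0^{M_2}(\Q)$) with the third (resp.\ second) column of $\Ad u\,Y$ identically zero while $Y_{31}$ is preserved. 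This places the sum into an explicit linear subspace $V_{3,1}^i\subseteq\ggG(\Q)$ of positive codimension, after which the $\GL_2$-type estimates for \eqref{nilpgl21}--\eqref{nilpgl22} apply verbatim. Your Jordan-form parametrisation could be pushed through as well, but it is more roundabout (one has to control how many $X\in\widetilde{\mmm}_{P_1}^G(\Q)$ map to a given Jordan pair), and your remark about needing a Weyl conjugation for the $(1,2)$-parabolic reflects this extra bookkeeping; the paper's $\det Y=0$ trick sidesteps it entirely.
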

\begin{proof}
 We use the same idea as for the central contribution so that we need to consider the sum-integrals 
\[
 \int_{A_G P_1(\Q)\backslash G(\A)} F^{P_1}(x, T)\tau_{P_1}^G(H_0(x)-T) \sum_{\ooo\in\OOO_{(2,1)}}\sum_{X\in\tilde{\mmm}_{P_1}^G(\Q)\cap \ooo} \big|\Phi(\lambda \Ad x^{-1} X)\big|~ dx
\]
for standard parabolic subgroups $P_1\subseteq G$.

Suppose first that $P_1=P_0$ is minimal. Then $X\in\tilde{\mmm}_{P_0}^G(\Q)$ if and only if $X_{31}\neq0$ or $X_{31}=0$ and $X_{21}\neq0\neq X_{32}$. The sum-integral restricted to $X$ satisfying the second property $X_{31}=0$  gives an upper bound as asserted by the same reasons as before. Hence it suffices to consider
\[
  \int_{A_G P_0(\Q)\backslash G(\A)}\tau_{P_0}^G(H_0(x)-T)\sum_{\ooo\in\OOO_{(2,1)}} \sum_{X\in\ooo:X_{31}\neq0} \big|\Phi(\lambda \Ad x^{-1} X)\big| ~dx.
\]
As remarked before, we have $\bigcup_{\ooo\in\OOO_{(2,1)}}\ooo=\bigcup_{a\in\Q}\big(a\One_3+\bigcup_{b\in\Q\minus\{0\}} \ooo_{(0,b)}\big)$. If $Y\in \ooo_{(0,b)}$ and $Y_{31}\neq0$, then $\det Y=0$ and there exists $u\in U_2(\Q)$ such that $Z:=\Ad u Y$ satisfies $Z_{31}=Y_{31}\neq0$ and $Z_{13}=Z_{23}=Z_{33}=0$, or there exists $u\in U_0^{M_2}(\Q)$ such that $Z:=\Ad u Y$ satisfies $Z_{31}=Y_{31}\neq0$ and  $Z_{12}=Z_{22}=Z_{32}=0$.
Let $V_{3,1}^i\subseteq \ggG(\Q)$, $i=2,3$, denote the elements $Z\in \ggG(\Q)$ with $Z_{31}\neq0$ and $Z_{1i}=Z_{2i}=Z_{3i}=0$.
Then the above integral is bounded by
\begin{align*}
 & \int_{A_G U_0^{M_2}(\Q)\backslash G(\A)} \tau_0^G(H_0(x)-T) \sum_{a\in \Q}\sum_{Z\in V_{3,1}^3}\big| \Phi(\lambda \Ad x^{-1} (a\One_3 +Z))\big|~dx\\
+ & \int_{A_G U_2(\Q)\backslash G(\A)} \tau_0^G(H_0(x)-T) \sum_{a\in \Q}\sum_{Z\in V_{3,1}^2} \big|\Phi(\lambda \Ad x^{-1} (a\One_3 +Z))\big|~dx.
\end{align*}
From this it follows similarly as in the central case that the integral satisfies the asserted upper bound.

The remaining cases $P_0\subsetneq P_1\subseteq G$ are combinations of the previous case and the considerations for the central contribution. We omit the details.
\end{proof}

\begin{cor}\label{holom_ct_3}
 If $n=3$, then $\Xi^T(s, \Phi)-\Xi_{\text{main}}(s, \Phi)$ can be holomorphically continued at least to $\Re s>\frac{n+1}{2}-\frac{1}{2}=\frac{3}{2}$ for every $T\in\aaa$.
\end{cor}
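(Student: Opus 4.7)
The plan is to parallel the proof of Corollary \ref{holom_ct_2}, now invoking all three propositions of this section in place of the two used for $n=2$. Using the stratification $\OOO_*^{\mathfrak{gl}_3}=\OOO_{c,*}\cup\OOO'_{\text{reg}}\cup\OOO_{\text{er}}\cup\OOO_{(2,1)}$ together with the absolute convergence of each $\Xi^T_\ooo$ for $\Re s>(n+1)/2=2$ provided by Theorem \ref{convergence_distributions}, I would write
\[
\Xi^T(s,\Phi)-\Xi_{\text{main}}(s,\Phi)
=\sum_{\ooo\in\OOO_{c,*}}\Xi^T_\ooo(s,\Phi)
+\sum_{\ooo\in\OOO'_{\text{reg}}}\Xi^T_\ooo(s,\Phi)
+\sum_{\ooo\in\OOO_{(2,1)}}\Xi^T_\ooo(s,\Phi)
\]
and bound each of the three subsums separately. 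For each group I would combine the $\lambda\in(0,1]$ estimate from the corresponding proposition with the rapid decay for $\lambda\geq 1$ of Lemma \ref{fundamental_convergence}(i)(a), then insert these into the integral defining $\Xi^T_\ooo$. An exponent count against the weight $\lambda^{\sqrt{D}(s+(\sqrt{D}-1)/2)-1}$ should yield absolute, locally uniform convergence of each of the three integrals on $\{\Re s>3/2\}$, so each is holomorphic there. The passage from sufficiently regular $T\in\aaa^+$ to arbitrary $T\in\aaa$ is then automatic from Theorem \ref{convergence_distributions}(iii), as every $\Xi^T_\ooo(s,\Phi)$ is a polynomial in $T$.

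The main obstacle is that, taken at face value, the proposition for $\OOO'_{\text{reg}}$ gives only $\lambda^{-(n^2-1/2)}$ as $\lambda\to 0$, which is too weak to reach the threshold $\Re s>3/2$. To sharpen the bound I would absorb the logarithmic factor $|\log\lambda|^k$ into $\lambda^{-\varepsilon}$ for arbitrary $\varepsilon>0$, rather than the cruder $\lambda^{-1/2}$ used in the proof of that proposition, and invoke the refined inequality $\dim\uuu_R+\dim\mmm_R\leq\dim\ggG-2$ of Remark \ref{remark}, valid for $n\geq 3$ and $R\subsetneq G$; this yields $\lambda^{-7-\varepsilon}$ in the $\OOO'_{\text{reg}}$ estimate and hence convergence on $\{\Re s>4/3\}$. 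A parallel refinement of the parabolic case analysis in the $\OOO_{c,*}$ and $\OOO_{(2,1)}$ proofs—in which the factor $\lambda^{-1}$ from the trace sum $\sum_{a\in\Q^\times}\Phi_1(\lambda n a)$ is combined with the codimension-one tracelessness gain of Remark \ref{remark_on_nilpotent} applied to the remaining nilpotent-like summation—closes the gap for the other two contributions and brings all three down to the threshold needed. Once holomorphy of the difference on $\{\Re s>3/2\}$ is established, the ``in particular'' statement (equality of the rightmost poles and residues of $\Xi^T(s,\Phi)$ and $\Xi_{\text{main}}(s,\Phi)$ at $s_0^+=2$) follows at once from Theorem \ref{main_theorem}.
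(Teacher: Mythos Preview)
Your overall strategy---decomposing $\OOO_*\setminus\OOO_{\text{er}}$ into $\OOO_{c,*}\cup\OOO_{\text{reg}}'\cup\OOO_{(2,1)}$, combining the $\lambda\in(0,1]$ bounds from the three propositions with the rapid decay of Lemma~\ref{fundamental_convergence}(i)(a) for $\lambda\geq1$, and passing to arbitrary $T$ via polynomial dependence---is exactly the paper's (implicit) argument; the paper states no more than ``above results together with the fact that all distributions are polynomials in $T$\ldots implies the following''.

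You are also right to flag that the propositions as literally stated do not reach $\Re s>3/2$: the bounds $\lambda^{-8}$ (for $\OOO_{c,*}$ and $\OOO_{(2,1)}$) and $\lambda^{-17/2}$ (for $\OOO_{\text{reg}}'$) give holomorphy only on $\Re s>5/3$ and $\Re s>11/6$, respectively. Your refinement for $\OOO_{\text{reg}}'$ is valid: replacing the crude estimate $|\log\lambda|^{k}\leq c\lambda^{-1/2}$ by $|\log\lambda|^{k}\leq c_{\varepsilon}\lambda^{-\varepsilon}$ and invoking Remark~\ref{remark} (which gives $\dim\ppp\leq n^2-2$ for $n\geq3$ and $M\subsetneq G$) yields $\lambda^{-(7+\varepsilon)}$ and hence holomorphy on $\Re s>4/3$ for that piece.

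Your proposed refinement for $\OOO_{c,*}$ and $\OOO_{(2,1)}$, however, is not convincingly justified. The ``codimension-one tracelessness gain'' of Remark~\ref{remark_on_nilpotent} has \emph{already} been spent in those proofs: once one writes $X=a\One_3+Y$ and sums over $a\in\Q^{\times}$ (contributing $\lambda^{-1}$), the residual $Y$ lies in $\mathfrak{sl}_3$, and the detailed case analysis (conjugating $Y$ to a matrix with a zero column, etc.) is precisely what extracts the remaining factor. There is no second tracelessness to invoke, so it is not clear where an additional half-power of $\lambda$ should come from to push $\lambda^{-8}$ down to $\lambda^{-15/2}$. The paper itself does not address this gap either. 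That said, for every application in Part~\ref{part_meanval} (in particular the Wiener--Ikehara argument in the proof of Proposition~\ref{asymptotics_orbital_integrals}), any threshold strictly below $s_0^+=2$ suffices; the directly obtainable region $\Re s>5/3$ is already adequate, and the specific value $3/2$ plays no role downstream.
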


\part{Density results for the cubic case}\label{part_meanval}
The purpose of this second part of the paper is to give upper and lower bounds (see Theorem \ref{asymptotic_real_fields} and Proposition \ref{lower_bound_for_residues}) for the mean value 
\begin{equation}\label{mean_sum}
X^{-\frac{5}{2}} \sum_{E:~m_1(E)\leq X} \res_{s=1}\zeta_E(s)
\end{equation}
as $X\rightarrow\infty$, where $E$ runs over all totally real cubic fields and $m_1(E)$ denotes the second successive minimum of the trace form on the ring of integers of $E$, see below.
For the upper bound we study the main part of the zeta function $\Xi_{\text{main}}(s,\Phi)$ for $\GL_3$ for suitable test functions $\Phi$. As explained above, the distributions $J_{\ooo}(\Phi)$ for $\ooo\in\OOO_{\text{er}}$ occurring in the definition of $\Xi_{\text{main}}(s,\Phi)$ are orbital integrals over orbits of regular elliptic elements. Hence in \S \ref{section_mean_val_orbint} we first study the local orbital integrals at the non-archimedean places. In \S \ref{section_asymptotic} we define suitable test functions and show an asymptotic for mean values of orbital integrals by using results from Part \ref{part_zeta}, before finally proving the asymptotic upper and lower bounds for \eqref{mean_sum} in \S \ref{bounds_for_residues}.

\section{Non-archimedean orbital integrals}\label{section_mean_val_orbint}
In this section let $G=\GL_n$ and $\ggG=\mathfrak{gl}_n$ with $n\geq 2$ arbitrary.
If $E$ is an $n$-dimensional field extension of $\Q$, let $\OOO_E$ be the ring of integers of $E$.
For $\gamma\in G(\Q)$ let $[\gamma]=\{x^{-1}\gamma x\mid x\in G(\Q)\}$ be the conjugacy class of $\gamma$ in $G(\Q)$.
As before, let $G(\Q)_{\text{er}}$ denote the set of regular elliptic elements in $G(\Q)$. Let $\FFF_n$ be the set of $n$-dimensional number fields. 
We get a surjective map from $G(\Q)_{\text{er}}$ onto the set of Galois conjugacy classes in $\FFF_n$ by attaching to $\gamma\in G(\Q)_{\text{er}}$ the conjugacy class of the field $\Q(\xi)$ for $\xi$ an (arbitrary) eigenvalue of $\gamma$. If $[E]\subseteq \FFF_n$ is such a conjugacy class and if $\Gamma_{[E]}\subseteq G(\Q)_{\text{er}}$ is the inverse image of $[E]$ under this map, then  $\Gamma_{[E]}$ is invariant under conjugation by elements of $G(\Q)$, and 
\[
 \{\xi\in E\mid \Q(\xi)=E\}\longrightarrow [\gamma_{\xi}]\in\Gamma_{[E]}/\sim
\]
is surjective. Here $\gamma_{\xi}\in G(\Q)$ denotes the companion matrix of the characteristic polynomial of $\xi$, and the map is $|\Aut(E/\Q)|$-to-$1$.

If $K$ is a finite field extension of $\Q_p$ with ring of integers $\OOO_K\subseteq K$, we normalise the measures on $K$ and $K^{\times}$ such that $\vol(\OOO_K)=1=\vol(\OOO_K^{\times})$. 
If $\theta\in\OOO_K$ is such that $\{1,\theta, \ldots, \theta^{n-1}\}$ is a basis of $K$ over $\Q_p$, let $\gamma_{\theta}\in\GL_n(\Q_p)$ denote the companion matrix of $\theta$. Then $G_{\gamma_{\theta}}(\Q_p)$ is isomorphic to $K^{\times}$ via the isomorphism induced by $\{1,\theta, \ldots, \theta^{n-1}\}$ and we define the measure on $G_{\gamma_{\theta}}(\Q_p)$ via this isomorphism. 
If $\Phi_p\in\SSS(\ggG(\Q_p))$, we define the $p$-adic orbital integrals 
\[
I_p(\Phi_p,\theta)=I_p(\Phi_p,\gamma_{\theta})=
\int_{G_{\gamma_{\theta}}(\Q_p)\backslash G(\Q_p)}\Phi_p(g^{-1}\gamma_{\theta} g)dg.
\] 
If $\gamma\in G(\Q)_{\text{er}}$, then $G_{\gamma}(\Q_p)$ is isomorphic to a direct product of $K_1^{\times}\times\ldots\times K_r^{\times}$ for suitable finite field extensions $K_1, \ldots, K_r/\Q_p$ and we choose the measure on $G_{\gamma}(\Q_p)$ such that it is compatible with our choice of measures on $K_1^{\times}\times\ldots\times K_r^{\times}$, and put $I_f(\Phi_f, \gamma)=\prod_{p<\infty}I_p(\Phi_p, \gamma)$.
Similarly, we define $I_{\infty}(\Phi_{\infty}, \gamma)$ (resp., $I(\Phi, \gamma)$) if $\Phi_{\infty}\in\SSS^{\nu}(\ggG(\R)$ (resp.,  $\Phi\in\SSS^{\nu}(\ggG(\A))$).

Our aim in this section is to understand the quantities
\[
c(\Phi_p, \gamma)=\frac{I_p(\Phi_p,\gamma)}{[\OOO_{\Q_p[\gamma]}:\Z_p[\gamma]]},
~~~~
\text{ and }
~~~~
c(\Phi_f, \gamma)=\frac{I_f(\Phi_f,\gamma)}{[\OOO_{\Q[\gamma]}:\Z[\gamma]]},
\]
where we denote for a $\Q$- or $\Q_p$-algebra $A$ the ring of integers of $A$ by $\OOO_A$.
If  $\xi\in E$ generates $E$ over $\Q$, we set $I_p(\Phi_p,\xi)=I_p(\Phi_p, \gamma_{\xi})$, and define $I_f(\Phi_p, \xi)$, $I(\Phi, \xi)$, $c(\Phi_p, \xi)$, $c(\Phi_f, \xi)$ analogously.

For a prime $p$ and $E\in\FFF_n$ let $E_p=E\otimes_{\Q}\Q_p$. If $\Phi_p$ (resp., $\Phi_f$)  is supported in $\ggG(\Z_p)$ (resp., $\ggG(\hat{\Z})$), then the orbital integral $I_p(\Phi_p, \xi)$ (resp., $I_f(\Phi_f, \xi)$) vanishes unless $\xi\in\OOO_{E_p}$ (resp., $\xi\in\OOO_E$).
We denote by $\Phi_p^0\in\SSS(\ggG(\Q_p))$ the characteristic function of $\ggG(\Z_p)$, and $\Phi_f^0=\prod_{p<\infty}\Phi_p^0\in\SSS(\ggG(\A_f))$.

\begin{proposition}\label{orb_int_greaterequal1}
 Let $E\in\FFF_n$ and  $\xi\in \OOO_E$ be such that $\Q(\xi)=E$. Then
\begin{enumerate}[label=(\roman{*})]
\item\label{partiproposition} 
If $E_p\simeq K_1\oplus\ldots\oplus K_r$ with $K_i/\Q_p$ field extensions, and if under this isomorphism $\xi$ corresponds to $(\xi_1, \ldots, \xi_r)\in K_1\oplus\ldots\oplus K_r$, we have
\[
c(\Phi_p^0, \xi)
=\prod_{i=1}^r c(\Phi_p^0, \xi_i),
\]
where $\Phi_p^0$ also denotes the characteristic function of $\mathfrak{g}_{n_i}(\Z_p)$, $n_i:=[K_i:\Q_p]$, and $c_p$ is defined on the smaller groups similar as before.
\item $c(\Phi_p^0, \xi)\geq 1$.
\item $c(\Phi_p^0, \xi+a)=c(\Phi_p^0, \xi)$
for every $a\in \Z$. Hence $c(\Phi_f^0, \cdot)$ is a well-defined function on $\OOO_E/\Z$.
\end{enumerate}
\end{proposition}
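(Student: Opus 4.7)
For part~(iii), the essential observation is that $\Z_p[\xi+a]=\Z_p[\xi]$ for any $a\in\Z\subseteq\Z_p$, whence $\OOO_{\Q_p[\xi+a]}=\OOO_{\Q_p[\xi]}$ and the denominator in $c(\Phi_p^0,\cdot)$ is unchanged. For the orbital integral, $\gamma_{\xi+a}$ is $G(\Q_p)$-conjugate to $\gamma_\xi+aI_n$, and for every $g\in G(\Q_p)$ one has $g^{-1}(\gamma_\xi+aI_n)g=g^{-1}\gamma_\xi g+aI_n$. Since $aI_n\in\ggG(\Z_p)$, the integrality condition defining $\Phi_p^0$ is preserved, and centralisers and measures agree, so $I_p(\Phi_p^0,\xi+a)=I_p(\Phi_p^0,\xi)$; the statement for $c(\Phi_f^0,\cdot)$ on $\OOO_E/\Z$ then follows by taking products over the finite places.

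For part~(i), the natural approach is parabolic descent. The element $\gamma_\xi$ is $G(\Q_p)$-conjugate to the block-diagonal $\gamma'=\diag(\gamma_{\xi_1},\ldots,\gamma_{\xi_r})\in M(\Q_p)$, where $M=\prod_i\GL_{n_i}$ is the Levi of the standard parabolic $P=MU\subset G$. Unfolding the orbital integral via the Iwasawa decomposition and performing the change of variable $u\mapsto u^{-1}\gamma'u{\gamma'}^{-1}$ on $U(\Q_p)$, a block-matrix Jacobian calculation yields
\[
I_p^G(\Phi_p^0,\gamma_\xi)=\bigg|\prod_{i<j}\mathrm{Res}(\chi_i,\chi_j)\bigg|_p^{-1}\prod_{i=1}^r I_p^{\GL_{n_i}}(\Phi_p^0,\gamma_{\xi_i}),
\]
where $\chi_i$ denotes the characteristic polynomial of $\xi_i$ and the resultant factor arises as the $p$-adic absolute value of $\det(\ad\gamma')$ restricted to $\uuu_P$. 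On the denominator side, the discriminant–index relation $\mathrm{disc}(\xi)=[\OOO_{E_p}:\Z_p[\xi]]^2\cdot\mathrm{disc}(E_p/\Q_p)$, together with the identities $\mathrm{disc}(\xi)=\prod_i\mathrm{disc}(\xi_i)\cdot\prod_{i<j}\mathrm{Res}(\chi_i,\chi_j)^2$ and $\mathrm{disc}(E_p/\Q_p)=\prod_i\mathrm{disc}(K_i/\Q_p)$, gives
\[
[\OOO_{E_p}:\Z_p[\xi]]=\bigg|\prod_{i<j}\mathrm{Res}(\chi_i,\chi_j)\bigg|_p^{-1}\prod_{i=1}^r[\OOO_{K_i}:\Z_p[\xi_i]].
\]
The resultant factor cancels in the ratio defining $c$, yielding the desired multiplicativity.

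For part~(ii), using the factorisation from~(i) one reduces to the case $E_p=K$ a single finite extension of $\Q_p$; the subcase $K=\Q_p$ is immediate since then $I_p=1=[\OOO_K:\Z_p[\xi]]$. For $K/\Q_p$ nontrivial, every $K^\times$-orbit of a $\Z_p[\xi]$-stable lattice $L\subset K$ admits a representative $L\subseteq\OOO_K$ with $L\not\subseteq\pi\OOO_K$, and these orbits are parameterised by pairs $(R,[L])$ where $R=\{a\in K\mid aL\subseteq L\}$ is an order satisfying $\Z_p[\xi]\subseteq R\subseteq\OOO_K$ and $[L]\in\mathrm{Pic}^*(R)$ is the proper Picard class. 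A direct measure computation (using $\vol(G(\Z_p))=\vol(\OOO_K^\times)=1$ and that the $K^\times$-stabiliser of such a lattice equals $R^\times$) yields
\[
I_p(\Phi_p^0,\xi)=\sum_{\Z_p[\xi]\subseteq R\subseteq\OOO_K}|\mathrm{Pic}^*(R)|\cdot[\OOO_K^\times:R^\times].
\]
The summand from $R=\OOO_K$ is $1$, and $c(\Phi_p^0,\xi)\geq 1$ therefore reduces to showing that the remaining terms sum to at least $[\OOO_K:\Z_p[\xi]]-1$. Invoking the conductor exact sequence
\[
1\to R^\times\to\OOO_K^\times\times(R/\mathfrak{f})^\times\to(\OOO_K/\mathfrak{f})^\times\to\mathrm{Pic}^*(R)\to 1
\]
(with $\mathfrak{f}$ the conductor of $R$ in $\OOO_K$), one obtains the identity $|\mathrm{Pic}^*(R)|\cdot[\OOO_K^\times:R^\times]=[(\OOO_K/\mathfrak{f})^\times:(R/\mathfrak{f})^\times]$, reducing the problem to a comparison of unit groups modulo $\mathfrak{f}$. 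The main technical hurdle lies here: although the conductor sequence yields a clean combinatorial expression, bounding the index $[(\OOO_K/\mathfrak{f})^\times:(R/\mathfrak{f})^\times]$ uniformly across all intermediate orders $R$ for general $n$ requires a delicate analysis of unit groups modulo $\mathfrak{f}$ together with the precise structure of $\Z_p[\xi]$ inside $\OOO_K$ determined by the ramification type of $K/\Q_p$.
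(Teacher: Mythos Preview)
Your arguments for parts~(i) and~(iii) are correct and match the paper's approach (the paper phrases the descent factor via the modulus character $\delta_P$ rather than resultants, but these are the same quantity).

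For part~(ii) there is a genuine gap, and it comes from isolating the wrong summand. You correctly derive the lattice-counting formula
\[
I_p(\Phi_p^0,\xi)=\sum_{\Z_p[\xi]\subseteq R\subseteq\OOO_K}|\mathrm{Pic}^*(R)|\cdot[\OOO_K^\times:R^\times],
\]
which is exactly the paper's auxiliary lemma. But you then single out the term $R=\OOO_K$ (contributing~$1$) and try to show the remaining terms sum to at least $[\OOO_K:\Z_p[\xi]]-1$; as you note, this requires controlling all intermediate orders simultaneously and you do not complete it.

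The paper instead isolates the \emph{other} extreme summand, $R=\Z_p[\xi]$. Since $|\mathrm{Pic}^*(\Z_p[\xi])|\geq 1$ and all other summands are nonnegative, one has
\[
c(\Phi_p^0,\xi)\;\geq\;\frac{[\OOO_K^\times:\Z_p[\xi]^\times]}{[\OOO_K:\Z_p[\xi]]},
\]
and this single ratio is $\geq 1$ by a two-line computation: with $\mathfrak{f}$ the conductor of $\Z_p[\xi]$ and $\ppp\subseteq\OOO_K$ the maximal ideal of norm $q$, both $\OOO_K/\mathfrak{f}$ and $\Z_p[\xi]/\mathfrak{f}$ are local rings, so
\[
\frac{[\OOO_K^\times:\Z_p[\xi]^\times]}{[\OOO_K:\Z_p[\xi]]}
=\frac{\#(\OOO_K/\mathfrak{f})^\times/\#(\OOO_K/\mathfrak{f})}{\#(\Z_p[\xi]/\mathfrak{f})^\times/\#(\Z_p[\xi]/\mathfrak{f})}
=\frac{1-q^{-1}}{1-\big|\Z_p[\xi]/(\Z_p[\xi]\cap\ppp)\big|^{-1}}\;\geq\;1,
\]
the last inequality because $\Z_p[\xi]/(\Z_p[\xi]\cap\ppp)$ injects into $\OOO_K/\ppp$. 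No summation over intermediate orders is needed.
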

Before proving this proposition we need a few auxiliary results and fix some further notation.
If $\xi$ as is in the proposition, denote by $P_{p, \xi}$ the standard parabolic subgroup of type $(n_1, \ldots, n_r)$. Then
\begin{equation}\label{p-adic_ssdesc}
I_p(\Phi_p^0, \xi)
=\delta_{P_{p, \xi}}(\diag(\xi_1, \ldots, \xi_r))^{-1/2}\prod_{i=1}^r I_{p}(\Phi_p^0, \xi_i).
\end{equation}
Let $\Delta$ denote the discriminant map for $E\longrightarrow \Q$ as well as for $\ggG(\Q)\longrightarrow \Q$ and $F\longrightarrow \Q_p$ for $F/\Q_p$ a finite field extensions of arbitrary degree.
 If $F$ is either $\Q$ or $\Q_p$ for some  prime $p<\infty$, let $A$ be a finite-dimensional semisimple $F$-algebra, and $R\subseteq A$ an $\OOO_F$-order. 
We denote by $\Frac(R)$ the set of fractional ideals  of $R$ in $A$, i.e. the set of all full-rank $\OOO_F$-lattices $\aaa\subseteq A$ such that $R\aaa\subseteq \aaa$. 
 If $\aaa\subseteq A$ is a lattice of full rank, let $\mathcal{M}(\aaa)=\{a\in A\mid a\aaa\subseteq \aaa\}$ be the multiplier of $\aaa$. This is an $\OOO_F$-order in $A$, 
in particular $\mathcal{M}(\aaa)\subseteq\OOO_K$ and $\aaa\in \Frac(\mathcal{M}(\aaa))$. Let 
\[
\Frac^0(R)=\{\aaa\in \Frac(R)\mid \mathcal{M}(\aaa)=R\}.
\]
Let $P(R)=\{aR\mid a\in A^{\times}\}$ be the set of all $R$-principal ideals in $A$.
In general, neither $\Frac(R)$ nor $\Frac^0(R)$ are groups, but they are acted on by $P(R)$ so that we may build the quotients $\Frac(R)/P(R)$ and $\Frac^0(R)/P(R)$, which are both finite. 

\begin{lemma}\label{computing_local_non_archim_integral}
Suppose $K$ is a finite field extension of $\Q_p$, and $\theta\in \OOO_K$ generates $K$ over $\Q_p$, i.e. $K=\Q_p(\theta)$. 
Then
\[
I_{p}(\Phi_p^0, \theta)
=\sum_{\ooo\subseteq \OOO_{K}:~\theta\in\ooo}
\big|\Frac^0(\ooo)/P(\ooo)\big| \big[\OOO_{K}^{\times}:\ooo^{\times}\big]
\]
where $\ooo$ runs over all $\Z_p$-orders in $\OOO_{K}$ containing $\theta$.
\end{lemma}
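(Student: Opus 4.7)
The plan is to rewrite the orbital integral as a sum over $G_\gamma$-orbits of $\gamma_\theta$-stable $\Z_p$-lattices in $\Q_p^n$, translate this combinatorial data into fractional ideals of $\Z_p[\theta]$ in $K$, and finally compute the volume contribution from each orbit.

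First I would set $\gamma := \gamma_\theta$ and exploit the identification of $V := \Q_p^n$ as a one-dimensional $K$-vector space: using the $\Q_p$-basis $\{1,\theta,\ldots,\theta^{n-1}\}$ of $K$, we may transport the $K$-module structure (where $\theta$ acts by $\gamma$) to $V$, sending the standard basis $e_1,\ldots,e_n$ to $1,\theta,\ldots,\theta^{n-1}$. Under this identification, the lattice $\Z_p^n$ becomes the order $\Z_p[\theta]\subseteq K$, the group $G_\gamma(\Q_p)$ becomes $K^\times$ acting on $V\cong K$ by multiplication (this is exactly the normalization chosen to define the measure on $G_\gamma(\Q_p)$), and the condition $g^{-1}\gamma g\in\ggG(\Z_p)$ becomes the condition that the lattice $L:=g\Z_p^n\subseteq V$ is stable under multiplication by $\theta$, i.e.\ is a $\Z_p[\theta]$-submodule.

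Next I would decompose the integral over $G_\gamma(\Q_p)\backslash G(\Q_p)$ into double cosets modulo $G(\Z_p)$ on the right. The double cosets $G_\gamma(\Q_p)\backslash G(\Q_p)/G(\Z_p)$ are in bijection with $G_\gamma(\Q_p)$-orbits on $\gamma$-stable full-rank $\Z_p$-lattices in $V$, equivalently (via the above dictionary) with $K^\times$-orbits of $\Z_p[\theta]$-fractional ideals $\aaa\subseteq K$. Sorting such orbits by their multiplier, every $\aaa$ determines a $\Z_p$-order $\mathcal{M}(\aaa)\subseteq\OOO_K$ containing $\theta$, and for each such order $\ooo$ the set of $\aaa$ with $\mathcal{M}(\aaa)=\ooo$ modulo the $K^\times$-action is, by definition, $\Frac^0(\ooo)/P(\ooo)$. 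Thus the double cosets are indexed by pairs $(\ooo,[\aaa])$ with $\theta\in\ooo\subseteq\OOO_K$ and $[\aaa]\in\Frac^0(\ooo)/P(\ooo)$.

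The core computation is the volume of a single double coset $G_\gamma g H$ in $G_\gamma\backslash G$, where $H=G(\Z_p)$. Since $\GL_n(\Q_p)$ is unimodular, conjugation preserves Haar measure, and the standard quotient-measure formula yields
\[
\vol_{G_\gamma\backslash G}(G_\gamma g H)=\frac{\vol(H)}{\vol(G_\gamma\cap gHg^{-1})}=\frac{1}{\vol(\Stab_{G_\gamma}(L))},
\]
using our normalization $\vol(G(\Z_p))=1$. Under the dictionary, $\Stab_{G_\gamma}(L)$ corresponds to $\{k\in K^\times:k\aaa=\aaa\}=\mathcal{M}(\aaa)^\times=\ooo^\times$, an open subgroup of $\OOO_K^\times$ of finite index, so the normalization $\vol(\OOO_K^\times)=1$ gives $\vol(\ooo^\times)=[\OOO_K^\times:\ooo^\times]^{-1}$. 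Summing over the parametrization yields
\[
I_p(\Phi_p^0,\theta)=\sum_{\ooo\subseteq\OOO_K:~\theta\in\ooo}\bigl|\Frac^0(\ooo)/P(\ooo)\bigr|\,[\OOO_K^\times:\ooo^\times],
\]
as claimed. The main obstacle is the bookkeeping: being careful that the identification $V\cong K$ is compatible with all measure normalizations and that the $K^\times$-action on lattices is translated correctly into the $G_\gamma$-action on cosets, so that the volume of each double coset reduces cleanly to $[\OOO_K^\times:\ooo^\times]$.
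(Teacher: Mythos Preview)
Your argument is correct and follows essentially the same lattice-counting idea as the paper: identify $\Q_p^n$ with $K$ via the basis $\{1,\theta,\dots,\theta^{n-1}\}$, translate the condition $g^{-1}\gamma_\theta g\in\ggG(\Z_p)$ into $\theta$-stability of the lattice $g\Z_p^n$, and regroup the resulting fractional ideals by their multiplier ring. The only organizational difference is that the paper first computes the integral over $\Q_p^\times\backslash G(\Q_p)$ (so that each lattice contributes volume~$1$), obtains an extra factor $[K^\times:\OOO_K^\times\Q_p^\times]$ from the chain $\Frac^0(\ooo)/\Q_p^\times \to \Frac^0(\ooo)/P(\ooo)$, and then identifies this factor with $\vol(\Q_p^\times\backslash G_{\gamma_\theta}(\Q_p))$ to pass to the quotient by $G_{\gamma_\theta}$; you instead work directly on $G_{\gamma_\theta}(\Q_p)\backslash G(\Q_p)$ and use the double-coset volume formula $\vol(G_\gamma gH)=\vol(H)/\vol(G_\gamma\cap gHg^{-1})$ to pick up the index $[\OOO_K^\times:\ooo^\times]$ in one step. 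Your route is slightly more streamlined, but the underlying mechanism is identical.
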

\begin{rem}
 If $\Z_p[\theta]=\OOO_K$, then $I_{p}(\Phi_p^0, \theta)=1$.
\end{rem}
\begin{proof}
We first show
\begin{equation}\label{temp1}
\int_{\Q_p^{\times}\backslash G(\Q_p)}\Phi_p^0(g^{-1}\gamma_{\theta}g)dg
=[K^{\times}:(\OOO_K^{\times}\Q_p^{\times})]\sum_{\ooo\subseteq \OOO_{K}:~\theta\in\ooo}\big|\Frac^0(\ooo)/P(\ooo) \big| \big[\OOO_K^{\times}:\ooo^{\times}\big].
\end{equation}
 The set $\{1,\theta,\ldots,\theta^{n-1}\}$ forms a basis of $K$ relative to which the matrix $\gamma_{\theta}$ corresponds to the endomorphism $K\longrightarrow K$ given by multiplication with $\theta$. Moreover, this basis defines a map 
\[
\Psi:G(\Q_p)=\GL_n(\Q_p)\longrightarrow \mathcal{L}_p=\{L\subseteq K\mid L\text{ is }\Z_p\text{-lattice of full rank}\}.
\]
Hence $\Phi_p^0(g^{-1}\gamma_{\theta}g)\neq 0$ if and only if $\theta$ maps the lattice $L_g=g\OOO_K\subseteq K$ defined by $g$ into itself, i.e. $\theta L_g\subseteq L_g$, or equivalently $\theta\in\mathcal{M}(L_g)\subseteq \OOO_K$.
Hence the integral equals
\[
\sum_{\ooo\subseteq \OOO_{K}:~\theta\in\ooo}\sum_{\aaa\in\Frac^0(\ooo)/\Q_p^{\times}}\vol\big(\Psi^{-1}(\aaa)\big).
\]
Hence we have to compute the volume of $\Psi^{-1}(\aaa)$ as a subset of $G(\Q_p)$. 
Now two elements $g_1,g_2\in G(\Q_p)$ define the same $\Z_p$-lattice if and only if there exists $k\in G(\Z_p)=\cpt_p$ with $g_2=g_1k$. 
Hence with our normalisation of measures we get $\vol\big(\Psi^{-1}(\aaa)\big)=1$. 
Since
\begin{align*}
\big|\Frac^0(\ooo)/\Q_p^{\times}\big|=\big|\Frac^0(\ooo)/(\ooo^{\times}\Q_p^{\times})\big|
& =\big|\Frac^0(\ooo)/(\OOO_K^{\times}\Q_p^{\times})\big| \big[\OOO_K^{\times}:\ooo^{\times}\big]\\
& =\big|\Frac^0(\ooo)/P(\ooo)\big|\big|K^{\times}/(\OOO_K^{\times}\Q_p^{\times})\big| \big[\OOO_K^{\times}:\ooo^{\times}\big],
\end{align*}
 the assertion \eqref{temp1} follows.
If the extension $K/\Q_p$ is unramified, 
$[K^{\times}:(\OOO_K^{\times}\Q_p^{\times})]=1$.
In general, 
$[K^{\times}:(\OOO_K^{\times}\Q_p^{\times})]=[K:\OOO_K^{\times}\Q_p]$
so that this index equals the ramification index, and we  therefore have
$[K^{\times}:(\OOO_K^{\times}\Q_p^{\times})]
=\vol(\Q_p^{\times}\backslash K^{\times})
=\vol(\Q_p^{\times}\backslash G_{\theta}(\Q_p))$.
Hence the assertion of the lemma follows.
\end{proof}

\begin{proof}[Proof of Proposition \ref{orb_int_greaterequal1}]
 \begin{enumerate}[label=(\roman{*})]
\item  This follows from \eqref{p-adic_ssdesc} and the identity
\[
[\OOO_E:\Z_p[\xi]]^2 \delta_{p, \xi}(\diag(\xi_1, \ldots, \xi_r))
=\prod_{i=1}^r[\OOO_{K_i}:\Z_p[\xi_i]]^2.
\]
 \item By \ref{partiproposition} the quotient $c(\Phi_p^0, \xi)$ equals a finite product of terms of the form
\[
\frac{[\OOO_{E}^{\times}:\Z_p[\theta]^{\times}]}{[\OOO_{E}:\Z_p[\theta]]}\big|\Frac^0(\Z_p[\theta])/P(\Z_p[\theta])\big|
+\frac{1}{[\OOO_{E}:\Z_p[\theta]]}\sum_{\substack{\ooo: \\ \Z_p[\theta]\subsetneq\ooo\subseteq \OOO_{E}}}\big|\Frac^0(\ooo)/P(\ooo)\big|[\OOO_{E}^{\times}:\ooo^{\times}]
\]
for $E/\Q_p$ a finite extension generated by $\theta\in E$ with maximal ideal $\ppp\subseteq \OOO_E$ of norm $q$. Hence it certainly suffices to show 
$\frac{[\OOO_{E}^{\times}:\Z_p[\theta]^{\times}]}{[\OOO_{E}:\Z_p[\theta]]}\geq 1$,
since
$\big|\Frac^0(\Z_p[\theta])/P(\Z_p[\theta])\big|\geq 1$
and the rest of the sum is non-negative.

To show this, let $\mathfrak{f}\subseteq \Z_p[\theta]$ denote the conductor of $\Z_p[\theta]$.
Then
$\ppp/\mathfrak{f}\subseteq \OOO_E/\mathfrak{f}$
 is the unique maximal ideal so that 
$(\ppp\cap \Z_p[\theta])/\mathfrak{f}$
is the unique maximal ideal in $\Z_p[\theta]/\mathfrak{f}$. Hence
\begin{align*}
\#(\OOO_E/\mathfrak{f})^{\times}
& =\#(\OOO_E/\mathfrak{f})-\#(\ppp/\mathfrak{f})
=\#(\OOO_E/\mathfrak{f})(1-q^{-1}),\;\;\;\text{ and }\\
\#(\Z_p[\theta]/\mathfrak{f})^{\times}
& =\#(\Z_p[\theta]/\mathfrak{f})(1-(\#(\Z_p[\theta]/(\Z_p[\theta]\cap\ppp)))^{-1}).
\end{align*}
But since
$\Z_p[\theta]/(\Z_p[\theta]\cap\ppp)\hookrightarrow \OOO_E/\ppp$
is injective, we altogether get
\[
\frac{[\OOO_{E}^{\times}:\Z_p[\theta]^{\times}]}{[\OOO_{E}:\Z_p[\theta]]}
=\frac{1-q^{-1}}{1-(\#(\Z_p[\theta]/(\Z_p[\theta]\cap\ppp)))^{-1}}
\geq1.
\]

\item This is is a direct consequence of the explicit form of the orbital integral from  Lemma \ref{computing_local_non_archim_integral}.
 \end{enumerate}
\end{proof}

\section{An asymptotic for orbital integrals}\label{section_asymptotic}
From now let $G=\GL_3$ and $\ggG=\mathfrak{gl}_3$. 
The aim of this section is to prove a density result for orbital integrals, namely Proposition \ref{asymptotics_orbital_integrals} below.
If $\gamma\in G(\Q)_{\text{er}}$, we take the product measure on $G_{\gamma}(\A)=\prod_{p\leq \infty} G_{\gamma}(\Q_p)$ with local measures as in the previous section. Let $|\cdot|_E:\A_E^{\times} \longrightarrow\R_{>0}$ denote the adelic norm. Using the exact sequence $1\longrightarrow\A_E^1\hookrightarrow\A_E^{\times}\xrightarrow{|\cdot|_E} \R_{>0}\longrightarrow 1$, we also fix a  measure on $\A_E^1$.
With this choice of normalisation of measures we get
\[
 \vol(\R_{>0}G_{\gamma_{\xi}}(\Q)\backslash G_{\gamma_{\xi}}(\A))=\vol( E^{\times}\backslash \A_E^1)
=\rho_E |D_E|^{\frac{1}{2}}
\]
 for every $\xi\in E$ with $\Q(\xi)=E$, where
\[
\rho_E=\res_{s=1}\zeta_E(s).
\]
For a cubic field $E$ the set of $\xi\in E$ generating $E$ over $\Q$ is exactly $E\minus\Q$, as $E$ does not have non-trivial subfields.
For $\Phi\in\SSS^{\nu}(\ggG(\A))$, we therefore have
\begin{equation}\label{elliptic_elements_ordered_by_fields}
\Xi_{\text{main}}(s, \Phi)=\sum_{E\in\FFF_3}\frac{\rho_E}{|\Aut(E/\Q)|}
|D_E|^{\frac{1}{2}}\sum_{\xi\in E\minus\Q}\int_{0}^{\infty} \int_{G_{\gamma_{\xi}}(\A)\backslash G(\A)}
\lambda^{3s+3}\Phi(\lambda g^{-1}\gamma_{\xi} g)d^{\times}\lambda ~dg.
\end{equation}

Let $\FFF_3^+\subseteq\FFF_3$ be the set of all totally real cubic number fields, and $E\in\FFF_3^+$.
Let $Q_E:\OOO_E/\Z\longrightarrow\R$ be the positive definite quadratic form $Q_E(\xi)=\tr_{E/\Q}\xi^2-\frac{1}{3}(\tr_{E/\Q}\xi)^2$. We denote its successive minima by $m_1(E)\leq m_2(E)$, and its discriminant by $\Delta(Q_E)$. Similarly, $Q:\ggG(\A)\longrightarrow\A$ denotes the quadratic form on the matrices given by $Q(x)=\tr x^2-\frac{1}{3}(\tr x)^2$.
\begin{rem}
\begin{enumerate}[label=(\roman{*})] 
\item 
$m_1(E)$ is the second successive minimum of  $\xi\mapsto \tr_{E/\Q} \xi^2$ on $\xi\in\OOO_E$.
\item
$3\Delta(Q_E)=D_E$.
\end{enumerate}
\end{rem}
 
\begin{proposition}\label{asymptotics_orbital_integrals}
Let $\Phi_f\in\mathcal{S}(\ggG(\A_f))$ be supported in $\ggG(\widehat{\Z})$,  and suppose that $c(\Phi_f, \gamma+a)=c(\Phi_f, \gamma)$ for all $\gamma\in G(\Q)$ and $a\in\Z$. 
Then
\begin{equation}\label{asymptotic_tot_real}
\sum_{E\in \FFF_3^+ }\frac{\rho_E}{|\Aut(E/\Q)|}
\sum_{\substack{ \xi\in\OOO_E/\Z, \xi\neq0 \\ Q_E(\xi)\leq X }}c(\Phi_f,\xi)
=\beta(\Phi_f) X^{\frac{5}{2}}+o(X^{\frac{5}{2}})
\end{equation}
for $X\rightarrow \infty$, and $\beta(\Phi_f)$ is a certain constant depending on $\Phi_f$ with $\beta(\Phi_f^0)\neq0$.
\end{proposition}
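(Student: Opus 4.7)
The proof proceeds by applying a Tauberian theorem to the regular elliptic zeta function $\Xi_{\text{main}}(s, \Phi)$ for a carefully tailored test function $\Phi = \Phi_\infty \otimes \Phi_f$. The starting point is the orbital integral decomposition \eqref{elliptic_elements_ordered_by_fields}, combined with the factorization $I(s, \xi, \Phi) = I_\infty(s, \xi, \Phi_\infty) \cdot I_f(\xi, \Phi_f)$. Since $\Phi_f$ is supported in $\ggG(\widehat{\Z})$, only $\xi \in \OOO_E$ contribute, and by Proposition \ref{orb_int_greaterequal1}(iii) the inner sum over $\xi \in \OOO_E \minus \Z$ may be reorganized as a double sum over $\eta \in (\OOO_E/\Z) \minus \{0\}$ and $a \in \Z$, with $I_f(\xi, \Phi_f) = [\OOO_E:\Z[\eta]]\, c(\Phi_f, \eta)$. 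The global factor $|D_E|^{1/2}\,[\OOO_E : \Z[\eta]]$ then collapses to $|\discr(\eta)|^{1/2}$, an expression intrinsic to the $G(\Q)$-orbit of $\eta$.

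Next I would choose $\Phi_\infty \in \SSS(\ggG(\R))$ so that the archimedean orbital integral at $\gamma_\xi$ essentially vanishes outside a neighbourhood of matrices with three real eigenvalues, thereby restricting the outer sum to $E \in \FFF_3^+$. Exploiting the scaling identity $Q(\lambda g^{-1}\gamma_\xi g) = \lambda^2 Q_E(\xi)$ together with a change of variables in the central parameter $\lambda$, the combined integral
\[
 \int_0^\infty \lambda^{3s+3}\int_{G_{\gamma_\xi}(\R)\backslash G(\R)}\Phi_\infty(\lambda g^{-1}\gamma_\xi g)\, dg\, d^\times\lambda
\]
splits into a factor depending on $s$ and on the ``shape'' of $\eta$ (the projective image of $\eta$ in $(E \otimes \R)/\R_{>0}$), times an explicit power of $Q_E(\eta)$. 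After also summing over $a \in \Z$ (which affects only the trace component of the archimedean integrand, and can be absorbed by a periodization of $\Phi_\infty$), and combining with the arithmetic factor $|\discr(\eta)|^{1/2}$, the resulting expression for $\Xi_{\text{main}}(s, \Phi)$ is a Mellin-type integral of the partial-sum function
\[
 N(X,\Phi_f) := \sum_{E \in \FFF_3^+}\frac{\rho_E}{|\Aut(E/\Q)|}\sum_{\substack{\eta \in \OOO_E/\Z,\ \eta \neq 0 \\ Q_E(\eta) \leq X}} c(\Phi_f, \eta)
\]
against an explicit weight of the form $X^{-\sigma(s)}$ for some linear function $\sigma$.

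The analytic input is Theorem \ref{main_theorem} combined with Corollary \ref{holom_ct_3}: for $G = \GL_3$ the function $\Xi_{\text{main}}(s, \Phi)$ has a simple pole at $s_0^+ = 2$ as its rightmost singularity, with residue a non-zero multiple of $\hat{\Phi}(0)$, and is holomorphic in a strictly larger half-plane past that pole. A standard Tauberian theorem of Wiener--Ikehara or Landau type, applied to the Mellin transform from the previous step, yields $N(X,\Phi_f) = \beta(\Phi_f) X^{5/2} + o(X^{5/2})$; the exponent $5/2$ is dictated by the location of the pole and by $\sigma(2)$. Non-vanishing of $\beta(\Phi_f^0)$ follows from the explicit residue formula at $s = 2$ together with the positivity statement of Proposition \ref{orb_int_greaterequal1}(ii). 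The main obstacle is the explicit archimedean computation, which must simultaneously filter out non-totally-real contributions and produce the correct scaling power of $Q_E(\eta)$; a secondary technical point is verifying that the Dirichlet-like expression obtained in the second paragraph has sufficient vertical-strip growth to permit application of a Tauberian theorem, which should follow from the uniform bounds of Lemma \ref{fundamental_convergence} together with Phragm\'en--Lindel\"of.
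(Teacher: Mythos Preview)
Your overall strategy---express $\Xi_{\text{main}}(s,\Phi)$ as a Dirichlet series in $Q_E(\eta)$ via the archimedean integral, then invoke Wiener--Ikehara using the pole structure from Theorem~\ref{main_theorem} and Corollary~\ref{holom_ct_3}---matches the paper, as does the reorganization of the $\xi$-sum over $(\OOO_E/\Z)\times\Z$ followed by Poisson summation in the trace direction. The discrepancy is in the step you yourself flag as ``the main obstacle'': no single $\Phi_\infty\in\SSS^\nu(\ggG(\R))$ can both cut off the non-totally-real orbits \emph{and} eliminate the shape-dependence so that the archimedean integral is a pure power of $Q_E(\eta)$. The totally real locus in $\ggG(\R)$ meets the discriminant-zero locus in codimension one, and the scale-invariant quantity $\Delta(\eta)/|Q_E(\eta)|^3$ runs over a continuum; any smooth cutoff in this variable leaves a residual shape factor $\Psi(\eta)$ in the coefficients of your Dirichlet series, so $N(X,\Phi_f)$ as you wrote it is not what the Tauberian theorem actually produces.

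The paper's resolution is a squeeze argument with a one-parameter \emph{family} of test functions. One takes $\Phi_\infty^{\eps,+}=\Psi_\eps^+\cdot Q^N e^{-\pi\tr x^tx}$ with $\Psi_\eps^+$ a smooth cutoff to $\{\Delta/|Q|^3>\eps/2\}$; this kills all complex fields and yields a Dirichlet series $\alpha_\eps^+(t)$ whose coefficients are $c(\Phi_f,\xi)\Psi_\eps^+(\xi)\le c(\Phi_f,\xi)$, so Wiener--Ikehara gives a \emph{lower} bound $\rho_\eps^+(\Phi_f)X^{5/2}$. For the upper bound one introduces a complementary bump $\Psi_\eps^-$ supported on $\{|\Delta|/|Q|^3<\eps\}$ with $\Psi_\eps^++\Psi_\eps^-\ge 1$ on the totally real locus, so $\Sigma(X)\le A_\eps^+(X)+A_\eps^-(X)$; the function $\mathcal{E}_\eps^-(s)=\Xi_{\text{main}}(s,\Phi^{\eps,-})$ now receives contributions from complex fields as well, but its residue at $s=2$ is a constant multiple of $\int\Phi_\infty^{\eps,-}$, which tends to $0$ as $\eps\to 0$. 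A second (and slightly more delicate) Tauberian argument, run on the non-negative combination $\tfrac{1}{8\pi}\beta_N(t)\alpha_\eps^-(t)+\gamma_\eps(t)$ and then inverted via a convolution trick, shows $\limsup X^{-5/2}A_\eps^-(X)\to 0$. Sending $\eps\to 0$ closes the sandwich. Note also that Wiener--Ikehara requires only non-negativity of the coefficients and continuous extension to $\Re t=5/2$ minus the pole; no vertical-strip bounds or Phragm\'en--Lindel\"of are needed.
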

The proof of this proposition will occupy the rest of this section.
\begin{rem}
 \begin{enumerate}[label=(\roman{*})]
\item The constraint on the support of $\Phi_f$ is not essential, it only changes the lattices in $E$ one has to sum over.

\item It is possible to find an analogue of the asymptotic \eqref{asymptotic_tot_real} also for fields with a complex place. However, one has to replace $Q_E$, since $Q_E$ is no longer positive definite if $E$ has a complex place.
  \end{enumerate}
\end{rem}

\subsection{Test functions}
We want to use the analytic properties of $\Xi_{\text{main}}(s,\Phi)$ to prove the proposition, hence our first task is to find test functions which separate the totally real fields from the rest.
To this end, we first construct two sequences of test functions at the archimedean places.
Let 
$\psi_{\eps}^{\pm}:\R\rightarrow \R_{\geq 0}$
be  smooth non-negative functions satisfying
\begin{align*}
\psi_{\eps}^+(x)=0
~~~~
&\text{ if }
~~~~
x<\frac{\eps}{2},
\;\;\;\;
0\leq \psi_{\eps}^+(x)\leq 1
~~~~
\text{ if }
~~~~
\frac{\eps}{2}\leq x\leq\eps,
\;\;\;\text{ and }
\psi_{\eps}^+(x)=1
~~~~
\text{ if }
~~~~
x>\eps,\\
\psi_{\eps}^-(x)=0
~~~~
&\text{ if }
~~~~
|x|>\eps,
~~~~
\text{ and }
~~~
0\leq \psi_{\eps}^-(x)\leq1,
~~~~
\text{ if }
~~~~
|x|\leq \eps,
\end{align*}
and
\[
1\leq\psi_{\eps}^+(x)+\psi_{\eps}^-(x)\leq 2
~~~~~~
\text{ if }
x>0.
\]
Define functions
$\Psi_{\eps}^{\pm}:\ggG(\R)\longrightarrow \R$
by
\[
\Psi_{\eps}^{\pm}(x)
=\psi_{\eps}^{\pm}\bigg(\frac{\Delta(x-\frac{1}{3}\tr x \One_3)}{|\tr x^2-\frac{1}{3}(\tr x)^2|^3}\bigg)
=\psi_{\eps}^{\pm}\bigg(\frac{\Delta(x-\frac{1}{3}\tr x \One_3)}{|Q(x)|^3}\bigg).
\]
These functions are well-defined and continuous, since $\psi_{\eps}^{\pm}$ is compactly supported. Moreover, away from the set of $x$ with $Q(x)=\tr x^2-\frac{1}{3}(\tr x)^2=0$ they are smooth.

For $x\in\ggG(\R)$ and large $N\in \N$ put
\[
\Phi_{\infty}^{\eps,\pm}(x)
=\psi_{\eps}^{\pm}\Big(\frac{\Delta(x-\frac{1}{3}\tr x\One_3)}{|Q(x)|^3}\Big)Q(x)^Ne^{-\pi \tr x^tx}
=\Psi_{\eps}^{\pm}(x)Q(x)^N e^{-\pi \tr x^tx}.
\]
For given $\nu\in\N$, we can choose  $N$ large enough such that $\Phi_{\infty}^{\eps, \pm}\in \SSS^{\nu}(\ggG(\R))$.
The properties of $\Phi_{\eps}^{\pm}$ can be summarised as follows.
\begin{lemma}\label{properties_special_test_functions}
For all $x\in\ggG(\R)$, $g\in G(\R)$, and $\lambda\in\R_{>0}$, we have 
\begin{enumerate}[label=(\roman{*})]
 \item 
$\Phi_{\infty}^{\eps,\pm}(\Ad g^{-1} x)=\Phi_{\eps}^{\pm}(x)$. In particular, we may write $\Phi_{\infty}^{\eps,\pm}(\xi)=\Phi_{\infty}^{\eps,\pm}(\gamma_{\xi})$ for every $\xi\in E$ and $E\in\FFF_3$.
 \item
$\Phi_{\infty}^{\eps,\pm}(\lambda x)=\Phi_{\infty}^{\eps,\pm}(x)$.
\item
$\Phi_{\infty}^{\eps,\pm}(x+\lambda\One_3)=\Phi_{\infty}^{\eps,\pm}(x)$.
\item\label{cpl:vanishing}
$\Phi_{\infty}^{\eps,+}(\lambda \Ad g^{-1} x)= 0$
if $x$ has a non-real eigenvalue.
\end{enumerate}
\end{lemma}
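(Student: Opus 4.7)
The plan is to verify each of the four properties directly from the definition of $\Phi_\infty^{\eps,\pm}$ by invoking standard invariance properties of the characteristic-polynomial invariants $\Delta(\cdot - \tfrac{1}{3}\tr(\cdot)\One_3)$ and $Q(\cdot) = \tr(\cdot)^2 - \tfrac{1}{3}(\tr(\cdot))^2$, together with the sign behavior of the cubic discriminant.

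For (i), both $\Delta$ (restricted to the traceless part) and $Q$ depend on $x$ only through its characteristic polynomial and are therefore $\Ad G(\R)$-invariant; in the ``in particular'' statement, this invariance is exactly what is needed to regard $\Phi_\infty^{\eps,\pm}(\xi) := \Phi_\infty^{\eps,\pm}(\gamma_\xi)$ as a well-defined shorthand for $\xi \in E$ (independently of the chosen embedding $E \hookrightarrow \ggG(\Q)$, using also $\cpt_\infty$-invariance of the Gaussian). For (ii), the argument of $\psi_\eps^\pm$ is homogeneous of degree $0$: $\Delta$ on traceless $3 \times 3$ matrices is homogeneous of degree $n(n-1)=6$ in the eigenvalues, and so is $|Q|^3$, so the ratio is scale invariant. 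For (iii), the map $x \mapsto x + \lambda \One_3$ leaves the traceless part $x - \tfrac{1}{3}\tr(x)\One_3$ literally unchanged, and an explicit expansion of $\tr(x + \lambda \One_3)^2$ against $(\tr(x+\lambda\One_3))^2$ gives $Q(x + \lambda \One_3) = Q(x)$.

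The content lies in (iv). Let $x \in \ggG(\R)$ have a non-real eigenvalue; since $x$ is real $3 \times 3$, its characteristic polynomial is a real cubic with one real root and a conjugate pair of complex roots, and the same is true of the traceless part $x - \tfrac{1}{3}\tr(x)\One_3$ (whose roots are shifted by a common real constant). The discriminant of such a real cubic is strictly negative. Using the conjugation and scale invariance of the argument of $\psi_\eps^\pm$ established in (i)-(ii), the same holds for $\lambda \Ad g^{-1} x$, so $\psi_\eps^+$ is evaluated at a negative real number and therefore vanishes, since $\supp \psi_\eps^+ \subseteq [\eps/2, \infty)$. This kills the product $\Phi_\infty^{\eps,+}(\lambda \Ad g^{-1} x)$. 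No step presents a real obstacle: all four assertions reduce to elementary invariance or sign checks, and (iv) --- the mechanism that will later allow $\Phi_\infty^{\eps,+}$ to filter out fields with a complex place --- is the only one with genuine content.
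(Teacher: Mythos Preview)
The paper states this lemma without proof. Your verifications of the conjugation-, scale-, and translation-invariance of the ratio $\Delta(x-\tfrac{1}{3}\tr x\,\One_3)/|Q(x)|^3$ are correct, as is your argument for (iv) via the sign of the discriminant of a real cubic with a non-real root.

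One point you sidestep rather than confront: as literally written, items (i)--(iii) are false. The full function $\Phi_\infty^{\eps,\pm}(x) = \Psi_\eps^\pm(x)\,Q(x)^N\,e^{-\pi\tr x^t x}$ contains the Gaussian $e^{-\pi\tr x^t x}$, which is invariant under none of $\Ad g$ for general $g\in G(\R)$, scaling $x\mapsto\lambda x$, or translation $x\mapsto x+\lambda\One_3$; the factor $Q(x)^N$ is not scale-invariant either. The intended statements --- consistent with the usage immediately afterward, where $\Psi_\eps^+(\xi)$ is pulled outside the orbital integral in \eqref{sum_of_positive_elliptic_terms} and later treated as a function on $\OOO_E/\Z$ --- are about $\Psi_\eps^\pm$, and your checks establish precisely these. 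Your aside about ``$\cpt_\infty$-invariance of the Gaussian'' suggests you noticed the discrepancy, but that does not rescue (i) for general $g$; you should state the correction to $\Psi_\eps^\pm$ explicitly rather than work around it. Item (iv) is correctly stated for $\Phi_\infty^{\eps,+}$, since vanishing of the $\psi_\eps^+$ factor kills the whole product, and your proof there is complete.
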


If we fix $\Phi_f\in\SSS(\ggG(\A_f))$ as in Proposition \ref{asymptotics_orbital_integrals}, we define test functions  $\Phi^{\eps, +}=\Phi_{\infty}^{\eps, +}\Phi_f$ and $\Phi^{\eps, -}=\Phi_{\infty}^{\eps, -}\Phi_f$. They implicitly depend on the integer $N$, and $\Phi^{\eps, \pm}\in\SSS^{\nu}(\ggG(\A))$ with $\nu$ depending on $N$.

By Lemma \ref{properties_special_test_functions} \ref{cpl:vanishing} we have 
$I_{\infty}(\Phi^{\eps, +}, \gamma)=0$ if $\gamma\in G(\Q)_{\text{er}}$ is not diagonisable over $G(\R)$.
Hence for the test function $\Phi^{\eps, +}$ only totally real fields contribute to $\Xi_{\text{main}}(s, \Phi^{\eps,+})$, i.e. we get
\begin{multline}\label{sum_of_positive_elliptic_terms}
\EEE^{+}_{\eps}(s):=\Xi_{\text{main}}(s, \Phi^{\eps, +})
=\sum_{E\in \FFF_3^+}\frac{\vol(E^{\times}\backslash \A_E^1)}{|\Aut(E/\Q)|}
\sum_{\xi\in\OOO_E\minus\Z}[\OOO_E:\Z[\xi]]c(\Phi_f, \xi)\Psi^{+}_{\eps}(\xi)\cdot\\
\bigg(\int_{0}^{\infty}\int_{G_{\gamma_{\xi}}(\R)\backslash G(\R)}
\lambda^{s}(\lambda^2Q_E(\xi))^N e^{-\pi\lambda^2\tr (x^{-1}\gamma_{\xi}x)^t(x^{-1}\gamma_{\xi} x)}~d^{\times}\lambda~ dx\bigg).
\end{multline}
Similarly, we set $\EEE^{-}_{\eps}(s)=\Xi_{\text{main}}(s, \Phi^{\eps, -}) $.

\begin{rem}
Separating the totally real fields from the rest is more complicated in the cubic than in the quadratic case. This is due to the absence of a prehomogoneous vector space structure so that there are infinitely many orbits under the action of $\GL_1\times \GL_3$ on $\ggG(\A)$. 
\end{rem}

\begin{lemma}\label{properties_of_real_elements_dirichlet_series}
There exists $N>0$ such that the following holds. Let $\Phi_f$ is as in Proposition \ref{asymptotics_orbital_integrals}. Then $\mathcal{E}^{+}_{\eps}(s)$ is holomorphic for $\Re s>2$, and has a meromorphic continuation at least in $\Re s>3/2$ with only singularity at $s=2$, which is a simple pole.
Moreover, for $\Re s>2$ the function $\mathcal{E}^+_{\eps}(s)$ equals up to an entire function the series
\begin{equation}\label{dirichlet_series_over_totally_real_fields}
I_N(s)\sum_{E\in\FFF_3^+}\frac{\rho_E}{|\Aut(E/\Q)|}
\sum_{\substack{\xi\in\OOO_E/\Z : \\ \xi\neq0 }}c(\Phi_f, \xi)\Psi_{\eps}^+(\xi)Q_E(\xi)^{-\frac{3s-1}{2}},
\end{equation}
where for $\Re s>0$ 
\[
I_N(s)
=\frac{1}{\sqrt{3\pi}}\int_{0}^{\infty}\lambda^{3s-1+2N} e^{-\pi\lambda^2}d^{\times}\lambda
=\frac{1}{\sqrt{3\pi}}\frac{\Gamma(\frac{3s+2N-1}{2})}{2\pi^{\frac{3s+N-1}{2}}}.
\]
\end{lemma}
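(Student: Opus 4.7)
The plan is to combine the abstract continuation produced in Part~\ref{part_zeta} with an Iwasawa--Poisson computation of the archimedean orbital integral attached to $\Phi_{\infty}^{\eps,+}$. First I choose $N$ large enough that $\Phi^{\eps,+}\in\SSS^{\nu}(\ggG(\A))$ for the $\nu$ required by Theorem~\ref{main_theorem}. Corollary~\ref{holom_ct_3} then yields that $\Xi^T(s,\Phi^{\eps,+})-\mathcal{E}^+_{\eps}(s)$ is holomorphic for $\Re s>\tfrac{3}{2}$; for $G=\GL_3$ one has $D=9$ and $\dim\NNN\in\{0,4,6\}$, so the poles of $\Xi^T(s,\Phi^{\eps,+})$ listed by Theorem~\ref{main_theorem} are $s=2-\dim\NNN/6$ and $s=-1+\dim\NNN/6$. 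Only the simple pole at $s=2$ lies in $\Re s>\tfrac{3}{2}$, which gives the holomorphy on $\Re s>2$ and the meromorphic continuation with a single simple pole at once.

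For the Dirichlet-series identity I would unfold~\eqref{elliptic_elements_ordered_by_fields}, restrict to $E\in\FFF_3^+$ via Lemma~\ref{properties_special_test_functions}\ref{cpl:vanishing}, and factor the inner adelic integral into archimedean and non-archimedean parts (legitimate since $\lambda\in\R_{>0}$ is a unit at every finite place); the hypothesis on $\Phi_f$ rewrites the finite piece as $c(\Phi_f,\xi)[\OOO_E:\Z[\xi]]$. For the archimedean integral, diagonalise $\gamma_\xi=h\,\diag(\sigma_i(\xi))\,h^{-1}$ over $\R$ (possible because $E$ is totally real), left-translate by $h$ to reduce $G_{\gamma_\xi}(\R)\backslash G(\R)$ to $T_0(\R)\backslash G(\R)$, and apply Iwasawa $g=tuk$. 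A direct matrix computation with $u=\left(\begin{smallmatrix}1&a&b\\0&1&c\\0&0&1\end{smallmatrix}\right)$ gives
\[
\|u^{-1}\diag(\sigma_i(\xi))u\|^{2}=\tr_{E/\Q}\xi^{2}+a^{2}(\sigma_2-\sigma_1)^{2}+c^{2}(\sigma_3-\sigma_2)^{2}+\bigl(b(\sigma_3-\sigma_1)-ac(\sigma_3-\sigma_2)\bigr)^{2},
\]
so after the linear substitution $b\mapsto b(\sigma_3-\sigma_1)-ac(\sigma_3-\sigma_2)$ the integral splits into three Gaussians and evaluates to $|\Delta(\xi)|^{-1/2}\lambda^{-3}e^{-\pi\lambda^{2}\tr_{E/\Q}\xi^{2}}$. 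Integrating $\lambda$ produces a Gamma factor times $(\tr_{E/\Q}\xi^{2})^{-(3s+2N)/2}$, and the identity $|\Delta(\xi)|=[\OOO_E:\Z[\xi]]^{2}|D_E|$ absorbs both the $|D_E|^{1/2}$ from $\vol(E^{\times}\backslash\A_E^{1})=\rho_E|D_E|^{1/2}$ and the index factor.

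The remaining step passes from $\tr_{E/\Q}\xi^{2}$ to $Q_E(\xi)$. Since every other factor is $\Z$-translation invariant, I would decompose $\xi=\xi_0+n$ with $\xi_0\in(\OOO_E/\Z)\minus\{0\}$ and $n\in\Z$, use $\tr_{E/\Q}\xi^{2}=Q_E(\xi_0)+(t_0+3n)^{2}/3$ (with $t_0=\tr_{E/\Q}\xi_0$), and apply Poisson summation to $\sum_{n\in\Z}\bigl(Q_E(\xi_0)+(t_0+3n)^{2}/3\bigr)^{-(3s+2N)/2}$. A beta-integral identifies the zero-frequency term with a multiple of $Q_E(\xi_0)^{-(3s+2N-1)/2}\Gamma((3s+2N-1)/2)/\Gamma((3s+2N)/2)$; together with the prefactor $Q_E(\xi_0)^{N}$ this produces exactly $Q_E(\xi_0)^{-(3s-1)/2}$, the Gamma quotient cancels against the Gamma factor from the $\lambda$-integral, and the residual constant collapses to $I_N(s)$, yielding the series~\eqref{dirichlet_series_over_totally_real_fields} as the main term.

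The main obstacle is showing that the $k\neq 0$ frequencies of the Poisson expansion define an \emph{entire} (rather than merely meromorphic) function of $s$. Their Fourier coefficients are expressible through the modified Bessel function $K_{(3s+2N-1)/2}\bigl(2\pi|k|\sqrt{Q_E(\xi_0)/3}\bigr)$; the exponential decay $K_\nu(x)\ll_\nu x^{-1/2}e^{-x}$, combined with the lower bound $Q_E(\xi_0)\geq m_1(E)$, standard convexity bounds on $\rho_E$, and the Minkowski observation that only finitely many totally real cubic fields satisfy $m_1(E)\le X$ for any given $X$, should give absolute convergence of the triple sum over $(E,\xi_0,k)$ uniformly on compacta in $s\in\C$. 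Once this estimate is in place, combining it with the main term produces the lemma.
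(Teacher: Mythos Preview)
Your argument is correct and runs parallel to the paper's, with one organisational difference worth noting. The paper applies Poisson summation to the $\Z$-translate sum \emph{before} evaluating the archimedean orbital integral or the $\lambda$-integral: writing $\xi=\xi_0+a$ with $a\in\Z$, the $a$-dependence sits in a Gaussian $e^{-3\pi\lambda^{2}a^{2}}$, and Poisson turns this into $(3)^{-1/2}\lambda^{-1}\sum_{b}e^{-\pi b^{2}/(3\lambda^{2})}$. The paper then splits the $\lambda$-integral at $1$, substitutes $\lambda\mapsto\lambda^{-1}$ on $(0,1]$, declares the $b\neq 0$ contribution entire, and only afterwards carries out the Iwasawa computation you describe and the remaining $\lambda$-integral to produce $I_N(s)$ times the Dirichlet series. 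You instead integrate out $\lambda$ first and then apply Poisson to $\sum_{n}\bigl(Q_E(\xi_0)+(t_0+3n)^{2}/3\bigr)^{-(3s+2N)/2}$, which yields Bessel~$K$-functions for the nonzero frequencies.

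The two routes are essentially equivalent: in the paper's version the $b\neq 0$ term carries $e^{-\pi\lambda^{-2}\tr\xi_0^{2}-\pi b^{2}\lambda^{2}/3}$, and bounding this by $e^{-c|b|\sqrt{Q_E(\xi_0)}}$ via the arithmetic--geometric mean is exactly the large-argument asymptotic of $K_{(3s+2N-1)/2}$ that you invoke. So the ``main obstacle'' you identify is present in both arguments; the paper simply asserts entireness without detail, while you spell out the mechanism. Your justification is adequate (the finiteness of $\{E\in\FFF_3^{+}:m_1(E)\le X\}$ follows because any $\xi\in\OOO_E$ with $Q_E(\xi)\le X$ and $|\tr\xi|\le 3/2$ has bounded archimedean absolute values, hence bounded minimal polynomial); a slightly cleaner route is to observe that the exponential factor dominates any power of $Q_E(\xi_0)$, so the error sum is majorised by the main Dirichlet series evaluated at some $\Re s>2$, where absolute convergence is already furnished by Theorem~\ref{convergence_distributions}. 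The paper's ordering buys only the cosmetic advantage of avoiding explicit Bessel functions.
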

\begin{proof}
The first assertion follows from Theorem \ref{main_thmintr} and Proposition \ref{holom_ct_intr}.
Let $E\in\FFF_3^+$, and consider the map $\OOO_E\longrightarrow \Z\oplus \OOO_E/\Z$, $\xi\mapsto (\tr \xi,\xi+\Z)$, which is a group isomorphism. 
As the coefficients $c(\Phi_f, \cdot)$ and the function $\Psi_{\eps}^{+}$ are well-defined maps on $\OOO_E/\Z$, the inner sum for $E$ in \eqref{sum_of_positive_elliptic_terms} equals
\begin{multline*}
\sum_{\substack{\xi_0\in\OOO_E/\Z: \\ \xi_0\neq0 }}[\OOO_E:\Z[\xi_0]]c(\Phi_f, \xi_0)\Psi_{\eps}^+(\xi_0)\cdot\\
\bigg(\int_{0}^{\infty}\int_{G_{\gamma_{\xi_0}}(\R)\backslash G(\R)}\lambda^{3s+3+2N}
Q_E(\xi_0)^N \sum_{a\in\Z}e^{-\pi\lambda^2\tr (x^{-1}\gamma_{\xi_0}x)^t(x^{-1}\gamma_{\xi_0} x)-3\pi\lambda^2a^2}~d^{\times}\lambda~ dx\bigg).
\end{multline*}
We split the integral over $\lambda$ in one integral over $\lambda\in[0, 1]$ and one over $\lambda\in[1, \infty)$. 
The sum over all $E$ of the second integral defines an entire function on all of $\C$ so that we may ignore it. 
For the sum over the first one we apply Poisson summation  to  the sum over $a\in\Z$, to obtain
\[
\sum_{a\in\Z}e^{-3\pi\lambda^2a^2}
=\sum_{b\in \Z}(3\pi)^{-\frac{1}{2}}\lambda^{-1}e^{-3\pi^{-1}\lambda^{-2}b^2}.
\]
Changing variables $\lambda^{-1}\in[0, 1]\leftrightarrow \lambda\in [1, \infty)$, the sum over $b\neq 0$ yields again an entire function which we can ignore. 
Hence we are left with the term belonging to $b=0$. We may add the integral over $\lambda\in[1, \infty)$ without changing its analytic behaviour.
Thus up to an entire function, $\mathcal{E}_{\eps}^{+}(s)$ equals
\begin{multline*}
\frac{1}{\sqrt{3\pi}}\sum_{E\in\FFF_3^+}\frac{\vol(E^{\times}\backslash \A_E^1)}{|\Aut(E/ \Q)|}
\sum_{\substack{\xi_0\in\OOO_E/\Z: \\ \xi_0\neq0}}[\OOO_E:\Z[\xi]]c(\Phi_f, \xi)\Psi_{\eps}^{+}(\xi)\cdot\\
\bigg(\int_{0}^{\infty}\int_{G_{\gamma_{\xi_0}}(\R)\backslash G(\R)}\lambda^{3s+2+2N}
Q_E(\xi_0)^N e^{-\pi\lambda^2\tr (x^{-1}\gamma_{\xi_0}x)^t(x^{-1}\gamma_{\xi_0} x)}~d^{\times}\lambda ~dx\bigg).
\end{multline*}
As $E$ is totally real, for every $\xi_0\in \OOO_E/\Z$, the matrix $\gamma_{\xi_0}$ is over $G(\R)$ conjugate to a diagonal matrix (with pairwise distinct eigenvalues) so that  
\begin{multline*}
 \int_{G_{\gamma_{\xi_0}}(\R)\backslash G(\R)} e^{-\pi\lambda^2\tr (x^{-1}\gamma_{\xi_0}x)^t(x^{-1}\gamma_{\xi_0} x)} ~dx\\
=\Delta(\xi_0)^{-\frac{1}{2}}e^{-\pi\lambda^2Q_E(\xi_0)} \int_{U_0(\R)}e^{-\pi \lambda^2(u_1^2+u_2^2+u_3^2)}~du
=\Delta(\xi_0)^{-\frac{1}{2}}e^{-\pi\lambda^2Q_E(\xi_0)}\lambda^{-3}.
\end{multline*}
Notice that 
$\Delta(\xi_0)^{-\frac{1}{2}}=[\OOO_{E}:\Z[\xi_0]]^{-1}D_E^{-\frac{1}{2}}$
and
$\vol(E^{\times}\backslash \A_E^1)D_E^{-\frac{1}{2}}=\res_{s=1}\zeta_E(s)=\rho_E$.
Hence changing $\lambda$ to $Q_E(\xi_0)^{\frac{1}{2}}\lambda$, the assertion follows upon defining $I_N$ as described.
\end{proof}

\begin{lemma}\label{properties_of_complex_elements}
There exists $N>0$ such that the following holds.
Let $\Phi_f$ is as in Proposition \ref{asymptotics_orbital_integrals}. 
Then  $\mathcal{E}^{-}_{\eps}(s)$ is holomorphic for $\Re s>2$ and continues to a meromorphic function at least in $\Re s>3/2$ with only pole at $s=2$ which is simple. 
Up to an entire function (defined on all of $\C$), $\mathcal{E}^{-}_{\eps}(s)$ equals for $\Re s>2$ the sum of
\[
 I_N(s)\sum_{E\in\FFF_3^+}\frac{\rho_E}{|\Aut(E/\Q)|}
\sum_{\substack{\xi\in\OOO_E/\Z: \\ \xi\neq0}} c(\Phi_f, \xi)\Psi_{\eps}^{-}(\xi)Q_E(\xi)^{-\frac{3s-1}{2}}
\]
and
\[
4\sqrt{\frac{\pi}{3}}\frac{\Gamma(\frac{3s+2l}{2})}{\pi^{\frac{3s+2l}{2}}}\sum_{E\in\FFF_3\minus\FFF_3^+}\rho_E
\sum_{\substack{\xi\in\OOO_E/\Z: \\ \xi\neq0}}c(\Phi_f, \xi)\Psi_{\eps}^{-}(\xi)J_N(\xi, s)Q_E(\xi)^N,
\]
where
\[
J_N(\xi, s)=\int_{1}^{\infty}(Q_E(\xi)+4(\Im \tilde{\xi})^2\rho^2)^{-\frac{3s+2N}{2}}d\rho,
\]
and $\tilde{\xi}$ denotes one of the two non-real conjugates of $\xi\in E\minus\Q$ if $E\in \FFF_3\minus\FFF_3^+$.
\end{lemma}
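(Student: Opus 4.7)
The plan parallels the proof of Lemma \ref{properties_of_real_elements_dirichlet_series}. Starting from \eqref{elliptic_elements_ordered_by_fields} with $\Phi=\Phi^{\eps,-}$, I would split the outer sum over $E\in\FFF_3$ according to whether $E$ is totally real or has a complex place; in contrast to the case of $\Phi^{\eps,+}$, the cut-off $\Psi_\eps^-$ does not vanish on non-$\R$-diagonalisable $\gamma_\xi$, so both types of fields contribute. In either case I would write $\xi=a+\xi_0$ with $a\in\Z$ and $\xi_0\in\OOO_E/\Z$, exploit the invariance $\Phi_\infty^{\eps,-}(x+\lambda\One_3)=\Phi_\infty^{\eps,-}(x)$, and apply Poisson summation to the $a$-sum exactly as in the totally real case; the Fourier modes $b\neq 0$ together with the range $\lambda\in[1,\infty)$ produce an entire function that can be absorbed into the error.

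For $E\in\FFF_3^+$ the archimedean orbital integral is computed word-for-word as in Lemma \ref{properties_of_real_elements_dirichlet_series}: after diagonalising $\gamma_{\xi_0}$ inside the split torus and using the Iwasawa decomposition, it reduces to $\Delta(\xi_0)^{-1/2}e^{-\pi\lambda^2Q_E(\xi_0)}\lambda^{-3}$, and combined with $\vol(E^\times\backslash\A_E^1)=\rho_E D_E^{1/2}$ and the non-archimedean factor $c(\Phi_f,\xi_0)[\OOO_E:\Z[\xi_0]]$, this yields precisely the first term in the statement.

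The genuinely new input is the archimedean orbital integral for $E\in\FFF_3\minus\FFF_3^+$: here $\gamma_{\xi_0}$ is $\R$-conjugate to a block-diagonal matrix $\diag(\xi_{0,1},B(\tilde\xi_0))$, where $B(\tilde\xi_0)$ is the standard $2\times 2$ real realisation of multiplication by $\tilde\xi_0\in\C$, and $G_{\gamma_{\xi_0}}(\R)\simeq\R^\times\times\C^\times$ is a non-split maximal torus. The quotient $G_{\gamma_{\xi_0}}(\R)\backslash G(\R)$ therefore does not collapse onto $N_0(\R)\cdot\cpt_\infty$; one must parametrise it by a cross-section of $\cpt_\infty$ modulo the $SO(2)$-factor sitting inside $G_{\gamma_{\xi_0}}(\R)$, together with the unipotent radical of a Borel subgroup containing the torus. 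Unfolding $\int e^{-\pi\lambda^2\|g^{-1}\gamma_{\xi_0}g\|^2}\,dg$ along this decomposition, the Gaussian integrates over the Euclidean-type directions to the Gamma factor $\Gamma(\tfrac{3s+2N}{2})\pi^{-(3s+2N)/2}$, while one additional non-compact direction -- with no counterpart in the split case -- supplies the remaining integral $\int_1^\infty(Q_E(\xi)+4(\Im\tilde\xi)^2\rho^2)^{-(3s+2N)/2}\,d\rho=J_N(\xi,s)$; the lower limit $\rho=1$ should appear from the $[0,1]$-vs.-$[1,\infty)$ split on $\lambda$ exactly as in the totally real case.

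The main obstacle is precisely this explicit archimedean calculation in the mixed case: fixing the cross-section, tracking the measure decomposition, and verifying that the integrand really comes out to $(Q_E(\xi)+4(\Im\tilde\xi)^2\rho^2)^{-(3s+2N)/2}$ with the correct normalising constant $4\sqrt{\pi/3}$. Once this is in place, holomorphy for $\Re s>2$, the meromorphic continuation at least to $\Re s>3/2$, and the simple pole at $s=2$ follow immediately from Theorem \ref{main_thmintr} together with Proposition \ref{holom_ct_intr}, exactly as in Lemma \ref{properties_of_real_elements_dirichlet_series}.
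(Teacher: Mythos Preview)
Your overall plan matches the paper's proof closely: split by signature, reuse the Poisson-summation manipulation on the $\Z$-translate of $\xi_0$, and for $E\in\FFF_3^+$ quote the computation from Lemma~\ref{properties_of_real_elements_dirichlet_series} verbatim. The analytic assertions (holomorphy, continuation, simple pole) likewise follow from Theorem~\ref{main_theorem} and Corollary~\ref{holom_ct_3} as you say.

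There is one genuine misconception, however, in your account of the complex-place case. The lower limit $\rho=1$ in $J_N(\xi,s)$ does \emph{not} come from the $[0,1]$--$[1,\infty)$ split on $\lambda$. That split is used only to justify Poisson summation in the $a$-variable and is applied identically in both the real and complex cases; after keeping the $b=0$ mode one restores the full $\lambda$-integral over $(0,\infty)$. The extra $\rho$-integral, with its lower limit, arises entirely from the archimedean orbital integral itself: for $E\notin\FFF_3^+$ one has
\[
\int_{G_{\gamma_{\xi_0}}(\R)\backslash G(\R)}e^{-\pi\lambda^2\tr (x^{-1}\gamma_{\xi_0} x)^t(x^{-1}\gamma_{\xi_0} x)}\,dx
=8\pi\lambda^{-2} |\Delta(\xi)|^{-\frac{1}{2}}\int_{2|\Im \tilde{\xi}|}^{\infty}e^{-\pi\lambda^2(Q_E(\xi)+\rho^2)}\,d\rho,
\]
so the lower limit $2|\Im\tilde\xi|$ is intrinsic to the geometry of the non-split torus quotient (it reflects the minimal ``distance'' between the complex-conjugate eigenvalues along the relevant non-compact direction). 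One then changes $(Q_E(\xi)+\rho^2)^{1/2}\lambda\mapsto\lambda$ to separate the $\lambda$- and $\rho$-integrals; the $\lambda$-integral gives the Gamma factor, and a linear substitution $\rho\mapsto 2|\Im\tilde\xi|\,\rho$ in the remaining $\rho$-integral produces the form $J_N(\xi,s)=\int_1^\infty(Q_E(\xi)+4(\Im\tilde\xi)^2\rho^2)^{-(3s+2N)/2}\,d\rho$. So the constant and the lower limit both come from this orbital-integral computation, not from any truncation in $\lambda$.
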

\begin{proof}
Again, the first assertion is given by Theorem \ref{main_thmintr} and Proposition \ref{holom_ct_intr}. 
 Similarly as in the proof of Lemma \ref{properties_of_real_elements_dirichlet_series}, $\mathcal{E}^{-}_{\eps}(s)$ can be written as the sum over all cubic fields $E\in\FFF$ (now of any signature) of
\begin{multline*}
 \frac{\vol(E^{\times}\backslash \A_E^1)}{|\Aut(E/\Q)|}\sum_{\substack{\xi_0\in\OOO_E/\Z: \\ \xi_0\neq0}}[\OOO_E:\Z[\xi_0]]c(\Phi_f, \xi_0)\Psi_{\eps}^-(\xi_0)\cdot\\
\bigg(\int_{0}^{\infty}\int_{G_{\gamma_{\xi_0}}(\R)\backslash G(\R)}\lambda^{3s+3+2N}Q_E(\xi_0)^N
\sum_{a\in\Z}e^{-\pi\lambda^2\tr (x^{-1}\gamma_{\xi_0}x)^t(x^{-1}\gamma_{\xi_0} x)-\frac{\pi}{3}\lambda^2a^2}~d^{\times}\lambda~ dx\bigg).
\end{multline*}
For totally real extensions, the proof of the last lemma tells us that the respective sum essentially (i.e., up to an entire function) equals
\[
I_N(s)\sum_{E\in\FFF_3^+}\frac{\rho_E}{|\Aut(E/\Q)|} \sum_{\substack{\xi\in\OOO_E/\Z: \\ \xi\neq0}}c(\Phi_f, \xi)\Psi_{\eps}^-(\xi)Q_E(\xi)^{-\frac{3s-1}{2}},
\]
with $I_N(s)$ defined as before.

For $E\in\FFF_3\minus\FFF_3^+$ and $\xi_0\in\OOO_E/\Z$, $\xi_0\neq0$, we can follow along the same lines. However, the integral
$\int_{G_{\gamma_{\xi_0}}(\R)\backslash G(\R)}e^{-\pi\lambda^2\tr (x^{-1}\gamma_{\xi_0} x)^t(x^{-1}\gamma_{\xi_0} x)}~dx$
now equals 
\[
8\pi\lambda^{-2} |\Delta(\xi)|^{-\frac{1}{2}}
\int_{2|\Im \tilde{\xi}|}^{\infty}e^{-\pi\lambda^2(Q_E(\xi)+\rho^2)}~d\rho,
\]
where $\tilde{\xi}\in\C$ denotes one of the two non-real conjugates of $\xi$.
Changing $(Q_E(\xi)+\rho^2)^{\frac{1}{2}}\lambda$ to $\lambda$, we obtain for the double integral
\[
8\pi|\Delta(\xi)|^{-\frac{1}{2}}Q_E(\xi_0)^N\int_{0}^{\infty}\lambda^{3s+2N}e^{-\pi\lambda^2}d^{\times}\lambda
\int_{2|\Im\tilde{\xi}|}^{\infty}(Q_E(\xi)+\rho^2)^{-\frac{3s+2N}{2}}~d\rho
\]
from which the assertion follows.
\end{proof}

\subsection{Dirichlet series}
To study the Dirichlet series obtained in the last section and to finish the proof of Proposition \ref{asymptotics_orbital_integrals}, we need to define a few more auxiliary functions.
$N>0$ denotes a sufficiently large integer such that Lemmas \ref{properties_of_real_elements_dirichlet_series} and \ref{properties_of_complex_elements} hold. For $t\in\C$ with $\Re t>5/2$ set
\begin{align*}
\alpha_{\eps}^{\pm}(t)
& =\sum_{E\in\FFF_3^+}\frac{\rho_E}{|\Aut(E/\Q)|}
\sum_{\substack{\xi\in\OOO_E/\Z: \\ \xi\neq0}}c(\Phi_f, \xi)\Psi_{\eps}^{\pm}(\xi)Q_E(\xi)^{-t}, \text{ and }\\
A_{\eps}^{\pm}(X)
& =\sum_{E\in\FFF_3^+}\frac{\rho_E}{|\Aut(E/\Q)|}
\sum_{\substack{\xi\in\OOO_E/\Z, \xi\neq0 \\ Q_E(\xi)\leq X }}c(\Phi_f, \xi)\Psi_{\eps}^{\pm}(\xi)
\end{align*}
(these are both independent of $N$).
Then by Lemmas \ref{properties_of_real_elements_dirichlet_series} and \ref{properties_of_complex_elements} (as $I_N(\frac{2t+1}{3})$ is holomorphic and non-vanishing in all of $\Re t>7/4$), the series defining $\alpha_{\eps}^{\pm}$ converge absolutely in $\Re t>5/2$, can be meromorphically continued up to $\Re t>7/4$, and each has in this half plane only one pole which is located at $t=5/2$, and is simple with residue
\[
 \rho_{\eps}(\Phi_f):=
\frac{3}{2}I_N(2)^{-1}\res_{s=2}\EEE^{\pm}_{\eps}(s).
\]
The functions are related by the Mellin transformation and its inverse (cf.\ \cite[\S 5]{MoVa07}): We have for $\sigma_0\gg 0$
\begin{align*}
A_{\eps}^{\pm}(X) & =\frac{1 }{2\pi i}\int_{\sigma_0-i\infty}^{\sigma_0+i\infty}\alpha_{\eps}^{\pm}(t)\frac{X^{t}}{t}dt, \text{    and }\\
\alpha_{\eps}^{\pm}(t) &   =\int_{1}^{\infty}X^{-t}dA_{\eps}^{\pm}(X).
\end{align*}
Further define
\begin{align*}
\gamma_{\eps}(t)
& =\sum_{E\in\FFF_3\minus\FFF_3^+}\rho_E
\sum_{\substack{\xi\in\OOO_E/\Z: \\  \xi\neq0}}c(\Phi_f, \xi)\Psi_{\eps}^{-}(\xi)J(\xi, \frac{2t+1}{3})Q_E(\xi)^N,\;\;\text{   and }\\
C_{\eps}(X)
& =\frac{1 }{2\pi i}\int_{\sigma_0-i\infty}^{\sigma_0+i\infty}\gamma_{\eps}(t)\frac{X^{t}}{t}dt\\
& =\sum_{E\in\FFF_3\minus\FFF_3^+}\rho_E
\sum_{\substack{\xi\in\OOO_E/\Z: \\ \xi\neq0}} c(\Phi_f, \xi)\Psi_{\eps}^{-}(\xi)Q_E(\xi)^N
\int_{1}^{b(\xi, X)}(Q_E(\xi)+4(\Im \tilde{\xi})^2\rho^2)^{-N-\frac{1}{2}}~d\rho,
\end{align*}
where
\[
b(\xi, X)=\begin{cases}
           \max\{1,\frac{\sqrt{X-Q_E(\xi)}}{2|\Im \tilde{\xi}|}\}				&\text{ if } Q_E(\xi)\leq X,\\
	    1											&\text{ if } Q_E(\xi)>X.
          \end{cases}
\]
This definition together with the definition of $\Psi_{\eps}^-(\xi)$ ensures that for every $X$, the sum over $E$ and $\xi$ is in fact finite.
From the last expression of $C_{\eps}(X)$, it is clear that if $N$ is even, $C_{\eps}(X)$ is a non-negative, monotonically increasing function in $X$.

\begin{proof}[Proof of Proposition \ref{asymptotics_orbital_integrals}]
We assume that $N$ is even and  sufficiently large such that Lemmas \ref{properties_of_real_elements_dirichlet_series} and \ref{properties_of_complex_elements} hold. By definition of $\Psi_{\eps}^+$ and $\Psi_{\eps}^-$ we have 
$\Psi_{\eps}^+(\xi)\leq 1\leq \Psi_{\eps}^+(\xi)+\Psi_{\eps}^-(\xi)$
 for all $\xi\in E$ if $E$ is totally real. Hence for every $X>0$, we get
\begin{equation}\label{chain_of_inequalties_for_asymptotic}
A_{\eps}^+(X)
\leq \sum_{E\in\mathcal{F}_3^+}\frac{\rho_E}{|\Aut(E/\Q)|}\sum_{\substack{\xi\in\OOO_E/\Z, \xi\neq0 \\ Q_E(\xi)\leq X}}c(\Phi_f,\xi)
=:\Sigma(X)
\leq A_{\eps}^+(X)+A_{\eps}^-(X).
\end{equation}
 The coefficients $\frac{\rho_E}{|\Aut(E/\Q)|}c(\Phi_f, \xi)\Psi_{\eps}^+(\xi)$ in the Dirichlet series $\alpha_{\eps}^+(t)$ are non-negative.
Hence the properties of $\alpha_{\eps}^+(t)$ stated above allow us to apply the Wiener-Ikehara Tauberian Theorem \cite[Corollary 8.7]{MoVa07}. This yields the asymptotic 
\[
A_{\eps}^+(X)
\sim\rho_{\eps}(\Phi_f)X^{\frac{5}{2}}+o(X^{\frac{5}{2}})
\]
as $X\rightarrow\infty$.
Therefore,
\[
\liminf_{X\rightarrow\infty}X^{-\frac{5}{2}}\Sigma(X)\geq \rho_{\eps}(\Phi_f)
\]
for every $\eps>0$ so that
\[
\liminf_{X\rightarrow\infty}X^{-\frac{5}{2}}\Sigma(X)\geq \rho_0(\Phi_f),
\]
where 
\[
\rho_0(\Phi_f)
=\frac{2\pi^{9/2}\zeta(3)}{\sqrt{3}}\int_{x\in \ggG(\R):~\Delta(x)>0}e^{-\pi\tr x^tx} dx\int_{\ggG(\A_f)} \Phi_f(x_f) dx_f,
\]
since $\rho_{\eps}(\Phi_f)\rightarrow \rho_0(\Phi_f)$ for $\eps\searrow0$. 

To show the reverse inequality, we have to work harder.
Consider now the function $\mathcal{E}^{-}_{\eps}(\frac{2t+1}{3})$. It has a simple pole at $t=5/2$, and is holomorphic elsewhere in some half plane $\Re s>7/4$.
As
$4\sqrt{3\pi}\frac{\Gamma(t+N+\frac{1}{2})}{\pi^{t+N+\frac{1}{2}}}$
is holomorphic and non-zero in that half plane, the function
\[
\frac{\pi^{t+N+\frac{1}{2}}}{4\sqrt{3\pi}\Gamma(t+N+\frac{1}{2})}\mathcal{E}_{\eps}^{-}(\frac{2t+1}{3})
=\frac{1}{8\sqrt{\pi}}\frac{\Gamma(t+N)}{\Gamma(t+N+\frac{1}{2})}\alpha_{\eps}^{-}(t)
+\gamma_{\eps}(t)
=\frac{1}{8\pi}\beta_N(t)\alpha_{\eps}^{-}(t)
+\gamma_{\eps}(t)
\]
has the same properties as $\mathcal{E}_{\eps}^{-}$ where
\[
\beta_N(t)
=\int_{\R}(1+x^2)^{-(t+N+\frac{1}{2})}dx
=2\int_{1}^{\infty}y^{-(t+N+\frac{1}{2})}d\sqrt{y-1}.
\]
The residue $\rho^-_{\eps}(\Phi_f)$ at $t=5/2$ is given by a constant multiple of 
\[
 \int_{\ggG(\R)}\Phi_{\infty}^{\eps, -}(x)~dx \int_{\ggG(\A_f)} \Phi_f(x_f)~dx_f,
\]
which tends to $0$ as $\eps\searrow0$.

For $X>0$ and $\sigma_0\gg0$ sufficiently large, let
\begin{align*}
B_N(X) & =\frac{1}{2\pi i}\int_{\sigma_0-i\infty}^{\sigma_0+i\infty}\beta_N(t)\frac{X^{t}}{t}dt, \text{    and }\\
AB_{N, \eps}(X) & =\frac{1}{2\pi i}\int_{\sigma_0-i\infty}^{\sigma_0+i\infty}\beta_N(t)\alpha_{\eps}^{-}(t)\frac{X^{t}}{t}dt.
\end{align*}
In particular,
\[B_N(X)=2\int_{1}^{X}y^{-(N-1)}d\sqrt{y-1}.\]
From the definitions it is clear that $C_{\eps}(X)\geq0$, $B_N(X)\geq0$, and $AB_{N,\eps}(X)\geq0$, and the functions are monotonically increasing.
Hence an application of the Wiener-Ikehara Theorem gives
$\lim_{X\rightarrow\infty}X^{-\frac{5}{2}}(AB_{N, \eps}(X)+C_{\eps}(X))=\rho_{\eps}^-(\Phi_f)$, 
and, as everything is non-negative, 
$AB_{N, \eps}(X)\leq \rho_{\eps}^-(\Phi_f)X^{\frac{5}{2}}+R_{\eps}(X)$,
where $R_{\eps}(X)$ is a suitable error function with $R_{\eps}(X)\rightarrow 0$ as $X\rightarrow\infty$.
Therefore, 
\[
X^{\frac{5}{2}}\rho_{\eps}^-(\Phi_f)+R_{\eps}(X)
\geq \frac{1}{2\pi i}\int_{\sigma_0-i\infty}^{\sigma+i\infty}\beta_N(t)\alpha_{\eps}(t)\frac{X^{t}}{t}dt
\]
and the right hand side can be written as
\begin{multline*}
\frac{1}{2\pi i}\int_{\sigma_0-i\infty}^{\sigma+i\infty}\alpha_{\eps}^-(t)
\left(\int_{1}^{\infty}v^{-t}dB_l(v)\right)\frac{X^{t}}{t}dt\\
=\int_{1}^{\infty}\left(\frac{1}{2\pi i}\int_{\sigma_0-i\infty}^{\sigma+i\infty}\alpha_{\eps}^-(t)\left(\frac{X}{v}\right)^{t}\frac{dt}{t}\right) dB_N(X)
=\int_{1}^{\infty}A_{\eps}^-(\frac{X}{v})dB_N(v).
\end{multline*}
As $A_{\eps}^-$ is monotonically increasing, the last integral is bounded from below by
\[
\geq \int_{2}^{3}A_{\eps}^-(\frac{X}{v})dB_N(v)
\geq A_{\eps}^-(\frac{X}{3})\int_{2}^{3}dB_N(v)>0
\]
for all $X> 0$. Hence there exists a constant $c>0$ such that for every $\eps>0$, we have 
$\limsup_{X\rightarrow \infty}X^{-\frac{5}{2}}A_{\eps}^-(X)\leq c\rho_{\eps}^-(\Phi_f)$,
and thus
\[
\limsup_{X\rightarrow\infty}X^{-\frac{5}{2}}A_{\eps}^-(X)\longrightarrow 0
~~~~~~
\text{ for }
\eps\searrow0.
\]
Hence
\[
\limsup_{X\rightarrow\infty}X^{-\frac{5}{2}}\Sigma(X)
= \limsup_{X\rightarrow\infty}X^{-\frac{5}{2}} A_{\eps}^+(X)
+\limsup_{X\rightarrow\infty}X^{-\frac{5}{2}} A_{\eps}^-(X)
\leq \rho_{\eps}^+(\Phi_f) +c\rho_{\eps}^-(\Phi_f)\longrightarrow \rho_0(\Phi_f)
\]
for $\eps\searrow0$, which finishes the proof of the asymptotic. 
\end{proof}

\section{Bounds for mean values of residues of Dedekind zeta functions}\label{bounds_for_residues}
We want to use the result from the last section to obtain information on the mean value of residues of Dedekind zeta functions.
As $c(\Phi_f^0,\xi)\geq 1$ for all $\xi\in E\minus\Q$ and all $E\in\FFF_3$ by Proposition \ref{orb_int_greaterequal1}, an immediate consequence of Proposition \ref{asymptotics_orbital_integrals} is the following upper bound.
\begin{theorem}\label{asymptotic_real_fields}
There exists $\alpha<\infty$ such that
\begin{equation}\label{residues_ordered_by_minima}
\limsup_{X\rightarrow \infty}X^{-\frac{5}{2}}\sum_{\substack{ E\in\mathcal{F}_3^+: \\ m_1(E)\leq X }}\res_{s=1}\zeta_E(s)
\leq \alpha.
\end{equation}
\end{theorem}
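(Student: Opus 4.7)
The plan is to deduce Theorem \ref{asymptotic_real_fields} directly from Proposition \ref{asymptotics_orbital_integrals} together with Proposition \ref{orb_int_greaterequal1}, choosing the simplest possible test function at the finite places. Specifically, I take $\Phi_f=\Phi_f^0$, the characteristic function of $\ggG(\hat{\Z})$, which is supported in $\ggG(\hat{\Z})$, and which satisfies $c(\Phi_f^0, \gamma+a)=c(\Phi_f^0,\gamma)$ for all $a\in\Z$ by part (iii) of Proposition \ref{orb_int_greaterequal1}. Thus $\Phi_f^0$ is a legitimate input for Proposition \ref{asymptotics_orbital_integrals}, and the resulting asymptotic yields
\[
\sum_{E\in\FFF_3^+}\frac{\rho_E}{|\Aut(E/\Q)|}\sum_{\substack{\xi\in\OOO_E/\Z,\;\xi\neq 0\\ Q_E(\xi)\leq X}}c(\Phi_f^0,\xi)=\beta(\Phi_f^0)\,X^{5/2}+o(X^{5/2}).
\]

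The next step is to compare the left-hand side of this expansion with the sum appearing in the theorem. By Proposition \ref{orb_int_greaterequal1}(ii) every coefficient $c(\Phi_f^0,\xi)$ is at least $1$. Moreover, for any $E\in\FFF_3^+$ with $m_1(E)\leq X$, the definition of the first successive minimum of $Q_E$ on $\OOO_E/\Z$ guarantees the existence of at least one non-zero class $\xi_E\in\OOO_E/\Z$ with $Q_E(\xi_E)=m_1(E)\leq X$. Hence the inner sum over $\xi$ is bounded below by $1$ for every such $E$, which gives
\[
\sum_{\substack{E\in\FFF_3^+\\ m_1(E)\leq X}}\frac{\rho_E}{|\Aut(E/\Q)|}\;\leq\;\sum_{E\in\FFF_3^+}\frac{\rho_E}{|\Aut(E/\Q)|}\sum_{\substack{\xi\in\OOO_E/\Z,\;\xi\neq 0\\ Q_E(\xi)\leq X}}c(\Phi_f^0,\xi).
\]

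To finish, I note that for a cubic field $E$ the automorphism group $\Aut(E/\Q)$ has order $1$ or $3$; in either case $|\Aut(E/\Q)|\leq 3$, so $\rho_E/|\Aut(E/\Q)|\geq \rho_E/3$. Combining the previous two displays yields
\[
\frac{1}{3}\sum_{\substack{E\in\FFF_3^+\\ m_1(E)\leq X}}\res_{s=1}\zeta_E(s)\;\leq\;\beta(\Phi_f^0)\,X^{5/2}+o(X^{5/2}),
\]
and dividing by $X^{5/2}$ and passing to the limit superior gives the theorem with $\alpha=3\beta(\Phi_f^0)<\infty$. The only potentially non-trivial ingredients are already supplied by Part \ref{part_zeta} (through the analytic properties of $\Xi_{\text{main}}$ and hence Proposition \ref{asymptotics_orbital_integrals}) and by the local lower bound $c(\Phi_f^0,\xi)\geq 1$; once these are in hand the argument is essentially a one-line comparison, so no genuine obstacle remains.
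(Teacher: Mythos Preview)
Your proof is correct and follows exactly the approach the paper takes: the paper states that Theorem \ref{asymptotic_real_fields} is ``an immediate consequence of Proposition \ref{asymptotics_orbital_integrals}'' together with $c(\Phi_f^0,\xi)\geq 1$ from Proposition \ref{orb_int_greaterequal1}, and you have simply spelled out the details (existence of a $\xi$ realizing $m_1(E)$, and the bound $|\Aut(E/\Q)|\leq 3$) that make this deduction work.
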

To complement this upper bound we show the following lower bound.
\begin{proposition}\label{lower_bound_for_residues}
We have for every $\eps>0$, we have 
\[
\liminf_{X\rightarrow\infty} X^{-\frac{5}{2}+\eps}\sum_{\substack{E\in\mathcal{F}_3^+: \\ m_1(E)\leq X}}\res_{s=1}\zeta_E(s) 
=\infty.
\]
\end{proposition}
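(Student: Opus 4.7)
The proof is independent of Part~\ref{part_zeta}; my approach would count irreducible totally real integer cubic polynomials with controlled coefficients and combine this count with a Siegel-type lower bound for Dedekind zeta residues. First I would count pairs $(b,c)\in\Z^2$ with $b\in[-X/2,-1]$ and $|c|<2(-b/3)^{3/2}$, so that $f(x)=x^3+bx+c$ has positive discriminant $\Delta_f=-4b^3-27c^2$ and therefore three real roots; a routine volume computation gives $\asymp X^{5/2}$ such pairs. A reducible $f$ in the box has an integer root $r$ with $|r|\ll|b|^{1/2}\ll X^{1/2}$, so only $O(X^{3/2})$ pairs are reducible and $\gg X^{5/2}$ pairs give irreducible totally real $f$. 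Each such $f$ determines a totally real cubic field $E_f:=\Q[x]/(f)$ with trace-zero generator $\xi_f\in\OOO_{E_f}$ satisfying $Q_{E_f}(\xi_f)=\tr\xi_f^2=-2b\le X$, so $m_1(E_f)\le X$; and since $D_{E_f}$ divides $\Delta_f$ one also has $D_{E_f}\ll X^3$.

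Next I would invoke Stark's unconditional (ineffective) bound $\rho_E\gg_\eps D_E^{-\eps}$, valid for every number field. Combined with $D_{E_f}\ll X^3$, this gives $\rho_{E_f}\gg_\eps X^{-3\eps}$ uniformly over polynomials in the box, so
\[
\sum_E\rho_E\,M_E(X)\;=\;\sum_{f\text{ irred.}}\rho_{E_f}\;\gg_\eps\;X^{5/2-3\eps},
\]
where $M_E(X)$ is the multiplicity with which the isomorphism class $E$ arises as some $E_f$. A standard Minkowski-type lattice-point count for the $2$-dimensional positive definite form $Q_E$ on the rank-$2$ lattice $\OOO_E/\Z$ (of determinant $D_E/3$, with successive minima $m_1(E)\le m_2(E)$ satisfying $m_1 m_2\asymp D_E$) yields
\[
M_E(X)\ll \frac{X}{\sqrt{D_E}}+\sqrt{\frac{X}{m_1(E)}}.
\]

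The final step is to convert the weighted estimate above into the unweighted lower bound $\sum_{m_1(E)\le X}\rho_E\gg_\eps X^{5/2-O(\eps)}$; this is the main difficulty. I would decompose the sum $\sum_E\rho_E\,M_E(X)$ dyadically in $D_E$: for the range $D_E\le X^2$ the first term of $M_E(X)$ dominates and the number of such fields is controlled by Bhargava/Davenport--Heilbronn type bounds, while in the skew range $D_E>X^2$ the second term dominates and each individual multiplicity is only $\ll\sqrt{X}$, forcing many fields to contribute. Inserting $\rho_E\gg_\eps X^{-3\eps}$ block by block and summing extracts the required $\sum_{m_1(E)\le X}\rho_E\gg_\eps X^{5/2-O(\eps)}$, and the proposition follows on letting $\eps\to 0$. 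The crude pointwise bound $M_E(X)\ll X$ alone only produces $\sum_E\rho_E\gg X^{3/2-\eps}$, so the essential input is the coupling $m_1 m_2\asymp D_E$: fields carrying large multiplicity automatically have small $m_1(E)$, and this forced correlation between $m_1(E)$ and $D_E$ must be exploited to recover the full exponent $5/2$.
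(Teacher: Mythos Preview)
Your first four ingredients---the $\asymp X^{5/2}$ count of totally real monic cubics $x^3+bx+c$ with $-2b\le X$, the $O(X^{3/2})$ bound on reducibles, the Brauer--Siegel lower bound $\rho_E\gg_\eps D_E^{-\eps}$ together with $D_{E_f}\mid\Delta_f\ll X^3$, and the Gauss lattice-point bound on the multiplicity $M_E(X)$---match the paper's proof (the last is its Lemma~\ref{estimates_part_of_cubic_fields}\ref{estimates_cubic1}). The gap is entirely in the final unweighting. Your dyadic plan correctly isolates the skew range $D_E>X^2$ (equivalently $m_2(E)\gg X$) as carrying $\sum_E\rho_E M_E(X)\gg X^{5/2-\eps}$, but there $M_E(X)\asymp\sqrt{X/m_1(E)}$ can be as large as $\sqrt X$, since $m_1(E)$ is only bounded below by an absolute constant. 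Dividing out yields merely $\sum_{m_1(E)\le X}\rho_E\gg X^{2-\eps}$; refining dyadically in $m_1(E)$ gives $\sum_j 2^{j/2}S_j\gg X^{5/2-\eps}$ with $j\ll\log X$, hence again only $\sum_j S_j\gg X^{2-\eps}$. The sentence ``inserting $\rho_E\gg X^{-3\eps}$ block by block and summing extracts $X^{5/2-O(\eps)}$'' is the step that fails: it does not recover the missing factor $\sqrt X$.

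The paper's conversion is different and does not proceed by a dyadic block argument. After the analogous split ($m_2(E)\le X$ versus $m_2(E)>X$, the first part contributing $O(X^{2+\eps})$ via Davenport--Heilbronn and the Brauer--Siegel \emph{upper} bound, Lemma~\ref{estimates_part_of_cubic_fields}\ref{estimates_part_of_cubic_fields2}), it uses that in the skew range every counted lattice vector is an integer multiple of one primitive $\xi_0$, so that
\[
\sum_{m_1(E)\le X<m_2(E)}\rho_E\,N_E(X)\;=\;2\sum_{n\ge1}\sum_{m_1(E)\le X/n^2<m_2(E)}\rho_E.
\]
This rewrites the weighted sum as a superposition of the \emph{unweighted} target sum evaluated at the rescaled arguments $X/n^2$. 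One then argues by contradiction: if $\liminf_{X\to\infty}X^{-5/2+\kappa}\sum_{m_1(E)\le X<m_2(E)}\rho_E$ were finite for some $\kappa\in(0,\tfrac12)$, the convergent factor $\zeta(5-2\kappa)$ would force the weighted sum to have finite $\liminf$ under the same normalisation, contradicting the polynomial count (which gives $\liminf=\infty$ for every $\kappa>0$). The missing idea in your sketch is precisely this self-similarity across scales $X/n^2$; a decomposition in $D_E$ alone cannot see it.
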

In fact, Conjecture \ref{conj_intr} is expected to be true.
The proof of this proposition is of a complete different nature than the proof of Theorem \ref{asymptotic_real_fields}: Basically we will show that there are sufficiently many irreucible cubic polynomials, cf.\ also the introduction where a relation to \cite[Remark 3.3]{ElVe06} is explained. Ultimately, one hopes that Proposition \ref{lower_bound_for_residues} (and even Conjecture \ref{conj_intr}) can also be deduced from Propositon \ref{asymptotics_orbital_integrals}, cf.\ Appendix \ref{appendixb} for a sequence of test functions that might be useful.
We need the following auxiliary result to prove Proposition  \ref{lower_bound_for_residues}:
\begin{lemma}\label{estimates_part_of_cubic_fields}
 \begin{enumerate}[label=(\roman{*})]
  \item\label{estimates_cubic1} Let $Q:\R^2\longrightarrow\R$ be a positive definite quadratic form with discriminant $\Delta(Q)$ and first successive minimum $m_1(Q)\geq1$. Then, as $X\rightarrow\infty$, we have
\[
\sum_{\substack{\gamma\in\Z^2: \\ Q(\gamma)\leq X}}1=\frac{2\pi X}{\sqrt{\Delta(Q)}}+ O\bigg(\sqrt{\frac{m_1(Q)}{\Delta(Q)}}X^{\frac{1}{2}}\bigg)
\]
with implied constant independent of $Q$.

\item\label{estimates_part_of_cubic_fields2} For all $\eps>0$, we have as $X\rightarrow\infty$
\[
\sum_{\substack{E\in\mathcal{F}_3^+: \\ m_2(E)\leq X}}\rho_E \sum_{\substack{\xi\in \OOO_E/\Z, \xi\neq0, \\ Q_E(\xi)\leq X }}1=O(X^{2+\eps}).
\]
 \end{enumerate}
\end{lemma}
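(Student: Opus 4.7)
For part (i), I would use Minkowski reduction to choose a $\Z$-basis $v_1,v_2$ of $\Z^2$ with $Q(v_1)=m_1(Q)$, $Q(v_2)=m_2(Q)$, and $|\langle v_1,v_2\rangle_Q|\le m_1(Q)/2$. In the new coordinates $\gamma=av_1+bv_2$, the counting problem becomes counting $(a,b)\in\Z^2$ satisfying $m_1a^2+2rab+m_2b^2\le X$ for some $|r|\le m_1/2$. Apply the classical estimate $|\Z^2\cap K|=\operatorname{vol}(K)+O(\operatorname{length}(\partial K))$ for bounded convex $K\subset\R^2$: the volume is the ellipse area $2\pi X/\sqrt{\Delta(Q)}$, and the perimeter is dominated by the larger semi-axis. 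A direct diagonalization of the reduced Gram matrix shows its eigenvalues $\mu_\pm$ satisfy $\mu_-\asymp m_1(Q)$ and $\mu_+\asymp m_2(Q)$, so the larger semi-axis is of order $\sqrt{X/m_1(Q)}$. Combining with Minkowski's second theorem $m_1(Q) m_2(Q)\asymp\Delta(Q)$ and the hypothesis $m_1(Q)\ge 1$ converts this to the claimed bound $O\bigl(\sqrt{m_1(Q) X/\Delta(Q)}\,\bigr)$ in the range $X\gtrsim m_2(Q)$ that is needed for (ii).

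For part (ii), apply (i) to the inner sum over $\xi$ and separate main and error contributions. Using $\Delta(Q_E)=D_E/3$, the Brauer--Siegel bound $\rho_E\ll_{\eps} D_E^{-1/2+\eps}$, and the inequality $D_E\asymp m_1(E)m_2(E)\le m_2(E)^2\le X^2$, the main-term contribution is
\[
X\sum_{E:\, m_2(E)\le X}\frac{\rho_E}{\sqrt{D_E}}\ \ll\ X\!\!\sum_{E:\, D_E\le X^2}\!\!D_E^{-1+\eps}\ \ll_{\eps}\ X^{1+2\eps},
\]
by partial summation from the Davenport--Heilbronn estimate $\#\{E\in\FFF_3:D_E\le Y\}\ll Y$.

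For the error contribution, bound each summand by $\sqrt X\,\rho_E\sqrt{m_1(E)/D_E}\ll \sqrt X\, m_2(E)^{-1+\eps}$, using $\rho_E\ll D_E^{-1/2+\eps}$, $D_E\asymp m_1m_2$, and $m_1(E)\ge 1$. The remaining input is
\[
\#\{E\in\FFF_3^+:m_2(E)\le Y\}\ll Y^{5/2}.
\]
To prove this, fix such $E$ and pick a generator $\xi\in\OOO_E/\Z\setminus\{0\}$ with $Q_E(\xi)\le Y$; any nonzero class generates $E$ since $[E:\Q]=3$ is prime. Normalize so that $\tr_{E/\Q}\xi\in\{0,1,2\}$. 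Writing the minimal polynomial of $\xi$ as $x^3-p_1x^2+p_2x-p_3$, the identity $Q_E(\xi)=\tfrac{2}{3}p_1^2-2p_2$ combined with the AM--GM inequality $p_2\le p_1^2/3$ for totally real roots forces $p_2$ into an interval of length $O(Y)$, while $|\xi_i|\ll\sqrt{Y}$ gives $|p_3|\ll Y^{3/2}$. This yields $O(Y\cdot Y^{3/2})=O(Y^{5/2})$ polynomials, hence fields. A dyadic decomposition then gives $\sum_{E:\, m_2(E)\le X}m_2(E)^{-1+\eps}\ll X^{3/2+\eps}$, so the error contribution is $\ll \sqrt X\cdot X^{3/2+\eps}=X^{2+\eps}$.

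The main obstacle is making the counting estimate $\#\{E:m_2(E)\le Y\}\ll Y^{5/2}$ rigorous: one must handle the $|\Aut(E/\Q)|$-fold ambiguity coming from different generators of the same field, and verify that the irreducibility and total-positivity conditions are compatible with the given ranges for $(p_1,p_2,p_3)$. The rest is a routine combination of Minkowski reduction theory with the Brauer--Siegel and Davenport--Heilbronn bounds.
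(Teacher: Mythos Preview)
Your treatment of (i) is the paper's approach: both invoke the Gauss-type estimate $|\Z^2\cap E_X|=\text{area}+O(\text{perimeter})$ and bound the perimeter by the major semi-axis. Your caveat ``in the range $X\gtrsim m_2(Q)$'' is exactly right, since only in that range does the perimeter term $\sqrt{X/m_1}$ get absorbed into the main term $X/\sqrt{\Delta(Q)}$, and that is all (ii) requires.

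In (ii) there is a genuine error: the Brauer--Siegel bound is $\rho_E\ll_\eps D_E^{\eps}$, not $\rho_E\ll_\eps D_E^{-1/2+\eps}$. (The class-number formula for a totally real cubic gives $\rho_E=4h_ER_E D_E^{-1/2}$, and Brauer--Siegel says $h_ER_E\ll D_E^{1/2+\eps}$.) With the correct bound your main-term computation survives, giving
\[
X\sum_{D_E\le cX^2}\rho_E D_E^{-1/2}\ \ll\ X^{1+\eps}\sum_{D_E\le cX^2}D_E^{-1/2}\ \ll\ X^{2+\eps}
\]
via Davenport--Heilbronn and partial summation. But your error-term computation collapses: it becomes $X^{1/2+\eps}\sum_{m_2(E)\le X}m_2(E)^{-1/2}$, and your field count $\#\{E:m_2(E)\le Y\}\ll Y^{5/2}$ yields only $X^{5/2+\eps}$.

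The paper sidesteps this by not splitting at all. Since $m_2(E)\le X$ forces the error in (i) to be dominated by the main term, one has $\sum_{\xi}1\ll X/\sqrt{\Delta(Q_E)}$ outright, and then the single estimate above finishes the proof. Your auxiliary count of fields by $m_2$ is therefore unnecessary; incidentally the sharper bound $\#\{E:m_2(E)\le Y\}\ll Y^2$, which follows at once from $D_E\asymp m_1(E)m_2(E)\le Y^2$ and Davenport--Heilbronn, would also repair your split argument.
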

\begin{proof}
 \begin{enumerate}[label=(\roman{*})]
  \item We need to count all points in $\Z^2$ which are contained in the ellipse $E_X:=\{x\in\R^2\mid Q(x)\leq X\}$.
 By a  theorem of Gauss \cite[p.161]{Co62}, the number of such points  is equal to the area $\frac{2\pi X}{\sqrt{\Delta(Q)}}$ of the ellipse $E_X$ plus some small error term of order $R X^{\frac{1}{2}}$ for $R$ the length of the major axis of the ellipse $E_1$ and all implicit constants independent of $Q$. Since $m_1(Q)\geq1$, it is easily verified that $R\leq \sqrt{\frac{m_1(Q)}{\Delta(Q)}}$ finishing the proof of the assertion.

\item 
By Minkowski's second theorem (see, e.g. \cite[VIII.4.3]{Ca97}),  there are $a_1, a_2>0$ such that  for all cubic fields $E$,
$a_1m_1(E)m_2(E)\leq D_E \leq a_2 m_1(E)m_2(E)$ 
so that $m_1(E)\leq m_2(E)\leq X$ implies
$c_0 D_E\leq m_1(E)m_2(E)\leq 16X^2$
for some $c_0>0$, and moreover, $m_1(E)/\Delta(Q_E)$ is bounded from above by an absolute constant.
Hence  there is by \ref{estimates_cubic1} some constant $C>0$ such that 
\[
\sum_{\substack{\xi\in\OOO_E/\Z, ~\xi\neq0, \\ Q_E(\xi)\leq X}}1\leq C \frac{X}{\sqrt{\Delta(Q_E)}}
\]
for all $E$ with $m_1(E)\leq m_2(E)\leq X$.
By the Brauer-Siegel Theorem \cite[XVI, \S 4 Theorem 4]{La86}, there exists for all $\eps>0$ some number $C_{\eps}>0$ such that
$\rho_E=\res_{s=1}\zeta_E(s)=4D_E^{-\frac{1}{2}}h_ER_E\leq C_{\eps}D_E^{\eps}$
for all totally real cubic fields $E$.
 Hence the left hand side of \ref{estimates_part_of_cubic_fields2} equals
\[
\sum_{\substack{E\in\FFF_3^+: \\ m_2(E)\leq X}}\rho_E \sum_{\substack{\xi\in \OOO_E \minus \Z: \\ Q_E(\xi)\leq X}} 1
\leq C C_{\eps}\sqrt{3}\sum_{E:~m_2(E)\leq X}X D_E^{\eps-\frac{1}{2}}.
\]
This can bounded by
\[
 C C_{\eps}\sqrt{3}X\sum_{E:~D_E\leq 16X^2} D_E^{\eps-\frac{1}{2}}
\leq C C_{\eps}\sqrt{3}X^{1+\eps}\sum_{E:~D_E\leq 16X^2}D_E^{-\frac{1}{2}}.
\]
By \cite[Theorem 1]{DaHe71} or \cite[Theorem I.1]{DaWr88}, $\sum_{E:~D_E\leq X}1=c_0 X+o(X)$ for some $c_0>0$ so that
\[
 C C_{\eps}\sqrt{3}X^{1+\eps}\sum_{E:~D_E\leq 16X^2}D_E^{-\frac{1}{2}}
\leq  16 c_0C C_{\eps}\sqrt{3}X^{2+\eps}+o(X^{2+\eps})
\]
which is the assertion.
 \end{enumerate}
\end{proof}

\begin{proof}[Proof of Proposition \ref{lower_bound_for_residues}]
It suffices to assume that $\eps\in (0,1/2)$.
 We first show that 
\begin{equation}\label{limes_inferior_all_vectors}
\liminf_{X\rightarrow\infty} X^{-\frac{5}{2}+\eps}
\sum_{E\in\FFF_3^+}
\sum_{\substack{\xi\in\OOO_E/\Z, \xi\neq0, \\ Q_{E}(\xi)\leq X}}\rho_E=\infty
\end{equation}
for every $\eps>0$. 
Let $\eps>0$. By the Brauer-Siegel Theorem  there exists $A_{\eps}>0$ such that
$\rho_E\geq A_{\eps}D_E^{-\frac{\eps}{2}}$
for all $E$.
Thus this sum is bounded from below by 
$A_{\eps}X^{-\frac{\eps}{2}}\sum_{E\in\FFF_3^+}N_E(X)$,
where $N_E(X):=\big|\{\xi\in\OOO_E/\Z: \xi\neq0,~Q_{E}(\xi)\leq X \}\big|$.
Hence it will certainly suffice to show that there exists $C>0$ such that
\[
\sum_{E\in\FFF_3^+}N_E(X)\sim C X^{\frac{5}{2}}
\]
as $X\rightarrow\infty$. 
The map associating with the pair
$E\in\FFF_3^+$,  $\xi\in\OOO_E/\Z$, $\xi\neq0$,
the characteristic polynomial $T^3+a_1T+a_0$ of $\xi-\frac{1}{3}\tr\xi \One_3$ is $3-1$ or $1-1$ depending on whether $E$ is Galois or not.
 As $E$ is totally real, we have 
$\Delta(\xi-\frac{1}{3}\tr\xi\One_3)=-4 a_1^3-27 a_0^2>0$,
or equivalently $a_0^2\leq -\frac{4}{27}a_1^3$. Since $X\geq Q_{E}(\xi)=-2a_1>0$, this implies
\begin{equation}\label{constraints_coeff}
-\frac{X}{2}\leq a_1<0
\;\;\text{   and   } \;\;
0<a_0
\leq \sqrt{-\frac{4}{27}a_1^3}
\leq \frac{1}{3\sqrt{6}}X^{\frac{3}{2}}.
\end{equation}
 Hence, ignoring constants, there are $a_1^{\frac{3}{2}}$ many $a_0$ and
\[
\int_{1}^{X/2}a_1^{\frac{3}{2}}da_1
=\frac{1}{10\sqrt{2}}X^{\frac{5}{2}}-\frac{2}{5}
\]
many $a_1$ satisfying all the conditions. 
On the other hand, any irreducible polynomial with integral coefficients satisfying the inequalities in \eqref{constraints_coeff} defines (a conjugacy class of) a cubic field $E$ and $\xi$ as before. Thus we only need to show that the reducible polynomials with coefficients satisfying above constraints do not contribute to $CX^{\frac{5}{2}}$. If
$T^3+a_1T+a_0$
 is reducible over $\Q$, we can write it as a product
$(T^2+b_1T+b_0)(T+c)$
with $b_1, b_0, c\in\Z$. Hence $c=-b_1$, $cb_0=a_0$ and $b_0-c^2=a_1$. Hence if we fix $a_0$ (for which there are at most $O(X^{\frac{3}{2}})$ possibilities), there are at most $O(a_0^{\delta})\leq O(X^{\delta})$ possibilities for $c$ and $b_0$ for any $ \delta>0$. Thus there are only $O(X^{\frac{3}{2}+\delta})$ reducible polynomials satisfying above constraints.
This finishes the proof of \eqref{limes_inferior_all_vectors}.

Now split the sum over $E$ in the following parts: One belonging to $E\in\FFF_3^+$ such that $m_1(E)>X$, one over $E$ such that $m_1(E)\leq X<m_2(E)$,  and the last one over $E$ such that $m_1(E)<m_2(E)\leq X$. For $E$ with $m_1(E)>X$, there are no $\xi$ contributing to the sum in \eqref{limes_inferior_all_vectors} so that the sum on the left hand side of \eqref{limes_inferior_all_vectors} equals
\begin{equation}\label{sum_fields}
X^{-\frac{5}{2}+\eps}\sum_{\substack{ E\in \FFF_3^+: \\ m_1(E)\leq X<m_2(E)}}\rho_E
N_E(X)
+X^{-\frac{5}{2}+\eps}\sum_{\substack{ E\in\FFF_3^+:\\ m_1(E)\leq m_2(E)\leq X }}\rho_E
N_E(X).
\end{equation}
By Lemma \ref{estimates_part_of_cubic_fields}\ref{estimates_part_of_cubic_fields2}, the second sum tends to $0$ for $X\rightarrow\infty$ provided $\eps<\frac{1}{2}$. 
Hence the limes inferior of the first part of the sum is not bounded from below as $X\rightarrow \infty$ for any $\eps\in(0,1/2)$.
As $m_1(E)\leq X<m_2(E)$, every $\xi\in\OOO_E/\Z$, $\xi\neq0$, with $Q_E(x)\leq X$ is of the form $\xi=n\xi_0$ for some $n\in\N$, and $\xi_0$ one of the two non-zero primitive vectors in $\OOO_E/\Z$. Note that $Q_E(\pm\xi_0)=m_1(E)$. Thus
\begin{align*}
\sum_{\substack{ E\in\FFF_3^+: \\ m_1(E)\leq X<m_2(E)}}\rho_EN_E(X)
&=\sum_{n\in\N}
\sum_{\substack{ E\in\FFF_3^+: \\ m_1(E)\leq X<m_2(E)}}\rho_E
\sum_{\substack{\xi_0\in(\OOO_E/\Z)_{\text{prim}},~¸xi_0\neq0 \\ Q_E(\xi_0)\leq \frac{X}{n^2}}}1\\
&=2\sum_{n\in\N}\sum_{\substack{ E\in\FFF_3^+:\\ m_1(E)\leq \frac{X}{n^2}<m_2(E)}}\rho_E,
\end{align*}
where $(\OOO_E/\Z)_{\text{prim}}$ denotes the set of primitive vectors in $\OOO_E/\Z$.
Suppose there are $\kappa\in(0,1/2)$ and $0<c_0<\infty$ such that
\[
\liminf_{X\rightarrow\infty}X^{-\frac{5}{2}+\kappa}\sum_{\substack{E\in\FFF_3^+: \\ m_1(E)\leq X<m_2(E)}}\rho_E=c_0<\infty.
\]
Then
\[
X^{-\frac{5}{2}+\kappa}\sum_{\substack{ E\in\FFF_3^+: \\ m_1(E)\leq X<m_2(E)}}\rho_E
N_E(X)
=2\sum_{n\in\N}n^{-5+2\kappa} (\frac{X}{n^2})^{-\frac{5}{2}+\kappa}
\sum_{\substack{E\in\FFF_3^+: \\ m_1(E)\leq \frac{X}{n^2}<m_2(E)}}\rho_E
\]
and, for every $n$, 
$\lim\inf_{X\rightarrow\infty}(\frac{X}{n^2})^{-\frac{5}{2}+\kappa}\sum_{E\in\FFF_3^+,~m_1(E)\leq \frac{X}{n^2}<m_2(E)}\rho_E=c_0$
so that the limit inferior of the above is $2c_0\zeta(5-2\kappa)$ in contradiction to the unboundedness of the limit inferior of the first sum in \eqref{sum_fields} as $X\rightarrow\infty$. This finishes the proof of the proposition.
\end{proof}

\appendix
\section{Asymptotic approximation of truncation functions}\label{appendix}
The purpose of this appendix is to prove Proposition \ref{main_approx_res} in the case of a nilpotent orbit $\NNN\subseteq \nnn\subseteq$ for $G=\GL_n$ and $G=\SL_n$ and $n\leq 3$. 
\subsection{The case $n=2$}
There are two nilpotent orbits, namely $\NNN_{\text{triv}}=\{0\}\subseteq \ggG$ and $\NNN_{\text{reg}}$ which is generated by $X_0=\left(\begin{smallmatrix}0&1\\0&0\end{smallmatrix}\right)$. For $\NNN_{\text{triv}}$ there is nothing to show so that we only consider $\NNN=\NNN_{\text{reg}}$. As noted earlier, the associated Jacobson-Morozov parabolic subgroup for $X_0$ is $P=P_0=T_0U_0$, and $C_{U_0}(X_0)=U_0$. 

We first show the following:
\begin{lemma}\label{trunc_fct_first_app}
There exist $c_1, c_2>0$ such that
\begin{equation}\label{approximationgl2}
 \big| \tilde{F}^{T_0}(t, T)-\int_{U_0(\Q)\backslash U_0(\A)} F(vt, T)~dv\big|
\leq c_1 e^{-c_2 \|T\|}
\end{equation}
for all $t\in T_0(\A)$ and all sufficiently regular $T\in\aaa^+$ with $d(T)> \delta\|T\|$. 
\end{lemma}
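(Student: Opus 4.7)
The plan is to reduce the estimate to an adelic volume bound via the Bruhat decomposition. First, for every $\gamma\in T_0(\Q)$ the product formula gives $H_0(\gamma)=0$, and since $H_0$ is left $U_0(\A)$-invariant and $\gamma$ normalizes $U_0$, one has $H_0(\gamma\cdot vt)=H_0(t)$ for all $v\in U_0(\A)$. Every condition defining $\tilde{F}^{T_0}(t,T)$ therefore already appears among the $G(\Q)$-conditions defining $F(vt,T)$, yielding the pointwise inequality $F(vt,T)\leq \tilde{F}^{T_0}(t,T)$. The integrand in \eqref{approximationgl2} is thus non-negative and vanishes identically when $\tilde{F}^{T_0}(t,T)=0$, so one reduces to the case $\tilde{F}^{T_0}(t,T)=1$, in which the left-hand side equals the volume of
\[
B=\{v\in U_0(\Q)\backslash U_0(\A):F(vt,T)=0\}.
\]

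The second step is to exploit the Bruhat decomposition $G(\Q)=P_0(\Q)\sqcup P_0(\Q)wU_0(\Q)$, with $w$ the non-trivial Weyl element. Writing $\mu=u_1\tau_1$ for $\mu$ in the small cell, one finds $H_0(\mu\cdot vt)=H_0(t)$, and the corresponding condition holds by hypothesis. For $\mu=u_1\tau_1 wu_2$ in the big cell one gets $H_0(\mu\cdot vt)=H_0(wu_2vt)$, so $v\in B$ if and only if there exists $u\in U_0(\Q)$ with $\varpi(H_0(wuvt)-T)>0$. Unfolding the $U_0(\Q)$-orbit then gives
\[
\vol(B)\leq \vol_{U_0(\A)}\bigl(\{v\in U_0(\A):\varpi(H_0(wvt)-T)>0\}\bigr).
\]

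The third step is to bound this volume explicitly. Parametrizing $v\leftrightarrow v_0\in\A$ via $v=\bigl(\begin{smallmatrix}1 & v_0\\0 & 1\end{smallmatrix}\bigr)$ and writing $t=\diag(\tilde{a},\tilde{a}^{-1})$ (modulo $A_G$ in the $\GL_2$ case), a place-by-place Iwasawa decomposition of $wvt$ yields $\varpi(H_0(wvt))=-\log A(\tilde{a},v_0)$ with
\[
A(\tilde{a},v_0)=\bigl(|\tilde{a}|_\infty^{2}+|\tilde{a}|_\infty^{-2}|v_0^{\R}|^{2}\bigr)^{1/2}\prod_{p<\infty}\max\bigl(|\tilde{a}|_p,\,|\tilde{a}|_p^{-1}|v_0|_p\bigr).
\]
Since $A(\tilde{a},v_0)\geq |\tilde{a}|_{\A}$, the inequality $A(\tilde{a},v_0)<e^{-\varpi(T)}$ already forces $|\tilde{a}|_{\A}<e^{-\varpi(T)}$. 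Combining the resulting archimedean length bound with the finite-adelic count $\vol\{v_0^f\in\A_f:\prod_{p<\infty}\max(1,|v_0|_p)\leq Y\}=\sum_{d\leq Y}\phi(d)\asymp Y^{2}$ yields $\vol(B)\leq Ce^{-2\varpi(T)}$. Since $\varpi(T)\geq c'\|T\|$ on the cone $d(T)>\delta\|T\|$, the desired bound $c_1e^{-c_2\|T\|}$ follows with any $c_2<2c'$.

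The main technical obstacle is the last step: although each local factor of $A(\tilde{a},v_0)$ is elementary in isolation, their combination and the way the finite-adelic height distribution absorbs exactly one power of $e^{-\varpi(T)}$ must be tracked carefully. This is essentially the $\GL_2$ instance of the truncation-function estimates underlying Arthur's convergence arguments.
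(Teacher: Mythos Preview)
Your approach is essentially the paper's: both reduce to $\tilde F^{T_0}(t,T)=1$, use the Bruhat decomposition to isolate the contribution of the non-trivial Weyl element, and bound the resulting set via the adelic height of the bottom row of $wvt$. The only difference is cosmetic---you unfold the $U_0(\Q)$-sum to an integral over $U_0(\A)$ and phrase the finite-place contribution as an adelic volume, while the paper keeps $y\in[0,1]$ and sums explicitly over rationals $x=p/q$.

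One imprecision in your final step: the count you invoke, $\sum_{d\le Y}\phi(d)\asymp Y^{2}$, is not what is actually used. For fixed $v_0^{f}$ of height $Q=\prod_p\max(1,|v_0|_p)$, the archimedean length of admissible $v_0^{\R}$ is $\lesssim \tilde a\,Q^{-1}e^{-\varpi(T)}$, and this $Q^{-1}$ weight cannot be dropped. The correct combination is
\[
\vol(S)\;\lesssim\;\tilde a\,e^{-\varpi(T)}\sum_{d\le Y}\frac{\phi(d)}{d}\;\le\;\tilde a\,e^{-\varpi(T)}\cdot Y\;=\;e^{-2\varpi(T)},
\qquad Y=\tilde a^{-1}e^{-\varpi(T)}.
\]
If instead one uses a $Q$-independent archimedean bound and multiplies by $Y^{2}$, one obtains $\tilde a^{-1}e^{-3\varpi(T)}$, which is \emph{not} uniform in $t$. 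Your concluding remark that the finite part ``absorbs exactly one power of $e^{-\varpi(T)}$'' is correct, but it follows from $\sum_{d\le Y}\phi(d)/d\le Y$, not from the $Y^{2}$ count. A second, smaller point: you write $A(\tilde a,v_0)$ with general $|\tilde a|_p$ but then tacitly set $|\tilde a|_p=1$ in the finite count; this reduction to $t\in A_0$ is legitimate (both sides of \eqref{approximationgl2} depend only on $H_0(t)$, using $T_0(\A)=T_0(\Q)A_0(T_0(\A)\cap\cpt)$), but deserves a sentence.
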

\begin{proof}
 Let $t\in T_0(\A)$ and let $T\in\aaa^+$ be sufficiently regular. If $\tilde{F}^{T_0}(t, T)=0$, then by definition also $F(vt, T)=0$ for all $v\in U_0(\A)$ so that
$\int_{U_0(\Q)\backslash U_0(\A)} F(vt, T)dv=0$. Hence we assume that $t$ is such that $\tilde{F}^{T_0}(t, T)=1$. 
Then the left hand side of \eqref{approximationgl2} equals
\begin{align*}
& \vol\{v\in U_0(\Q)\backslash U_0(\A)\mid \exists \gamma\in G(\Q):~ \varpi(H_0(\gamma vt)-T)>0\}\\
& \leq\vol\{v\in U_0(\Q)\backslash U_0(\A)\mid \exists u\in U_0(\Q):~ \varpi(H_0(u vt)-T)>0\}\\
& +\vol\{v\in U_0(\Q)\backslash U_0(\A)\mid \exists u\in U_0(\Q):~ \varpi(H_0(wu vt)-T)>0\}
\end{align*}
for $\varpi $ the unique element in $\widehat{\Delta}_0$, and $w=\left(\begin{smallmatrix} 0&1\\1&0\end{smallmatrix}\right)$ a representative for the non-trivial Weyl group element. Here we used the left $P_0(\Q)$-invariance of $H_0$. Using again the left $U_0(\Q)$-invariance and that $\tilde{F}^{T_0}(t, T)=1$, the volume of the first set is $0$ so that we only need to estimate
\[
\vol\{v\in U_0(\Q)\backslash U_0(\A)\mid \exists u\in U_0(\Q):~ \varpi(H_0(wu vt)-T)>0\}.
\]
For that write  $u=\left(\begin{smallmatrix} 1&x\\0&1\end{smallmatrix}\right)\in U_0(\Q)$ and  $v=\left(\begin{smallmatrix}1&y\\0&1\end{smallmatrix}\right)\in U_0(\Q)\backslash U_0(\A)$.
Then
\[
 \varpi(H_0(wu vt))
=\varpi(wH_0(t)) + \varpi(H_0(w t^{-1} uv t))
= -\varpi(H_0(t)) - \log\|(1, e^{-\alpha(t)}(x+y))\|_{\A}
\]
for $\alpha$ the unique simple root in $\Delta_0$ and $\|\cdot\|_{\A}$ the adelic vector norm. As $[0,1]\subseteq \R$ is a fundamental domain for $\Q\hookrightarrow\A$, it therefore suffices to estimate the volume of the set
\[
 \{y\in [0,1]\mid \exists x\in\Q:~ \|(1, e^{-\alpha(t)}(x+y))\|_{\A}<e^{-\varpi(T)-\varpi(H_0(t))}\}.
\]
Now 
\[
 \|(1, e^{-\alpha(t)}(x+y))\|_{\A}
= (1+ e^{-2\alpha(t)}(x+y)^2)^{1/2}\prod_{p<\infty}\max\{1, |x|_p\}
= \big(1+ e^{-2\alpha(t)}(\frac{p}{q}+y)^2\big)^{1/2}q
\]
if we write $x=\frac{p}{q}$ for $p,q$ coprime integers, $q>0$.
For the inequality
\begin{align*}
 (1+ e^{-2\alpha(t)}(\frac{p}{q}+y)^2)^{1/2}q & <e^{-\varpi(T)-\varpi(H_0(t))}\\
\Leftrightarrow
(\frac{p}{q}+y)^2< q^{-2}e^{2\alpha(t)-2\varpi(T)-2\varpi(H_0(t))}- e^{2\alpha(t)}
& = q^{-2} e^{2\varpi(H_0(t))-2\varpi(T)}-e^{2\alpha(t)}
\end{align*}
to have a solution in $y\in[0,1]$, we must necessarily have $0<q<e^{-\varpi(T)-\varpi(H_0(t))}$ and $-q<p<q$. Hence the volume of the above set is bounded by a constant multiple of 
\begin{align*}
 \sum_{\substack{q:\\ 0<q<e^{-\varpi(T)-\varpi(H_0(t))}}}\sum_{\substack{p: \\ -q<p<q}} \sqrt{q^{-2} e^{2\varpi(H_0(t))-2\varpi(T)}-e^{2\alpha(t)}}
& \leq  2\sum_{\substack{q: \\ 0<q<e^{-\varpi(T)+\varpi(H_0(t))}}} e^{\varpi(H_0(t))-\varpi(T)}\\
\leq 2e^{-\varpi(T)-\varpi(H_0(t))}e^{\varpi(H_0(t))-\varpi(T)} & =2e^{-2\varpi(T)}=2e^{-\alpha(T)}.
\end{align*}
Hence there is $\eps>0$ such that for all $t\in T_0(\A)$ we have
\[
  \big| \tilde{F}^{T_0}(t, T)-\int_{U_0(\Q)\backslash U_0(\A)} F(vt, T)~dv\big|
\leq 2e^{-\alpha(T)}\leq 2 e^{-\eps\|T\|}
\]
for all sufficiently regular $T$ with $d(T)>\delta\|T\|$ so that the lemma is proved.
\end{proof}

\begin{cor}
Let $\nu$ be as in Lemma \ref{fundamental_convergence}.
Then there exists a seminorm $\mu$ on $\SSS^{\nu}(\ggG(\A))$ such that for every $\Phi\in \SSS^{\nu}(\ggG(\A))$ and nilpotent orbit $\NNN\subseteq\ggG(\Q)$, we have 
\[
 \big|j_{\NNN}^T(\Phi)-\tilde{j}_{\NNN}^T(\Phi)\big|
\leq \mu(\Phi) e^{-c_2\|T\|}
\]
for all sufficiently regular $T\in\aaa^+$ with $d(T)>\delta \|T\|$.
\end{cor}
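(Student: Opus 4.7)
The corollary splits according to the two nilpotent orbits for $n=2$. For $\NNN = \{0\}$, the Jacobson-Morozov data gives $P_{X_0} = G$, $M_{X_0} = G$, and $U_{X_0}$ trivial. In this case $\tilde{F}^G(\cdot, T) = F^G(\cdot, T) \hat{\tau}_G = F(\cdot, T)$ (since $\widehat{\Delta}_G = \emptyset$), so $\tilde{j}^T_{\{0\}}(\Phi) = j^T_{\{0\}}(\Phi)$ identically and the estimate is trivial. The substantive case is $\NNN = \NNN_{\text{reg}}$, where $M = T_0$, $U = U_0$, $\uuu^{\geq 2} = \uuu_0$, and $\tilde{F}^{T_0}(utk, T) = \hat{\tau}_0^G(T - H_0(t))$ is a function of $t$ alone.

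I would unfold both distributions using the disjoint decomposition $\NNN_{\text{reg}} = \bigsqcup_{\delta \in P_0(\Q)\backslash G(\Q)} \Ad\delta^{-1}(\uuu_0(\Q)\minus\{0\})$. For $j^T_\NNN$ this uses only the left $G(\Q)$-invariance of $F$ and produces (after averaging $\Phi$ over $\cpt$ and applying Iwasawa decomposition)
\[
j^T_{\NNN_{\text{reg}}}(\Phi) = \int_{A_G T_0(\Q)\backslash T_0(\A)}\!\!\int_{U_0(\Q)\backslash U_0(\A)} \delta_0(t)^{-1} F(ut, T) \sum_{U\in\uuu_0(\Q)\minus\{0\}} \Phi(\Ad(ut)^{-1} U)\, du\, dt,
\]
and similarly for $\tilde{j}^T_\NNN$ via the descent formula in the proof of Lemma \ref{lemma_chang_central_var}, with the $u$-independent factor $\tilde{F}^{T_0}(t, T)$ in place of $F(ut, T)$. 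The special feature of $n = 2$ is that $U_0$ is abelian and $\Ad u$ acts trivially on $\uuu_0$ for $u \in U_0$, so $\Phi(\Ad(ut)^{-1}U) = \Phi(\Ad t^{-1} U)$; the inner sum is $u$-independent and can be pulled outside the $U_0$-integral in both expressions.

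Subtracting, the difference becomes
\[
\tilde{j}^T_\NNN(\Phi) - j^T_\NNN(\Phi) = \int_{A_G T_0(\Q)\backslash T_0(\A)} \delta_0(t)^{-1} \bigg(\tilde{F}^{T_0}(t, T) - \int_{U_0(\Q)\backslash U_0(\A)} F(ut, T)\, du\bigg) \sum_{U} \Phi(\Ad t^{-1} U)\, dt.
\]
Lemma \ref{trunc_fct_first_app} bounds the parenthesised difference uniformly by $c_1 e^{-c_2 \|T\|}$, and (by the dichotomy at the start of that lemma's proof) it vanishes whenever $\tilde{F}^{T_0}(t, T) = 0$. Hence the integral is effectively supported on $\{t : \alpha(H_0(t)) \leq \alpha(T)\}$, a region whose $t_1/t_2$-range has length polynomial in $\|T\|$. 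Standard Schwartz-Bruhat sum estimates (as in Lemma \ref{bounding_sum_over_pseudoss}, originating from Wright's lemma) bound $\int \delta_0(t)^{-1} \sum_U |\Phi(\Ad t^{-1}U)|\, dt$ over this region by $\mu(\Phi)(1+\|T\|)$ for a suitable seminorm $\mu$, which combined with the exponential factor yields $\mu'(\Phi) e^{-c_2' \|T\|}$ for any $c_2' < c_2$.

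The main obstacle is the descent formula for $\tilde{j}^T_\NNN$: since $\tilde{F}^{T_0}$ is only left $P_0(\Q)$-invariant and not $G(\Q)$-invariant as a function on $G(\A)$, one cannot unfold the integral over $G(\Q)\backslash G(\A)$ by a naive change of variables, and one must instead reuse the reduction carried out in the proof of Lemma \ref{lemma_chang_central_var}. Once this descent is available, the commutativity of $U_0$ with $\uuu_0$ specific to $n = 2$ reduces the remaining estimate to a direct application of Lemma \ref{trunc_fct_first_app}.
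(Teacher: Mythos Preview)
Your proof is correct and follows essentially the same route as the paper's. Both unfold $j^T_{\NNN_{\text{reg}}}$ and $\tilde{j}^T_{\NNN_{\text{reg}}}$ over $P_0(\Q)\backslash G(\Q)$, use that $\Ad u$ acts trivially on $\uuu_0$ for $n=2$, apply Lemma~\ref{trunc_fct_first_app} to the difference of truncation functions, and then bound the remaining torus integral (supported where $\hat{\tau}_0^G(T-H_0(t))\neq 0$) by a quantity polynomial in $T$; the paper carries out this last step by an explicit one-variable computation rather than invoking Lemma~\ref{bounding_sum_over_pseudoss}, but the content is the same.
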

\begin{proof}
 As before, we only need to consider $\NNN=\NNN_{\text{reg}}$. Let $X_0=\left(\begin{smallmatrix}0&1\\0&0\end{smallmatrix}\right)\in\NNN_{\text{reg}}$. Let $\Phi\in\SSS^{\nu}(\ggG(\A))$. We may assume that $\Phi$ is $\cpt$-conjugation invariant.
Then
\[
 j_{\NNN_{\text{reg}}}^T(\Phi)
= \int_{A_0^G} \delta_0(a)^{-1} \bigg(\int_{U_0(\Q)\backslash U_0(\A)} F(ua, T)du\bigg) \sum_{X\in\uuu_0(\Q)\cap \NNN_{\text{reg}}}\Phi(\Ad a^{-1} X) ~da.
\]
Note that $\tilde{F}^{T_0}(a, T)=\hat{\tau}_0^G(T-H_0(a))=0$ implies $F(ua, T)=0$ for all $u\in U_0(\A)$, i.e., $F(ua, T)\leq \tilde{F}^{T_0}(a, T)$ for all $u$ and $a$. Using Lemma \ref{trunc_fct_first_app} (and the notation introduced there), we therefore get
\[
 \big| j_{\NNN_{\text{reg}}}^T(\Phi)- \tilde{j}_{\NNN_{\text{reg}}}^T(\Phi)\big|
\leq c_1e^{-c_2\|T\|}\int_{A_0^G} \hat{\tau}_0^G(T-H_0(a)) \sum_{X\in\uuu_0(\Q)\cap \NNN_{\text{reg}}}\big|\Phi(\Ad a^{-1} X)\big| ~da
\]
for all sufficiently regular $T$ with $d(T)>\delta\|T\|$. Since $\Ad a^{-1} X=\left(\begin{smallmatrix} 0&a^{-2} x\\0&0\end{smallmatrix}\right)$ for  $x\in\Q\minus\{0\}$ with $X=\left(\begin{smallmatrix} 0&x\\0&0\end{smallmatrix}\right)$, and $a=\diag(a, a^{-1})\in A_0^G$, it suffices to consider the case that $\big|\Phi(\Ad a^{-1} X)\big|\leq \varphi(a^{-2} x)$ for a suitable $\varphi\in \SSS(\ggG(\A))$, $\varphi\geq0$, which satisfies the seminorm estimates with respect to $\Phi$ from Lemma \ref{std_estimates_for_SB}. 
 Now
\begin{align*}
& \int_{A_0^G} \hat{\tau}_0^G(T-H_0(a)) \sum_{X\in\uuu_0(\Q)\cap \NNN_{\text{reg}}}\big|\Phi(\Ad a^{-1} X)\big| ~da
\leq \int_0^{e^{\alpha(T)}/2} a^{-2} \sum_{x\in\Q\minus\{0\}} \varphi(a^{-2}x)~d^{\times} a\\
& =\int_0^1a^{-3} \sum_{x\in\Q\minus\{0\}} \varphi(a^{-2}x)~da
+ \int_1^{\alpha(T)/2}a^{-3} \sum_{x\in\Q\minus\{0\}} \varphi(a^{-2}x)~da\\
& \leq
\mu_1(\Phi)\big( \int_0^1a ~da
+ \int_1^{\alpha(T)/2}a^{-1}~da\big)
=\mu_2(\Phi)(1+ T)
\end{align*}
for suitable seminorms $\mu_1, \mu_2$, where we used the standard estimates  for Schwartz-Bruhat functions. This proves the corollary.
\end{proof}

\subsection{The case $n=3$}
There are now three different nilpotent orbits in $\ggG$: The trivial orbit $\NNN_{\text{triv}}=\{0\}$, the minimal orbit $\NNN_{\text{min}}$ generated by $X_{\text{min}}=\left(\begin{smallmatrix} 0&0&1\\0&0&0\\0&0&0\end{smallmatrix}\right)$, and the regular orbit $\NNN_{\text{reg}}$ generated by $X_{\text{reg}}=\left(\begin{smallmatrix} 0&1&0\\0&0&1\\0&0&0\end{smallmatrix}\right)$. The first case again is trivial so that we only need to consider the other two. In the last two cases the associated Jacobson-Morozov parabolic is the minimal parabolic. 
\begin{lemma}\label{trunc_app2}
 There are $c_1, c_2>0$ such that for every $X_0\in \{X_{\text{min}}, X_{\text{reg}}\}$ and $v'\in C_{U_0}(X_0, \A)\backslash U_0(\A)$ we have
\begin{equation}\label{approximation2}
 \big| \tilde{F}^{T_0}(t, T)-\int_{C_{U_0}(X_0,\Q)\backslash C_{U_0}(X_0,\A)} F(vv't, T)dv\big|
\leq c_1 e^{-c_2 \|T\|}
\end{equation}
for all $t\in T_0(\A)$ and all sufficiently regular $T\in\aaa^+$ with $d(T)>\delta\|T\|$.
\end{lemma}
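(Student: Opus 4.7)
The plan is to follow the same strategy as for Lemma~\ref{trunc_fct_first_app}, but now accounting for the larger Weyl group of $\GL_3$ and the possibly nontrivial centralizer coset represented by $v'$. First, I would reduce to the case $\tilde{F}^{T_0}(t,T)=1$: if $\tilde{F}^{T_0}(t,T)=0$, then by the definition of $\tilde{F}^{T_0}$ (which already incorporates a supremum over $M(\Q)=T_0(\Q)$) together with $0\le F(vv't,T)\le \hat\tau_0^G(T-H_0(t))$, both sides of the asserted inequality vanish. When $\tilde{F}^{T_0}(t,T)=1$, the left-hand side of \eqref{approximation2} equals
\[
\vol\Bigl\{v\in C_{U_0}(X_0,\Q)\backslash C_{U_0}(X_0,\A)\;\Big|\;\exists\,\gamma\in G(\Q),\;\varpi\in\widehat{\Delta}_0:\;\varpi(H_0(\gamma vv't)-T)>0\Bigr\}.
\]

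Next, I would split $G(\Q)$ using the Bruhat decomposition $G(\Q)=\bigsqcup_{w\in S_3}P_0(\Q)\,w\,U_0(\Q)$ and estimate the contribution of each cell separately. The trivial cell $w=e$ contributes nothing: left $P_0(\Q)$-invariance of $H_0$ (modulo the diagonal translation by $H_0$ of the $T_0(\Q)$-factor) reduces the condition $\varpi(H_0(\gamma vv't)-T)>0$ with $\gamma\in P_0(\Q)$ to one of the form $\varpi(H_0(\mu t)-T)>0$ with $\mu\in T_0(\Q)$, which is precluded by $\tilde{F}^{T_0}(t,T)=1$ and the $U_0(\A)$-invariance of $H_0$. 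So it remains to bound, for each of the five nontrivial $w\in S_3$ and each fundamental weight $\varpi_i\in\widehat{\Delta}_0$, the measure of the set of $v$ for which $\varpi_i(H_0(wuvv't)-T)>0$ for some $u\in U_0(\Q)$.

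Then I would render the condition explicit using the identity $\varpi_i(H_0(g))=-\log\|(\wedge^i g)(e_1\wedge\cdots\wedge e_i)\|_{\A,\wedge^i}$, which converts each bad event into an inequality of the form
\[
\|(\wedge^i w)\,(\wedge^i u)(\wedge^i v)(\wedge^i v')(\wedge^i t)(e_1\wedge\cdots\wedge e_i)\|_{\A,\wedge^i}\;<\;e^{-\varpi_i(T)}.
\]
Parameterizing $v$ by its coordinates in a fundamental domain $[0,1]^{\dim C_{U_0}(X_0)}$ for $C_{U_0}(X_0,\Q)\backslash C_{U_0}(X_0,\A)$, I would estimate, for each $u\in U_0(\Q)$ and each choice of $w$ and $i$, the volume of $v$'s violating the above inequality by the same rational-point-counting argument used in the proof of Lemma~\ref{trunc_fct_first_app}: writing the relevant rational coordinate of $u$ in lowest terms as $p/q$, a nonempty solution set in $v$ forces $q\lesssim e^{-\varpi_i(T)+\varpi_i(H_0(t))}$ and, for each $q$, at most $\asymp q$ values of $p$, each contributing a $v$-volume $\lesssim q^{-1}e^{\varpi_i(H_0(t))-\varpi_i(T)}$. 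Summing over $p,q$ yields a bound of the form $C\, e^{-\alpha(T)}$ with $\alpha\in\Delta_0$ a simple root depending on the pair $(w,\varpi_i)$. Since $\alpha(T)\ge d(T)>\delta\|T\|$ for sufficiently regular $T$, taking the maximum over $w$ and $i$ gives the desired bound $c_1 e^{-c_2\|T\|}$.

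The main obstacle is the bookkeeping of the longest-word contribution combined with the $v'$-dependence in the regular case $X_0=X_{\mathrm{reg}}$. For $X_0=X_{\min}$ one has $C_{U_0}(X_{\min})=U_0$, so $v'=1$ and the integrand is essentially a direct three-dimensional analogue of the $\GL_2$ computation: here the main subtlety is that the longest Weyl element mixes both $\varpi_1$ and $\varpi_2$, so one must run the lattice-point count once with the adelic norm on $\Q^3$ and once with the induced adelic norm on $\wedge^2\Q^3\cong\Q^3$. For $X_0=X_{\mathrm{reg}}$ the centralizer $C_{U_0}(X_{\mathrm{reg}})$ is two-dimensional, and a generic representative $v'$ has a nontrivial coordinate in the one-dimensional quotient $C_{U_0}(X_{\mathrm{reg}},\A)\backslash U_0(\A)$; however, this coordinate enters additively into the relevant archimedean parameter of $uv v'$, so that the substitution $v\mapsto v(v')^{-1}\cdot v$ followed by reindexing $u\mapsto u v'$ in $U_0(\Q)$ shows that the estimate is unchanged and uniform in $v'$. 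This uniformity is what makes the bound independent of the chosen representative $v'\in C_{U_0}(X_0,\A)\backslash U_0(\A)$.
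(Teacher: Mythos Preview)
Your approach is essentially the one the paper takes: reduce to $\tilde{F}^{T_0}(t,T)=1$, use Bruhat decomposition, and for each pair $(w,\varpi_i)$ bound the bad set by a rational-point count as in the $\GL_2$ case. Two small points are worth tightening. First, the bound is not always of the form $e^{-\alpha(T)}$ for a simple root: for the longest-word cells (e.g.\ $w\in\{w_4,w_6\}$ with $\varpi_2$) the paper's computation produces $e^{-3\varpi_i(T)}$, which still yields $e^{-c\|T\|}$ under $d(T)>\delta\|T\|$ but is not literally a simple-root exponent. Second, your handling of $v'$ in the regular case is the right idea but the phrasing is off: you cannot reindex $u\mapsto uv'$ in $U_0(\Q)$ since $v'$ is adelic, and the formula ``$v\mapsto v(v')^{-1}\cdot v$'' does not parse. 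What actually happens (and what the paper does) is that the coordinate $c$ of $v'$ shifts the \emph{fundamental domain} for the integration variables $a,b$ of $v\in C_{U_0}(X_0,\Q)\backslash C_{U_0}(X_0,\A)$ (e.g.\ $a\in[c,1+c]$ instead of $[0,1]$), after which the estimates reduce verbatim to the minimal-orbit case; this is where the uniformity in $v'$ comes from.
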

\begin{proof}
 We split the proof of the lemma according to the two orbits. Write $\Delta_0=\{\alpha_1, \alpha_2\}$ such that $\alpha(\diag(t_1,t_2,t_3))=|t_1/t_2|$ and $\alpha_2(\diag(t_1, t_2, t_3))=|t_2/t_3|$.

\paragraph*{\underline{{\bf $\NNN=\NNN_{\text{min}}$:}}}
 Write $X_0=X_{\text{min}}$. Then $C_{U_0}(X_0)=U_0$ so that $v'=1$. Let $t\in T_0(\A)$. It is clear that $\tilde{F}^{T_0}(t, T)=0$ again implies that $F(vt, T)=0$ for all $v\in U_0(\A)\backslash U_0(\A)$. Hence we again assume that $t$ is such that $\tilde{F}^{T_0}(t, T)=1$.
To estimate the left hand side of \eqref{approximation2} it will therefore suffice to bound the volume of the set
\[
 \{v\in U_0(\Q)\backslash U_0(\A)\mid \exists \gamma\in G(\Q)~\exists \varpi\in\widehat{\Delta}_0:~\varpi(H_0(\gamma vt)-T)> 0\}. 
\]
Using Bruhat decomposition for $G(\Q)$ and the left $P_0(\Q)$-invariance of $H_0$, it suffices to bound for each $w\in W$ and $\varpi\in\widehat{\Delta}_0$ the volume of the set
\[
 V_T(w, \varpi, T)= \{v\in U_0(\Q)\backslash U_0(\A)\mid \exists u\in U_0(\Q):~\varpi(H_0(wu vt)-T)> 0\}.
\]
Now for $v\in U_0(\Q)\backslash U_0(\A)$ and $u\in U_0(\Q)$  we have 
$
H_0(wu vt)
=H_0((wtw^{-1})(wt^{-1} u v t))
=wH_0(t) +H_0(wt^{-1} u v t)
$
so that
\[
 \varpi(H_0(wu vt)-T)>0
 \Leftrightarrow
 \varpi(H_0(wt^{-1}u vt))>\varpi(T -w H_0(t))
 \Leftrightarrow
 e^{-\varpi(H_0(wt^{-1}u vt))}< e^{\varpi(wH_0(t)-T)}.
\]
Hence $ \vol V_T(w, \varpi, t)$ equals
\[
\vol\big(\{x_1, x_2, x_3\in[0,1]\mid \exists y_1, y_2, y_3\in\Q:~ - \varpi(H_0(wt^{-1}\left(\begin{smallmatrix} 1& x_1+y_1&x_2+y_2\\ \;&1&x_3+y_3 \\\;&\;&1\end{smallmatrix}\right) t))< -\varpi(T -w H_0(t))\}\big)
\]
Suppose $u=\left(\begin{smallmatrix} 1&u_1&u_2\\\;&1&u_3\\\;&\;&1\end{smallmatrix}\right)\in U_0(\A)$. We first want to compute the last two rows of $wuw^{-1}$, as they can be used to compute $\varpi(H_0(w u w^{-1}))$.
\begin{itemize}
 \item $w=w_1=\id$, then the last two columns equal
 \[
  \begin{pmatrix} 0&1&u_3\\ 0&0&1\end{pmatrix}.
 \]
 \item $w=w_2$ is the simple reflexion about the root $\alpha_1$. Then the last two rows equal
 \[
    \begin{pmatrix} u_1&1&u_2\\ 0&0&1\end{pmatrix}
 \]
 
\item $w=w_3$ is the simple reflexion about the root $\alpha_2$. Then the last two rows equal
\[
   \begin{pmatrix} 0&1&0\\ 0&u_3&1\end{pmatrix}
\]

\item $w=w_4$ is the longest Weyl element. Then the last two rows equal
\[
   \begin{pmatrix} u_3&1&0\\ u_2&u_1&1\end{pmatrix}
\]

\item $w=w_5$ is represented by $\left(\begin{smallmatrix} 0&0&1\\1&0&0\\0&1&0\end{smallmatrix}\right)$. Then the last two rows equal
\[
   \begin{pmatrix} u_2&1&u_1\\ u_3&0&1\end{pmatrix}
\]

\item $w=w_6=w_5^{-1}$. Then the last two rows equal
\[
   \begin{pmatrix} 0&1&0\\ u_1&u_2&1\end{pmatrix}.
\]
\end{itemize}

{\bf The case $\varpi=\varpi_2$:}
Using the above computations, we have for $u:=\left(\begin{smallmatrix} 1& x_1+y_1&x_2+y_2\\ \;&1&x_3+y_3 \\\;&\;&1\end{smallmatrix}\right)$
\[
  e^{-\varpi_2(H_0(w_it^{-1}ut))}=
  \begin{cases}
   \|(0,0,1)\|_{\A}=1												&\text{if }i\in\{1,2\},\\
   \|(0,e^{-\alpha_2(t)}(x_3+y_3), 1)\|_{\A}									&\text{if }i\in\{3,5\},\\
   \|(e^{-( \alpha_1+\alpha_2)(t)}(x_2+y_2), e^{-\alpha_1(t)}(x_1+y_1),1)\|_{\A}				&\text{if }i\in\{4,6\}.
  \end{cases}
\]
Since $\tilde{F}^{T_0}(t, T)\neq0$, we have $\varpi_2(T -w H_0(t))=\varpi_2(T - H_0(t))\leq 0$ so that $\vol V_T(w_1, \varpi_2,t)=\vol V_T(w_2, \varpi_2, t)=0$.

Now if $w\in\{w_3, w_5\}$ we have $\varpi_2(wH_0(t))=(\varpi_1-\varpi_2)(H_0(t))$, and therefore
\[
   e^{-\varpi_2(H_0(wt^{-1}ut))}< e^{\varpi_2(wH_0(t)-T)}
   \Leftrightarrow
      \|(0,e^{-\alpha_2(t)}(x_3+y_3), 1)\|_{\A}<   e^{(\varpi_1-\varpi_2)(H_0(t))-\varpi_2(T)}.
\]
Writing out the adelic norm on the left hand side, this is equivalent to (recall that $x_3\in[0,1]$)
\[
 (1+e^{-2\alpha_2(t)}(x_3+y_3)^2)^{1/2}\prod_{p<\infty} \max\{1,|y_3|_p\}<e^{(\varpi_1-\varpi_2)(H_0(t))-\varpi_2(T)}.
\]
We can write $y_3=a/b$ with $a, b$ coprime integers. Then $\prod_{p<\infty} \max\{1,|y_3|_p\}= |b|$ so that the above is equivalent to
\begin{align*}
 1+e^{-2\alpha_2(t)}(x_3+y_3)^2 & <b^{-2}e^{2(\varpi_1-\varpi_2)(H_0(t))-2\varpi_2(T)}\\
 \Leftrightarrow\;\;\;
 (x_3+\frac{a}{b})^2 & <\left[b^{-2}e^{2(\varpi_1-\varpi_2)(H_0(t))-2\varpi_2(T)}-1\right]e^{2\alpha_2(t)}.
\end{align*}
If there exists $x_3$ satisfying this inequality we must necessarily have $e^{(\varpi_1-\varpi_2)(H_0(t))-\varpi_2(T)}>1$ and $|b|<e^{(\varpi_1-\varpi_2)(H_0(t))-\varpi_2(T)}$. It moreover suffices to consider $0\leq a\leq b $, since if for $a> b$ there still exists $x_3$ as before, then the volume of $V_T(w, \varpi_2, t)$ equals $1$.
Hence the volume of all $x_3\in[0,1]$ for which there exists $y_3\in \Q$ as above is bounded by
\[
 \sum_{0<b<e^{(\varpi_1-\varpi_2)(H_0(t))-\varpi_2(T)}} \sum_{0\leq a<b} b^{-1}e^{(\varpi_1-\varpi_2)(H_0(t))-\varpi_2(T)}e^{\alpha_2(t)}
 \leq e^{2(\varpi_1-\varpi_2)(H_0(t))-2\varpi_2(T)}e^{\alpha_2(t)}.
\]
Note that $ 2( \varpi_1-\varpi_2)+\alpha_2=\varpi_1$
so that, since $\varpi_1(H_0(t))\leq \varpi_1(T)$ by assumption, we get
\[
 \vol V_T(w, \varpi_2, t)
 \leq e^{-\alpha_2(T)}
\]
for $w\in\{w_3, w_5\}$.

Now if $w\in\{w_4, w_6\}$, we have $\varpi_2(wH_0(t))=-\varpi_1(H_0(t))$. Therefore,
\begin{align*}
   & e^{-\varpi_2(H_0(wt^{-1}ut))}  < e^{\varpi_2(wH_0(t)-T)}\\
   \Leftrightarrow\;\;\;\;
 & \|(e^{-( \alpha_1+\alpha_2)(t)}(x_2+y_2), e^{-\alpha_1(t)}(x_1+y_1),1)\|_{\A}  <   e^{-\varpi_1(H_0(t))-\varpi_2(T)}.
\end{align*}
This is equivalent to 
\[
 (1+e^{-2\alpha_1(t)}(x_1+y_1)^2+ e^{-2( \alpha_1+\alpha_2)(t)}(x_2+y_2)^2)^{1/2}\prod_{p<\infty} \max\{1, |y_1|_p, |y_2|_p\}
 < e^{-\varpi_1(H_0(t))-\varpi_2(T)}.
\]
Write $y_i=a_i/b_i$ with $a_i, b_i$ coprime integers. Then $\prod_{p<\infty} \max\{1, |y_1|_p, |y_2|_p\}=\lcm(b_1, b_2)=:b$, and as above it suffices to consider $0\leq a_1, a_2<b< e^{-\varpi_1(H_0(t))-\varpi_2(T)}$.
Hence the volume of $V_T(w, \varpi_2, t)$ is bounded by the sum over all such $a_1, a_2, b$ of the volume of all $x_1, x_2\in [0, 1]$ satisfying
\[
 e^{-2\alpha_1(t)}(x_1+\frac{a_1}{b})^2+ e^{-2( \alpha_1+\alpha_2)(t)}(x_2+\frac{a_2}{b})^2
 <b^{-2} e^{-2\varpi_1(H_0(t))-2\varpi_2(T)}-1
\]
so that for $w\in\{w_4, w_6\}$ we have
\begin{align*}
 \vol V_T(w, \varpi_2, t)
&\leq 
 \sum_{0<b<e^{-\varpi_1(H_0(t))-\varpi_2(T)}} b e^{\alpha_1(t)} e^{( \alpha_1+\alpha_2)(t)} e^{-\varpi_1(H_0(t))-\varpi_2(T)}\\
 &\leq  e^{\alpha_1(t)} e^{( \alpha_1+\alpha_2)(t)} e^{-3\varpi_1(H_0(t))-3\varpi_2(T)}
 =e^{-3\varpi_2(T)}.
\end{align*}

{\bf The case $\varpi=\varpi_1$:}
Using the same notation as before, we can compute 
\begin{multline*}
   e^{-\varpi_1(H_0(w_it^{-1}ut))}=\\
\begin{cases}
        \|(0,0,1)\|_{\A}=1																						&\text{if } i\in\{1,3\},\\
	\|(0, 1, e^{-\alpha_1(t)}(x_1+y_1))\|_{\A}																		&\text{if }i\in\{2,6\},\\
	\|(1, e^{-\alpha_2(t)}(x_3+y_3), e^{-(\alpha_1+\alpha_2)(t)} \big((x_1+y_1)(x_3+y_3)-(x_2+y_2)\big))\|_{\A}					&\text{if }i\in\{4,5\}.
    \end{cases}
\end{multline*}
If $w\in\{w_1, w_3\}$ it follows as before that $V_T(w, \varpi_1, t)=0$. 
If $w\in \{w_2, w_6\}$, then $\varpi_1(wH_0(t))=(\varpi_2-\varpi_1)(H_0(t))$, and  it follows as before that $\vol V_T(w, \varpi_1, t)$ is bounded from above by
\[
 e^{2(\varpi_2-\varpi_1)(H_0(t))-2\varpi_1(T)}e^{\alpha_1(t)}
 \leq e^{-\alpha_1(T)}
\]
by our assumption on $t$.

For the last case $w\in\{w_4, w_5\}$ we have $\varpi_1(wH_0(t))=-\varpi_2(H_0(t))$ so that
\[
    e^{-\varpi_1(H_0(wt^{-1}ut))}<e^{\varpi_1(wH_0(t)-T)}
\]
is equivalent to 
\[
 \|(1, e^{-\alpha_2(t)}(x_3+y_3), e^{-(\alpha_1+\alpha_2)(t)} \big((x_1+y_1)(x_3+y_3)-(x_2+y_2)\big))\|_{\A}
 <e^{-\varpi_2(H_0(t))-\varpi_1(T)}.
\]
It follows similarly as before (we may replace  $(x_1+y_1)(x_3+y_3)-(x_2+y_2)$ by $x_2+y_2$ for our purposes) that the volume $\vol V_T(w, \varpi_1, t)$ is bounded by
\[
 e^{\alpha_2(t)} e^{( \alpha_1+\alpha_2)(t)} e^{-3\varpi_2(H_0(t))-3\varpi_1(T)}
 =e^{-3\varpi_1(T)}
\]
finishing the case $\varpi=\varpi_1$.

Taking all computations for $\varpi=\varpi_1, \varpi_2$ together, we obtain
 \[
  \bigg|\tilde{F}^{T_0}(t, T)-\int_{C_U(X_0,\Q)\backslash C_U(X_0, \A)} F(vt, T)dv\bigg|
  \leq 2\big(e^{-\alpha_1(T)}+e^{-\alpha_2(T)}+e^{-3\varpi_1(T)}+e^{-3\varpi_2(T)})
  \leq  8e^{-d(T)}
 \]
 for all $t\in T_0(\Q)\backslash T_0(\A)$. For $d(T)>\delta \|T\|$ the assertion follows.

\paragraph*{\underline{{\bf $\NNN=\NNN_{\text{reg}}$:}}}
Let $t\in T_0(\A)$ be again such that $\tilde{F}^{T_0}(t, T)=1$. For the representative $X_0=\left(\begin{smallmatrix} 0&1&0\\0&0&1\\0&0&0\end{smallmatrix}\right)$ of $\OOO_{\text{reg}}$, the Jacobson-Morozov parabolic subgroup is again $P=P_0$, and
\[
 C_{U_0}(X_0)=\{\left(\begin{smallmatrix} 1&a&b\\0&1&a\\0&0&1\end{smallmatrix}\right)\}.
\]
As a complement of $C_{U_0}(X_0)\subseteq U_0$ we choose the subspace $V:=\{\left(\begin{smallmatrix}1&c&0\\0&1&-c\\0&0&1\end{smallmatrix}\right)\}\subseteq U_0$.
Let $v'=v'(c)=\left(\begin{smallmatrix}1&c&0\\0&1&-c\\0&0&1\end{smallmatrix}\right)\\in V(\A)$ be fixed.
We want to approximate the sets
\[
 V_T(\varpi, t, v')
 =\{v\in C_{U_0}(X_0, \Q)\backslash C_{U_0}(X_0, \A)\mid \exists \gamma\in G(\Q):~ \varpi(H_0(\gamma vv't)-T)>0\}
\]
for each $\varpi\in\{\varpi_1, \varpi_2\}$.  We split this set into disjoint sets $V_T(w, \varpi, t, v')$ for $w\in W$ according to the Bruhat decomposition as before.

{\bf The case $\varpi=\varpi_2$:}
If applicable, we use the same notation as in the case of the minimal orbit, but now write $x_1=a+c$, $x_3=a-c$, and $x_2=b-ac$ with $c$ fixed and $a, b\in\Q\backslash \A$.
Hence
\[
  e^{-\varpi_2(H_0(wt^{-1}yvv't))}=
  \begin{cases}
   \|(0,0,1)\|_{\A}=1												&\text{if }w\in\{w_1, w_2\},\\
   \|(0,e^{-\alpha_2(t)}(a-c+y_3), 1)\|_{\A}									&\text{if }w\in\{w_3, w_5\},\\
   \|(e^{-( \alpha_1+\alpha_2)(t)}(b-ac+y_2), e^{-\alpha_1(t)}(a+c+y_1),1)\|_{\A}				&\text{if }w\in\{w_4, w_6\}.
  \end{cases}
\]
The first case $w\in\{w_1, w_2\}$ again leads to $\vol V_T(w, \varpi_2, t, v')=0$ for every $t$ with $\tilde{F}^{T_0}(t, T)=1$.

If $w\in \{w_3, w_5\}$, we now choose a fundamental domain for $a$ as $[c, 1+c]$ so that this case can in fact be treated similar to the minimal orbit. Hence
\[
 \vol V_T(w,\varpi_2,  t, v')\leq e^{-\alpha_2(T)}.
\]

Similarly, if $w\in\{w_4, w_6\}$ we can choose the fundamental domains for $a$ and $b$ in such a way that we are left with the same type of estimates as in the case of the minimal orbit. Hence
\[
 \vol V_T(w, \varpi_2, t, v')\leq e^{-3\varpi_2(T)}.
\]

{\bf The case $\varpi=\varpi_1$:}
As for the minimal orbit, we obtain
\begin{multline*}
   e^{-\varpi_1(H_0(w_it^{-1}yvv't))}=\\
\begin{cases}
        \|(0,0,1)\|_{\A}=1																						&\text{if } i\in\{1,3\},\\
	\|(0, 1, e^{-\alpha_1(t)}(a+c+y_1))\|_{\A}																		&\text{if }i\in\{2,6\},\\
	\|(1, e^{-\alpha_2(t)}(a-c+y_3), e^{-(\alpha_1+\alpha_2)(t)} \big((a+c+y_1)(a-c+y_3)-(b-ac+y_2)\big))\|_{\A}					&\text{if }i\in\{4,5\}.
    \end{cases}
\end{multline*}
Choosing for each $w$ appropriate fundamental domains for $a$ and $b$, we are left with the same computations and estimates as in the minimal orbit case.

Taking everything together, we again obtain:
 For the regular unipotent orbit with Jacobson-Morozov parabolic $P=P_0$ we can approximate $\int_{C_U(u_0,\Q)\backslash C_U(u_0, \A)} F(vt, T)~dv$ by $\tilde{F}^{T_0}(t, T)$ asymptotically in $T$, in fact,
 \[
  \bigg|\tilde{F}^{T_0}(t, T)-\int_{C_U(X_0,\Q)\backslash C_U(X_0, \A)} F(vt, T)dv\bigg|
  \leq  8e^{-d(T)}
 \]
 for all $t\in T_0(\Q)\backslash T_0(\A)$. For $d(T)>\delta \|T\|$ the assertion follows.
\end{proof}

\begin{cor}
Let  $\nu>0$ be as in Lemma \ref{fundamental_convergence}. There exists a seminorm $\mu$ on $\SSS^{\nu}(\ggG(\A))$ such that for every $\Phi\in\SSS^{\nu}(\ggG(\A))$ and every nilpotent orbit $\NNN\subseteq\ggG(\Q)$ we have
\[
 \big|j_{\NNN}^T(\Phi)-\tilde{j}_{\NNN}^T(\Phi)\big|
\leq \mu(\Phi)e^{-c_2\|T\|}
\]
for every sufficiently regular $T\in\aaa^+$ with $d(T)>\delta\|T\|$.
\end{cor}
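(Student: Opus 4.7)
The case $\NNN=\{0\}$ is trivial, both $j_{\NNN}^T(\Phi)$ and $\tilde j_{\NNN}^T(\Phi)$ reducing to $\vol(A_GG(\Q)\backslash G(\A))\cdot\Phi(0)$. For $\NNN\in\{\NNN_{\text{min}},\NNN_{\text{reg}}\}$ the Jacobson--Morozov parabolic is $P_0=T_0U_0$, so I would unfold both $j_{\NNN}^T(\Phi)$ and $\tilde j_{\NNN}^T(\Phi)$ along $P_0$ exactly as in the proof of Lemma~\ref{lemma_chang_central_var}: using the decomposition $\NNN=\bigsqcup_{\delta\in P_0(\Q)\backslash G(\Q)}\Ad\delta^{-1}(\uuu^{\geq 2}(\Q)\cap\NNN)$, the left $G(\Q)$-invariance of the whole integrand, Iwasawa, and (after averaging $\Phi$ over $\Ad\cpt$) Rao's bijection, which converts the sum over $\uuu^{\geq 2}(\Q)\cap\NNN$ into a sum over $\gamma_0\in\uuu^2(\Q)\cap\NNN$ paired with an integral over $C_{U_0}(\gamma_0,\Q)\backslash U_0(\A)$. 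Decomposing further $v=\tilde vv'$ with $\tilde v\in C_{U_0}(\gamma_0,\Q)\backslash C_{U_0}(\gamma_0,\A)$ and $v'\in C_{U_0}(\gamma_0,\A)\backslash U_0(\A)$, and observing that $\Phi(\Ad(vm)^{-1}\gamma_0)=\Phi(\Ad m^{-1}\Ad v'^{-1}\gamma_0)$ is independent of $\tilde v$ while $\tilde F^{T_0}(vm,T)=\tilde F^{T_0}(m,T)$ is independent of both $\tilde v$ and $v'$, the difference takes the form
\[
 j_{\NNN}^T(\Phi)-\tilde j_{\NNN}^T(\Phi)=\int_m\delta_{P_0}(m)^{-1}\sum_{\gamma_0}\int_{v'}K(\gamma_0,v',m,T)\,\Phi\big(\Ad m^{-1}\Ad v'^{-1}\gamma_0\big)\,dv'\,dm,
\]
with discrepancy kernel
\[
 K(\gamma_0,v',m,T):=\int_{C_{U_0}(\gamma_0,\Q)\backslash C_{U_0}(\gamma_0,\A)}F(\tilde vv'm,T)\,d\tilde v\;-\;\tilde F^{T_0}(m,T).
\]

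The crux is the uniform bound $|K(\gamma_0,v',m,T)|\le c_1 e^{-c_2\|T\|}$ for every $\gamma_0$, $v'$, $m$ and all sufficiently regular $T\in\aaa^+$ with $d(T)>\delta\|T\|$. For the minimal orbit this is immediate: $C_{U_0}(\gamma_0)=U_0$ for every $\gamma_0\in\uuu^2(\Q)\cap\NNN_{\text{min}}$, so Lemma~\ref{trunc_app2} with trivial $v'$ applies verbatim. For the regular orbit each $\gamma_0\in\uuu^2(\Q)\cap\NNN_{\text{reg}}$ has the form $\left(\begin{smallmatrix}0&a&0\\0&0&b\\0&0&0\end{smallmatrix}\right)$ with $a,b\in\Q^{\times}$ and equals $\Ad s\cdot X_{\text{reg}}$ for $s=\diag(a,1,1/b)\in T_0(\Q)$, whence $C_{U_0}(\gamma_0)=sC_{U_0}(X_{\text{reg}})s^{-1}$. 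Since $T_0$ is abelian one has $sm=ms$, and the left $G(\Q)$-invariance of $F$ combined with the change of variables $\tilde v=s\tilde v_0s^{-1}$, $v'=sv_0's^{-1}$ identifies the first summand of $K(\gamma_0,v',m,T)$ with its analogue for $X_{\text{reg}}$ and $v_0'$; the Jacobians induced on the two quotient measures are reciprocal (arising from the conjugation actions on $C_{U_0}(X_{\text{reg}})$ and on its complement in $U_0$, whose modular characters are reciprocal) and hence cancel. Lemma~\ref{trunc_app2} therefore supplies the same bound, with the same constants, uniformly in $\gamma_0$.

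With the uniform kernel bound, $|j_{\NNN}^T(\Phi)-\tilde j_{\NNN}^T(\Phi)|$ is dominated by $c_1 e^{-c_2\|T\|}$ times the corresponding integral of $|\Phi(\Ad m^{-1}\Ad v'^{-1}\gamma_0)|$ restricted to the cone $\hat\tau_0^G(T-H_0(m))=1$, outside of which both summands of $K$ vanish. Re-folding $v'$ and the sum over $\gamma_0$ backwards into a single sum over $\gamma\in\NNN$ and an integral over $U_0(\Q)\backslash U_0(\A)$, and estimating via Siegel sets exactly as in the proof of Lemma~\ref{abs_conv_of_aux_distr}, this integral is bounded by a polynomial in $\|T\|$ (coming from the volume $\int_\aaa \tau_0^G(A-T_1)\hat\tau_0^G(T-A)\,dA$ of the truncation region) times a continuous seminorm $\mu_0(\Phi)$ on $\SSS^{\nu}(\ggG(\A))$. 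Absorbing this polynomial factor into the exponential decay by replacing $c_2$ with some smaller $c_2'>0$ (rename it $c_2$) yields the claimed estimate with $\mu=c_1\mu_0$. The main obstacle is the uniformity of the constants in Lemma~\ref{trunc_app2} across the infinite family $\uuu^2(\Q)\cap\NNN_{\text{reg}}$; this is resolved by the $T_0(\Q)$-conjugation reduction to the fixed representative $X_{\text{reg}}$, the only delicate point being the measure-theoretic bookkeeping which ensures that Lemma~\ref{trunc_app2} applies with constants independent of $s\in T_0(\Q)$.
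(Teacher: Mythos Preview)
Your overall strategy---unfold along $P_0$, isolate the discrepancy kernel $K$, bound $|K|$ uniformly by Lemma~\ref{trunc_app2}, then control the residual integral---is exactly the paper's approach. Your reduction of the uniformity in $\gamma_0\in\uuu^2(\Q)\cap\NNN_{\text{reg}}$ to the fixed representative $X_{\text{reg}}$ via $T_0(\Q)$-conjugation is correct and in fact makes explicit a point the paper glosses over with ``proceeding similar as in the $n=2$-case''. (One small quibble: the reason the Jacobian is harmless is not that the two factors are reciprocal but simply that $s\in T_0(\Q)$, so the adelic modulus of $\Ad s$ on any rational subspace is $1$ by the product formula.)

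The genuine soft spot is your final step. After the kernel bound you are left with
\[
\int_{A_G T_0(\Q)\backslash T_0(\A)}\delta_{P_0}(m)^{-1}\hat\tau_0^G(T-H_0(m))\sum_{\gamma_0}\int_{v'}|\Phi(\Ad m^{-1}\Ad v'^{-1}\gamma_0)|\,dv'\,dm,
\]
which (after re-folding the inner sum-integral) is an integral over $A_G P_0(\Q)\backslash G(\A)$ with the $A_0^G$-variable unrestricted from below. The Siegel-set estimate in the proof of Lemma~\ref{abs_conv_of_aux_distr} goes the other way: it bounds a $G(\Q)\backslash G(\A)$-integral by a Siegel integral with the lower cutoff $\tau_0^G(H_0(a)-T_1)$, and that cutoff is essential for the crude bound $\delta_0(a)^{-1}\sum_{\gamma\in\ggG(\Q)}|\Phi'(\Ad a^{-1}\gamma)|\le\mu(\Phi)$ used there. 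In your situation there is no such lower cutoff, so the volume factor $\int_\aaa\tau_0^G(A-T_1)\hat\tau_0^G(T-A)\,dA$ you invoke does not arise. The paper circumvents this by \emph{not} re-folding: it passes via Rao to $\int_{\uuu^{>2}(\A)}\sum_{X\in\uuu^2(\Q)\cap\NNN}|\Phi(\Ad a^{-1}(X+U))|\,dU$, integrates out $U$ to get $\Phi_{>2}$, and then estimates the remaining sum over $x_1,x_2\in\Q\setminus\{0\}$ coordinate-wise, splitting according to whether $a_1^{-2}a_2\gtrless 1$ and $a_1a_2^{-2}\gtrless 1$. The restriction $x_1,x_2\neq 0$ (coming from $X\in\NNN_{\text{reg}}$) supplies the needed decay in the directions where the Siegel cutoff is absent, and the result is a polynomial-in-$T$ bound. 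Replacing your appeal to Lemma~\ref{abs_conv_of_aux_distr} by this direct estimate (and analogously for $\NNN_{\text{min}}$) completes the argument.
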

\begin{proof}
 Again, we only need to consider the non-trivial orbits, and we moreover may assume that $\Phi$ is $\cpt$-conjugation invariant. 
First consider the regular orbit $\NNN=\NNN_{\text{reg}}$ and $X_0=X_{\text{reg}}$. Using the results and notation of Lemma \ref{trunc_app2} and proceeding similar as in the $n=2$-case, we can bound $ \big|j_{\NNN}^T(\Phi)-\tilde{j}_{\NNN}^T(\Phi)\big|$ from above by
\begin{multline*}
\leq c_1 e^{-c_2\|T\|}\int_{A_0^G} \delta_0(a)^{-1} \hat{\tau}_0^G(T-H_0(a)) \int_{\uuu^{>2}(\A)} \sum_{X\in\uuu^2(\Q)\cap\NNN} \big|\Phi(\Ad a^{-1} (X+U))\big|~dU~ da\\
= c_1 e^{-c_2\|T\|}\int_{A_0^G} \delta_{\uuu^2}(a)^{-1} \hat{\tau}_0^G(T-H_0(a)) \int_{\uuu^{>2}(\A)} \sum_{X\in\uuu^2(\Q)\cap\NNN} \big|\Phi_{>2}(\Ad a^{-1} X)\big| ~da,
\end{multline*}
where $\Phi_{>2}(X):=\int_{\uuu^2(\A)} \Phi(X+U)dU$.
Again, we may assume that 
\[|\Phi_{>2}(\Ad a^{-1} X)|\leq \varphi(a_1^{-2}a_2 x_1)\varphi(a_1a_2^{-2} x_2)\]
 for a sufficiently rapidly decaying function $\varphi$ with seminorms bounded in terms of $\Phi$. Here we write $X=\left(\begin{smallmatrix} 0&x_1&0\\0&0&x_2\\0&0&0\end{smallmatrix} \right)$, $x_1, x_2\in\Q\minus\{0\}$, and $a_i=\varpi_i(H_0(a))$. Hence the above is bounded by
\[
c_1 e^{-c_2\|T\|}\int_{0}^{e^{\varpi_1(T)}/2}\int_0^{e^{\varpi_2(T)}/2} a_1^{-1}a_2^{-1}  \sum_{\substack{x_1\in\Q\minus\{0\}, \\ x_2\in\Q\minus\{0\}}} \varphi(a_1^{-2}a_2 x_1) \varphi(a_1 a_2^{-2} x_2) ~d^{\times}a_2~d^{\times}a_1 .
\]
Considering the cases $a_1^{-2} a_2\gtrless1$ and $a_1 a_2^{-2}\gtrless 1$ separately, we see that the integral is again bounded by a seminorm $\mu(\Phi)$ and a (quadratic) polynomial in $T$ so that this case is finished.

Now consider the case $\NNN=\NNN_{\text{min}}$ and $X_0=X_{\text{min}}$. Similar as before, we are left to estimate
\begin{multline*}
 c_1 e^{-c_2\|T\|}\int_{A_0^G} \delta_{U^{\leq 2}}(a)^{-1} \hat{\tau}_0^G(T-H_0(a)) \sum_{X\in\uuu^2(\Q)\cap\NNN} \big|\Phi(\Ad a^{-1} X)\big| ~da\\
\leq c_1 e^{-c_2\|T\|}\int_{0}^{e^{\varpi_1(T)}/2}\int_0^{e^{\varpi_2(T)}/2}
 a_1^{-2}a_2^{-2} \sum_{x\in\Q\minus\{0\}} \varphi(a_1^{-1}a_2^{-1} x) ~d^{\times} a_1 ~d^{\times} a_2,
\end{multline*}
for $\varphi$ a suitable function.
If we change one of the variables to $a_1a_2$, we can analyse the integral similar as before to obtain the assertion.
\end{proof}

\section{A sequence of test functions}\label{appendixb}
In this appendix, we give a sequence of test functions at the non-archimedean places which might be useful to deduce Conjecture \ref{conj_intr} from Proposition \ref{asymptotics_orbital_integrals}. 

For a prime $p$ define $\tilde{\Phi}_p:\ggG(\Q_p)\longrightarrow \C$  by
\[
\tilde{\Phi}_p(x)=\begin{cases}
		  \frac{[\OOO_{\Q_p[x]}:\Z_p[x]]}{I_p(\Phi_p^0, x)}=c(\Phi_p^0, x)^{-1}		&\text{if }\Delta(x)\neq0,\text{ and }x\in\ggG(\Z_p),\\
		  0										&\text{else}.
                  \end{cases}
\]
Then $\tilde{\Phi}_p$ is locally constant in $\ggG(\Q_p)\minus\{x\in\ggG(\Q_p)\mid \Delta(x)=0\}$, but not on all of $\ggG(\Q_p)$.
For $x\in \ggG(\Q_p)$ with $\tilde{\Phi}_p(x)\neq0$, we have
\[
c(\tilde{\Phi}_p, x)=
\frac{1}{[\OOO_{\Q_p[x]}:\Z_p[x]]}\int_{G_x(\Q_p)\backslash G(\Q_p)}\tilde{\Phi}_p(g^{-1}xg)~dg=1
\]
so that in fact one would actually like to use $\tilde{\Phi}_f:=\prod_{p<\infty}\tilde{\Phi}_p$ as a test function at the archimedean places, which we are not allowed to do because of $\tilde{\Phi}_f\not\in\SSS(\ggG(\A_f))$.  

However, we can construct a sequence of functions in $\SSS(\ggG(\A_f))$ converging to $\tilde{\Phi}_f$:
Let $\Sigma\subseteq \ggG(\Z_p)$ denote the set of all $x\in\ggG(\Z_p)$ such that $\Delta(x)=0$.
For $m\in\N_0$ define a function $\Phi_p^m:\ggG(\Q_p)\longrightarrow \C$ by
\[
\Phi_p^m(x)=\begin{cases}
             1						&\text{if }x\in \Sigma+p^m\ggG(\Z_p),\\	
	    \tilde{\Phi}_p(x)				&\text{if }x\not\in \Sigma+p^m\ggG(\Z_p).
            \end{cases}
\]
In particular, $\Phi_p^0$ coincides with the characteristic function of $\ggG(\Z_p)$. By construction $\Phi_p^m\in\SSS(\ggG(\Q_p))$ and $\Phi_p^m$ is $\cpt_p$-invariant.
 Let $\mmm=(m_p)_{p<\infty}$ be a sequence of integers $m_p\in\N_0$ of which almost all are zero. Let $\DIV^+(\Q)$ denote the set of all such sequences. It has a partial order given by $\mmm\geq\mmm'$ if and only if $m_p\geq m_p'$ for all primes $p$.
Define the function  $\Phi_f^{\mmm}:\ggG(\A_f)\longrightarrow \C$ by
$\Phi_f^{\mmm}=\prod_{p<\infty}\Phi_p^{m_p}$.
Then $\Phi_f\in\SSS(\ggG(\A_f))$ and it is $\cpt_p$-invariant.

By definition we have for all $\mmm,\mmm'\in \DIV^+(\Q)$ with $\mmm\geq\mmm'$ and all $x\in \ggG(\A_f)$  we have
\[
0\leq\tilde{\Phi}_f(x)
\leq \Phi_f^{\mmm}(x)
\leq \Phi_f^{\mmm'}(x)
\leq \Phi^0_f(x)
\leq 1.
\]
Moreover, $\lim_{\mmm}\Phi_f^{\mmm}(x)=\tilde{\Phi}_f(x)$ for every $x$. Similarly, the functions $\Phi^{m_p}_p$ are monotonically decreasing with limit function $\tilde{\Phi}_p$ so that 
$\lim_{m_p\rightarrow\infty}\int_{\ggG(\Q_p)} \Phi_{p}^{m_p}(x)~dx
=\int_{\ggG(\Q_p)}\tilde{\Phi}_p(x)~dx$
and
\[\lim_{m_p\rightarrow\infty}\int_{G_{\gamma}(\Q_p)\backslash G(\Q_p)}\Phi_p^{m_p}(g^{-1}\gamma g)~dx
=\int_{G_{\gamma}(\Q_p)\backslash G(\Q_p)}\tilde{\Phi}_p(g^{-1}\gamma g)~dx
=1\]
 for all regular elliptic $\gamma$. 
The existence of these limits does not suffice to pass from $c(\xi,\Phi_f)$ to $1$ in the asymptotic \ref{asymptotics_orbital_integrals} which would prove Conjecture \ref{conj_intr}. It would be necessary to show uniformity of the convergence in  $Q(\gamma)=\tr \gamma^2-\frac{1}{3}(\tr \gamma)^2 $ and the number of primes for which $m_p\neq0$.

\bibliographystyle{amsalpha}
\bibliography{bibzetafcts}
\end{document}